\numberwithin{equation}{section}
\title{Optimal Agnostic Control of Unknown Linear Dynamics in a Bounded Parameter Range}
\author{J. Carruth, M. F. Eggl, C. Fefferman, C. W. Rowley}
\date{\today}
\newcommand{\Z}{\mathbbm{Z}}
\newcommand{\R}{\mathbb{R}}
\newcommand{\C}{\mathbb{C}}
\newcommand{\N}{\mathbb{N}}
\newcommand{\prob}{\text{Prob}}
\newcommand{\eprob}{\emph{Prob}}
\newcommand{\tru}{\text{TRUE}}
\newcommand{\etru}{\emph{TRUE}}
\newcommand{\mx}{\text{MAX}}
\newcommand{\emx}{\emph{MAX}}
\newcommand{\vxi}{\vec{\xi}}
\newcommand{\prior}{\text{Prior}}
\newcommand{\eprior}{\emph{Prior}}
\newcommand{\post}{\text{Post}}
\newcommand{\epost}{\emph{Post}}
\newcommand{\cE}{\mathcal{E}}
\newcommand{\E}{\text{E}}
\newcommand{\eE}{\emph{E}}
\newcommand{\veta}{\vec{\eta}}
\newcommand{\tnu}{t_\nu}
\newcommand{\qnu}{q_\nu}
\newcommand{\unu}{u_\nu}
\newcommand{\tame}{\text{TAME}}
\newcommand{\etame}{\emph{TAME}}
\newcommand{\nottame}{\text{NOT TAME}}
\newcommand{\enottame}{\emph{NOT TAME}}
\newcommand{\signu}{\sigma_{\tnu}}
\newcommand{\qsig}{q^{\sigma}}
\newcommand{\usig}{u^{\sigma}}
\newcommand{\zosig}{\zeta_1^\sigma}
\newcommand{\ztsig}{\zeta_2^\sigma}
\newcommand{\dqnu}{\Delta \qnu}
\newcommand{\cF}{\mathcal{F}}
\newcommand{\zo}{\zeta_1}
\newcommand{\zt}{\zeta_2}
\newcommand{\cZ}{\mathcal{Z}}
\newcommand{\qnusig}{\qsig_\nu}
\newcommand{\qnutsig}{q^{\tsig}_\nu}
\newcommand{\unusig}{\usig_\nu}
\newcommand{\unutsig}{u^{\tsig}_\nu}
\newcommand{\zonusig}{\zeta_{1,\nu}^\sigma}
\newcommand{\zonutsig}{\zeta_{1,\nu}^{\tsig}}
\newcommand{\ztnusig}{\zeta_{2,\nu}^\sigma}
\newcommand{\ztnutsig}{\zeta_{2,\nu}^{\tsig}}
\newcommand{\zonu}{\zeta_{1,\nu}}
\newcommand{\ztnu}{\zeta_{2,\nu}}
\newcommand{\barzosig}{\bar{\zeta}_1^\sigma}
\newcommand{\barztsig}{\bar{\zeta}_2^\sigma}
\newcommand{\barq}{\bar{q}}
\newcommand{\barN}{{\bar{N}}}
\newcommand{\baru}{\bar{u}}
\newcommand{\terma}{\text{Term}\;\alpha}
\newcommand{\termb}{\text{Term}\;\beta}
\newcommand{\termc}{\text{Term}\;\gamma}
\newcommand{\err}{\text{ERR}}
\newcommand{\eerr}{\emph{ERR}}
\newcommand{\ecost}{\text{ECOST}}
\newcommand{\erroro}{\text{ERROR 1}}
\newcommand{\errort}{\text{ERROR 2}}
\newcommand{\errorth}{\text{ERROR 3}}
\newcommand{\cR}{\mathcal{R}}
\newcommand{\compE}{\prescript{c}{}{\cE}}
\newcommand{\cost}{\textsc{Cost}}
\newcommand{\osc}{\textsc{Osc}}
\newcommand{\qcsig}{q^\sigma_C}
\newcommand{\qdsig}{q^\sigma_D}
\newcommand{\zocsig}{\zeta_{1,C}^\sigma}
\newcommand{\ztcsig}{\zeta_{2,C}^\sigma}
\newcommand{\zodsig}{\zeta_{1,D}^\sigma}
\newcommand{\ztdsig}{\zeta_{2,D}^\sigma}
\newcommand{\bad}{\textsc{bad}}
\newcommand{\term}{\textsc{Term}}
\newcommand{\hatt}{\hat{t}}
\newcommand{\hatN}{{\hat{N}}}
\newcommand{\hatsigma}{\hat{\sigma}}
\newcommand{\qhatsig}{q^{\hat{\sigma}}}
\newcommand{\uhatsig}{u^{\hat{\sigma}}}
\newcommand{\hatq}{\hat{q}}
\newcommand{\hatu}{\hat{u}}
\newcommand{\disaster}{\textsc{disaster}}
\newcommand{\ndisaster}{\textsc{non-disaster}}
\newcommand{\ok}{\text{OK}}
\newcommand{\barzonu}{\bar{\zeta}_{1,\nu}}
\newcommand{\barztnu}{\bar{\zeta}_{2,\nu}}
\newcommand{\op}{\text{opt}}
\newcommand{\eop}{\emph{opt}}
\newcommand{\tsig}{\tilde{\sigma}}
\newcommand{\zomu}{\zeta_{1,\mu}}
\newcommand{\ztmu}{\zeta_{2,\mu}}
\newcommand{\cU}{\mathcal{U}}
\newcommand{\ctg}{\text{CTG}}
\newcommand{\ectg}{\emph{CTG}}
\newcommand{\discrep}{\textsc{Discrep}}
\newcommand{\casei}{\textsc{Case}\;\text{I}}
\newcommand{\caseii}{\textsc{Case}\;\text{II}}
\newcommand{\error}{\text{ERROR}}
\newcommand{\cX}{\mathcal{X}}
\newcommand{\sigveta}{\sigma_{\veta}}
\newcommand{\goodflips}{\textsc{goodflips}}
\newcommand{\badflips}{\textsc{badflips}}
\newcommand{\vsigma}{\vec{\sigma}}
\newcommand{\excost}
{\textsc{ECost}}
\newcommand{\regret}{\textsc{Regret}}
\newcommand{\mr}{\text{MR}}
\newcommand{\good}{\textsc{good}}
\newcommand{\vexcost}{\overrightarrow{\excost}}
\newcommand{\cK}{\mathcal{K}}
\newcommand{\cV}{\mathcal{V}}
\newcommand{\mix}{\text{MIX}}
\newcommand\extrafootertext[1]{%
    \bgroup
    \renewcommand\thefootnote{\fnsymbol{footnote}}%
    \renewcommand\thempfootnote{\fnsymbol{mpfootnote}}%
    \footnotetext[0]{#1}%
    \egroup
}
\newcommand{\bayes}{\text{Bayes}}
\newcommand{\hr}{\text{HReg}}
\newcommand{\mreg}{\text{MReg}}
\newcommand{\ar}{\text{AReg}}
\newcommand{\vvsigma}{\vec{\vec{\sigma}}}
\newtheorem{lem}{Lemma}
\newtheorem{thm}{Theorem}
\newtheorem{rmk}{Remark}
\newtheorem{cor}{Corollary}
\begin{document}

\maketitle

\begin{abstract}
Here and in the follow-on paper \cite{almostoptimal2023}, we consider a simple control problem in which the underlying dynamics depend on a parameter $a$ that is unknown and must be learned. In this paper, we assume that
$a$ is bounded, i.e., that $|a| \le a_\mx$, and we study two variants of the control problem. In the first variant, Bayesian control, we are given a prior probability distribution for $a$ and we seek a strategy that minimizes the expected value of a given cost function. Assuming that we can solve a certain PDE (the Hamilton-Jacobi-Bellman equation), we produce optimal strategies for Bayesian control. In the second variant, agnostic control, we assume nothing about $a$ and we seek a strategy that minimizes a quantity called the regret. We produce a prior probability distribution $d\prior(a)$ supported on a finite subset of $[-a_\mx,a_\mx]$ so that the agnostic control problem reduces to the Bayesian control problem for the prior $d\prior(a)$.
\end{abstract}

\tableofcontents

\section{Introduction}
Here and in \cite{carruth2022controlling,fefferman2021optimal}, we explore a new flavor of adaptive control theory, which we call ``agnostic control.''\extrafootertext{This work was supported by AFOSR grant FA9550-19-1-0005 and by the Joachim Herz Foundation.} Our introduction borrows heavily from that of \cite{almostoptimal2023}.

Many works in adaptive control theory attempt to control a system whose underlying dynamics are initially unknown and must be learned from observation. The goal is then to bound $\regret$, a quantity defined by comparing our expected cost with that incurred by an opponent who knows the underlying dynamics. Typically one tries to achieve a regret whose order of magnitude is as small as possible after a long time. Adaptive control theory has extensive practical applications; see, e.g., \cite{bertsekas2012dynamic, cesa2006prediction, hazancontrol, powell2007approximate}.

In some applications, we don't have the luxury of waiting for a long time. This is the case, e.g., for a pilot attempting to land an airplane following the sudden loss of a wing, as in \cite{Brazy:2009}. Our goal, here and in \cite{almostoptimal2023}, is to achieve the absolute minimum possible regret over a fixed, finite time horizon. This poses formidable mathematical challenges, even for simple model systems.

We will study a one-dimensional, linear model system whose dynamics depend on a single unknown parameter $a$. When $a$ is large positive, the system is highly unstable. (There is no ``stabilizing gain'' for all $a$.) Here, we suppose that the unknown $a$ is confined to a known interval $[-a_\mx, a_\mx]$ and we don't assume that we are given a Bayesian prior probability distribution for it. In \cite{almostoptimal2023}, we extend our results to deal with the case in which $a$ may be any real number.

Modulo an arbitrarily small increase in regret, we reduce the problem, here and in \cite{almostoptimal2023}, to a Bayesian variant in which the unknown $a$ is confined to a finite set and governed by a prior probability distribution.

For the Bayesian problem, our task is to find a strategy that minimizes the expected cost. This leads naturally to a PDE, the Bellman equation. We prove here that the optimal strategy for Bayesian control is indeed given in terms of the solution of the Bellman equation, and that any strategy significantly different from that optimum incurs a significantly higher cost. We proceed modulo assumptions about existence and regularity of the relevant PDE solutions, for which we lack rigorous proofs. (However, we have obtained numerical solutions, which seem to behave as expected.)

Let us now explain the above in more detail. 

\subsection*{The Model System}

Our system consists of a particle moving in one dimension, influenced by our control and buffeted by noise. The position of our particle at time $t$ is denoted by $q(t) \in \R$. At each time $t$, we may specify a ``control'' $u(t) \in \R$, determined by history up to time $t$, i.e., by $(q(s))_{s \in [0,t]}$. A ``strategy'' (aka ``policy'') is a rule for specifying $u(t)$ in terms of $(q(s))_{s \in [0,t]}$ for each $t$. We write $\sigma, \sigma', \sigma^*, \text{etc.}$ to denote strategies. The noise is provided by a standard Brownian motion $(W(t))_{t\ge 0}$.

The particle moves according to the stochastic ODE
\begin{equation}\label{eq: nintro 1}
dq(t) = \big(aq(t) + u(t)\big)dt + dW(t), \qquad q(0) = q_0,
\end{equation}
where $a$ and $q_0$ are real parameters. Due to the noise in \eqref{eq: nintro 1}, $q(t)$ and $u(t)$ are random variables; these random variables depend on our strategy $\sigma$, and we often write $q^\sigma(t)$, $u^\sigma(t)$ to make that dependence explicit.

Over a time horizon $T>0$, we incur a $\cost$, given\footnote[2]{By rescaling, we can consider seemingly different cost functions of the form $\int_0^T(q^2+\lambda u^2)$ for $\lambda >0$.} by
\begin{equation}\label{eq: nintro 2}
    \cost = \int_0^T \big\{ (q(t))^2 + (u(t))^2\big\} dt.
\end{equation}
This quantity is a random variable determined by $a, q_0,T$ and our strategy $\sigma$. Here, the starting position $q_0$ and time horizon $T$ are fixed and known, but we don't know the parameter $a$. 

We would like to keep our cost as low as possible. We write $\excost(\sigma,a)$ to denote the expected value of the $\cost$ \eqref{eq: nintro 2} for the given $a$ in \eqref{eq: nintro 1}.

In this paper we study two variants of the above control problem, which we call \emph{Bayesian control} and \emph{agnostic control}. 

\underline{For Bayesian control}, we are given a prior probability distribution $d\prior(a)$ for the unknown $a$ in \eqref{eq: nintro 1}. We assume that $d\prior$ is supported in an interval $[-a_\mx, a_\mx]$. Our task is to pick the strategy $\sigma$ to minimize
\begin{equation}\label{eq: nintro 4}
    \excost(\sigma, d\prior) = \int_{-a_\mx}^{a_\mx} \excost(\sigma, a) d\prior(a).
\end{equation}

\begin{sloppypar}
\underline{For agnostic control}, we are given that $a$ belongs to a known interval $[-a_\mx,a_\mx]$, but we aren't given a prior probability distribution $d\prior(a)$, so we can't define an expected cost by \eqref{eq: nintro 4}. Instead, our goal will be to minimize \emph{worst-case regret}, defined by comparing the performance of our strategy with that of an opponent who knows the value of $a$ and plays optimally. Let $\sigma_\op(a)$ be the optimal strategy for known $a$. Thus $\excost(\sigma,a)$ is minimized over all $\sigma$ by taking $\sigma = \sigma_\op(a)$.\footnote{See standard textbooks (e.g., \cite{astrom}) for the computation of $\sigma_\op(a)$ and its expected cost.} We will introduce several variants of the notion of regret. 
\end{sloppypar}

To a given strategy $\sigma$ we associate the following functions on $[-a_\mx, a_\mx]$:
\begin{itemize}
    \item \emph{Additive Regret}, defined as
    \[
    \ar(\sigma,a) = \excost(\sigma,a) - \excost(\sigma_\op(a),a) \ge 0.
    \]
    \item \emph{Multiplicative Regret} (aka ``competitive ratio''), defined as
    \[
    \mreg(\sigma,a) = \frac{\excost(\sigma,a)}{\excost(\sigma_\op(a),a)} \ge 1.
    \]
    \item \emph{Hybrid Regret}, defined in terms of a parameter $\gamma>0$ by setting 
    \[
    \hr_\gamma(\sigma,a) = \frac{\excost(\sigma,a)}{\excost(\sigma_\op(a),a)+\gamma}
    \]
\end{itemize}
See \cite{almostoptimal2023} for a discussion of the regimes in which these three notions provide useful information.

Writing $\regret(\sigma,a)$ to denote any one of the above three functions on $[-a_\mx, +a_\mx]$, we define the \emph{worst-case regret}:
\begin{equation}\label{eq: nintro 5}
    \regret^*(\sigma) = \sup\{ \regret(\sigma, a) : a \in [-a_\mx, a_\mx]\}.
\end{equation}
We seek a strategy $\sigma$ having the least possible worst-case regret.

Thus, we have posed two problems: For Bayesian control, find the strategy that minimizes our expected cost; and for agnostic control, find a strategy that minimizes worst-case regret.

To prepare to present our results, we next discuss a relevant PDE, the \emph{Bellman equation}.

We will see that the problem of Bayesian control is intimately connected to the following PDE for an unknown function $S(q,t,\zo, \zt)$ of four variables:
\begin{equation}\label{eq: nintro +1}
        \begin{split}
        0 = &\partial_t S + (\bar{a}(\zo,\zt) q + u_\op)\partial_q S + \bar{a}(\zo,\zt) q^2 \partial_{\zo}S + q^2 \partial_{\zt}S+ \frac{1}{2}\partial_q^2 S \\& + q \partial_{q\zo} S  + \frac{1}{2}q^2 \partial_{\zo}^2 S + (q^2 + u_\op^2),
    \end{split}
\end{equation}
where
\begin{equation}\label{eq: nintro +2}
u_\op = - \frac{1}{2} \partial_q S,
\end{equation}
with terminal condition
\begin{equation}\label{eq: nintro +3}
    S|_{t = T} = 0.
\end{equation}
Here, $\bar{a}(\zo,\zt)$ is a known, smooth function of two variables.

We have succeeded in finding numerical solutions of \eqref{eq: nintro +1}--\eqref{eq: nintro +3}, but we lack rigorous proofs of existence and smoothness of solutions. Accordingly, we impose a \emph{PDE Assumption} to the effect that \eqref{eq: nintro +1}--\eqref{eq: nintro +3} admit a solution $S$ satisfying plausible estimates (see Section \ref{sec: pde}). Our numerics suggest that the PDE Assumption is correct. \underline{Our results below are conditional on the PDE Assumption}.

We are ready to state our main results. We begin with Bayesian control. For a function $\bar{a}(\zo, \zt)$ given in terms of $d\prior$ by an elementary formula, we define a function $u_\op(q,t,\zo,\zt)$ as in \eqref{eq: nintro +1}--\eqref{eq: nintro +3}, and then specify a strategy $\sigma = \sigma_\bayes(d\prior)$ by setting
\begin{align}
    &\usig(t) = u_\op(\qsig(t),t,\zo(t),\zt(t)), \;\text{with}\label{eq: nintro *1}\\
    &\zo(t) = \int_0^t \qsig(s)[ d\qsig(s) - \usig(s) ds]\qquad \text{and}\qquad \zt(t) = \int_0^t (\qsig(s))^2 \ ds.\label{eq: nintro *2}
\end{align}
Note that $\zo(t)$, $\zt(t)$ are determined by past history up through time $t$, hence so is $\usig(t)$ in \eqref{eq: nintro *1}. As explained in \cite{almostoptimal2023}, heuristic reasoning suggests that $\sigma_\bayes(d\prior)$ is the optimal strategy for Bayesian control with prior belief $d\prior$. Our rigorous result confirms this intuition. Recall that $q_0$ is our starting position.
\begin{thm}\label{thm: nintro 1}
    Fix a probability distribution $d\eprior$ on $[-a_\emx,a_\emx]$, and let $S$, $u_\eop$, $\sigma = \sigma_{\emph{Bayes}}(d\eprior)$ be as above. Then the following hold.
    \begin{enumerate}[label={\emph{(\Alph*)}}, ref={(\Alph*)}]
        \item $\excost(\sigma, d\eprior) = S(q_0,0,0,0)$.
        \item Let $\sigma'$ be any other strategy. Then
        \[
        \excost(\sigma',d\eprior) \ge \excost(\sigma, d\eprior).
        \]
    \label{thm: obs b}
    \end{enumerate}
\end{thm}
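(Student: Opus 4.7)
The plan is to prove both parts of Theorem~\ref{thm: nintro 1} simultaneously by a verification argument based on It\^o's formula. The first step is to recognize $\bar a(\zo,\zt)$ as the posterior mean of the unknown parameter $a$ given the observation history $\cF_t$ generated by $\{q(s) : s \le t\}$. Indeed, the Girsanov density of $P^a$ relative to $P^0$ restricted to $\cF_t$ equals $\exp\bigl(a\,\zo(t) - \tfrac12 a^2\,\zt(t)\bigr)$, so $(\zo(t),\zt(t))$ is a sufficient statistic for $a$ and the posterior mean is exactly the ratio of Gaussian-weighted integrals that, by hypothesis, defines $\bar a(\zo,\zt)$.

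The second step is the innovations representation. Under the Bayesian measure $P := \int P^a\, d\prior(a)$, the process
\[
d\tilde W(t) \;:=\; dq(t) \;-\; \bigl(\bar a(\zo(t),\zt(t))\, q(t) + u(t)\bigr)\, dt
\]
is an $\cF_t$-Brownian motion, and one checks that under $P$ the triple $(q^{\sigma'}(t), \zo(t), \zt(t))$ is an It\^o diffusion whose infinitesimal generator $\mathcal L^u$ is precisely the second-order operator appearing on the right-hand side of \eqref{eq: nintro +1} with $u_\op$ replaced by the control $u^{\sigma'}(t)$ being played. Applying It\^o to $S(q^{\sigma'}(t),t,\zo(t),\zt(t))$, adding the running cost $q^2+u^2$, and using $u_\op = -\tfrac12 \partial_q S$ together with the PDE \eqref{eq: nintro +1} gives the pointwise identity
\[
\mathcal L^u S + q^2 + u^2 \;=\; (u - u_\op)^2,
\]
since the cross term $(u-u_\op)\partial_q S = -2(u-u_\op)u_\op$ combines with $u^2 - u_\op^2$ to form a perfect square.

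Integrating this identity from $0$ to $T$, invoking the terminal condition $S|_{t=T} = 0$, and taking expectation under $P$ (which by Fubini equals the $d\prior$-average of $\E^a[\,\cdot\,]$, matching \eqref{eq: nintro 4}) yields
\[
\excost(\sigma', d\prior) \;=\; S(q_0,0,0,0) \;+\; \E\!\left[\int_0^T \bigl(u^{\sigma'}(t) - u_\op(q^{\sigma'}(t),t,\zo(t),\zt(t))\bigr)^2\, dt\right].
\]
Specializing to $\sigma = \sigma_\bayes(d\prior)$, whose control equals $u_\op$ along the trajectory by \eqref{eq: nintro *1}, kills the last term and proves (A); nonnegativity of that term for any other $\sigma'$ gives (B).

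The main obstacle is the honest justification of the stochastic calculus: verifying that $\tilde W$ is a true Brownian motion under $P$, that It\^o's formula applies with the regularity supplied by the PDE Assumption, and, critically, that the stochastic integrals arising from the $\partial_q S\, d\tilde W$ and $q\,\partial_{\zo} S\, d\tilde W$ terms are genuine martingales on $[0,T]$, rather than mere local martingales, so that their expectations vanish. This requires polynomial-growth control of $S$ and its first derivatives in $(q,\zo,\zt)$, which is exactly the kind of estimate the PDE Assumption of Section~\ref{sec: pde} is designed to provide; if needed, one localizes with stopping times $\tau_n \uparrow T$ and passes to the limit using those estimates.
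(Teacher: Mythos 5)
Your outline captures the right heuristic: $\bar a$ is the posterior mean (via the Girsanov density $\exp(a\zo - \tfrac12 a^2\zt)$), and the completion-of-squares identity $\mathcal{L}^u S + q^2 + u^2 = (u - u_\op)^2$ follows from the Bellman equation and $u_\op = -\tfrac12\partial_q S$; that identity is exactly what appears, in discrete form, in the paper's equation \eqref{eq: pcs 32}. But the paper takes a genuinely different route to make this rigorous: it never applies It\^o's formula or Girsanov's theorem to a continuous-time process. Chapter~3 works entirely with strategies associated to a finite partition of $[0,T]$; the posterior-mean fact is Lemma~\ref{lem: post prob} (proved with explicit error terms), and the verification identity is established by downward induction on the time-step index $\nu$ in Lemma~\ref{lem: comp strategies}, where It\^o's formula is replaced by a second-order Taylor expansion of $S$ combined with the conditional-moment estimates of Lemma~\ref{lem: post exp}. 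Theorem~\ref{thm: nintro 1} is then extracted in Chapter~4 by passing to the limit along refining partitions. The discrete route earns its bookkeeping in three ways that your outline does not confront head-on. First, the paper's general strategy $\sigma'$ is \emph{defined} as an equivalence class of Cauchy sequences of discrete-time strategies, so there is no a~priori progressively measurable continuous-time control process $u^{\sigma'}(t)$ to feed into It\^o; establishing that such a process exists with the needed regularity is itself work. Second, the local-martingale-to-martingale upgrade you flag is genuinely delicate for not-necessarily-tame strategies (no bound $|u| \le C(|q|+1)$), and the PDE Assumption alone does not cure it; in the paper's approach tameness and rare-event control (Lemma~\ref{lem: rare events}) are built into the inductive step. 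Third, the induction produces the quantitative stability result (Theorem~\ref{thm: nintro 2}) essentially for free via the accumulated term $\sum_\nu(\discrep^\sigma(\tnu))^2\Delta\tnu$, whereas a one-shot It\^o identity gives the lower bound $\excost(\sigma',d\prior) \ge S(q_0,0,0,0)$ but requires additional work to extract stability. So your approach is a correct heuristic and not a wrong idea, but making it airtight in the paper's framework would amount to reconstructing most of the machinery of Chapters~2--4 in different clothing.
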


For a class of ``tame strategies'' $\sigma'$, we can sharpen \ref{thm: obs b} above to a quantitative result. A \emph{tame strategy} $\sigma'$ satisfies the estimate
\[
|u^{\sigma'}(t)| \le \hat{C} [ |q^{\sigma'}(t)| + 1]\;\text{(all }t \in [0,T])
\]
with probability 1, for a constant $\hat{C}$, called a \emph{tame constant for $\sigma'$}. The quantitative version of \ref{thm: obs b} is as follows.
\begin{thm}[Quantitative Uniqueness]\label{thm: nintro 2}
Let $d\eprior$, $\sigma = \sigma_{\emph{Bayes}}(d\eprior)$ be as in Theorem \ref{thm: nintro 1}. Given $\varepsilon>0$ and given a constant $\hat{C}$, there exists $\delta >0$ for which the following holds:

Let $\sigma'$ be a tame strategy with tame constant $\hat{C}$.

If $\excost(\sigma',d\eprior) \le \excost(\sigma, d\eprior) + \delta$, then the expected value of 
\[
\int_0^T \{ |\qsig(t) - q^{\sigma'}(t)|^2 + |\usig(t) - u^{\sigma'}(t)|^2 \} \ dt
\]
is less than $\varepsilon$.
\end{thm}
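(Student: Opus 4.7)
The strategy is to push the It\^o-and-Bellman-PDE calculation underlying Theorem \ref{thm: nintro 1}\ref{thm: obs b} to an \emph{exact} formula for $\excost(\sigma',d\prior)-\excost(\sigma,d\prior)$, and then convert the resulting $L^2$ control on $u^{\sigma'}$ into an $L^2$ control on the trajectories via a Gr\"onwall argument. First I would apply It\^o to $S(q^{\sigma'}(t),t,\zeta_1^{\sigma'}(t),\zeta_2^{\sigma'}(t))$, with $\zeta_1^{\sigma'},\zeta_2^{\sigma'}$ the statistics of \eqref{eq: nintro *2} built along the $\sigma'$-trajectory. Under $dq^{\sigma'}=(aq^{\sigma'}+u^{\sigma'})\,dt+dW$, $d\zeta_1^{\sigma'}=a(q^{\sigma'})^2\,dt+q^{\sigma'}\,dW$, $d\zeta_2^{\sigma'}=(q^{\sigma'})^2\,dt$, the drift of $dS$ matches the left-hand side of \eqref{eq: nintro +1} except that $(\bar a,u_\op)$ is replaced by $(a,u^{\sigma'})$. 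Integrating from $0$ to $T$, invoking the terminal condition \eqref{eq: nintro +3}, and taking expectations under the joint Bayesian measure on $(a,W)$ with $a\sim d\prior$, the cross term involving $a-\bar a$ vanishes by the posterior-mean identity $\bar a(\zeta_1^{\sigma'}(t),\zeta_2^{\sigma'}(t))=E[a\mid\mathcal{F}_t]$. Completing the square via $u_\op=-\tfrac12\partial_q S$ and applying Theorem \ref{thm: nintro 1}(A) to identify $S(q_0,0,0,0)$ with $\excost(\sigma,d\prior)$, I obtain
\begin{equation*}
\excost(\sigma',d\prior)-\excost(\sigma,d\prior)\;=\;E\!\int_0^T\!\bigl(u^{\sigma'}(t)-u_\op(q^{\sigma'}(t),t,\zeta_1^{\sigma'}(t),\zeta_2^{\sigma'}(t))\bigr)^2 dt.
\end{equation*}
Hence the hypothesis yields $E\int_0^T e(t)^2\,dt\le\delta$ with $e(t):=u^{\sigma'}(t)-u_\op(q^{\sigma'}(t),t,\zeta_1^{\sigma'}(t),\zeta_2^{\sigma'}(t))$.

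Next, the processes $(q^\sigma,\zeta_1^\sigma,\zeta_2^\sigma)$ and $(q^{\sigma'},\zeta_1^{\sigma'},\zeta_2^{\sigma'})$ satisfy SDEs driven by the same Brownian motion $W$, with identical initial conditions, whose drifts differ only through the forcing $e(t)$ and through the point at which $u_\op$ is evaluated. Using the Lipschitz bounds on $u_\op$ provided by the PDE Assumption, I would apply It\^o to the squared differences $(\Delta q)^2,(\Delta\zeta_1)^2,(\Delta\zeta_2)^2$, take expectations to kill the martingale parts, and estimate the quadratic-in-$q$ cross terms (of the form $\Delta q\cdot(q^\sigma+q^{\sigma'})$ that enter the $\zeta_j$-dynamics) via H\"older together with uniform moment estimates on the two trajectories. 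A vector Gr\"onwall inequality for $\psi(t):=E[(\Delta q(t))^2]+E[(\Delta\zeta_1(t))^2]+E[(\Delta\zeta_2(t))^2]$ with forcing $E[e(t)^2]$ then gives $E\int_0^T\bigl(|\Delta q|^2+|\Delta\zeta_1|^2+|\Delta\zeta_2|^2\bigr)\,dt\le C(\hat C,T,d\prior)\,\delta$, and a final invocation of the Lipschitz bound on $u_\op$ upgrades this to $E\int_0^T\bigl(|q^\sigma-q^{\sigma'}|^2+|u^\sigma-u^{\sigma'}|^2\bigr)\,dt\le C'(\hat C,T,d\prior)\,\delta$. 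Choosing $\delta$ small enough in terms of $\varepsilon,\hat C,T,d\prior$ completes the proof.

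The tame hypothesis is precisely what keeps the above rigorous: $|u^{\sigma'}(t)|\le\hat C(|q^{\sigma'}(t)|+1)$ turns the $\sigma'$-dynamics into an SDE with linear-growth drift and yields $\sup_{t\le T}E|q^{\sigma'}(t)|^p<\infty$ for every $p$ (and likewise for $\zeta_1^{\sigma'},\zeta_2^{\sigma'}$), which is needed both to make the stochastic-integral terms in the It\^o expansion of $S$ into honest martingales (so their expectations vanish) and to close the Gr\"onwall estimate of Step 2. The one serious input imported from outside the argument is the Lipschitz/polynomial-growth regularity of $u_\op$ and of $\partial_{\zo}S,\partial_{\zt}S$; if the PDE Assumption supplies these bounds then the argument runs as sketched, and any genuine analytic obstacle would localize to whether the PDE Assumption (or a modest strengthening of it) is adequate to deliver them.
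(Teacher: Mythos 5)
Your plan is the continuous-time heuristic of precisely what the paper does in discrete time: the It\^o--Bellman ``completion of the square'' yielding (morally) $\excost(\sigma',d\prior)-\excost(\sigma,d\prior)=\E\int_0^T(\discrep^{\sigma'})^2\,dt$ is the content of the Main Lemma on Competing Strategies and the First Bayesian Main Lemma (Sections \ref{sec: pde}--\ref{sec: pcs}), and the Gr\"onwall step converting smallness of $\E\int(\discrep^{\sigma'})^2$ into closeness of trajectories is the content of the Stability Lemma (Section \ref{sec: stability}). So the structural approach matches; the difference is that the paper works through partitions of $[0,T]$ and passes to a limit, and that detour is not decorative---it is what makes both of your steps rigorous. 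Two places where your sketch has a genuine gap:

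First, you claim the Gr\"onwall gives $\E\int_0^T(|\Delta q|^2+|\Delta\zeta_1|^2+|\Delta\zeta_2|^2)\,dt\le C(\hat C,T,d\prior)\,\delta$, but the differential inequality for $\psi(t)$ does not have a uniformly bounded Lipschitz coefficient. The ``$G$'' terms in \eqref{eq: saos 18}--\eqref{eq: saos 20} are bounded only by $CQ^{m_0}$ on the region where the state variables are $\le Q$, and the $\zeta$-dynamics carry a factor $q^{\sigma}+q^{\sigma'}$ that is unbounded. A straight Gr\"onwall therefore produces a prefactor of the form $\exp(CQ^{m_0}T)$ that blows up as $Q\to\infty$. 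The paper handles this by truncating to the event $\cE_N=\{\sup_\nu|q_\nu^\sigma|,|\zeta_{1,\nu}^\sigma|,\dots\le Q\}$, running the Gr\"onwall there to get \eqref{eq: saos 29}, and then estimating the complementary event by the exponential tail $\exp(-cQ)$ from Lemma \ref{lem: rare events}; the final $\delta$ is chosen \emph{after} $Q$ in a careful order. Your sketch, which asserts a clean $C\delta$ bound ``via H\"older together with uniform moment estimates,'' elides this tradeoff, and without the truncation the estimate is false.

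Second, the continuous-time It\^o formula applied to $S$ along the $\sigma'$-trajectory, and the identity $\bar a(\zeta_1^{\sigma'}(t),\zeta_2^{\sigma'}(t))=\E[a\mid\cF_t]$, are both asserted without justification within the paper's framework. In the paper, a continuous-time tame strategy is \emph{defined} as an equivalence class of Cauchy sequences of piecewise-constant strategies (Section \ref{sec: ctns time}), so $q^{\sigma'}(t),u^{\sigma'}(t)$ are only $L^2$ limits; it is not given that they are adapted processes solving an SDE to which one may apply It\^o. Likewise the filtering identity is established in discrete time only up to errors $O(Q^2(\Delta t_{\mx})^{1/4})$ (Lemma \ref{lem: post prob}). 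The paper's route---prove everything for fine partitions, with quantitative error terms, and then take a limit using the Refinement Lemma and Corollary \ref{cor: comparing}---is what makes both of these steps legitimate. If you were to reformulate the theorem in a classical SDE/filtering framework with a $C^{1,2}$ solution of the Bellman equation, your sketch would be a valid (and shorter) proof; within this paper's framework, the discretization is doing real work that your argument silently borrows.

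One thing your route does genuinely avoid: the paper proves the stronger fact (Second Bayesian Main Lemma) that $\E_a\int(\dots)\,dt<\varepsilon$ for \emph{every} $a\in[-a_\mx,a_\mx]$, which requires the Change-of-Assumption Lemma (analytic tilting of the density). For Theorem \ref{thm: nintro 2} as literally stated---the $d\prior$-averaged expectation---that extra step is unnecessary, and your argument, if made rigorous, would give the averaged bound by simply integrating the per-$a$ Gr\"onwall estimate against $d\prior$. The paper proves the stronger per-$a$ statement because it is needed downstream in Chapter \ref{chap: 5}.
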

Quantitative Uniqueness will play a crucial r\^{o}le in our analysis of agnostic control.

Our main result for agnostic control is as follows.
\begin{thm}\label{thm: nintro 3}
    Fix $[-a_\emx,a_\emx]$, $q_0$, $T$ (and $\gamma$ if we use hybrid regret). Then there exist a probability measure $d\eprior$, a finite subset $E\subset [-a_\emx, a_\emx]$, and a strategy $\sigma$, for which the following hold.
    \begin{itemize}
        \item[\emph{(I)}] $\sigma$ is the optimal Bayesian strategy for the prior probability distribution $d\eprior$.
        \item[\emph{(II)}] $d\eprior$ is supported in the finite set $E$.
        \item[\emph{(III)}] $E$ is precisely the set of points $a \in [-a_\emx,a_\emx]$ at which the function $[-a_\emx,a_\emx] \ni a \mapsto \regret(\sigma,a)$ achieves its maximum.
        \item[\emph{(IV)}] $\regret^*(\sigma) \le \regret^*(\sigma')$ for any other strategy $\sigma'$.
    \end{itemize}
\end{thm}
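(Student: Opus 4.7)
The plan is to recast agnostic control as a two-player minimax problem defined by the pairing
\[
F(\sigma, d\prior) = \int_{-a_\mx}^{a_\mx} \regret(\sigma, a)\, d\prior(a),
\]
so that $\regret^*(\sigma) = \sup_{d\prior} F(\sigma, d\prior)$ over Borel probability measures on $[-a_\mx, a_\mx]$. First I would check that for each fixed $d\prior$ the infimum $\inf_\sigma F(\sigma, d\prior)$ is attained by a Bayes-optimal strategy. For additive regret this is immediate from Theorem~\ref{thm: nintro 1}, since $\excost(\sigma_\op(a), a)$ is independent of $\sigma$. For multiplicative and hybrid regret, one rewrites $F$ as an $\excost$ averaged against the normalized measure proportional to $d\prior(a)/(\excost(\sigma_\op(a),a)+\gamma)$ (with $\gamma=0$ in the multiplicative case) and again applies Theorem~\ref{thm: nintro 1}; in all three cases the minimizer is the Bayes-optimal strategy for the appropriate (possibly reweighted) prior.

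Next I would establish the minimax equality $\inf_\sigma \sup_{d\prior} F = \sup_{d\prior} \inf_\sigma F$ via Sion's minimax theorem. The space of probability measures on $[-a_\mx, a_\mx]$ is convex and weak-$*$ compact, $F$ is affine in $d\prior$, and on the strategy side one enlarges to mixed strategies with the observation that mixing cannot beat the best pure response to a given prior. Weak-$*$ upper semi-continuity of $d\prior \mapsto \inf_\sigma F(\sigma, d\prior)$ reduces to continuous dependence of the Bayes value $S(q_0,0,0,0)$ on the coefficient $\bar{a}(\zo,\zt)$, which is part of the PDE Assumption. Sion then yields a maximizing prior $d\prior^*$, and $\sigma^* := \sigma_\bayes(d\prior^*)$ satisfies claims (I) and (IV) immediately from the saddle-point property of $(\sigma^*, d\prior^*)$.

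For claim (III), set $V^* := F(\sigma^*, d\prior^*)$. The saddle inequality evaluated against a point mass $\delta_a$ says $\regret(\sigma^*, a) \le V^*$ for every $a$, while integrating this bound back against $d\prior^*$ forces equality $d\prior^*$-almost everywhere. Hence $d\prior^*$ is supported in $E := \{a \in [-a_\mx, a_\mx] : \regret(\sigma^*, a) = V^*\}$, which is exactly (III).

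The main obstacle is claim (II), the finiteness of $E$. My plan is to exploit real-analytic dependence of $\bar{a}(\zo,\zt)$ and of the Bellman solution $S(q,t,\zo,\zt)$ on the parameter $a$, which should follow from an analytic-coefficient strengthening of the PDE Assumption. Under such analyticity, $a \mapsto \regret(\sigma^*, a)$ is real analytic on the compact interval $[-a_\mx, a_\mx]$ and so has either a constant or a discrete (hence finite) global maximizer set; in the constant case one simply replaces $d\prior^*$ by any finitely-supported measure and re-runs the saddle-point argument. If genuine analyticity proves inaccessible, a fallback route through Quantitative Uniqueness (Theorem~\ref{thm: nintro 2}) would rule out accumulation points in $E$: perturbing $d\prior^*$ by $\epsilon\delta_{a_n}$ along a hypothetical accumulating sequence $a_n\to a_\infty$ in $E$ produces Bayes-optimal strategies $\sigma_n^*$ that must simultaneously differ from $\sigma^*$ (because the priors do) and yet coincide with $\sigma^*$ in $L^2$ by Theorem~\ref{thm: nintro 2}, a contradiction.
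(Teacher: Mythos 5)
Your minimax route via Sion's theorem is genuinely different from the paper's argument, which establishes the result first for finite parameter sets $A$ by induction on $\#A$ (using a convexity separation lemma for ``efficient'' strategies) and then passes to the interval by applying this to the nets $A_N = [-a_\mx,a_\mx]\cap 2^{-m_N}\Z$ and invoking weak-$*$ compactness together with careful stability estimates for the Bayes strategies. Your approach is cleaner in spirit, but two steps need repair.

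First, Sion's theorem yields the minimax \emph{equality} (and, since the space of priors is weak-$*$ compact, attainment of the outer $\sup$ by some $d\prior^*$), but it does \emph{not} yield a saddle point. From $d\prior^*$ and $\sigma^* := \sigma_\bayes(d\prior^*)$ you only know that $F(\sigma^*,d\prior^*)=\inf_\sigma F(\sigma,d\prior^*)=\sup_{d\prior}\inf_\sigma F$. The missing inequality is $F(\sigma^*,d\prior)\le F(\sigma^*,d\prior^*)$ for all $d\prior$, i.e.\ $\regret(\sigma^*,a)\le V^*$ for every $a$, and this does not come for free: it is exactly the statement that $\sigma^*$, defined via the Bayes problem, is also worst-case optimal. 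To close this, one perturbs $d\prior^*$ to $d\prior_\theta=(1-\theta)d\prior^*+\theta\delta_a$, uses concavity of $d\prior\mapsto\inf_\sigma F$ to get $\inf_\sigma F(\cdot,d\prior_\theta)\le V^*$, deduces that the Bayes minimizer $\sigma_\theta$ satisfies $F(\sigma_\theta,d\prior^*)\to V^*$ as $\theta\to 0$, and then must invoke Quantitative Uniqueness (Theorem~\ref{thm: nintro 2}) to conclude $\sigma_\theta\to\sigma^*$, hence $\regret(\sigma^*,a)\le V^*$ by continuity. You relegate Theorem~\ref{thm: nintro 2} to a fallback, but it is in fact essential to your main line of argument; without it, claims (III) and (IV) are unsupported.

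Second, your analyticity argument for finiteness of $E$ does not rule out the degenerate case where $a\mapsto\regret(\sigma^*,a)$ is constant on $[-a_\mx,a_\mx]$, in which case $E$ is the entire interval. (Your proposed remedy---replacing $d\prior^*$ by a finitely supported measure and rerunning the saddle argument---does not help, since the resulting Bayes strategy and its regret function would change.) The paper handles this by combining the analytic continuation (established, not assumed---see the Analytic Continuation Lemma and part (3) of Theorem~\ref{thm: bayesian strategies}) with the Lemma on Undercontrol (part (4) of that theorem): $\excost(\sigma^*,a)$ grows exponentially as $a\to\infty$, so the real-analytic function $a\mapsto\regret(\sigma^*,a)$ cannot be constant on any interval by the identity theorem. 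You should incorporate this, and you should also note that the analyticity you invoke is a proven consequence of the PDE Assumption rather than an ``analytic-coefficient strengthening'' of it.
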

So, for optimal agnostic control, we should pretend to believe that the unknown $a$ is confined to a finite set $E$ and governed by the probability distribution $d\prior$, even though in fact we know nothing about $a$ except that it lies in $[-a_\mx,a_\mx]$.

Let $d\prior$, $\sigma$, $E$ be as in (I), (II), (III) of Theorem \ref{thm: nintro 3}. Since $\sigma$ is the optimal Bayesian strategy for $d\prior$ (by (I)), and since $d\prior$ is supported on the finite set $E$ (by (II)), we have for any other strategy $\sigma'$ that
\[
\excost(\sigma, a_0) \le \excost(\sigma', a_0) \;\text{for some} \; a_0 \in E.
\]
In particular, we have
\[
\regret(\sigma, a_0) \le \regret(\sigma' , a_0)\;\text{for some}\; a_0 \in E.
\]
Combining this with (III), we see that for any $a \in [-a_\mx, a_\mx]$ we have
\[
\regret(\sigma, a) \le \regret(\sigma',a_0).
\]
Therefore (I), (II), (III) of Theorem \ref{thm: nintro 3} easily imply (IV). The hard part of Theorem \ref{thm: nintro 3} is the assertion that there exist $d\prior$, $E$, $\sigma$, satisfying (I), (II), (III). 

Theorem \ref{thm: nintro 3} lets us search for optimal agnostic strategies: We first guess a finite set $E$ and a probability measure $d\prior$ concentrated on $E$. By solving the Bellman equation, we produce the optimal Bayesian strategy $\sigma= \sigma_{\bayes}(d\prior)$, which allows us to compute the function $[-a_\mx,a_\mx] \ni a \mapsto \regret(\sigma,a)$. If the maximum of that function occurs precisely at the points of $E$, then $\sigma$ is the desired optimal agnostic strategy. Otherwise, we modify our guess $(E, d\prior)$. We have carried this out numerically for several $[-a_\mx,a_\mx], q_0, T$.

This concludes our introductory discussion of agnostic control for bounded $a$ (i.e. $a \in [-a_\mx, a_\mx]$).

We briefly touch on another variant of the control problem \eqref{eq: nintro 1}: \emph{agnostic control for unbounded $a$}.

Suppose we assume absolutely nothing about our unknown $a$; it might be any real number. For any strategy $\sigma$ we define $\regret^*(\sigma)$ as in \eqref{eq: nintro 5}, except that now the $\sup$ is taken over all $a\in\R$. Our task is to pick $\sigma$ to minimize $\regret^*(\sigma)$.

Our companion paper \cite{almostoptimal2023} analyzes this problem by comparing optimal agnostic control for arbitrary $a$ with the case in which $a$ is confined to a large interval $[-a_\mx(\varepsilon), +a_\mx(\varepsilon)]$, depending on a small parameter $\varepsilon>0$. Roughly speaking, \cite{almostoptimal2023} shows that any strategy for $a$ confined to $[-a_\mx(\varepsilon), +a_\mx(\varepsilon)]$ may be modified to produce a strategy for arbitrary $a \in \R$, with an increase in worst-case hybrid regret of at most $\varepsilon$. (See \cite{almostoptimal2023} for precise statements.)

\subsection*{Recap}
Let us summarize what we have achieved. Suppose our goal is to minimize worst-case hybrid regret in the setting in which $a$ may be any real number. Modulo an arbitrarily small increase in regret, we may reduce matters to the case in which $a$ is confined to a bounded interval $[-a_\mx, a_\mx]$. We then look for a probability measure $d\prior$ living on a finite set $E\subset [-a_\mx, a_\mx]$, such that the regret of the optimal Bayesian strategy for $d\prior$ is maximized precisely on $E$. We can calculate the optimal Bayesian strategy for a given prior probability measure by solving a Bellman equation. However, our results are conditional; we have to make an assumption on the existence, smoothness, and size of solutions to the Bellman equation. In numerical simulations, we have produced evidence for our PDE Assumptions, and we have produced optimal agnostic strategies for cases in which the unknown $a$ is confined to an interval.

\subsection*{Ideas from the Proofs}
We mention one significant technical point regarding the proofs of Theorems \ref{thm: nintro 1}, \ref{thm: nintro 2}, \ref{thm: nintro 3}: We need a rigorous definition of a strategy. Certainly the phrase ``a rule for determining $u(t)$ from past history'' isn't precise.

We want to allow $u(t)$ to depend discontinuously on past history $(q(s))_{s \in [0,t]}$. For instance, we should be allowed to set 
\[
u(t)=\begin{cases}
    -q(t) &\text{if}\; |q(t)|>1,\\
    0 &\text{otherwise}.
\end{cases}
\]
On the other hand, we had better make sure that we can produce solutions of our stochastic ODE
\[
dq = (aq + u) dt + dW.
\]
We proceed as follows.

At first we fix a partition
\begin{equation}\label{eq: nintro sharp}
    0 = t_0 < t_1 < \dots < t_N =T
\end{equation}
of the time interval $[0,T]$. We restrict ourselves to strategies $\sigma$ in which the control $u(t)$ is constant in each interval $[t_\nu, t_{\nu+1})$, and in which, for each $\nu$, $u(\tnu)$ is determined by $q(t_1),\dots,q(\tnu)$, together with ``coin flips'' $\vxi = (\xi_1, \xi_2, \cdots) \in \{0,1\}^\N$. For all $\nu$, we assume that $u(\tnu)$ is a Borel measurable function of $(q(t_1),\dots, q(\tnu),\vxi)$, and that
\[
|u(\tnu)| \le C_\tame [|q(\tnu)| + 1].
\]
A strategy as above is called a \emph{tame strategy associated to the partition} \eqref{eq: nintro sharp}, with a \emph{tame constant} $C_\tame$. For such strategies, it is easy to define the solutions $\qsig(t)$, $\usig(t)$ of our stochastic ODE \eqref{eq: nintro 1}.

Most of our work lies in controlling and optimizing tame strategies associated to a sufficiently fine partition. In particular, we will prove approximate versions of Theorems \ref{thm: nintro 1}, \ref{thm: nintro 2} in the setting of such strategies.

We will then define a tame strategy (not associated to any partition) by considering a sequence $\pi_1, \pi_2,\dots$  of ever-finer partitions of $[0,T]$. To each partition $\pi_n$ we associate a tame strategy $\sigma_n$ with a tame constant $C_\tame$ independent of $n$. If the resulting $q^{\sigma_n}(t)$ and $u^{\sigma_n}(t)$ tend to limits, in an appropriate sense, as $n \rightarrow \infty$, then we declare these limits $q(t)$, $u(t)$ to arise from a \emph{tame strategy} $\sigma$ with a \emph{tame constant} $C_\tame$.

Finally, we drop the restriction to tame strategies and consider general strategies. To do so, we consider a sequence $(\sigma_n)_{n = 0,1,2,\dots}$ of tame strategies, NOT assumed to have a tame constant independent of $n$. If the relevant $q^{\sigma_n}(t)$ and $u^{\sigma_n}(t)$ converge, in a suitable sense, as $n \rightarrow \infty$, then we say that the limits $q(t)$, $u(t)$ arise from a strategy $\sigma$.

It isn't hard to pass from tame strategies associated to partitions of $[0,T]$ to general tame strategies, and then to pass from such tame strategies to general strategies. The work in proving Theorems \ref{thm: nintro 1} and \ref{thm: nintro 2} lies in our close study of tame strategies associated to fine partitions.

We provide only a few comments on the proof of Theorem \ref{thm: nintro 3}. The main work lies in proving an analogue of Theorem \ref{thm: nintro 3} in which the unknown $a$ is confined to a finite set $A \subset [-a_\mx, a_\mx]$, rather than to the whole of $[-a_\mx, a_\mx]$. We apply that analogue to a sequence $A_1, A_2, \dots$ of fine nets in $[-a_\mx, a_\mx]$, e.g., $A_n = [-a_\mx, a_\mx] \cap 2^{-n} \Z$, and pass to the limit as $n \rightarrow \infty$ using a weak compactness argument. To establish the result for finite $A$, we proceed by induction on the number of elements of $A$. Details may be found in Chapter \ref{chap: 5}.

\subsection*{Future Directions}
Our work suggests several unsolved problems, among which we mention:
\begin{itemize}
    \item Prove (or disprove) the PDE Assumption.
    \item Consider problems in which the particle lives in $\R^N$, not just in $\R^1$; and in which the dynamics of the particle depend on more than one unknown parameter. Can that be done without rendering the relevant numerics hopelessly impractical?
    \item Even for the model problem considered in this paper, improve the numerics to let us produce optimal agnostic strategies for a larger range of $a_\mx$ and $T$ than we can deal with today.
\end{itemize}

We speculate briefly on a particular model problem in which we don't know a priori what our control does.

Consider a particle governed by the stochastic ODE
\begin{equation}\label{eq: nintro !}
    dq(t) = au(t)dt + dW(t), \qquad q(0) = 0.
\end{equation}
As usual, $q(t)$ denotes position, $u(t)$ is our control, $W(t)$ is Brownian motion, and we incur a cost
\[
\int_0^T\{(q(t))^2 +(u(t))^2\} \ dt.
\]
In the simplest case, suppose we know a priori that $a=1$ or $a=-1$, each with probability $1/2$. We write $\excost(\sigma)$ to denote the expected cost incurred by executing a strategy $\sigma$, and we set
\begin{equation}\label{eq: nintro !!}
    \excost^* = \inf\{\excost(\sigma) : \text{All strategies } \sigma\}.
\end{equation}

For this simple model problem we conjecture that the inf in \eqref{eq: nintro !!} is not achieved by any strategy $\sigma$, because heuristic reasoning suggests that there is a regime in which we would like to set $u = \pm \infty$ to gain instant information about $a$.

Clearly there is much to be done before we can claim to understand agnostic control theory.

\subsection*{Survey of Prior Literature}

Literature that considers adaptive control of a simple linear system similar to the one considered in this paper commonly consists of one or more of the following features: (\emph{i}) unknown governing dynamics, (\emph{ii}) unknown cost function and (\emph{iii}) adversarial noise.  Examples of such work include \cite{kumar2022online,mania2019certainty,wagenmaker2020active,furieri2020learning,Cohen:2019,malik2019derivative,duchi2011adaptive} as well as our own prior work \cite{fefferman2021optimal}, \cite{carruth2022controlling}.

Initial work in obtaining regret bounds in the infinite time horizon for the related LQR (linear-quadratic regulator) problem was undertaken in \cite{abbasi2011regret}, which proved that under certain assumptions, the expected additive regret of the adaptive controller is bounded by $\tilde{O} (\sqrt{T})$. Further progress was made on this problem in \cite{chen2021black}. Assuming controllability of the system, the authors gave the first efficient algorithm capable of attaining sublinear additive regret in a single trajectory in the setting of online nonstochastic control. See also the related \cite{minasyan2021online}, which obtained sublinear adaptive regret bounds, a stronger metric than standard regret and more suitable for time-varying systems. Additional adaptive control approaches include \cite{dean2018regret,dean2019safely} using the system level synthesis. This expands on ideas in \cite{simchowitz2018learning}, which showed that the ordinary least-squares estimator learns a linear system nearly optimally in one shot. Other work uses Thompson Sampling \cite{abeille2017thompson,kargin2022thompson} or deep learning \cite{chen2023regret}.  Perhaps most related to the work performed in this study is \cite{jedra2022minimal}, which designed an online learning algorithm with sublinear expected regret that moves away from episodic estimates of the state dynamics (meaning that no boundedness or initially stabilizing control needed to be assumed).

In \cite{fefferman2021optimal}, the third and fourth authors of the present paper, along with B. Guill\'{e}n Pegueroles and M. Weber, found regret minimizing strategies for a problem with simple unknown dynamics (a particle moving in one-dimension at a constant, unknown velocity subject to Brownian motion). In \cite{gurevich2022optimal}, along with D. Goswami and D. Gurevich, they generalized these results to an analogous, higher-dimensional system with the addition of sensor noise. In \cite{fefferman2021optimal}, they also posed the problem of finding regret minimizing strategies for the more complicated dynamics \eqref{eq: nintro 1}. In \cite{carruth2022controlling}, the authors of the present paper, along with M. Weber, took the first steps toward resolving this problem. Specifically, we exhibited a strategy for the dynamics \eqref{eq: nintro 1} with bounded multiplicative regret.

Historically, significant work has been undertaken in the closely related ``multi-armed bandit'' problem; see, for instance, the classic papers \cite{robbins1952some,vermorel2005multi}. Recent work considering this paradigm includes \cite{wei2021non}, which used reinforcement learning to obtain dynamic regret whose order of magnitude is optimal, and \cite{faury2021regret}, which studied the more general Generalized Linear Bandits (GLBs) and obtained similar regret bounds.

We finally want to point out the parallel field of adversarial control, where the noise profile is chosen by an adversary instead of randomly. This includes \cite{martin2022safe}, which attained minimum dynamic regret and guaranteed compliance with hard safety constraints in the face of uncertain disturbance realizations using the system level synthesis framework, and \cite{goel2021competitive}, which studied the problem of competitive control.

As this list of references is by no means exhaustive and does not do justice to the wealth of studies in the literature, we point the reader to the book \cite{hazancontrol} and the references therein for a more thorough overview of online control.

We emphasize that our approach in \cite{almostoptimal2023}, \cite{carruth2022controlling}, \cite{fefferman2021optimal}, and the present paper differs from the other work cited above in that:
\begin{itemize}
    \item We seek strategies that minimize the worst-case regret for a fixed time horizon $T$, whereas the literature is mainly concerned with $T \rightarrow \infty$.
    \item \sloppy Typically in the literature one assumes either that the dynamics are bounded or that one is given a stabilizing control. We make no such assumptions in \cite{almostoptimal2023} and so we must control a system that is arbitrarily unstable.
    \item However, we achieve the above ambitious goals only for a simple model system.
\end{itemize}

We thank Amir Ali Ahmadi, Brittany Hamfeldt, Elad Hazan, Robert Kohn, Sam Otto, Allan Sly, and Melanie Weber for helpful conversations. We are grateful to the Air Force Office of Scientific Research, especially Frederick Leve, for their generous support. The second named author thanks the Joachim Herz foundation for support.

\chapter{The Game}

We will deal with random variables
\begin{flalign}
    & a_\tru \in [-a_\mx, + a_\mx],&\label{eq: bps 1}\\
    &\text{``Coin flips'' } \xi_1, \xi_2, \dots \in \{0,1\}\text{ (we write } \vxi \text{ for } (\xi_1,\xi_2,\dots)), \text{and}\\
    &\text{Brownian motion } W(t), \text{ starting at }W(0) = 0.
\end{flalign}

The random variable $a_\tru$ is deterministic and known in Chapter \ref{chap: 2}, unknown but subject to a known prior in Chapters \ref{chap: 3} and \ref{chap: 4}, and unknown without a known prior in Chapter \ref{chap: 5}.

The $\xi_\nu$, the real number $a_\tru$, and the Brownian motion are mutually independent. The variable $a_\tru$ has a prior probability distribution given by the measure $d \prior(a)$ in Chapters \ref{chap: 3} and \ref{chap: 4}; and each $\xi_\nu$ is equal to 0 with probability $1/2$, and to 1 with probability $1/2$.

When $a_\tru$ has a prior probability distribution, we write $\prob[\cE]$ to denote the probability of an event $\cE$ with respect to the above probability space, and we write $\E[X]$ for the expected value of $X$ with respect to that probability space.

Given $a \in [-a_\mx, + a_\mx ] $, we write $\prob_a[\cE]$ for the probability of event $\cE$ conditioned on the event $a_\tru = a$, and we write $\E_a[X]$ for the expectation of $X$ conditioned on $a_\tru = a$. $\prob_a[\cdots]$ and $\E_a[\cdots]$ make sense without an assumed prior for $a$.

Similarly, given $a \in [-a_\mx, +a_\mx]$ and $\veta = (\eta_1, \eta_2, \dots) \in \{0,1\}^\N$, we write $\prob_{a,\veta}[\cE]$ and $\E_{a,\veta}[X]$ to denote the probability and expectation, respectively, conditioned on the event $a_\tru = a$ and $\xi_\nu = \eta_\nu$ for all $\nu$.

Also, we write $\E_{\veta}[X]$ and $\prob_{\veta}[\cE]$ to denote expectation and probability, respectively, conditioned on $\vxi = \veta$.

The above conditional expectations make sense even if, e.g., $a$ isn't in the support of $d\prior$.

Fix a \emph{terminal time} $T>0$ and a partition
\[
0 = t_0 < t_1 < \dots < t_N = T \;\text{of}\; [0,T].
\]
Fix a \emph{starting position} $q_0 \in \R$. A \emph{tame rule} at time $t_\nu$ is a Borel measurable function $\sigma_{\tnu}: \R^\nu \times \{0,1\}^\N\rightarrow \R$, satisfying the estimate
\begin{equation}\label{eq: tg 1}
|\sigma_{\tnu}(q_1, \dots, \qnu, \vxi)| \le C_\tame [|\qnu| + 1]
\end{equation}
for all $(q_1,\dots, q_\nu, \vxi) \in \R^\nu \times \{0,1\}^\N.$
If $\nu =0$, then $\signu$ is simply a function on $\{0,1\}^\N$. (We use the product topology on $\R^\nu \times \{0,1\}^\N$ to define Borel measurability. We require Borel measurability to avoid technicalities. In particular, the composition of Borel measurable functions is Borel measurable, whereas the composition of Lebesgue measurable functions needn't be Lebesgue measurable.)

A \emph{tame strategy} is an array $\sigma = (\signu)_{\nu=0,1,\dots,N-1}$, where, for each $\nu$, $\signu$ is a tame rule at time $t_\nu$ with the same $C_\tame$ serving in \eqref{eq: tg 1} for all the $\tnu$. We call $C_\tame$ a \emph{tame constant} for the strategy $\sigma$. Until further notice, we say simply ``strategy'' in place of ``tame strategy.'' If the $\signu$ don't depend on the coin flips $\vxi$, we call $\sigma$ a \emph{deterministic strategy}. We will often write $\sigma_\nu$ in place of $\sigma_{\tnu}$.

Given a strategy $\sigma = (\signu)_{\nu=0,1,\dots,N-1}$, we define random variables $\qsig(t)$ for $t \in [0,T]$ and $\usig(t)$ for $t \in [0,T)$, as follows.

By induction on $\nu$, we define $\qsig(t)$ for $t \in [0, \tnu]$ and $\usig(t)$ for $ t\in [0,\tnu)$.

In the base case $\nu=0$, we set $\qsig(t) = q_0$ for $t \in [0,\tnu]=\{0\}$. Since $[0,\tnu) = [0,0)$ is empty, there is no need to define $\usig$ in the base case.

For the induction step, we fix $\nu \ge 0$, and assume that we have defined $\qsig(t)$ for $t \in [0,\tnu]$ and $\usig(t)$ for $t \in [0,\tnu)$. We extend the definition of $\qsig(t)$ to $t \in [0, t_{\nu+1}]$, and that of $\usig(t)$ to $t \in [0,t_{\nu+1})$, as follows.
\begin{itemize}
\item For $t \in [\tnu, t_{\nu+1})$, we set
\[
\usig(t) = \signu(\qsig(t_1),\dots, \qsig(\tnu),\vxi).
\]
\item For $t \in [\tnu, t_{\nu+1}]$, we define $\qsig(t)$ as the solution of the stochastic ODE
\[
d\qsig(t) = (a_\tru \qsig(t) + \usig(t) )dt + dW(t),
\]
with the initial value $\qsig(t_\nu)$ already given by our induction hypothesis.
\end{itemize}
This completes our induction on $\nu$, so we have defined the random variables $\qsig(t), \usig(t)$.

In addition to $\qsig(t), \usig(t)$, we define random variables $\zosig(\tnu)$, $\ztsig(\tnu)$ $(0 \le \nu < N)$ by the following induction.
\begin{flalign*}
    &\zosig(t_0) = \ztsig(t_0) = 0. \quad \text{(Recall, $t_0=0$.)}\\
    &\zosig(t_{\nu+1}) = \zosig(\tnu) + \qsig(\tnu)\cdot[\Delta \qsig_\nu - \usig(\tnu)\Delta \tnu],\;\text{where}\\
    &\qquad \Delta \qsig_\nu  = \qsig(t_{\nu+1}) - \qsig(\tnu)\;\text{and}\; \Delta \tnu = t_{\nu+1} - t_\nu.\\
    & \ztsig(t_{\nu+1}) = \ztsig(\tnu) + (\qsig(\tnu))^2\Delta \tnu.
\end{flalign*}
Thus,
\[
\zosig(\tnu) = \sum_{0 \le \mu < \nu} \qsig(t_\mu)(\Delta \qsig_\mu - \usig(t_\mu)\Delta t_\mu)
\]
and
\[
\ztsig(\tnu) = \sum_{0 \le \mu < \nu} (\qsig(t_\mu))^2 \Delta t_\mu.
\]
We will try to pick our strategy $\sigma$ to make the expected value of
\[
\int_0^T [ (\qsig(t))^2 + (\usig(t))^2] \ dt 
\]
as small as possible.

\chapter{Tame Strategies Associated to Partitions}\label{chap: 2}

\section{Setup}

In this chapter, we take $a_\tru$ to be fixed, $a_\tru = a$, and we suppose that our strategy makes no use of coin flips.

We fix a partition
\begin{equation}\label{eq: 1.1}
    0 = t_0 < t_1 < \dots < t_N = T
\end{equation}
of a time interval $[0,T]$.

We fix a (deterministic) strategy $\sigma$ for the game with starting position $q_0$. We assume that our strategy is tame, i.e., 
\begin{equation}\label{eq: 1.2}
|\usig(\tnu)| \le C_{\tame}[|\qsig(t_\nu)|+1]
\end{equation}
for a constant $C_\tame$.

We write $c$, $C$, $C'$, etc.\ to denote constants determined by
\begin{itemize}
    \item $C_\tame$ in \eqref{eq: 1.2}
    \item An upper bound for the time horizon $T$
    \item An upper bound for $a_\tru$
    \item An upper bound for $|q_0|$.
\end{itemize}
These symbols may denote different constants in different occurrences.

We define
\[
\Delta t_\nu := t_{\nu+1} - t_\nu\;\text{for all}\; \nu \;(0 \le \nu < N)
\]
and we assume that
\begin{equation}\label{eq: 1.3}
(\Delta t_\mx) := \max_\nu \Delta t_\nu \; \text{is less than a small enough constant } c.
\end{equation}

We write $X=O(Y)$ to denote the estimate $|X| \le C Y$.

We write $\qnu$ to denote $\qsig(t_\nu)$, $\unu$ to denote $\usig(\tnu)$, and $\dqnu$ to denote $q_{\nu+1} - \qnu$.

Note that
\begin{equation}\label{eq: 1.3.5}
    \unu = \sigma_\nu(q_1,\dots,\qnu),
\end{equation}
where $\sigma_\nu$ is given by the strategy  $\sigma$ for decisions at time $\tnu$.

Thanks to \eqref{eq: 1.2}, we have
\begin{equation}\label{eq: 1.4}
    |\unu| \le C [|\qnu|+1].
\end{equation}
Recall that the $\qnu$ evolve as follows.

$q_0$ is given, and $u_0$ is specified by the strategy $\sigma$. $q_{\nu+1}$ and $u_{\nu+1}$ are then determined from $\qnu$ and $\unu$ as follows.
\begin{itemize}
    \item We solve the stochastic ODE
\begin{equation}\label{eq: 1.5}
    dq(t) = (aq(t) + \unu)dt + dW(t) \qquad \text{for } t \in [\tnu, t_{\nu+1}],
\end{equation}
with initial condition $q(\tnu) = \qnu$. Here, $a$ is the (given) value of $a_\tru$, and $W(t)$ denotes Brownian motion at time $t$.
\item We set $q_{\nu+1} = q(t_{\nu+1})$.
\item We set $u_{\nu+1} = \sigma_{\nu+1}(q_1,\dots,q_{\nu+1})$ (compare with \eqref{eq: 1.3.5}).
\end{itemize}
Thus, the $\qnu$, $\unu$ are random variables defined by induction on $\nu$. 

Solving the ODE \eqref{eq: 1.5} using an integrating factor, we find that
\begin{equation}\label{eq: 1.6}
    q(t) - q(\tnu) = (a\qnu + \unu) \bigg[ \frac{e^{a(t-\tnu)}-1}{a}\bigg] + \int_{\tnu}^t e^{a(t-s)}\ dW(s)
\end{equation}
for $ t \in [t_\nu, t_{\nu+1}]$.

In particular,
\begin{equation}\label{eq: 1.7}
\dqnu = q_{\nu+1} - \qnu = (a\qnu + \unu) \Delta t_\nu^* + \Delta W_\nu,
\end{equation}
where 
\begin{equation}\label{eq: 1.8}
    \Delta t_\nu^* = \bigg[ \frac{e^{a\Delta t_\nu}-1}{a}\bigg]
\end{equation}
and
\begin{equation}\label{eq: 1.9}
    \Delta W_\nu = \int_{\tnu}^{t_{\nu+1}} e^{a(t_{\nu+1}-s)}\ dW(s).
\end{equation}
(If $a=0$, we interpret the above fractions in square brackets as $(t-t_\nu)$ in \eqref{eq: 1.6}, and $\Delta t_\nu$ in \eqref{eq: 1.8}.) We warn the reader that $\Delta W_\nu \ne W(t_{\nu+1})-W(t_\nu)$.

Note that $\Delta W_\nu$ is a normal random variable with mean 0 and variance
\begin{equation}\label{eq: 1.10}
\Delta \tilde{t}_\nu = \bigg[ \frac{e^{2a\Delta \tnu}-1}{2a}\bigg]
\end{equation}
(again, equal to $\Delta t_\nu$ if $a=0$).

Note that
\begin{equation}\label{eq: 1.11}
    \Delta \tilde{t}_\nu, \Delta t_\nu^* = \Delta t_\nu + O((\Delta t_\nu)^2).
\end{equation}
Note also that $\Delta \tilde{t}_\nu$, $\Delta t_\nu^*$, $\Delta W_\nu$ depend on $a$.

We introduce the sigma algebras $\cF_\nu$, defined as the algebra of events determined by the $\Delta W_\mu$ for $0 \le \mu < \nu$. Note that $q_\nu$, $u_\nu$ and $\Delta W_\mu$ ($\mu < \nu$) are $\cF_\nu$-measurable (i.e., they are deterministic once we condition on $\cF_\nu$), while the $\Delta W_\mu$ for $\mu \ge \nu$ are independent of $\cF_\nu$.

\begin{rmk}
    Thanks to equation \eqref{eq: 1.7}, the sigma algebra $\cF_\nu$ may be equivalently defined to consist of all events determined by by $q_1, \dots, q_\nu$. This equivalence holds because $a_\etru$ has been fixed $(a_\etru = a)$. 
\end{rmk}

\section{Estimates for probabilities of outliers}\label{sec: rare events}

We suppose
\begin{equation}\label{eq: 1.12}
    Q>C \;\text{for a large enough} \; C,
\end{equation}
and we estimate the probability that $\max_\nu |q_\nu| > Q$. To do so, we set
\begin{flalign*}
    &\unu^1 = \unu/\qnu \;\text{and}\; \unu^0=0\;\text{if}\;|\qnu| > 1;&\\
    &\unu^1 = 0\;\text{and}\; \unu^0 = \unu\;\text{otherwise}.
\end{flalign*}
Thus,
\begin{equation}\label{eq: 1.12.5}
    \unu = \unu^1 q_\nu + \unu^0
\end{equation}
and
\begin{equation}\label{eq: 1.13}
|\unu^1|, |\unu^0| \le C.
\end{equation}
Also, $\unu^1$ and $\unu^0$ are $\cF_\nu$-measurable.

Thanks to \eqref{eq: 1.12.5}, we can rewrite \eqref{eq: 1.7} in the form
\[
\Delta \qnu = (a + \unu^1) \qnu (\Delta \tnu^*) +\unu^0(\Delta \tnu^*) + \Delta W_\nu,
\]
or equivalently,
\[
[e^{-at_{\nu+1}}q_{\nu+1}] = (1 + e^{-a\Delta t_\nu} \Delta \tnu^* \unu^1)[e^{-a\tnu}\qnu] +e^{-at_{\nu+1}}\Delta t_\nu^* \unu^0 + e^{-at_{\nu+1}}\Delta W_\nu.
\]
(See \eqref{eq: 1.8}.)

Setting
\begin{flalign}
&M_\nu = \prod_{0 \le \mu < \nu} (1+e^{-a\Delta t_\mu} \Delta t_\mu^* u_\mu^1)^{-1},\label{eq: 1.14}&\\
& m_\nu = e^{-at_{\nu+1}} \Delta t_\nu^* \unu^0, \;\text{and}\label{eq: 1.15}\\
& \qnu^* = M_\nu e^{-at_\nu} q_\nu,\label{eq: 1.16}
\end{flalign}
we see that
\begin{flalign}\label{eq: 1.17}
    & q_{\nu+1}^* = q_\nu^* + M_{\nu+1} m_\nu + M_{\nu+1} e^{-at_{\nu+1}}\Delta W_\nu,
\end{flalign}
and that
\begin{flalign}\label{eq: 1.18}
    &q_0^* = q_0.
\end{flalign}

Since $|e^{-a\Delta t_\mu} u_\mu^i| \le C \; (i=0,1)$ and 
\[
\sum_\mu (\Delta t_\mu^*) \le C,
\]
we have
\begin{equation}\label{eq: 1.19}
    c < M_\nu < C\;\text{and}\; |m_\nu| < C \Delta t_\nu.
\end{equation}
Moreover, $M_{\nu+1}$ is $\cF_\nu$-measurable.

Let $\lambda \in \R$, to be fixed below. From \eqref{eq: 1.17}, \eqref{eq: 1.19}, we have
\begin{equation}\label{eq: 1.20}
\exp(\lambda q_{\nu+1}^*) \le \{ \exp(\lambda q_\nu^*) \cdot \exp( C |\lambda| \Delta t_\nu)\} \exp(\{M_{\nu+1}e^{-at_{\nu+1}}\}\lambda \Delta W_\nu).
\end{equation}
Here, the quantities in curly brackets are $\cF_\nu$-measurable, while $\Delta W_\nu$ is independent of $\cF_\nu$.

Recalling that $\Delta W_\nu$ is normal, with mean 0 and variance $O(\Delta t_\nu)$, we deduce from \eqref{eq: 1.19}, \eqref{eq: 1.20} that
\[
\E[\exp(\lambda q_{\nu+1}^*) | \cF_\nu] \le \exp(\lambda q_\nu^*) \exp(C |\lambda| \Delta \tnu)\exp(C \lambda^2 \Delta \tnu).
\]

Thus, the random variables
\begin{equation}\label{eq: 1.21}
    Z_\nu = \exp(- C [|\lambda| + \lambda^2] \tnu) \exp(\lambda q_\nu^*)\quad (0 \le \nu \le N)
\end{equation}
form a supermartingale, with
\[
Z_0 = \exp(\lambda q_0) \le \exp(C |\lambda|).
\]

Consequently, for any $Q>0$ we have
\[
\prob[ \max_\nu Z_\nu > \exp(|\lambda| Q)] \le \exp(|\lambda| (C-Q)).
\]
By definition \eqref{eq: 1.21}, this means that
\begin{equation}\label{eq: 1.22}
    \prob[\lambda q_\nu^* - C [|\lambda| + \lambda^2] \tnu > |\lambda| Q\;\text{for some}\; \nu] \le \exp(|\lambda| (C-Q)).
\end{equation}
Taking $Q$ greater that $2C$ in \eqref{eq: 1.22} (see \eqref{eq: 1.12}), and picking $\lambda = \pm Q$, we learn from \eqref{eq: 1.22} that 
\[
\prob[ |q_\nu^*| > C Q\;\text{for some}\; \nu] \le C\exp(-c Q^2).
\]
Recalling \eqref{eq: 1.16} and \eqref{eq: 1.19}, we conclude that
\begin{equation}\label{eq: 1.23}
    \prob[ \max_\nu | q_\nu| > Q ] \le C \exp(-c Q^2)
\end{equation}
if $Q$ satisfies \eqref{eq: 1.12}.

Thus, we have succeeded in estimating the probability that $\max_\nu |q_\nu|$ is large.

Immediately from \eqref{eq: 1.4} and \eqref{eq: 1.23} we have also
\begin{equation}\label{eq: 1.24}
    \prob[ \max_\nu |u_\nu| > Q] \le C \exp(-c Q^2)
\end{equation}
if $Q$ satisfies \eqref{eq: 1.12}.

We now turn our attention to
\begin{equation}\label{eq: 1.25}
    \zo(\tnu) = \sum_{0 \le \mu < \nu} q_\mu [ \Delta q_\mu - u_\mu \Delta t_\mu]
\end{equation}
and
\begin{equation}\label{eq: 1.26}
    \zt(\tnu) = \sum_{0\le \mu < \nu} q_\mu^2 \Delta t_\mu.
\end{equation}
Note that $\zo(\tnu)$ can be rewritten as
\begin{equation}\label{eq: 1.27}
\begin{split}
    \zo(\tnu) = &\sum_{0 \le \mu < \nu}\Big\{\frac{1}{2} (q_{\mu+1}^2 - q_\mu^2) - \frac{1}{2} (\Delta q_\mu)^2 \Big\} - \sum_{0 \le \mu < \nu} u_\mu q_\mu \Delta t_\mu\\
    = & \frac{1}{2} \qnu^2 - \frac{1}{2} q_0^2 - \frac{1}{2} \sum_{\mu = 0}^{\nu-1}(\Delta q_\mu)^2 - \sum_{0 \le \mu < \nu} u_\mu q_\mu \Delta t_\mu.
    \end{split}
\end{equation}

Now suppose that
\begin{equation}\label{eq: 1.28}
    \max_\nu |q_\nu|, \max_\nu |u_\nu| \le CQ\;\text{with}\; Q\; \text{as in \eqref{eq: 1.12}}.
\end{equation}
Then from \eqref{eq: 1.26}, \eqref{eq: 1.27}, we have
\begin{equation}\label{eq: 1.29}
    |\zt(\tnu)|\le C Q^2\quad \text{(all }\nu),
\end{equation}
and
\begin{equation}\label{eq: 1.30}
    |\zo(\tnu)| \le C Q^2 + \Big| \sum_{0 \le \mu < \nu} \{ (\Delta q_\mu)^2 - \Delta \tilde{t}_\mu \} \Big| \quad \text{(all }\nu),
\end{equation}
since also
\[
\sum_{\mu=0}^N (\Delta \tilde{t}_\mu) \le C \sum_{\mu=0}^N \Delta t_\mu \le C'.
\]

We will show that
\begin{multline}\label{eq: 1.31}
    \prob \Big[ \max_\nu \Big| \sum_{0 \le \mu < \nu} \big\{ (\Delta q_\mu)^2 - \Delta \tilde{t}_\mu\big\} \Big| > C Q^2 (\Delta t_\mx)^{1/2} \Big]\\ \le C \exp(-c Q^2).
\end{multline}
In view of \eqref{eq: 1.23}, \eqref{eq: 1.24}, and \eqref{eq: 1.31}, estimates \eqref{eq: 1.29}, \eqref{eq: 1.30} imply the inequalities
\begin{equation}\label{eq: 1.32}
    \prob[ \max_\nu |\zo(\tnu)| > C Q^2] \le C \exp(-c Q^2)
\end{equation}
and
\begin{equation}\label{eq: 1.33}
    \prob[ \max_\nu | \zt(\tnu)| > C Q^2] \le C \exp(-c Q^2)
\end{equation}
for $Q$ as in \eqref{eq: 1.12}. Thus, to prove \eqref{eq: 1.32} and \eqref{eq: 1.33}, it remains only to prove \eqref{eq: 1.31}. Estimate \eqref{eq: 1.31} will have further applications in a later section.

We now prove \eqref{eq: 1.31}.

From \eqref{eq: 1.7}, \eqref{eq: 1.11} we have
\begin{equation}\label{eq: 1.34}
    \begin{split}
        \sum_{\mu < \nu} \{(\Delta q_\mu)^2 - \Delta \tilde{t}_\mu\} = & \sum_{\mu < \nu}(a q_\mu + u_\mu)^2 (\Delta t_\mu^*)^2 \\ &+ 2 \sum_{\mu< \nu} ( a q_\mu + u_\mu)(\Delta t_\mu^*) \Delta W_\mu
        \\
        &+ \sum_{\mu < \nu} \{ (\Delta W_\mu)^2 - \Delta \tilde{t}_\mu\}\\
        \equiv& \text{TERM 1}(\nu) + \text{TERM 2}(\nu) + \text{TERM 3}(\nu),
    \end{split}
\end{equation}
with
\begin{equation}\label{eq: 1.35}
    0 \le \text{TERM 1} (\nu) \le C \max_\mu \{ |q_\mu| +  |u_\mu|\}^2 \cdot (\Delta t_\mx), \;\text{all }\nu.
\end{equation}

To estimate $\text{TERM 2}(\nu)$, we fix $Q>C$ as in \eqref{eq: 1.12} and study the random variables
\begin{multline}\label{eq: 1.36}
    Y_\nu = \exp(-\hat{C}\lambda^2 Q^2 (\Delta t_\mx)^2 \tnu) \\ \cdot\exp\Big(\lambda \sum_{\mu < \nu} (a q_\mu + u_\mu) \cdot \mathbbm{1}_{|a q_\mu + u_\mu|< CQ}(\Delta t_\mu^*) \Delta W_\mu\Big)
\end{multline}
for a large enough constant $\hat{C}$, and for $\lambda \in \R$ to be picked below. Since $(aq_\mu + u_\mu)\cdot \mathbbm{1}_{|aq_\mu + u_\mu|< CQ}$ is $\cF_\nu$-measurable for $\mu \le \nu$, we have
\begin{equation}\label{eq: 1.37}
\begin{split}
    \E[Y_{\nu+1}|\cF_\nu] = & Y_\nu \cdot \exp(- \hat{C} \lambda^2 Q^2 (\Delta t_\mx)^2 \Delta t_\nu) \cdot \\ &\E[\exp([\lambda (a \qnu + \unu)\cdot \mathbbm{1}_{|a \qnu + \unu| < CQ} \cdot (\Delta t_\nu^*)] \Delta W_\nu | \cF_\nu]\\
    \le & Y_\nu \cdot \exp(- \hat{C} \lambda^2 Q^2 (\Delta t_\mx)^2 \Delta t_\nu)\\
    &\cdot \exp(C [\lambda(a \qnu + \unu) \cdot \mathbbm{1}_{|a \qnu + \unu|<CQ}\cdot (\Delta t_\nu^*)]^2 \Delta \tilde{t}_\nu).
\end{split}
\end{equation}
(Here, we use the fact that $\Delta W_\nu$ is independent of $\cF_\nu$, and normal with mean 0 and variance $\Delta \tilde{t}_\nu$.)

If we take $\hat{C}$ large enough, then the product of the exponentials on the right in \eqref{eq: 1.37} is less than 1. Thus,
\[
\E[Y_{\nu+1} | \cF_\nu] \le Y_\nu, \;\text{with}\; Y_0 \equiv 1.
\]
So the $Y_\nu$ form a supermartingale. Consequently,
\begin{equation}\label{eq: 1.38}
    \prob(\exists \nu \;\text{s.t.}\; Y_\nu > \exp(cQ^2)) \le \exp(-c Q^2).
\end{equation}
We now pick $\lambda = [\text{sgn}]\tilde{c}[(\Delta t_\mx)]^{-1}$ with $\text{sgn} = \pm 1$ and $\tilde{c}>0$ a small enough constant.  Combining \eqref{eq: 1.36} and \eqref{eq: 1.38} then yields the estimate
\begin{multline*}
\prob\Big(\exists \nu \;\text{s.t}\; [(\Delta t_\mx)]^{-1} \Big| \sum_{\mu < \nu} ( a q_\mu + u_\mu) \cdot \mathbbm{1}_{|aq_\mu + u_\mu|<CQ} (\Delta t_\mu^*) \Delta W_\mu \Big| > CQ^2\Big) \\ \le \exp(-cQ^2).
\end{multline*}
Consequently,
\begin{multline*}
    \prob\Big(\exists \nu \;\text{s.t.}\; \Big| \sum_{\mu < \nu}(a q_\mu + u_\mu)(\Delta t_\mu^*) \Delta W_\mu \Big| > C Q^2 (\Delta t_\mx)\Big)\\ \le \exp(-cQ^2) + \prob( \exists \mu \;\text{s.t.}\; |a q_\mu + u_\mu| > CQ).
\end{multline*}
Recalling \eqref{eq: 1.34}, we have
\begin{multline}\label{eq: 1.39f}
    \prob(\max_\nu | \text{TERM 2}(\nu)| > C Q^2 (\Delta t_\mx))\\ \le \exp(-c Q^2) + \prob( \max_\mu \{ | q_\mu | + |u_\mu|\} > c Q).
\end{multline}
We turn our attention to $\text{TERM 3}(\nu)$. Let 
\begin{equation}\label{eq: 1.40f}
    \cZ_\nu = \exp(- \hat{C} \lambda^2 (\Delta t_\mx) t_\nu) \cdot \exp\bigg(\lambda \sum_{\mu < \nu} \{ (\Delta W_\mu)^2 - \Delta \tilde{t}_\mu\}\bigg)
\end{equation}
for a large enough constant $\hat{C}$, and for $\lambda \in \R$ to be picked below.

Then $\cZ_0 \equiv 1$, and
\begin{equation}\label{eq: 1.41f}
    \E[\cZ_{\nu+1} | \cF_\nu] = \cZ_\nu \exp(- \hat{C} \lambda^2 (\Delta t_\mx) \Delta t_\nu) \cdot \E[\exp(\lambda\{ (\Delta W_\nu)^2 - \Delta \tilde{t}_\nu \} ) ].
\end{equation}

For $|\lambda | < c (\Delta t_\mx)^{-1}$, we have
\begin{equation}\label{eq: 1.42f}
\begin{split}
    \E[\exp(\lambda \{ (\Delta W_\nu)^2 - &\Delta \tilde{t}_\nu\})] \\
    &= \exp(-\lambda \Delta \tilde{t}_\nu) \cdot \frac{1}{\sqrt{2\pi \Delta \tilde{t}_\nu}} \int_{-\infty}^\infty e^{\lambda x^2} e^{- x^2/2 \Delta \tilde{t}_\nu}\ dx\\
    & = \exp(- \lambda \Delta \tilde{t}_\nu) \cdot (1-2\lambda(\Delta \tilde{t}_\nu))^{-1/2}\\
    & \le \exp( C \lambda^2 (\Delta t_\nu)^2) \le \exp(C\lambda^2 ( \Delta t_\mx) \Delta t_\nu).
\end{split}
\end{equation}
Substituting \eqref{eq: 1.42f} into \eqref{eq: 1.41f}, and taking $\hat{C}$ large enough, we find that
\[
\E[\cZ_{\nu+1} | \cF_\nu] \le \cZ_\nu,
\]
i.e., the $\cZ_\nu$ form a supermartingale. This holds for $|\lambda| < c (\Delta t_\mx)^{-1}$. Since $\cZ_0 \equiv 1$, it follows that
\[
\prob(\exists \nu \;\text{s.t.}\; \cZ_\nu > \exp(c Q^2)) \le \exp(-c Q^2).
\]
Taking $\lambda = (\Delta t_\mx)^{-1/2}\cdot [\text{sgn}]$ with $\text{sgn} = \pm 1$, we conclude that
\begin{multline*}
        \prob\Big( \exists \nu \;\text{s.t.}\; \exp\Big(-\hat{C} t_\nu + (\Delta t_\mx)^{-1/2} \Big| \sum_{\mu < \nu} \{ (\Delta W_\mu)^2 - \Delta \tilde{t}_\mu\} \Big| \Big) > \exp(c Q^2)\Big) \\ \le \exp(-c Q^2),
\end{multline*}
so that
\[
\prob\Big( \max_\nu \Big| \sum_{\mu < \nu}\{ (\Delta W_\mu)^2 - \Delta \tilde{t}_\mu \} \Big| > C Q^2 (\Delta t_\mx)^{1/2}\Big) \le \exp(-c Q^2).
\]
Recalling \eqref{eq: 1.34}, we conclude that
\begin{equation}\label{eq: 1.43f}
    \prob(\max_\nu | \text{TERM 3}(\nu) | > C Q^2 (\Delta t_\mx)^{1/2})) \le \exp(-c Q^2).
\end{equation}
Estimates \eqref{eq: 1.35}, \eqref{eq: 1.39f}, \eqref{eq: 1.43f} control TERM 1($\nu$), TERM 2$(\nu)$, TERM 3($\nu$). Substituting these estimates into \eqref{eq: 1.34}, we learn that
\begin{multline*}
    \prob\Big(\max_\nu \Big| \sum_{\mu < \nu} \{ (\Delta q_\mu)^2 - \Delta \tilde{t}_\mu\}\Big| > C' Q^2 (\Delta t_\mx)^{1/2}\Big) \\ 
    \le C \exp(-cQ^2) + C \prob(\max_\mu \{ |q_\mu| + |u_\mu|\} > c Q).
\end{multline*}
Finally, recalling \eqref{eq: 1.23} and \eqref{eq: 1.24}, we see that
\begin{equation*}
    \prob\Big( \max_\nu \Big| \sum_{\mu < \nu} \{ (\Delta q_\mu)^2 - \Delta \tilde{t}_\mu\} \Big| > C'' Q^2 (\Delta t_\mx)^{1/2}\Big) \le C \exp(-c Q^2),
\end{equation*}
completing the proof of \eqref{eq: 1.31}.

Next, we estimate
\begin{flalign*}
    &\Delta \qnusig = \qsig(t_{\nu+1}) - \qsig(\tnu),&\\
    &\Delta \zonusig = \zosig(t_{\nu+1}) - \zosig(\tnu),\\
    &\Delta \ztnusig = \ztsig(t_{\nu+1}) - \ztsig(\tnu).
\end{flalign*}
Recall that
\begin{flalign*}
    &\Delta \qnusig = (a \qnusig + \unusig) \Delta \tnu^* + \Delta W_\nu,&\\
    &\Delta \zonusig = \qnusig(\Delta \qnusig - \unusig \Delta \tnu),\\
    &\Delta \ztnusig = (\qnusig)^2 \Delta \tnu.
\end{flalign*}
Let $Q \ge C$ for large enough $C$, and suppose $|\qnusig| \le Q.$ Then also $|\unusig| \le C [|\qnusig| + 1]\le C'Q$, so
\begin{flalign*}
    &|\Delta \qnusig | \le C Q(\Delta t_\nu) + |\Delta W_\nu|,&\\
    &|\Delta \zonusig| \le Q (|\Delta \qnusig| + CQ\Delta \tnu) \le C' Q^2(\Delta \tnu) + Q |\Delta W_\nu|,\;\text{and}\\
    &|\Delta \ztnusig | \le Q^2 (\Delta t_\nu),
\end{flalign*}
hence for $p\ge 1$, we have
\[
(|\Delta \qnusig | + |\Delta \zonusig| + |\Delta \ztnusig|)^p \le C_p Q^{2p}(\Delta t_\nu)^p + C_p Q^p | \Delta W_\nu|^p.
\]
Recall that $\Delta W_\nu$ is independent of $\cF_\nu$ and normal, with mean 0 and variance at most $C \Delta \tnu$. it follows that, for any $p \ge 1$, we have
\begin{equation}\label{eq: 1.star.1}
\E[(|\Delta \qnusig | + |\Delta \zonusig| + |\Delta \ztnusig|)^p | \cF_\nu] \le C_p Q^{2p}(\Delta t_\nu)^p + C_p Q^p (\Delta t_\nu)^{p/2}
\end{equation}
whenever $|\qnusig| \le Q$. (Recall, $\qnusig$ is deterministic once we condition on $\cF_\nu$.) In particular, \eqref{eq: 1.star.1} implies that
\begin{equation}\label{eq: 1.star.2}
    \prob[ | \Delta \qnusig | + | \Delta \zonusig| + |\Delta \ztnusig| > (\Delta t_\nu)^{2/5} | \cF_\nu] \le C (\Delta \tnu)^{1000}
\end{equation}
if $|\qnusig| \le Q$ and $C \le Q \le (\Delta \tnu)^{-1/1000}$.

Together with \eqref{eq: 1.star.1} for $p=2,4$ and Cauchy-Schwartz, \eqref{eq: 1.star.2} implies the estimate
\begin{multline*}
\E[(|\Delta \qnusig| + | \Delta \zonusig| + |\Delta \ztnusig|)^p \cdot \mathbbm{1}_{|\Delta \qnusig| + |\Delta \zonusig| + |\Delta \ztnusig| > (\Delta \tnu)^{2/5}} | \cF_\nu]\\ \le C (\Delta \tnu)^{100}\;\text{for}\; p =1,2,
\end{multline*}
provided $|\qnusig| \le Q$ and $C' \le Q \le (\Delta \tnu)^{-1/1000}$ for large enough $C'$.

Next, we estimate $|\qsig(t) - \qnusig| = |\qsig(t) - \qsig(\tnu)|$ for $t \in [\tnu, t_{\nu+1}]$.

Recall that
\[
\qsig(t) - \qnusig = (a\qnusig + \unusig) \cdot \bigg[ \frac{e^{a(t-t_\nu)}-1}{a}\bigg] + \int_{t_\nu}^t e^{a(t-s)}\ dW(s)
\]
for $t \in [t_\nu, t_{\nu+1}]$.

If $|\qnusig| \le Q$ with $Q \ge C$ (for large enough $C$), then also $|\unusig| \le CQ$, hence
\[
|\qsig(t) - \qnusig | \le C Q(\Delta \tnu) + \bigg| \int_{\tnu}^t e^{a(t-s)}\ dW(s)\bigg|
\]
for $t \in [\tnu, t_{\nu+1}]$.

Applying the reflection principle \cite{feller} to the Gaussian process
\[
W^\#(t) = \int_{t_\nu}^t e^{-as} \ dW(s)\qquad (t \ge t_\nu),
\]
we see that
\[
\prob\Big[ \max_{t \in [t_\nu, t_{\nu+1}]}\Big| \int_{t_\nu}^t e^{a(t-s)}\ dW(s)\Big| > CQ(\Delta \tnu)^{1/2} \Big]\le C \exp(-cQ^2).
\]
Since $\int_{t_\nu}^t e^{a(t-s)}\ dW(s)$ $(t\ge \tnu)$ is independent of $\cF_\nu$, it now follows that
\[
\prob\Big[\max_{t \in [\tnu, t_{\nu+1}]}|\qsig(t) - \qnusig| > C'Q(\Delta \tnu)^{1/2} \Big| \cF_\nu\Big] \le C \exp(-c Q^2)
\]
provided $|\qnusig| \le Q $ and $C \le Q$ for large enough $C$. Taking 
\[
Q = (\Delta \tnu)^{2/5}/[C'(\Delta \tnu)^{1/2}],
\]
we find that
\begin{equation}
    \prob\Big[ \max_{t \in [t_\nu, t_{\nu+1}]} | \qsig(t) - \qnusig| > (\Delta \tnu)^{2/5} \Big| \cF_\nu\Big] \le C (\Delta \tnu)^{1000}
\end{equation}
provided $|\qnusig| \le c \cdot (\Delta \tnu)^{-1/10}$.

Let us summarize the results of the above discussion.

\begin{lem}[Lemma on Rare Events]\label{lem: rare events}
    We condition on $a_\etru = a$, $\vxi = \veta$. Fix a strategy $\sigma$. For constants $c,C$ depending only on upper bounds for $|q_0|$, $a_\emx$, $C_\etame$, and $T$, the following holds.

    Suppose $\Delta t_\emx \equiv \max_\nu ( t_{\nu+1} - t_\nu) < c$.

    Then, for $Q>C$, the following hold with probability $> 1 - \exp(-cQ^2)$:
    \begin{itemize}
        \item $|\qsig(\tnu)|, |\usig(\tnu)| \le Q$ for all $\nu$
        \item $|\zosig(\tnu)|, |\ztsig(\tnu)| \le Q^2$ for all $\nu$
        \item $\big| \sum_{0 \le \mu < \nu} (\qsig(t_{\mu+1}) - \qsig(t_\mu))^2 - \tnu \big| \le Q^2 (\Delta t_\emx)^{1/2}$ for all $\nu$.
    \end{itemize}
    Moreover, suppose we fix $\nu$ and condition on $\cF_\nu$, the sigma algebra of events determined by the $\qsig(t_\mu)$ $(0 \le \mu \le \nu)$. Suppose that $|\qnu|  < (\Delta \tnu)^{-1/1000}$. Then
    \[
    \eE[(|\Delta \qnusig| + |\Delta \zonusig| + |\Delta \ztnusig|)^p \cdot \mathbbm{1}_{|\Delta \qnusig| + |\Delta \zonusig| + |\Delta \ztnusig| > (\Delta \tnu)^{2/5}} | \cF_\nu] \le C (\Delta t_\nu)^{100}
    \]
    for $p = 1,2$, and
    \[
    \eprob[ |\Delta \qnusig| + |\Delta \zonusig| + |\Delta \ztnusig| > (\Delta \tnu)^{2/5} | \cF_\nu] \le C(\Delta \tnu)^{1000}.
    \]
    Also, we have
    \[
    \eprob\Big[ \max_{t \in [\tnu, t_{\nu+1}]} |\qsig(t) - \qnusig | > (\Delta \tnu)^{2/5} | \cF_\nu\Big] \le C (\Delta \tnu)^{1000}
    \]
    provided $|\qnusig| \le c \cdot (\Delta \tnu)^{-1/10}$. Finally, for $Q\ge C$, we have
    \[
    \eprob \Big[ \max_{t\in [\tnu, t_{\nu+1}]} |\qsig(t) - \qnusig| > C' Q(\Delta \tnu)^{1/2} | \cF_\nu\Big] \le C \exp(-cQ^2) \;\text{if} \; |q_\nu | \le Q.
    \]
\end{lem}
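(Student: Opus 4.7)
The Lemma on Rare Events is the consolidation of the concentration estimates derived throughout Section~2, so my plan is to check off each bullet against an inequality already in hand, then assemble the two remaining one-step (conditional) assertions from the moment calculus developed at the end of the section.

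For the first three bullets (global bounds with probability $>1-\exp(-cQ^2)$), I would simply cite the estimates already proved: \eqref{eq: 1.23} for $|\qsig(\tnu)|$, \eqref{eq: 1.24} for $|\usig(\tnu)|$ (which in turn used the tameness bound \eqref{eq: 1.4}), and \eqref{eq: 1.32}, \eqref{eq: 1.33} for $|\zosig(\tnu)|$ and $|\ztsig(\tnu)|$. The third bullet rewrites \eqref{eq: 1.31}: since $\Delta\tilde t_\mu = \Delta t_\mu + O((\Delta t_\mu)^2)$ by \eqref{eq: 1.11}, one has $\sum_{\mu<\nu}\Delta\tilde t_\mu = \tnu + O(\Delta t_\mx)$, so $|\sum_{\mu<\nu}(\Delta q_\mu)^2 - \tnu|$ differs from the left-hand side of \eqref{eq: 1.31} by $O(\Delta t_\mx)$, which is absorbed into $CQ^2(\Delta t_\mx)^{1/2}$ for $Q>C$.

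For the conditional one-step estimates, I would start from \eqref{eq: 1.7} and the tameness bound: if $|\qnusig|\le Q$, then $|\unusig|\le C(Q+1)$, so the drift contribution to each of $\Delta\qnusig$, $\Delta\zonusig$, $\Delta\ztnusig$ is $O(Q^2\Delta\tnu)$ and the stochastic contribution is $O(Q|\Delta W_\nu|)$. Taking $p$-th powers and using that $\Delta W_\nu$ is $\cF_\nu$-independent Gaussian with variance $O(\Delta\tnu)$ gives the moment bound \eqref{eq: 1.star.1}. Specializing $p$ so that $Q^{2p}(\Delta\tnu)^p + Q^p(\Delta\tnu)^{p/2}\ll (\Delta\tnu)^{1000}$ under the hypothesis $Q\le (\Delta\tnu)^{-1/1000}$ then yields the Markov-type probability bound; a Cauchy--Schwarz pairing of the probability bound with \eqref{eq: 1.star.1} at $p=2,4$ delivers the truncated expectation of order $(\Delta\tnu)^{100}$ for $p=1,2$. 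For the final assertion on $\max_{t\in[\tnu,t_{\nu+1}]}|\qsig(t)-\qnusig|$, the plan is to use the integral representation \eqref{eq: 1.6}: the drift term is deterministically $O(Q\Delta\tnu)$, and the stochastic integral $t\mapsto \int_{\tnu}^t e^{a(t-s)}\,dW(s)$ is a continuous Gaussian martingale whose sup is controlled by the reflection principle (via the obvious Gaussian process $W^\#$), yielding the $C\exp(-cQ^2)$ tail and, by taking $Q$ of order $(\Delta\tnu)^{-1/10}$, the $(\Delta\tnu)^{1000}$ bound.

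The only genuinely delicate step in the whole package is the quadratic-variation concentration \eqref{eq: 1.31}, and specifically TERM~3($\nu$): the exponential martingale $\cZ_\nu$ in \eqref{eq: 1.40f} uses Gaussian moment generating functions $(1-2\lambda\Delta\tilde t_\nu)^{-1/2}$, which blow up as $\lambda\Delta\tnu\to 1/2$. I would therefore need to insist on $|\lambda|<c(\Delta t_\mx)^{-1}$ to keep the MGF finite, and then optimize with $\lambda=\pm(\Delta t_\mx)^{-1/2}$ to trade the $O(\lambda^2\Delta t_\mx\,\Delta\tnu)$ compensator against the target deviation $Q^2(\Delta t_\mx)^{1/2}$. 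TERM~2($\nu$) requires an analogous Bernstein-type argument, truncated on the event $\{|aq_\mu+u_\mu|<CQ\}$ so that the compensator remains controllable; the residual probability is absorbed into $\eqref{eq: 1.23}$--$\eqref{eq: 1.24}$. Once \eqref{eq: 1.31} is in hand, every claim in the lemma follows by bookkeeping.
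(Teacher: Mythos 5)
Your proposal is correct and follows exactly the paper's route: the lemma is a consolidation of the estimates \eqref{eq: 1.23}, \eqref{eq: 1.24}, \eqref{eq: 1.32}, \eqref{eq: 1.33}, \eqref{eq: 1.star.1}, \eqref{eq: 1.star.2}, and the reflection-principle bound, with the only new step being the observation $\sum_{\mu<\nu}\Delta\tilde t_\mu = \tnu(1+O(\Delta t_\mx))$ to pass from \eqref{eq: 1.31} to the third bullet. Your account of the supermartingale arguments behind \eqref{eq: 1.31} (the constraint $|\lambda|<c(\Delta t_\mx)^{-1}$, the choice $\lambda=\pm(\Delta t_\mx)^{-1/2}$, and the truncation for TERM 2) also matches the paper.
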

\begin{proof}
    To deduce the third bullet point from \eqref{eq: 1.31}, we note that
    \[
    \sum_{0 \le \mu < \nu} (\Delta \tilde{t}_\mu) = \sum_{0 \le \mu < \nu} [ (\Delta t_\mu) + O((\Delta t_\mu)^2)] = \tnu\cdot (1+O(\Delta t_\mx)).
    \]
    The remaining assertions of the lemma have already been proved as stated.
\end{proof}

\section{The Probability Density}

We continue to adopt the assumptions and notation of Section \ref{sec: rare events}. Our goal is to derive an approximate formula for the joint probability density of $(q_1, \dots, q_\barN) = (\qsig(t_1),\dots, \qsig(t_\barN))$ for fixed $\barN \le N$. Let us denote this joint probability density by $\Phi(\barq_1, \dots, \barq_\barN).$ Thus,
\begin{equation}\label{eq: 1.39}
    \prob((\qsig(t_1), \dots, \qsig(t_\barN)) \in E) = \int_E \Phi(\barq_1, \dots, \barq_\barN)\ d\barq_1\cdots d\barq_\barN
\end{equation}
for measurable sets $E\subset \R^\barN$.

By formula \eqref{eq: 1.7},
\[
\Delta q_\nu = (a q_\nu + u_\nu) \Delta t_\nu^* + \Delta W_\nu
\]
with $\Delta W_\nu$ mutually independent and normal, with mean 0 and variance $\Delta \tilde{t}_\nu$; consequently, the joint probability $\Phi$ is given by
\begin{equation}\label{eq: 1.40}
    \Phi(\barq_1, \dots, \barq_\barN) = \prod_{\nu = 0}^{\barN-1} \phi_\nu
\end{equation}
with
\begin{equation}\label{eq: 1.41}
    \phi_\nu = \frac{1}{\sqrt{2\pi \Delta \tilde{t}_\nu}} \exp\bigg( - \frac{1}{2\Delta \tilde{t}_\nu} [ \Delta \barq_\nu - (a \barq_\nu + \baru_\nu) \Delta t_\nu^*]^2 \bigg).
\end{equation}
Here, $\Delta \barq_\nu \equiv \barq_{\nu+1} - \barq_\nu$, $\barq_0 \equiv q_0$, and $\baru_\nu$ denotes the control exercised by the strategy $\sigma$ at time $t_\nu$ given that $\qsig(t_\mu) = \barq_\mu$ for $0 \le \mu \le \nu$ and $\vxi = \veta$. Note that $\baru_\nu$ is determined by $\barq_1, \dots, \barq_\nu$ (and $q_0)$.

We make the following assumptions on $(\barq_1, \dots, \barq_\barN)$:
\begin{flalign}
    &\max_\nu ( |\barq_\nu| + |\baru_\nu|) \le Q,\label{eq: 1.42} &\\
    & \Big| \sum_\nu (\Delta \barq_\nu)^2 - t_\barN \Big| \le Q^2 (\Delta t_\mx)^{1/4}, \label{eq: 1.43}
\end{flalign}
with $Q \ge C $ given.

Thanks to Lemma \ref{lem: rare events}, \eqref{eq: 1.42} and \eqref{eq: 1.43} are very likely true for $(\barq_1, \dots, \barq_\barN) = (\qsig(t_1), \dots, \qsig(t_\barN)).$ Under assumptions \eqref{eq: 1.42} and \eqref{eq: 1.43} we will simplify \eqref{eq: 1.40}, \eqref{eq: 1.41}.

First of all, since $\Delta \tnu^* = \Delta \tnu + O((\Delta \tnu)^2)$ we have
\[
[\Delta \barq_\nu - (a \barq_\nu + \baru_\nu) \Delta \tnu^*] = [ \Delta \barq_\nu - (a \barq_\nu + \baru_\nu) \Delta \tnu] + \text{ERR}_\nu,
\]
with
\[
\text{ERR}_\nu = O(Q( \Delta t_\nu)^2)
\]
thanks to \eqref{eq: 1.42}.

Hence,
\begin{align*}
    [\Delta \barq_\nu - ( a\barq_\nu + \baru_\nu) \Delta t_\nu^*]^2 = & [ \Delta \barq_\nu - (a\barq_\nu + \baru_\nu)^2 \Delta t_\nu]^2 + \text{ERR}_\nu^2\\
    & + 2 \text{ERR}_\nu [ \Delta \barq_\nu - ( a \barq_\nu + \baru_\nu) \Delta t_\nu]\\
    = & [ \Delta \barq_\nu - (a \barq_\nu + \baru_\nu) \Delta t_\nu]^2 + O(Q^2 (\Delta \tnu)^3) \\
    &+ 2 \text{ERR}_\nu (\Delta \barq_\nu).
\end{align*}
Therefore,
\begin{equation}\label{eq: 1.44}
    \begin{split}
        \sum_\nu \frac{1}{2\Delta \tilde{t}_\nu} [ \Delta \barq_\nu - &( a\barq_\nu + \baru_\nu) \Delta t_\nu^*]^2 \\ = & \sum_{\nu} \frac{1}{2\Delta \tilde{t}_\nu} [ \Delta \barq_\nu - (a\barq_\nu + \baru_\nu) \Delta \tnu]^2 + \sum_\nu O(Q^2(\Delta \tnu)^2) \\
        &+ \sum_\nu \frac{\text{ERR}_\nu}{\Delta \tilde{t}_\nu} ( \Delta \barq_\nu).
    \end{split}
\end{equation}
The last sum has absolute value at most
\begin{equation}\label{eq: 1.44.5}
C \sum_\nu ( \Delta \tnu)^{-1/2}\bigg\{ \frac{\text{ERR}_\nu}{\Delta \tilde{t}_\nu}\bigg\}^2 + C \sum_\nu ( \Delta \tnu)^{1/2}(\Delta \barq_\nu)^2.
\end{equation}
The expression \eqref{eq: 1.44.5} is, in turn, at most
\begin{align*}
    \sum_\nu O (Q^2(\Delta \tnu)^{3/2}) +& C (\Delta t_\mx)^{1/2} \sum_\nu ( \Delta \barq_\nu)^2\\
    =& O(Q^2 (\Delta t_\mx)^{1/2}) + C (\Delta t_\mx)^{1/2} \Big| \sum_\nu ( \Delta \barq_\nu)^2 - t_\barN\Big|\\
    &+ C t_\barN (\Delta t_\mx)^{1/2}\\
    =& O(Q^2 (\Delta t_\mx)^{1/2}),
\end{align*}
thanks to \eqref{eq: 1.43}. Therefore, \eqref{eq: 1.44} implies that
\begin{multline}\label{eq: 1.45}
    \sum_\nu \frac{1}{2\Delta \tilde{t}_\nu} [ \Delta \barq_\nu - ( a \barq_\nu + \baru_\nu) \Delta \tnu^*]^2\\
    = \sum_\nu \frac{1}{2 \Delta \tilde{t}_\nu} [ \Delta \barq_\nu - (a\barq_\nu + \baru_\nu) \Delta \tnu]^2 + O(Q^2 ( \Delta t_\mx)^{1/2}).
\end{multline}
We want to replace $\Delta \tilde{t}_\nu$ by $\Delta \tnu$ on the right in \eqref{eq: 1.45}. To do so, note that
\[
\frac{1}{2\Delta \tilde{t}_\nu} - \frac{1}{2\Delta \tnu} = - \frac{a}{2} + O(\Delta \tnu),
\]
thanks to \eqref{eq: 1.10}. Consequently,
\begin{equation}\label{eq: 1.46}
    \begin{split}
        \sum_\nu  \Big( \frac{1}{2\Delta \tilde{t}_\nu} - \frac{1}{2\Delta \tnu}\Big) [\Delta \barq_\nu &- (a\barq_\nu + \baru_\nu) \Delta \tnu]^2\\
        = & \sum_\nu [ a + O(\Delta \tnu)](a \barq_\nu + \baru_\nu) \Delta \tnu (\Delta \barq_\nu) \\&
        + \sum_\nu \Big[ - \frac{a}{2} + O(\Delta \tnu)\Big](\Delta \barq_\nu)^2\\
        & + \sum_\nu \Big[ - \frac{a}{2} + O(\Delta \tnu)\Big](a\barq_\nu + \baru_\nu)^2(\Delta \tnu)^2\\
        \equiv & \terma + \termb + \termc
    \end{split}
\end{equation}

Now
\begin{align*}
\termb &= - \frac{a}{2} \sum_\nu ( \Delta \barq_\nu)^2 + O(\Delta t_\mx) \sum_\nu (\Delta \barq_\nu)^2\\
& = - \frac{a}{2} t_\barN + O(Q^2 (\Delta t_\mx)^{1/4}) 
\end{align*}
by \eqref{eq: 1.43}, while
\[
\termc = \sum_\nu O(Q^2 (\Delta \tnu)^2) = O(Q^2 (\Delta t_\mx))
\]
by \eqref{eq: 1.42}. To estimate $\terma$, we apply \eqref{eq: 1.42} and \eqref{eq: 1.43} to write:
\begin{align*}
    | \terma | \le& C \sum_\nu ( a\barq_\nu + \baru_\nu)^2 ( \Delta \tnu)^{3/2} + C \sum_\nu (\Delta \tnu)^{1/2} ( \Delta \barq_\nu)^2\\
    \le & O(Q^2 (\Delta t_\mx)^{1/2}) + C (\Delta t_\mx)^{1/2} \sum_\nu (\Delta \barq_\nu)^2\\
    = & O(Q^2 (\Delta t_\mx)^{1/2}) + C (\Delta t_\mx)^{1/2}\Big[ \sum_\nu ( \Delta \barq_\nu)^2 - t_\barN\Big] \\
    = & O(Q^2 (\Delta t_\mx)^{1/2}).
\end{align*}

Combining our estimates for Terms $\alpha, \beta, \gamma$, and recalling \eqref{eq: 1.46}, we learn that
\[
\sum_\nu \Big( \frac{1}{2\Delta \tilde{t}_\nu} - \frac{1}{2\Delta \tnu}\Big) [ \Delta \barq_\nu - (a  \barq_\nu + \baru_\nu) \Delta \tnu]^2 = - \frac{a}{2} t_\barN + O(Q^2 (\Delta t_\mx)^{1/4}).
\]
Consequently, \eqref{eq: 1.45} implies that
\begin{multline}\label{eq: 1.47}
    \sum_\nu \frac{1}{2\Delta \tilde{t}_\nu} [ \Delta \barq_\nu - (a \barq_\nu + \baru_\nu) \Delta t_\nu^*]^2 \\= - \frac{a}{2} t_\barN + \sum_\nu \frac{1}{2\Delta \tnu} [ \Delta \barq_\nu - (a\barq_\nu +\baru_\nu) \Delta \tnu]^2 +  O(Q^2 (\Delta t_\mx)^{1/4}).
\end{multline}
Again applying \eqref{eq: 1.10}, we see that
\begin{multline*}
\frac{1}{\sqrt{2\pi \Delta \tilde{t}_\nu}} = \frac{1}{\sqrt{2\pi\Delta \tnu}} \cdot ( 1- \frac{1}{2}a \Delta \tnu + O(\Delta \tnu)^2) \\= \frac{1}{\sqrt{2\pi\Delta \tnu}} \exp(- \frac{1}{2}a \Delta \tnu + O(\Delta \tnu)^2)
\end{multline*}
so that
\begin{equation}\label{eq: 1.48}
    \begin{split}
        \prod_{\nu=0}^{\barN - 1} \frac{1}{\sqrt{2\pi\Delta \tilde{t}_\nu}} =& \bigg( \prod_{\nu=0}^{\barN-1} \frac{1}{\sqrt{2\pi \Delta \tnu}}\bigg) \exp\Big(- \frac{a}{2}\sum_\nu \{ (\Delta \tnu) + O(\Delta \tnu)^2\}\Big)\\
        = & \bigg( \prod_{\nu=0}^{\barN-1} \frac{1}{\sqrt{2\pi\Delta \tnu}}\bigg) \exp\Big( - \frac{a}{2} t_\barN + O(\Delta t_\mx)\Big).
    \end{split}
\end{equation}
Putting \eqref{eq: 1.47} and \eqref{eq: 1.48} into \eqref{eq: 1.40} and \eqref{eq: 1.41}, we find that
\begin{equation}\label{eq: 1.49}
\begin{split}
    \Phi(\barq_1,\dots,\barq_\barN) = \prod_{\nu=0}^{\barN-1} &\Big\{ \frac{1}{\sqrt{2\pi\Delta \tnu}}\exp\Big(- \frac{1}{2\Delta \tnu} [ \Delta \barq_\nu - (a \barq_\nu + \baru_\nu) \Delta \tnu]^2\Big)\Big\}\\ &\cdot (1 + O(Q^2 (\Delta t_\mx)^{1/4})).
\end{split}
\end{equation}
In particular, the factors $\exp(\frac{a}{2} t_\barN)$ arising from \eqref{eq: 1.47} and \eqref{eq: 1.48} cancel.

We record our result \eqref{eq: 1.49} as a lemma.

\begin{lem}[Lemma on the Probability Distribution]\label{lem: prob dist}
    We condition on $a_\etru = a$, $\vxi = \veta$.

    Then, for constants $c,C$ determined by $q_0$, $a_\emx$, $C_\etame,$ and an upper bound for $T$, the following holds.

    Suppose $\Delta t_\emx = \max_\nu (t_{\nu+1}- \tnu) < c$. Fix $\barN \le N$. Let $\Phi(\barq_1, \dots, \barq_\barN)$ be the joint probability density for $(\qsig(t_1), \dots, \qsig(t_\barN))$.

    Let $Q> C$, and suppose $(\barq_1,\dots, \barq_\barN)$ satisfies
    \begin{flalign*}
        &\max_\nu ( |\barq_\nu| + |\baru_\nu|) \le Q \;\text{and}&\\
        & \Big| \sum_\nu ( \barq_{\nu+1} - \barq_\nu)^2 - t_\barN \Big| \le Q^2 (\Delta t_\emx)^{1/4},
    \end{flalign*}
    where $\baru_\nu$ is the control exercised by the strategy $\sigma$ at time $\tnu$ when 
    \[
    (\qsig(t_1), \dots, \qsig(t_\nu))= (\barq_1, \dots, \barq_\nu)\;\text{and}\; \vxi = \veta.
    \]
    Then
    \begin{align*}
    \Phi(\barq_1,\dots,\barq_\barN) = \prod_{\nu=0}^{\barN-1} &\Big\{ \frac{1}{\sqrt{2\pi\Delta \tnu}}\exp\Big(- \frac{1}{2\Delta \tnu} [ \Delta \barq_\nu - (a \barq_\nu + \baru_\nu) \Delta \tnu]^2\Big)\Big\}\\ &\cdot (1 + O(Q^2 (\Delta t_\emx)^{1/4})).
    \end{align*}
    Here, $\Delta \barq_\nu = \barq_{\nu+1} - \barq_\nu$ (with $\barq_0 = q_0$), $\Delta \tnu = t_{\nu+1} - \tnu$, and $O(Q^2 (\Delta t_\emx)^{1/4})$ denotes a quantity whose absolute value is at most $CQ^2(\Delta t_\emx)^{1/4}$.
\end{lem}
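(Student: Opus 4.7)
The plan is to start from the exact joint density expression in \eqref{eq: 1.40}--\eqref{eq: 1.41}, which holds because conditional on the past, $\Delta \qnu = (a\qnu + \unu)\Delta \tnu^* + \Delta W_\nu$ with $\Delta W_\nu$ mutually independent Gaussians of mean $0$ and variance $\Delta \tilde t_\nu$. So $\phi_\nu$ is literally the transition density, and the product formula gives the joint density exactly. The task is then to show that replacing each occurrence of $\Delta \tilde t_\nu$ and $\Delta \tnu^*$ by $\Delta \tnu$ produces a multiplicative error of $1 + O(Q^2 (\Delta t_\mx)^{1/4})$, using the hypotheses on $\max_\nu(|\barq_\nu| + |\baru_\nu|)$ and on $\sum_\nu (\Delta \barq_\nu)^2$.

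First, I would expand $\Delta \tnu^* = \Delta \tnu + O((\Delta \tnu)^2)$ inside $[\Delta \barq_\nu - (a\barq_\nu + \baru_\nu)\Delta \tnu^*]^2$. The change to the bracket is of size $O(Q(\Delta \tnu)^2)$, so squaring and cross-multiplying produces a pure $O(Q^2(\Delta \tnu)^3)$ error plus a cross term of the form $O(Q(\Delta\tnu)^2)\cdot \Delta\barq_\nu$. After dividing by $2\Delta\tilde t_\nu \sim \Delta\tnu$ and summing, Cauchy--Schwarz together with the hypothesis $\sum_\nu(\Delta\barq_\nu)^2 = t_\barN + O(Q^2(\Delta t_\mx)^{1/4})$ bounds the total error contribution to the exponent by $O(Q^2(\Delta t_\mx)^{1/2})$.

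Next, I would replace $1/(2\Delta\tilde t_\nu)$ by $1/(2\Delta\tnu)$, using $1/(2\Delta\tilde t_\nu) - 1/(2\Delta\tnu) = -a/2 + O(\Delta\tnu)$ from \eqref{eq: 1.10}. Applied to $[\Delta\barq_\nu - (a\barq_\nu + \baru_\nu)\Delta\tnu]^2$, this generates three sums: a $-\tfrac{a}{2}\sum_\nu (\Delta\barq_\nu)^2$ piece, which by \eqref{eq: 1.43} equals $-\tfrac{a}{2}t_\barN + O(Q^2(\Delta t_\mx)^{1/4})$; a cross term $\sum_\nu (\text{coef})\cdot(a\barq_\nu+\baru_\nu)(\Delta\tnu)(\Delta\barq_\nu)$ controlled by Cauchy--Schwarz plus \eqref{eq: 1.42}, \eqref{eq: 1.43}; and a pure $\sum_\nu O(Q^2(\Delta\tnu)^2)$ tail. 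The net contribution to the exponent is $-\tfrac{a}{2} t_\barN + O(Q^2(\Delta t_\mx)^{1/4})$. Separately, I would expand $1/\sqrt{2\pi \Delta \tilde t_\nu} = 1/\sqrt{2\pi\Delta\tnu}\cdot\exp(-\tfrac{a}{2}\Delta\tnu + O((\Delta\tnu)^2))$ so that the product of normalizations contributes an extra factor $\exp(-\tfrac{a}{2}t_\barN + O(\Delta t_\mx))$.

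The main obstacle, and the reason the lemma is stated with a clean error $O(Q^2(\Delta t_\mx)^{1/4})$ rather than something blowing up with $e^{at_\barN/2}$, is the exact cancellation between the two $-\tfrac{a}{2}t_\barN$ contributions (one from the variance correction in the exponent, one from the normalization). Once this cancellation is verified, what remains in the logarithm of the ratio $\Phi/\Phi_{\text{approx}}$ is $O(Q^2(\Delta t_\mx)^{1/4})$, which exponentiates to $1 + O(Q^2(\Delta t_\mx)^{1/4})$ (we may assume this quantity is bounded, else the assertion is vacuous). The delicate bookkeeping — tracking the $\Delta\tnu^*$ vs.\ $\Delta\tilde t_\nu$ vs.\ $\Delta\tnu$ discrepancies and repeatedly trading $\sum(\Delta\barq_\nu)^2$ for $t_\barN$ via \eqref{eq: 1.43} — is the only real work; the structural input is just the Gaussian product formula.
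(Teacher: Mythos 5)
Your proposal follows the paper's proof almost line for line: start from the exact Gaussian product of transition densities (equations \eqref{eq: 1.40}--\eqref{eq: 1.41}), replace $\Delta\tnu^*$ by $\Delta\tnu$ in the bracket at cost $O(Q(\Delta\tnu)^2)$, control the accumulated exponent error via Cauchy--Schwarz and hypothesis \eqref{eq: 1.43}, replace $1/(2\Delta\tilde t_\nu)$ by $1/(2\Delta\tnu)$ using the $-a/2 + O(\Delta\tnu)$ expansion, expand $1/\sqrt{2\pi\Delta\tilde t_\nu}$ to produce the normalization correction, and observe the exact cancellation of the two $-\tfrac{a}{2}t_\barN$ contributions (one from the variance correction, one from the normalization), which the paper flags explicitly after \eqref{eq: 1.49}. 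You have correctly identified this cancellation as the key structural point preventing an exponential blow-up of the error. The only thing the paper leaves implicit and you handle in passing — that the exponentiation step $e^E = 1 + O(E)$ requires $E$ bounded — is fine as stated, since in every application of the lemma $Q^2(\Delta t_\mx)^{1/4}$ is indeed small.
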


\section{Analytic Continuation}
In this section we prepare to make an analytic continuation of the function mapping $a \in [-a_\mx, + a_\mx]$ to the expected cost incurred by a strategy $\sigma$ assuming that $a_\mx = a$. We set up notation.

We fix a tame strategy $\sigma = (\sigma_\nu)_{0 \le \nu < N}$ and coin flips $\veta$; we write $\baru_\nu$ to denote the control exercised by the strategy $\sigma$ at time $\tnu$ assuming that $\qsig(t_\mu) = \barq_\mu$ for $1 \le \mu \le \nu$ and $\vxi = \veta$ (i.e., $\baru_\nu = \usig(\tnu) = \sigma_\nu(\bar{q}_1, \dots, \bar{q}_\nu, \veta)$).

We define functions
\begin{equation}\label{eq: ac1}
    \psi_\nu(\Delta \barq, \barq, \baru,a) := \frac{1}{\sqrt{2\pi\Delta \tnu}}\exp\Big( - \frac{[\Delta \barq - (a\barq+ \baru)\Delta \tnu]^2}{2\Delta \tnu}\Big)
\end{equation}
and
\begin{equation}\label{eq: ac2}
    \Psi(\barq_1, \dots, \barq_N,a) = \prod_{\nu=0}^{N-1} \psi_\nu(\Delta \barq_\nu, \barq_\nu, \baru_\nu(\barq_1,\dots,\barq_\nu),a).
\end{equation}
In \eqref{eq: ac1}, $\Delta \barq$, $\barq$, $\baru$ are real variables, while $a \in \C$. In \eqref{eq: ac2}, $\Delta \barq_\nu := \barq_{\nu+1} - \barq_\nu$, with $\barq_0 := q_0$. Again, in \eqref{eq: ac2}, $a \in \C$.

We introduce a set
\begin{equation}\label{eq: ac3}
\begin{split}
    E  = \{(\barq_1,\dots,\barq_N) \in \R^N: & \max_\nu |\barq_\nu| \le (\Delta t_\mx)^{-1/16}\\ &\text{and}\; |\sum_\nu (\barq_{\nu+1}-\barq_\nu)^2 - T| \le (\Delta t_\mx)^{1/8}\}.
\end{split}
\end{equation}

We denote by

\[
\Phi(\barq_1,\dots,\barq_N,a)\qquad (a \in [-a_\mx, +a_\mx])
\]
the probability density for $(\qsig(t_1),\dots,\qsig(t_N))$ assuming that $a_\tru = a $ and $\vxi = \veta$.

According to Lemma \ref{lem: rare events} and Lemma \ref{lem: prob dist}, we have
\begin{equation}\label{eq: ac4}
    \Phi(\barq_1,\dots,\barq_N,a) = \Psi(\barq_1,\dots,\barq_N,a) \cdot (1+\err(\barq_1,\dots,\barq_N,a))
\end{equation}
for $(\barq_1,\dots,\barq_N)\in E$ and $a \in [-a_\mx, a_\mx]$, with
\begin{equation}\label{eq: ac5}
    |\err(\barq_1,\dots,\barq_N,a)|\le (\Delta t_\mx)^{1/8};
\end{equation}
and
\begin{equation}\label{eq: ac6}
    \E_{a,\veta} [ \mathbbm{1}_{(\qsig(t_1),\dots, \qsig(t_N)) \notin E}] \le C \exp(-c(\Delta t_\mx)^{-1/8}).
\end{equation}
Since $|\usig| \le C [|\qsig| + 1]$ for a tame rule $\sigma$, we have, for $a \in [-a_\mx, a_\mx]$,\
\begin{equation}\label{eq: ac7}
    \sum_\nu \{ (\usig_\nu)^2 + (\qsig_\nu)^2\} \Delta \tnu \le C \max_\nu |\qnu|^2 + C.
\end{equation}

By Lemma \ref{lem: rare events}, we have
\begin{equation}\label{eq: ac8}
    \E_{a,\veta}[ \{\max_\nu |\qsig(\tnu)|^2 + |\usig(\tnu)|^2\}^2] \le C.
\end{equation}

We study the function
\[
[-a_\mx, +a_\mx] \ni a \mapsto \E_{{a,\veta}} \Big[ \sum_{\nu=0}^{N-1} \{ (\qsig(\tnu))^2 + (\usig(\tnu))^2\}\Delta \tnu\Big];
\]
we denote this function by $\ecost(a)$.

The above definitions and estimates yield:
\begin{equation}\label{eq: ac9}
\begin{split}
    \ecost(a) =& \E_{a,\veta}\Big[ \sum_{\nu=0}^{N-1}\{ (\qsig(\tnu))^2 + (\usig(\tnu))^2\}\Delta \tnu \cdot \mathbbm{1}_{(\qsig(t_1),\dots, \qsig(t_N))\in E}\Big]\\
    &+\erroro(a),
\end{split}
\end{equation}
with
\begin{equation}\label{eq: ac10}
    \begin{split}
        |\erroro(a)| \le & \E_{a,\veta}[ C \max_\nu \{ |\qsig(\tnu)|^2 + |\usig(\tnu)|^2\}\cdot\mathbbm{1}_{(\qsig(t_1),\dots,\qsig(\tnu))\notin E}]\\
        \le & \Big( \E_{a,\veta} [C \max\{ |\qsig(\tnu)|^2 + |\usig(\tnu)|^2\}^2])^{1/2}\\
        &\cdot \Big( \prob_{a,\veta} ((\qsig(t_1),\dots,\qsig(t_N))\notin E)\Big)^{1/2}\\
        \le & C' \exp(-c' (\Delta t_\mx)^{-1/8}).
    \end{split}
\end{equation}

Moreover,
\begin{align*}
    \E_{a,\veta}\Big[ \sum_\nu & \{ (\qsig(\tnu))^2 + (\usig(\tnu))^2\} \Delta \tnu \cdot \mathbbm{1}_{(\qsig(t_1),\dots,\qsig(\tnu))\in E} \Big]\\
    = &\int_{(\barq_1,\dots,\barq_N) \in E} \Big(\sum_\nu \{ \barq_\nu^2 + \baru_\nu^2\}\Delta \tnu\Big) \Phi(\barq_1, \dots, \barq_N,a) \ d\barq_1\cdots d\barq_N\\
    = &\int_{(\barq_1,\dots,\barq_N)\in E} \Big\{\Big( \sum_\nu \{ \barq_\nu^2 + \baru_\nu^2\}\Delta \tnu\Big) \Psi(\barq_1,\dots,\barq_N,a)\\
    & \quad \cdot (1 + \err(\barq_1,\dots,\barq_N,a))\ d\barq_1\cdots d\barq_N\Big\}\\
    = & (1 + \errort(a))\cdot \int_{(\barq_1,\dots,\barq_N)\in E} \Big\{\Big( \sum_\nu \{ \barq_\nu^2 + \baru_\nu^2\} (\Delta \tnu)\Big) \\ &
    \qquad \qquad \qquad \qquad\; \cdot \Psi(\barq_1,\dots,\barq_N,a) \ d\barq_1\cdots d\barq_N\Big\},
\end{align*}
with
\begin{equation}\label{eq: ac11}
    |\errort(a)| \le C  ( \Delta t_\mx)^{1/8},
\end{equation}
thanks to \eqref{eq: ac4}, \eqref{eq: ac5}. Together with \eqref{eq: ac9}, \eqref{eq: ac10}, this yields
\begin{align*}
\ecost(a) =& \erroro(a) \\
&+ (1+\errort(a))\cdot \int_E \Big\{ \Big(\sum_\nu \{\barq_\nu^2 + \baru_\nu^2\} \Delta \tnu\Big) \\
&\qquad \qquad\qquad \qquad\qquad \;\; \;\cdot \Psi(\barq_1,\dots,\barq_N,a)\Big\}\ d\barq_1\cdots d\barq_N
\end{align*}
with $\erroro(a)$, $\errort(a)$ controlled by \eqref{eq: ac10}, \eqref{eq: ac11}. Since also
\[
0 \le \ecost(a) \le C
\]
by Lemma \ref{lem: rare events}, it follows that
\begin{equation}\label{eq: ac12}
\begin{split}
    \ecost(a) = & \int_{(\barq_1, \dots, \barq_N) \in E}\Big\{ \Big( \sum_\nu \{\barq_\nu^2 + \baru_\nu^2\} \Delta \tnu\Big)\\ &\quad \cdot \Psi(\barq_1,\dots,\barq_N,a)\Big\} \;d\barq_1\cdots d\barq_N + \errorth(a),
    \end{split}
\end{equation}
with
\begin{equation}\label{eq: ac13}
    |\errorth(a)|\le C (\Delta t_\mx)^{1/8}.
\end{equation}
Equation \eqref{eq: ac12} and estimate \eqref{eq: ac13} hold for $a \in [-a_\mx,+a_\mx]$. We make an analytic continuation of the integral in \eqref{eq: ac12} from $a \in [-a_\mx,+a_\mx]$ to $a = a_R + ia_I$ in the rectangle
\[
\cR = \{ a_R + i a_I : a_R \in [-a_\mx, a_\mx], a_I \in [-\delta, \delta]\}
\]
for a small $\delta>0$ to be picked below.

We write $I(a)$ to denote the integral in \eqref{eq: ac12}. Thus,
\begin{equation}\label{eq: ac14}
    I(a) = \int_E \Big( \sum_\nu \{ \barq_\nu^2 + \baru_\nu^2\} \Delta \tnu\Big) \Psi(\barq_1,\dots,\barq_N,a) \ d\barq_1\cdots d\barq_N
\end{equation}
for $a = a_R + i a_I \in \cR$, and
\begin{equation}\label{eq: ac15}
    |\ecost(a) - I(a)| \le C (\Delta t_\mx)^{1/8}\;\text{for} \; a \in [-a_\mx, + a_\mx].
\end{equation}

A glance at \eqref{eq: ac1}, \eqref{eq: ac2} shows that the integrand in \eqref{eq: ac14} has the form
\[
B(\barq_1,\dots,\barq_N) \exp(a^2 G_2(\barq_1,\dots,\barq_N) + a G_1(\barq_1, \dots, \barq_N) + G_0(\barq_1,\dots,\barq_N)),
\]
where $ B, G_0, G_1, G_2$ are bounded measurable functions of $(\barq_1, \dots, \barq_N)$ on $E$. Moreover, the region of integration, $E$, is bounded; see \eqref{eq: ac3}. Therefore, $I(a)$ is an analytic function on $\cR$.

Next, we estimate
\begin{equation}\label{eq: ac17}
    \int_E \Big( \sum_\nu \{ \barq_\nu^2 + \baru_\nu^2\}\Delta t_\nu \Big) |\Psi(\barq_1,\dots,\barq_N, a_R + i a_I)|\; d\barq_1\cdots d\barq_N
\end{equation}
for $a_R + i a_I \in \cR$. From \eqref{eq: ac1} we have
\[
|\psi_\nu(\Delta \barq, \barq, \baru, a_R + i a_I)| = \exp(\frac{a_I^2}{2}\barq^2 \Delta \tnu) \psi_\nu(\Delta \barq, \barq, \baru, a_R),
\]
hence \eqref{eq: ac2} implies that
\[
|\Psi(\barq_1,\dots,\barq_N, a_R + ia_I)| = \exp\Big(\sum_\nu \frac{a_I^2}{2} \barq_\nu^2 \Delta \tnu\Big) \Psi(\barq_1,\dots,\barq_N,a_R).
\]
Moreover, for $(\barq_1, \dots, \barq_N) \in E$ and $a_R \in [-a_\mx,+a_\mx]$, \eqref{eq: ac4} and \eqref{eq: ac5}  yield
\[
\Psi(\barq_1,\dots,\barq_N,a_R) \le 2 \Phi(\barq_1, \dots, \barq_N,a_R).
\]
Consequently, the integrand in \eqref{eq: ac17} is at most
\[
2\Big( \sum_\nu \{ \barq_\nu^2 + \baru_\nu^2\} \Delta \tnu\Big)\exp\Big(\frac{a_I^2}{2}\sum_\nu \barq_\nu^2 \Delta \tnu\Big) \Phi(\barq_1,\dots,\barq_N,a_R) 
\]
for $a_R \in [-a_\mx, a_\mx]$. Since $\Phi(\barq_1,\dots,\barq_N,a_R)$ is the probability density for $(\qsig(t_1), \dots, \qsig(t_N))$ assuming $a_\tru = a_R$ and $\vxi = \veta$, it follows that the integral \eqref{eq: ac17} is at most
\begin{multline*}
    2 \E_{a_R, \veta} \Big[ \mathbbm{1}_{(\qsig(t_1),\dots,\qsig(t_N)) \in E} \cdot \Big( \sum_\nu \{ \qsig(\tnu))^2 + (\usig(\tnu))^2\} \Delta \tnu \Big)\\ \cdot \exp\Big( \frac{a_I^2}{2} \sum_\nu (\qsig(\tnu))^2 \Delta \tnu\Big) \Big]\;\text{for}\; a \in \cR.
\end{multline*}
Recall that $|\usig(\tnu)| \le C [|\qsig(\tnu)|+1]$ and that $|a_I|\le \delta$ for $a = a_R + i a_I \in \cR$.

Consequently, for $a_R + i a_I \in \cR$, we have
\begin{multline*}
    \Big( \sum_\nu \{ (\qsig(\tnu))^2 + (\usig(\tnu))^2\} \Delta \tnu\Big) \exp\Big( \frac{a_I^2}{2} \sum_\nu (\qsig(\tnu))^2 \Delta \tnu\Big)\\
    \le C_\delta \cdot \exp\Big( C \delta^2 \max_\nu | \qsig(\tnu)|^2\Big).
\end{multline*}
So the integral \eqref{eq: ac17} is at most
\begin{equation}\label{eq: ac18}
    C_\delta \E_{a_R,\veta} \Big[ \exp \Big( C \delta^2 \max_\nu | \qsig(t_\nu)|^2\Big)\Big]\;\text{for}\; a \in \cR.
\end{equation}
On the other hand, Lemma \ref{lem: rare events} gives
\begin{equation}\label{eq: ac19}
    \E_{a_R, \veta} [ \exp(c \max_\nu | \qsig(\tnu)|^2)] \le C
\end{equation}
for $a_R \in [-a_\mx,a_\mx]$ and $c>0$ small enough.

We now fix $\delta = \hat{c}$ small enough that $C\delta^2 < c$ with $C,c$ as in \eqref{eq: ac18}, \eqref{eq: ac19}. We conclude that the integral \eqref{eq: ac17} is less than a large constant $C$, independent of $a \in \cR$. 

So we have shown that $I(a)$ is analytic and bounded for 
\[
a \in (-a_\mx, + a_\mx) \times (-\delta, \delta).
\]
Recalling that we have taken $\delta = \hat{c}$ and that \eqref{eq: ac15} holds, we obtain the following result.

\begin{lem}[Analytic Continuation Lemma]\label{lem: ac}
    Let $\sigma$ be a tame strategy, and let $\veta \in \{0,1\}^\N$.

    Then there exists an analytic function $I_{\veta}(a)$ on the rectangle
    \[
    \cR = \{a_R + ia_I : a_R \in (-a_\emx, a_\emx), a_I \in (-\hat{c},\hat{c})\}
    \]
    such that
    \[
    |I_{\veta}(a)| \le C \;\text{on}\; \cR
    \]
    and
    \[
    \Big| \eE_{a,\veta} \Big[ \sum_{\nu=0}^{N-1} \{ (\qsig(\tnu))^2 + (\usig(\tnu))^2\} \Delta \tnu\Big] - I_{\veta}(a) \Big| \le C (\Delta t_\emx)^{1/8}
    \]
    for $ a \in (-a_\emx, + a_\emx)$. Explicitly,
    \begin{align*}
    I_{\veta}(a) =& \int_E \Big( \sum_{\nu=0}^{N-1}\{\qnu^2 + \unu^2\} \Delta \tnu\Big)\\& \cdot \prod_{\nu=0}^{N-1} \Big\{ \frac{1}{\sqrt{2\pi \Delta \tnu}} \exp \Big(  - \frac{[q_{\nu+1}-\qnu - (a\qnu + \unu)\Delta \tnu]^2}{2 \Delta \tnu} \Big) \Big\}\ dq_1 \cdots dq_N
    \end{align*}
    where
    \begin{equation}
\begin{split}
    E  = \{(\barq_1,\dots,\barq_N) \in \R^N: & \max_\nu |\barq_\nu| \le (\Delta t_\emx)^{-1/16}\\ &\emph{and}\; |\sum_\nu (\barq_{\nu+1}-\barq_\nu)^2 - T| \le (\Delta t_\emx)^{1/8}\}.
\end{split}
\end{equation}
and $\unu$ denotes the value assigned by $\sigma$ to the control at time $\tnu$ assuming that $\qsig(t_\mu) = q_\mu$ for $0 \le \mu \le \nu$ and $\vxi = \veta$.
\end{lem}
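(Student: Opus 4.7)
The plan is to take $I_{\veta}(a)$ to be precisely the integral written on the right of the displayed formula in the statement, and then verify in turn (i) that $I_{\veta}$ is analytic on $\cR$, (ii) that $|I_{\veta}(a)| \le C$ uniformly on $\cR$, and (iii) that $|\ecost(a) - I_{\veta}(a)| \le C(\Delta t_\mx)^{1/8}$ on the real interval $(-a_\mx, a_\mx)$. Essentially every ingredient needed has been established in the preceding subsections; the proof is largely a careful assembly of \eqref{eq: ac9}--\eqref{eq: ac19} with the choice $\delta = \hat c$ made at the end.

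For analyticity, I would observe that for each fixed $(\barq_1,\dots,\barq_N) \in E$ the integrand in \eqref{eq: ac14} is entire in $a$: from \eqref{eq: ac1}--\eqref{eq: ac2} it is a bounded measurable coefficient times $\exp(\alpha_2 a^2 + \alpha_1 a + \alpha_0)$, with $\alpha_0,\alpha_1,\alpha_2$ bounded measurable functions on the bounded set $E$. Since $E \subset \R^N$ is bounded and the integrand is locally uniformly bounded in $a$ on compact subsets of $\cR$, Morera's theorem combined with Fubini gives analyticity of $I_{\veta}$ on $\cR$.

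The main quantitative step is the uniform bound on $\cR$. I would use the identity
\[
|\psi_\nu(\Delta \barq, \barq, \baru, a_R + i a_I)| = \exp\Bigl(\tfrac{a_I^2}{2}\barq^2 \Delta \tnu\Bigr)\,\psi_\nu(\Delta \barq, \barq, \baru, a_R),
\]
together with the inequality $\Psi(\,\cdot\,, a_R) \le 2\Phi(\,\cdot\,, a_R)$ on $E$ coming from Lemma \ref{lem: prob dist}, to reinterpret the absolute value of the integrand as an expectation under $\prob_{a_R,\veta}$. The tame estimate $|\baru_\nu| \le C[|\barq_\nu|+1]$ then gives $\sum_\nu \{\barq_\nu^2 + \baru_\nu^2\}\Delta \tnu \le C(\max_\nu |\barq_\nu|^2 + 1)$, so the integrand is dominated by $C_\delta \exp(C\delta^2 \max_\nu|\qsig(\tnu)|^2)$ where $\delta = |a_I| \le \hat c$. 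The subgaussian bound $\E_{a_R,\veta}[\exp(c \max_\nu|\qsig(\tnu)|^2)] \le C$ from Lemma \ref{lem: rare events} then finishes the job, \emph{provided} the strip half-width $\hat c$ is chosen so small that $C \hat c^2 < c$. This balance between the imaginary-part blow-up and the Gaussian moment bound is the only delicate point in the proof, and it forces the half-width of $\cR$ to be a (fixed but small) absolute constant $\hat c$ depending only on the parameters $|q_0|$, $a_\mx$, $C_\tame$, $T$.

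Finally, for the real-axis approximation (iii), I would split $\ecost(a)$ according to whether $(\qsig(t_1),\dots,\qsig(t_N))$ lies in $E$ or not. The ``bad'' part is controlled via Cauchy--Schwarz together with the fourth-moment estimate \eqref{eq: ac8} and the exponentially small probability \eqref{eq: ac6} of leaving $E$, both from Lemma \ref{lem: rare events}; this produces $\erroro(a) = O(\exp(-c(\Delta t_\mx)^{-1/8}))$. On $E$, I would replace the exact density $\Phi$ by its Gaussian model $\Psi$ using the multiplicative error $|\err|\le (\Delta t_\mx)^{1/8}$ from Lemma \ref{lem: prob dist}, producing $\errort(a)=O((\Delta t_\mx)^{1/8})$. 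Together these give \eqref{eq: ac12}--\eqref{eq: ac13}, which is exactly the claimed approximation with the explicit form of $I_{\veta}(a)$.
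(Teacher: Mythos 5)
Your proposal reproduces the paper's own argument step for step: you take $I_{\veta}(a)$ to be the integral over $E$ against the Gaussian model $\Psi$, establish analyticity from the boundedness of $E$ and the exponential-polynomial form of the integrand, derive the uniform bound on $\cR$ via the identity $|\psi_\nu(\cdot,a_R+ia_I)|=\exp(\tfrac{a_I^2}{2}\barq^2\Delta\tnu)\psi_\nu(\cdot,a_R)$, the comparison $\Psi\le 2\Phi$ on $E$, the subgaussian moment bound from Lemma \ref{lem: rare events}, and the choice $\hat c$ with $C\hat c^2<c$, and finally obtain the real-axis approximation by the $E$/$E^c$ split together with \eqref{eq: ac6}, \eqref{eq: ac8}, and Lemma \ref{lem: prob dist}. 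This matches the paper's derivation of \eqref{eq: ac12}--\eqref{eq: ac19}, with Morera/Fubini merely making explicit the analyticity step the paper treats as routine.
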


\section{Moments of Increments}
We retain the assumptions and notation of Section \ref{sec: rare events}. Recall that $\cF_\nu$ is the sigma algebra of events determined by $\qsig(t_\mu)$ ($0 \le \mu \le \nu)$, and that
\[
\begin{aligned}
  &\Delta \tnu = t_{\nu+1} - \tnu, & &\Delta \qnu = \qsig(t_{\nu+1}) - \qsig(\tnu),\\
  & \Delta \zonu = \zosig(t_{\nu+1}) - \zosig(\tnu), & &\Delta \ztnu = \ztsig(t_{\nu+1}) - \ztsig(\tnu).
\end{aligned}
\]
We suppose that $a_\tru =  a$ and $\vxi = \veta$.

Recall also that
\begin{flalign}
    &\Delta \qnu = (a \qnu + \unu) \Delta \tnu^* + \Delta W_\nu,\label{eq: moi 1}&\\
    &\Delta \zonu = \qnu (\Delta \qnu - \unu \Delta \tnu) = a \qnu^2 \Delta \tnu^* + \qnu \unu (\Delta \tnu^* - \Delta \tnu) + \qnu \Delta W_\nu,\label{eq: moi 2}\\
    &\Delta \ztnu = \qnu^2 \Delta \tnu. \label{eq: moi 3}
\end{flalign}

We condition on $\cF_\nu$; thus, $\qnu$ and $\unu$ are deterministic, while $\Delta W_\nu$ is normal, with mean 0 and variance $\Delta \tilde{t}_\nu$. We suppose that
\begin{equation}\label{eq: moi 4}
    |\qnu|, |\unu| \le Q, \;\text{with}\; Q\ge C \;\text{given}.
\end{equation}
Then
\begin{flalign*}
    & \Delta \qnu = O(Q) \Delta \tnu + \Delta W_\nu,&\\
& \Delta \zonu = O(Q^2) \Delta \tnu + \qnu \Delta W_\nu,\\
& \Delta \ztnu = O(Q^2) \Delta \tnu,
\end{flalign*}
so that
\begin{flalign}
    & (\Delta \qnu)^2 = O(Q^2) (\Delta \tnu)^2 + 2 O(Q) \Delta \tnu \Delta W_\nu + (\Delta W_\nu)^2,\label{eq: moi 5} &\\
    & (\Delta \zonu)(\Delta \qnu) = O(Q^3) (\Delta \tnu)^2 + O(Q^2) \Delta \tnu \Delta W_\nu + \qnu (\Delta W_\nu)^2,\label{eq: moi 6}\\
    & (\Delta \ztnu) (\Delta \qnu) = O(Q^3)(\Delta \tnu)^2 + O(Q^2) \Delta \tnu \Delta W_\nu,\label{eq: moi 7}\\
    & (\Delta \zonu)^2 =O(Q^4)(\Delta \tnu)^2 + O(Q^3) (\Delta \tnu) (\Delta W_\nu) + \qnu^2 (\Delta W_\nu)^2,\label{eq: moi 8}\\
& (\Delta \ztnu) (\Delta \zonu) = O(Q^4)(\Delta \tnu)^2 + O(Q^3)(\Delta \tnu)(\Delta W_\nu)\label{eq: moi 9}\\
& (\Delta \ztnu)^2 = O(Q^4)(\Delta \tnu)^2.\label{eq: moi 10}
\end{flalign}
Moreover, all the quantities $O(Q^\text{power})$ above are deterministic once we condition on $\cF_\nu$.

Therefore, \eqref{eq: moi 1}--\eqref{eq: moi 3} and \eqref{eq: moi 5}--\eqref{eq: moi 10} yield the following.
\begin{flalign}
    & \E[\Delta \qnu | \cF_\nu] = (a\qnu + \unu) \Delta \tnu + O(Q(\Delta \tnu)^2),\label{eq: moi 11} &\\
    & \E[\Delta \zonu | \cF_\nu] = a\qnu^2(\Delta \tnu) + O(Q^2 (\Delta \tnu)^2),\label{eq: moi 12}\\
    & \E[ \Delta \ztnu | \cF_\nu] = \qnu^2 \Delta \tnu, \label{eq: moi 13}\\
    & \E[ (\Delta \zonu)^2 | \cF_\nu] = \qnu^2 \Delta \tnu + O(Q^4 (\Delta \tnu)^2),\label{eq: moi 14}\\
    & \E[(\Delta \zonu) (\Delta \ztnu) | \cF_\nu ] = O(Q^4 (\Delta \tnu)^2),\label{eq: moi 16},\\
    & \E[(\Delta \ztnu)^2 | \cF_\nu] = O(Q^4 (\Delta \tnu)^2),\label{eq: moi 17}\\
    & \E[ (\Delta \qnu)^2 | \cF_\nu] = \Delta \tnu + O(Q^2 (\Delta \tnu)^2), \label{eq: moi 18}\\
    & \E[(\Delta \qnu) (\Delta \zonu) | \cF_\nu] = \qnu \Delta \tnu + O(Q^3 (\Delta \tnu)^2), \label{eq: moi 19}\\
    & \E[(\Delta \qnu) (\Delta \ztnu) | \cF_\nu] = O(Q^3 (\Delta \tnu)^2).\label{eq: moi 20}
\end{flalign}
(Here we have used the fact that $\Delta \tnu^*$ and $\Delta \tilde{t}_\nu$ are $\Delta \tnu + O((\Delta \tnu)^2).$)

We define an event
\begin{multline}
    \tame_\nu = \{ |\Delta \qnu | \le 2 (\Delta \tnu)^{2/5},  |\Delta \zonu| \le 2 (\Delta \tnu)^{2/5}, \\
    | \Delta \ztnu| \le 2 (\Delta \tnu)^{2/5}\}.
\end{multline}
From Lemma \ref{lem: rare events} we obtain the estimate
\[
\E[(\Delta \qnu)^{\alpha_0}(\Delta \zonu)^{\alpha_1}(\Delta \ztnu)^{\alpha_2} \cdot \mathbbm{1}_{\nottame_\nu} | \cF_\nu] = O((\Delta \tnu)^{100})
\]
for integers $\alpha_0,\alpha_1, \alpha_2 \ge 0 $ with $\alpha_0 + \alpha_1 + \alpha_2 \le 2$. Also from Lemma \ref{lem: rare events}, we recall that
\[
\prob[\nottame_\nu | \cF_\nu] \le C \cdot (\Delta \tnu)^{1000}.
\]
Consequently, \eqref{eq: moi 11}--\eqref{eq: moi 20} imply the conclusions of the following lemma.

\begin{lem}[Lemma on Moments of Increments]\label{lem: moments}
    We fix $a_\etru = a$ and $\vxi = \veta$.

    We suppose that
    \[
    Q \ge C \text{ and } (\Delta t_\emx) < Q^{-1000}.
    \]
    Define the event
    \[
    \etame(\nu) = \{ |\Delta \qnusig| \le 2 (\Delta \tnu)^{2/5}, |\Delta \zonusig| \le 2 (\Delta \tnu)^{2/5}, | \Delta \ztnusig| \le 2 (\Delta \tnu)^{2/5}\}.
    \]
    Let $\cF_\nu$ be the sigma algebra of events determined by $\qsig(t_\mu)$ ($0 \le \mu \le \nu)$.

    Fix $\nu$, and suppose that
    \[
    |\qsig(t_\nu)|, |\usig(t_\nu)| \le Q.
    \]
    Then the following hold.
    \begin{flalign*}
        &\eE[(\Delta \qnusig) \mathbbm{1}_{\etame(\nu)} | \cF_\nu] = ( a \qnusig + \unusig)(\Delta \tnu) + \eerr\; 1, &\\
        & \eE[(\Delta \zonusig) \mathbbm{1}_{\etame(\nu)}|\cF_\nu] = a(\qnusig)^2(\Delta \tnu) + \eerr\; 2, \\
        & \eE[(\Delta \ztnusig)\mathbbm{1}_{\etame(\nu)}|\cF_\nu] = (\qnusig)^2(\Delta \tnu) + \eerr\; 3\\
        & \eE[(\Delta \qnusig)^2 \mathbbm{1}_{\etame(\nu)}|\cF_\nu] = (\Delta \tnu) + \eerr\; 4,\\
        &\eE[(\Delta \qnusig)(\Delta \zonusig)\mathbbm{1}_{\etame(\nu)}|\cF_\nu] = \qnusig(\Delta \tnu) + \eerr\; 5,\\
        &\eE[(\Delta \qnusig) (\Delta \ztnusig) \mathbbm{1}_{\etame(\nu)}|\cF_\nu] = \eerr\; 6,\\
        & \eE[(\Delta \zonusig)^2 \mathbbm{1}_{\etame(\nu)}|\cF_\nu] = (\qnusig)^2(\Delta \tnu) + \eerr \; 7,\\
        & \eE[(\Delta \zonusig)(\Delta \ztnusig)\mathbbm{1}_{\etame(\nu)}|\cF_\nu] = \eerr\; 8,\\
        & \eE[(\Delta \ztnusig)^2 \mathbbm{1}_{\etame(\nu)}|\cF_\nu] = \eerr\; 9,
    \end{flalign*}
    where
    \[
    |\eerr\; 1|, \dots, |\eerr\; 9| \le C' Q^4 (\Delta \tnu)^2.
    \]
    Also, under the above assumptions, we have
    \[
    \eprob[\enottame(\nu)|\cF_\nu] \le (\Delta \tnu)^{20}.
    \]
    Here, of course,
    \[
    \begin{aligned}
        & \qnusig = \qsig(\tnu),& & \unusig = \usig(\tnu),&\\
        & \Delta \tnu = t_{\nu+1} - \tnu,& & \Delta \qnusig = \qsig(t_{\nu+1}) - \qsig(t_\nu),&\\ & \Delta \zonusig = \zosig(t_{\nu+1}) - \zosig(\tnu),& & \Delta \ztnusig = \ztsig(t_{\nu+1}) - \ztsig(\tnu).
    \end{aligned}
    \]
    The constants $C,C'$ are determined by $q_0, a_\emx, C_\etame$ and an upper bound for $T$.
\end{lem}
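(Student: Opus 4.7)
The plan is to derive each identity by expressing the increments in terms of $\Delta W_\nu$, taking conditional expectations using that $\Delta W_\nu$ is $\cF_\nu$-independent and $N(0,\Delta\tilde{t}_\nu)$, and then showing that inserting $\mathbbm{1}_{\tame(\nu)}$ changes each expectation by at most the claimed error. Throughout, $q_\nu^\sigma, u_\nu^\sigma$, and the coefficients $a\qnu+u_\nu$ are $\cF_\nu$-measurable, so the computation reduces to Gaussian integrals against $\Delta W_\nu$.

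First I would record the basic identities \eqref{eq: moi 1}--\eqref{eq: moi 3} expressing $\Delta \qnu$, $\Delta \zonu$, $\Delta \ztnu$ as $\cF_\nu$-measurable affine functions of $\Delta W_\nu$ (plus a deterministic $O(Q^2)\,\Delta\tnu$ drift). Expanding the products and using $\Delta\tnu^* = \Delta\tnu + O((\Delta\tnu)^2)$ and $\Delta\tilde{t}_\nu = \Delta\tnu + O((\Delta\tnu)^2)$ from \eqref{eq: 1.11} and \eqref{eq: 1.10}, together with the Gaussian moments $\E[\Delta W_\nu|\cF_\nu]=0$, $\E[(\Delta W_\nu)^2|\cF_\nu]=\Delta\tilde{t}_\nu$, I would directly compute the unconditioned analogues of the nine identities (these are exactly \eqref{eq: moi 11}--\eqref{eq: moi 20} already displayed). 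The hypothesis $|\qnu|, |\unu|\le Q$ bounds every prefactor by $O(Q^2)$, so the error accumulated from replacing $\Delta\tnu^*$, $\Delta\tilde{t}_\nu$ by $\Delta\tnu$ and from dropping cubic-in-$\Delta W_\nu$ terms is $O(Q^4(\Delta\tnu)^2)$, matching the required bound on $\err\,1,\dots,\err\,9$.

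The second step is to pass from the unconditioned expectations to those with the indicator $\mathbbm{1}_{\tame(\nu)}$ inserted. For each product $X = (\Delta\qnu)^{\alpha_0}(\Delta\zonu)^{\alpha_1}(\Delta\ztnu)^{\alpha_2}$ appearing in the lemma (with $\alpha_0+\alpha_1+\alpha_2\le 2$), I write $\E[X\mathbbm{1}_{\tame(\nu)}|\cF_\nu] = \E[X|\cF_\nu] - \E[X\mathbbm{1}_{\nottame(\nu)}|\cF_\nu]$. Lemma \ref{lem: rare events} (last two bullets) provides exactly the needed control: it gives $\prob[\nottame(\nu)|\cF_\nu]\le C(\Delta\tnu)^{1000}$, and more importantly the moment bound
\[
\E\bigl[(|\Delta\qnu|+|\Delta\zonu|+|\Delta\ztnu|)^p\mathbbm{1}_{\nottame(\nu)}\,\big|\,\cF_\nu\bigr]\le C(\Delta\tnu)^{100}\qquad(p=1,2),
\]
under the assumption $|\qnu|\le (\Delta\tnu)^{-1/1000}$, which holds since $Q\le (\Delta t_\mx)^{-1/1000}$. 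This immediately absorbs the non-tame correction into the $O(Q^4(\Delta\tnu)^2)$ error, and the tame-indicator bound $\prob[\nottame(\nu)|\cF_\nu]\le (\Delta\tnu)^{20}$ asserted at the end of the lemma is an a fortiori consequence.

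I do not foresee a real obstacle; the work is bookkeeping of error terms. The only point demanding care is coordinating the two sources of error, namely the Taylor replacement $\Delta\tnu^*,\Delta\tilde{t}_\nu\leftrightarrow\Delta\tnu$ (which is genuinely $O(Q^4(\Delta\tnu)^2)$ and cannot be improved) and the tame truncation (which is much smaller, of size $(\Delta\tnu)^{100}$). Presenting the nine identities as a single table driven by the three basic formulas keeps the argument short and makes clear that every error term is controlled by the same $C'Q^4(\Delta\tnu)^2$ bound.
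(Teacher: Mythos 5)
Your proposal is correct and follows essentially the same route as the paper: the paper also first computes $\E[\,\cdot\,|\cF_\nu]$ without the indicator via the Gaussian moments of $\Delta W_\nu$ (the displayed formulas \eqref{eq: moi 11}--\eqref{eq: moi 20}), with the $O(Q^4(\Delta t_\nu)^2)$ error coming from $\Delta t_\nu^*,\Delta\tilde{t}_\nu = \Delta t_\nu + O((\Delta t_\nu)^2)$, and then inserts $\mathbbm{1}_{\tame(\nu)}$ by invoking exactly the two tail bounds from Lemma \ref{lem: rare events} that you cite. The only minor point worth making explicit in a write-up is that $\mathbbm{1}_{\nottame(\nu)}\le\mathbbm{1}_{|\Delta q_\nu^\sigma|+|\Delta\zeta_{1,\nu}^\sigma|+|\Delta\zeta_{2,\nu}^\sigma|>(\Delta t_\nu)^{2/5}}$, so the truncated-moment estimate from Lemma \ref{lem: rare events} applies directly.
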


\section{Stability Under Change of Assumption}
Let $f(\barq_1,\dots,\barq_N)$ be a nonnegative function on $\R^N$. For $\veta \in \{0,1\}^\N$ and $a_1,a_2 \in [-a_\mx, +a_\mx]$, we compare
\[
\E_{a_1,\veta}[ f(\qsig(t_1), \dots, \qsig(t_N))]
\]
with
\[
\E_{a_2,\veta}[f(\qsig(t_1),\dots,\qsig(t_N))]
\]
for a tame strategy $\sigma$.

To do so, let $\Phi(\barq_1,\dots, \barq_N,a)$ denote the probability density of 
\[
(\qsig(t_1),\dots,\qsig(t_N))
\]
assuming that $\vxi = \veta$ and $a_\tru = a$.

According to Lemma \ref{lem: prob dist}, the following holds for $a = a_1, a_2$:

Let $Q>C$ and $Q\le (\Delta t_\mx)^{-1/1000}$, and suppose that
\begin{equation}\label{eq: suca 1}
\begin{split}
& \max_\nu(|\barq_\nu| + |\baru_\nu|) \le Q, \\
    &\Big| \sum_{0 \le \nu < N} (\barq_{\nu+1} - \barq_\nu)^2 - T \Big| \le Q^2 (\Delta t_\mx)^{1/4},
\end{split}
\end{equation}
where $\baru_\nu$ denotes the value of $\usig(\tnu)$ assuming $\qsig(t_\mu) = \barq_\mu$ for $\mu \le \nu$. (Recall that the $\baru_\nu$ don't depend on $a$.) Then
\begin{equation}\label{eq: suca 2}
\begin{split}
    \Phi(\barq_1,\dots,\barq_N,a&) \\= \prod_{0 \le \nu < N} \Big\{&\frac{1}{\sqrt{2\pi\Delta \tnu}} \exp\Big( - \frac{1}{2\Delta \tnu}[(\barq_{\nu+1}-\barq_\nu) - (a\barq_\nu + \baru_\nu) \Delta \tnu]^2\Big)\Big\}\\ &\cdot ( 1 + \err(\barq_1,\dots, \barq_N, a)
\end{split}
\end{equation}
with
\begin{equation}\label{eq: suca 3}
    |\err(\barq_1,\dots,\barq_N,a)| \le C Q^2 (\Delta t_\mx)^{1/4}
\end{equation}
and $\barq_0 \equiv q_0$.

Applying \eqref{eq: suca 2}, \eqref{eq: suca 3} with $a=a_1$ and with $a=a_2$, we see that if \eqref{eq: suca 1} holds, then
\begin{equation}\label{eq: suca 4}
\begin{split}
    \Phi(\barq_1, \dots, \barq_N, a_2) = &\Phi(\barq_1, \dots, \barq_N, a_1) \\ &\cdot \exp\Big( - \frac{1}{2}[a_2^2 - a_1^2] \barztsig(T) + [a_2 - a_1] \barzosig(T)\Big)\\
    &\cdot (1 + \err(\barq_1,\dots,\barq_N,a_1,a_2))
\end{split}
\end{equation}
with
\begin{equation}\label{eq: suca 5}
    \barzosig(T):= \sum_{0 \le \nu < N} \barq_\nu([\barq_{\nu+1} - \barq_\nu] - \baru_\nu \Delta \tnu), \qquad \barztsig(T):= \sum_{0 \le \nu < N} \barq_\nu^2 \Delta \tnu,
\end{equation}
and
\begin{equation}\label{eq: suca 6}
    |\err(\barq_1,\dots,\barq_N,a_1,a_2)| \le C Q^2 (\Delta t_\mx)^{1/4}.
\end{equation}
If \eqref{eq: suca 1} holds, then
\begin{equation}\label{eq: suca 7}
    |\barzosig(T)|, |\barztsig(T)| \le C Q^2
\end{equation}
(see \eqref{eq: 1.27}).

Letting $\cE$ denote the event that $(\qsig(t_1) , \dots, \qsig(t_N))$ satisfies \eqref{eq: suca 1}, we conclude from \eqref{eq: suca 4}, \eqref{eq: suca 6}, \eqref{eq: suca 7} that
\begin{equation}\label{eq: suca 8}
    \begin{split}
        \E_{a_2,\veta}[f(\qsig(t_1),&\dots,\qsig(t_N))\cdot \mathbbm{1}_{\cE}]\\ \le &\E_{a_1,\veta}[f(\qsig(t_1),\dots,\qsig(t_N))\cdot \mathbbm{1}_\cE]\cdot \exp(CQ^2 |a_2-a_1|)\\ & \cdot (1 + \err_f(a_1,a_2))
    \end{split}
\end{equation}
with
\begin{equation}\label{eq: suca 9}
    |\err_f(a_1,a_2)|\le C Q^2(\Delta t_\mx)^{1/4}.
\end{equation}
On the other hand, Lemma \ref{lem: rare events} shows that the complement of $\cE$, denoted $\compE$, satisfies
\[
\prob_{a_2,\veta}[\compE] \le \exp(-cQ^2),
\]
and therefore, by Cauchy-Schwarz, we have
\begin{equation}\label{eq: suca 10}
\begin{split}
    \E_{a_2,\veta}[f(\qsig(t_1),\dots, &\qsig(t_N))\cdot \mathbbm{1}_{\compE}]\\ &\le \exp(-c'Q^2) \Big( \E_{a_2, \veta}[ f^2(\qsig(t_1),\dots,\qsig(t_N))]\Big)^{1/2}.
\end{split}
\end{equation}
Combining \eqref{eq: suca 9} and \eqref{eq: suca 10}, we obtain the following result.

\begin{lem}[Lemma on Change of Assumption]\label{lem: change of assumption}
    Suppose $Q \ge C $ for a large enough constant $C$.

    Let $a_1, a_2 \in [-a_\emx, + a_\emx]$, let $\veta \in \{0,1\}^\N$, and let $f(\barq_1, \dots, \barq_N)$ be a nonnegative function on $\R^\N$. Let $\sigma$ be a tame strategy. Assume that $Q \le (\Delta t_\emx)^{-1/1000}$. Then
    \begin{align*}
        \eE_{a_2, \veta} [ &f(\qsig(t_1), \dots, \qsig(t_N))] \\ \le & \exp(C Q^2 |a_2 - a_1|) (1 + CQ^2 (\Delta t_\emx)^{1/4}) \eE_{a_1, \veta} [ f(\qsig(t_1),\dots, \qsig(t_N))]\\ &+ \exp(-c Q^2) \Big( \eE_{a_2, \veta} [ f^2(\qsig(t_1),\dots, \qsig(t_N))]\Big)^{1/2}.
    \end{align*}
\end{lem}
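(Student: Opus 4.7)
My plan is to compare the two expectations by splitting the sample space into a \emph{good} event $\cE$ on which the trajectory $(\qsig(t_1),\dots,\qsig(t_N))$ satisfies the hypotheses of Lemma \ref{lem: prob dist}, and its complement $\compE$. On $\cE$, both probability densities $\Phi(\barq_1,\dots,\barq_N,a_1)$ and $\Phi(\barq_1,\dots,\barq_N,a_2)$ can be written in the explicit Gaussian-product form given by that lemma, up to a multiplicative error of size $O(Q^2(\Delta t_\mx)^{1/4})$. Taking the ratio, the $\frac{1}{\sqrt{2\pi\Delta \tnu}}$ factors and all terms in the exponent not involving $a$ cancel exactly, since the control values $\baru_\nu$ do not depend on $a$. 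What remains in the exponent is, after expanding $[\Delta \barq_\nu - (a\barq_\nu+\baru_\nu)\Delta \tnu]^2$ and cancelling, precisely $-\frac{1}{2}(a_2^2-a_1^2)\barztsig(T) + (a_2-a_1)\barzosig(T)$, with $\barzosig(T), \barztsig(T)$ as in \eqref{eq: suca 5}.

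On $\cE$ we have the bounds $|\barzosig(T)|, |\barztsig(T)| \le CQ^2$ (using the identity \eqref{eq: 1.27} together with $\max |\barq_\nu|, \max|\baru_\nu| \le Q$), so the exponential factor in the ratio is bounded by $\exp(CQ^2|a_2-a_1|)$. Multiplying $f(\barq_1,\dots,\barq_N)\cdot\mathbbm{1}_\cE$ by this density ratio and integrating therefore gives the first summand in the claimed inequality:
\[
\E_{a_2,\veta}[f\cdot\mathbbm{1}_\cE] \le \exp(CQ^2|a_2-a_1|)(1+CQ^2(\Delta t_\mx)^{1/4})\,\E_{a_1,\veta}[f\cdot\mathbbm{1}_\cE] \le \exp(CQ^2|a_2-a_1|)(1+CQ^2(\Delta t_\mx)^{1/4})\,\E_{a_1,\veta}[f].
\]

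For the contribution from $\compE$, Lemma \ref{lem: rare events} (applied under the assumption $a_\tru=a_2$) yields $\prob_{a_2,\veta}[\compE] \le C\exp(-cQ^2)$; note the hypothesis $Q \le (\Delta t_\mx)^{-1/1000}$ ensures we are in the regime where Lemma \ref{lem: prob dist} applies for \emph{both} $a_1$ and $a_2$, and the failure conditions defining $\compE$ each have small probability by Lemma \ref{lem: rare events}. Cauchy–Schwarz applied to $f\cdot\mathbbm{1}_{\compE}$ then gives
\[
\E_{a_2,\veta}[f\cdot\mathbbm{1}_{\compE}] \le \bigl(\prob_{a_2,\veta}[\compE]\bigr)^{1/2}\bigl(\E_{a_2,\veta}[f^2]\bigr)^{1/2} \le \exp(-cQ^2)\bigl(\E_{a_2,\veta}[f^2]\bigr)^{1/2},
\]
and combining with the $\cE$-contribution yields the lemma.

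The main obstacle is a careful bookkeeping step rather than a conceptual one: verifying that when we form the ratio $\Phi(\barq,a_2)/\Phi(\barq,a_1)$ from the Gaussian-product formula, every term independent of $a$ (including the prefactors and the $(\Delta\barq_\nu)^2/(2\Delta\tnu)$ pieces) cancels cleanly, leaving exactly the $\barzosig(T),\barztsig(T)$ expression; and that the multiplicative $(1+O(Q^2(\Delta t_\mx)^{1/4}))$ errors from the two applications of Lemma \ref{lem: prob dist} combine into a single such factor without introducing extra $a$-dependent growth. Once this algebraic identity is in hand, the probabilistic estimates on $\compE$ from Lemma \ref{lem: rare events} finish the argument essentially immediately.
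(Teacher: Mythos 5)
Your proposal is correct and follows essentially the same route as the paper: split on the good event $\cE$ where Lemma \ref{lem: prob dist} applies for both $a_1$ and $a_2$, compute the ratio of the Gaussian-product densities to get the $\exp\bigl(-\tfrac{1}{2}(a_2^2-a_1^2)\barztsig(T)+(a_2-a_1)\barzosig(T)\bigr)$ factor, bound $|\barzosig(T)|,|\barztsig(T)|\le CQ^2$ on $\cE$, and handle $\compE$ by Lemma \ref{lem: rare events} plus Cauchy--Schwarz. The algebraic cancellation you flag as the "main obstacle" does go through exactly as you describe (the $(\Delta\barq_\nu)^2$, the $\baru_\nu$-only, and the normalization pieces cancel in the ratio), so there is no gap.
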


\section{Disasters Due to Undercontrol}

Our tame strategies $\sigma$ are defined to guarantee that
\begin{equation}\label{eq: du 1}
    |\usig(t_\nu)| \le C [|\qsig(\tnu)| + 1],
\end{equation}
which is reasonable, given our assumption that $a_\tru \in [-a_\mx, a_\mx]$. However, if $a_\tru \gg a_\mx$, then we expect \eqref{eq: du 1} to undercontrol, leading to exponentially large expected cost.

The following lemma confirms that intuition.

\begin{lem}[Lemma on Undercontrol]\label{lem: undercontrol}
    Let $\sigma$ be a deterministic strategy satisfying \eqref{eq: du 1}, and suppose $a_\etru =a $, where $a$ exceeds a large enough constant $C_*$. Write $\eE_a[\dots]$ for the corresponding expectation. Assume that $\Delta t_\emx$ is less than a small enough positive number determined by $a$ and $T$. Then
    \[
    \eE_a \Big[ \sum_{0 \le \nu < N} \{ \qsig(\tnu))^2 + (\usig(\tnu))^2\}\Delta \tnu\Big] \ge c T^2 \exp(c a T).
    \]
\end{lem}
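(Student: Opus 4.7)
The plan is to lower-bound the cost by dropping the non-negative $u^2$ contribution and then to show that $\sum_\nu \E_a[(\qsig(\tnu))^2]\Delta\tnu$, which is a Riemann sum approximating $\int_0^T y(t)\,dt$ with $y(t):=\E_a[(\qsig(t))^2]$, grows like $c T^{2}e^{caT}$ once $a$ is large. The heuristic is that a tame control with constant $C_\tame$ can subtract at most $\sim C_\tame q$ from the drift $aq$, so the effective growth rate stays at least $\sim a-C_\tame$; Young's inequality $(q+u/a)^2\ge 0$ makes this quantitative in a way that avoids any logarithmic loss in $a$.

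On each subinterval $[\tnu,t_{\nu+1})$, $u = u_\nu$ is $\cF_\nu$-measurable and constant, so Itô applied to $q^2$ gives $y'(t) = 2ay(t) + 2\E_a[q(t)u_\nu] + 1$. Using $2qu\ge -aq^2-u^2/a$ and the tame estimate $\E_a[u_\nu^2]\le 2C_\tame^2(y(\tnu)+1)$, together with the bound $y(\tnu) = (1+O(a\Delta t_\mx))y(t)+O(\Delta t_\mx)$ (which follows because $y'$ is controlled by $a(y+1)$ on each subinterval, via Lemma \ref{lem: rare events}), I obtain, for $a$ above a threshold $C_*$ depending only on $C_\tame$ (e.g.\ $a\ge 6C_\tame^2$) and $\Delta t_\mx$ small enough depending on $a$ and $T$, the clean differential inequality
\[
y'(t) \;\ge\; \alpha\, y(t) + \beta, \qquad \alpha := a/2,\; \beta := 1/4.
\]
Grönwall with $y(0)=q_0^2\ge 0$ gives $y(t)\ge (\beta/\alpha)(e^{\alpha t}-1)$, so
\[
\int_0^T y(t)\,dt \;\ge\; \frac{\beta}{\alpha^2}\bigl(e^{\alpha T}-1-\alpha T\bigr).
\]
An elementary inequality $e^x-1-x\ge c_0 x^2 e^{c_0 x}$ for all $x\ge 0$ (for some absolute $c_0>0$; verified by matching Taylor coefficients at $0$ and using that $e^x/(x^2 e^{c_0 x})\to\infty$) applied with $x=\alpha T$ yields $\int_0^T y(t)\,dt\ge c_0\beta\, T^2 e^{c_0 aT/2}$, which is of the required form $c T^2 e^{caT}$.

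Finally, I pass from the integral to the Riemann sum: since $|y'(t)|\le C'a(y(t)+1)$, the quantities $y(t)$ and $y(\tnu)$ agree up to a factor $1+O(a\Delta t_\mx)$, hence $\sum_\nu y(\tnu)\Delta\tnu$ and $\int_0^T y(t)\,dt$ differ by the same factor, which is absorbed for $\Delta t_\mx$ small enough in terms of $a$ and $T$. Because $\sum_\nu\{(\qsig(\tnu))^2+(\usig(\tnu))^2\}\Delta\tnu \ge \sum_\nu (\qsig(\tnu))^2\Delta\tnu$, the lemma follows. The main obstacle is bookkeeping: the tame bound only controls $u(t)$ by $q(\tnu)$ (not $q(t)$), the moment estimates of Lemma \ref{lem: moments} carry $O((\Delta\tnu)^2)$ corrections, and the Riemann-to-integral passage introduces a further multiplicative error; all of these must be shown to stay strictly smaller than the positive gap $\beta = 1/4$, which requires $\Delta t_\mx$ to be small compared to $1/a$. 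Conceptually the argument is routine, but the quantitative absorption of errors is what the hypothesis ``$\Delta t_\mx$ less than a small enough positive number determined by $a$ and $T$'' is for.
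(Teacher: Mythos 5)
Your proposal is correct and follows essentially the same route as the paper: the key step in both is to evolve the conditional second moment of $q$ and absorb the cross term $2qu$ by Young's inequality (the paper takes a fixed small $\delta$ with $2q_\nu u_\nu \ge -\delta^{-2}q_\nu^2 - \delta^2 u_\nu^2$ and requires $a > C\delta^{-2}$; you take weight $\epsilon = a$, requiring $a \gtrsim C_{\text{TAME}}^2$---same threshold up to constants). The presentational differences---you pass to the continuous-time ODE for $y(t) = \E_a[q(t)^2]$ and use Gr\"onwall plus the elementary inequality $e^x - 1 - x \ge c_0 x^2 e^{c_0 x}$, whereas the paper stays with the discrete recursion $\E_a[q_{\nu+1}^2 \mid \cF_\nu] \ge (1 + a\Delta t_\nu^*)q_\nu^2 + \tfrac12 \Delta t_\nu$ and splits the estimate into a linear-growth phase (yielding $\E_a[q_{\nu_0}^2] \ge T/4$ for $t_{\nu_0} \approx T/2$) followed by an exponential-growth phase---are minor, and your version incurs the extra step of relating the Riemann sum $\sum_\nu y(t_\nu)\Delta t_\nu$ back to $\int_0^T y$, a step the paper avoids entirely by never leaving the discrete recursion.
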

\begin{proof}
    We write $\qnu$ for $\qsig(\tnu)$, $\Delta \qnu$ for $q_{\nu+1}-\qnu$, $\unu$ for $\usig(\tnu)$, and $\Delta \tnu$ for $t_{\nu+1}-\tnu$. We let $\cF_\nu$ denote the sigma algebra of events determined by $q_1,\dots, q_\nu$. Thus, $\qnu$ and $\unu$ are deterministic once we condition on $\cF_\nu$.

    Recall that
    \begin{equation}\label{eq: du 2}
        \Delta \qnu = (a\qnu + \unu ) (\Delta \tnu^*) + \Delta W_\nu
    \end{equation}
    where $\Delta W_\nu$ is normal with mean 0 and variance
    \[
    \Delta \tilde{t}_\nu = \frac{\exp(2a\Delta \tnu) - 1}{2a} ;
    \]
    moreover $\Delta W_\nu$ is independent of $\cF_\nu$. Here,
    \[
    \Delta \tnu^* = \frac{\exp(a\Delta \tnu)-1}{a}.
    \]
    Since $\Delta \tnu < \Delta t_\mx$ is less than a small enough positive number determined by $a$ and $T$, we have
    \begin{equation}\label{eq: du 3}
        |\Delta \tilde{t}_\nu - \Delta t_\nu|, |\Delta \tnu^* - \Delta \tnu| < 10^{-3} \Delta \tnu.
    \end{equation}

From \eqref{eq: du 2} we have
\begin{align*}
    q_{\nu+1}^2 = & \qnu^2 + 2\qnu [ (a\qnu + \unu) (\Delta \tnu^*) + \Delta W_\nu] + (a\qnu + \unu)^2(\Delta \tnu^*)^2\\
    & + 2(a\qnu + \unu)(\Delta \tnu^*) \Delta W_\nu + (\Delta W_\nu)^2\\
    \ge & \qnu^2(1+2a\Delta \tnu^*) + 2 \qnu \unu (\Delta \tnu^*) + [2\qnu + 2(a\qnu + \unu)(\Delta \tnu^*)] \Delta W_\nu\\
    &+ (\Delta W_\nu)^2.
\end{align*}
Consequently,
\begin{equation}\label{eq: du 4}
    \E_a[q_{\nu+1}^2 | \cF_\nu] \ge \qnu^2(1+2a\Delta \tnu^*) + 2\qnu \unu(\Delta \tnu^*) + (\Delta \tilde{t}_\nu).
\end{equation}
For $\delta>0$ to be picked in a moment, we have
\[
|2\qnu \unu| \le \delta^{-2} \qnu^2 + \delta^2 \unu^2 \le C \delta^{-2} \qnu^2 + C \delta^2,
\]
thanks to \eqref{eq: du 1}.

Putting this inequality into \eqref{eq: du 4}, we find that
\begin{equation}\label{eq: du 5}
    \E_a[q_{\nu+1}^2| \cF_\nu] \ge \qnu^2(1+[2a-C\delta^{-2}]\Delta \tnu^*) + (\Delta \tilde{t}_\nu - C \delta^2 \Delta \tnu^*).
\end{equation}
We take $\delta$ to be a small enough constant $c$ such that
\[
\Delta \tilde{t}_\nu - C \delta^2 \Delta \tnu^* \ge \frac{1}{2} \Delta \tnu;
\]
see \eqref{eq: du 3}. Since $a$ exceeds a large enough constant $C$, we then have
\[
[2a-C\delta^{-2}] > a,
\]
so from \eqref{eq: du 5} we obtain
\[
\E_a[q_{\nu+1}^2 | \cF_\nu] \ge \qnu^2(1+a\Delta \tnu^*) + \frac{1}{2}\Delta \tnu.
\]
Again applying \eqref{eq: du 3}, and recalling that $\Delta \tnu < \Delta t_\mx$ is less than a small enough positive nuumber determined by $a$ and $T$, we conclude that
\[
\E_a[q_{\nu+1}^2 | \cF_\nu] \ge \exp\Big( \frac{1}{2} a \Delta \tnu\Big) \qnu^2 + \frac{1}{2} \Delta \tnu,
\]
and therefore
\begin{equation}\label{eq: du 6}
    \E_a[q_{\nu+1}^2] \ge \exp\Big( \frac{1}{2} a \Delta \tnu\Big) \E_a[\qnu^2] + \frac{1}{2}\Delta \tnu.
\end{equation}
Since \eqref{eq: du 6} implies that
\[
\E_a[q_{\nu+1}^2] \ge \E_a [\qnu^2] + \frac{1}{2} \Delta \tnu \; \text{for each}\; \nu,
\]
we conclude that
\[
\E_a[\qnu^2] \ge \frac{1}{2} \tnu\;\text{for each}\; \nu.
\]
We pick $\nu_0$ so that
\[
\frac{1}{2}T < t_{\nu_0} < \frac{2}{3}T.
\]
(Our smallness assumption on $\Delta t_\mx$ implies that such a $\nu_0$ exists.) Then 
\[
\E_a[q_{\nu_0}^2] \ge \frac{1}{2} t_{\nu_0} > \frac{1}{4} T.
\]
Returning to \eqref{eq: du 6}, we have
\[
\E_a[q_{\nu+1}^2] \ge \exp\Big(\frac{1}{2} a \Delta \tnu\Big) \E_a[\qnu^2]\; \text{for each}\; \nu,
\]
hence for $\nu \ge \nu_0$ we have
\[
\E_a[\qnu^2] \ge \exp\Big( \frac{1}{2}a [t_\nu - t_{\nu_0}]\Big) \E[q_{\nu_0}^2] \ge \frac{1}{4}T\exp\Big(\frac{1}{2} a [t_\nu - t_{\nu_0}]\Big).
\]
In particular,
\[
\E_a[\qnu^2] \ge \frac{1}{4}T\exp(caT)\;\text{for}\; \tnu \in \Big[\frac{3}{4}T,T\Big].
\]
Consequently,
\begin{align*}
    \E_a\Big[ \sum_{0 \le \nu < N} \{ q_\nu^2 + \unu^2\} \Delta \tnu\Big] &\ge \E_a \Big[ \sum_{\tnu \in [\frac{3}{4}T,T]}\qnu^2 \Delta \tnu\Big]\\ &\ge \frac{1}{4}T \exp(ca T) \cdot \sum_{\tnu \in [ \frac{3}{4}T,T]} \Delta \tnu\\ &\ge c T^2 \exp(ca T),
\end{align*}
since each $\Delta \tnu < \Delta t_\mx$ is less than a small enough positive number determined by $a$ and $T$. The proof of the lemma is complete.
\end{proof}

\section{Costing by Integrals}\label{sec: costing by integrals}
Let $\sigma$ be a deterministic tame strategy. We condition on 
\[
a_\tru = a \in [-a_\mx, +a_\mx],
\]
and write $\prob_a[\cdot]$ and $E_a[\cdot]$ to denote the corresponding probability and expectation.

We want to compare
\[
\cost(\sigma) = \int_0^T \{(\qsig(t))^2 + (\usig(t))^2\} \ dt
\]
with
\[
\cost_D(\sigma) = \sum_{0 \le \nu < N}\{ (\qsig(\tnu))^2 + (\usig(\tnu))^2\} \ \Delta \tnu
\]
(``$D$'' for ``discrete'').

\begin{lem}[Lemma on Costing by Integrals]\label{lem: costing by integrals}
For any $m \ge 1$, we have
\[
\eE_a[ | \cost(\sigma) - \cost_D(\sigma)|^m] \le C_m (\Delta t_\emx)^{m/2}.
\]
\end{lem}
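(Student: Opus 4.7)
The first observation is that $\usig(t)$ is piecewise constant: by construction $\usig(t) = u_\nu$ for $t \in [t_\nu,t_{\nu+1})$, so
\[
\int_{t_\nu}^{t_{\nu+1}} (\usig(t))^2\,dt = u_\nu^2 \Delta t_\nu,
\]
and the contribution of the $u^2$ terms to $\cost(\sigma) - \cost_D(\sigma)$ vanishes identically. Thus
\[
\cost(\sigma) - \cost_D(\sigma) = \sum_{\nu=0}^{N-1} \int_{t_\nu}^{t_{\nu+1}} \bigl[(\qsig(t))^2 - (\qsig(\tnu))^2\bigr]\,dt,
\]
and everything reduces to controlling increments of $\qsig(t)^2$ inside each interval $[\tnu,t_{\nu+1}]$.

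Next I would apply Jensen's inequality (or H\"{o}lder) to the time integral: writing $\tau(t) = \tnu$ for $t \in [\tnu, t_{\nu+1})$,
\[
|\cost(\sigma) - \cost_D(\sigma)|^m \le T^{m-1} \int_0^T \bigl|(\qsig(t))^2 - (\qsig(\tau(t)))^2\bigr|^m\,dt,
\]
so by Fubini it suffices to prove, uniformly in $t \in [\tnu, t_{\nu+1}]$,
\[
\E_a\bigl[\,\bigl|(\qsig(t))^2 - (\qsig(\tnu))^2\bigr|^m\,\bigr] \le C_m (\Delta t_\mx)^{m/2}.
\]
Using $|x^2 - y^2|^m \le C_m\bigl(|x-y|^m |y|^m + |x-y|^{2m}\bigr)$ with $x = \qsig(t)$ and $y = \qsig(\tnu)$, and Cauchy--Schwarz, this in turn reduces to establishing
\[
\E_a\bigl[|\qsig(\tnu)|^{2m}\bigr] \le C_m, \qquad \E_a\bigl[|\qsig(t) - \qsig(\tnu)|^{2m}\bigr] \le C_m (\Delta t_\nu)^m.
\]

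For the first bound, Lemma \ref{lem: rare events} provides $\prob_a[\max_\nu |\qsig(\tnu)| > Q] \le C\exp(-cQ^2)$ for $Q > C$; integrating against the tail yields uniform bounds on all moments of $\qsig(\tnu)$. For the second, I would invoke the explicit integrated ODE \eqref{eq: 1.6}, namely
\[
\qsig(t) - \qsig(\tnu) = (a\qsig(\tnu) + \unu)\bigg[\frac{e^{a(t-\tnu)}-1}{a}\bigg] + \int_{\tnu}^t e^{a(t-s)}\,dW(s).
\]
The first summand is $O((|\qsig(\tnu)| + 1) \Delta \tnu)$ by tameness, contributing at most $C_m (\Delta \tnu)^{2m}$ to the $2m$-th moment after using the moment bound on $\qsig(\tnu)$. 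The second summand is Gaussian with variance $\Delta \tilde t_\nu = O(\Delta \tnu)$, contributing $C_m (\Delta \tnu)^m$. The $(\Delta \tnu)^m$ term dominates, giving the required bound.

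Putting the pieces together, one obtains $\E_a|(\qsig(t))^2 - (\qsig(\tnu))^2|^m \le C_m(\Delta t_\mx)^{m/2}$, and integrating over $t\in [0,T]$ yields the lemma. No step should be genuinely hard: the only thing to be careful about is ensuring the $\qsig(\tnu)$ moment bounds are uniform in $\nu$ and in the partition, which is exactly what the subgaussian tail estimate in Lemma \ref{lem: rare events} provides. The key qualitative point is simply that the Brownian increment produces the $(\Delta \tnu)^{1/2}$ scale, and pairing it with a bounded-moment factor $\qsig(\tnu)$ transfers that scale to $(\qsig(t))^2 - (\qsig(\tnu))^2$.
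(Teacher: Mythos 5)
Your proof is correct and arrives at the same bound by essentially the same decomposition, but with one genuine structural variation worth noting. The paper controls the interval increment via the supremum $\osc(\nu) = \max_{t\in[\tnu,t_{\nu+1}]}|\qsig(t)-\qsig(\tnu)|$ and invokes the tail estimate for $\osc(\nu)$ from Lemma~\ref{lem: rare events} (proved there with the reflection principle). You instead bring the $m$-th power inside the time integral by Jensen and use Fubini, which lets you work pointwise: for each fixed $t\in[\tnu,t_{\nu+1}]$ you bound $\E_a|\qsig(t)-\qsig(\tnu)|^{2m}$ directly from the explicit solution \eqref{eq: 1.6}, separating the $\cF_\nu$-measurable drift (size $O((|\qsig(\tnu)|+1)\Delta\tnu)$, giving $(\Delta\tnu)^{2m}$) from the independent Gaussian stochastic integral (variance $O(\Delta\tnu)$, giving $(\Delta\tnu)^m$). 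This sidesteps the running-maximum tail bound entirely; the only external input is the uniform-in-$\nu$ moment bound $\E_a|\qsig(\tnu)|^{2m}\le C_m$ from the Gaussian tail in Lemma~\ref{lem: rare events}. Both routes give the dominant $(\Delta t_\mx)^{m/2}$ contribution from the cross term $|\qsig(\tnu)|^m|\Delta \qsig|^m$ via Cauchy--Schwarz. Your approach is marginally more self-contained at this point in the paper (no need for the $\osc$ tail bound), while the paper's version reuses an ingredient it has already established and will need again elsewhere.
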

\begin{proof}
    Recall that $\usig(t) = \usig(\tnu)$ for $t \in [\tnu, t_{\nu+1}]$. Hence, 
    \begin{align*}
        \cost(\sigma) - &\cost_D(\sigma) = \sum_{0 \le \nu < N} \int_{\tnu}^{t_{\nu+1}}\{ (\qsig(t))^2 - (\qsig(\tnu))^2\}\ dt\\
        &= \sum_{0 \le \nu < N} \int_{\tnu}^{t_{\nu+1}} \{ [ \qsig(t) - \qsig(\tnu)]^2 + 2\qsig(\tnu)[\qsig(t) - \qsig(\tnu)]\}\ dt.
    \end{align*}

Setting
\[
\osc(\nu) = \max_{t \in [\tnu, t_{\nu+1}]} | \qsig(t) - \qsig(\tnu)|,
\]
we therefore have
\[
|\cost(\sigma) - \cost_D(\sigma)| \le \sum_{0 \le \nu < N}\{ (\osc(\nu))^2 + 2 | \qsig(\tnu)| (\osc(\nu))\} \Delta \tnu,
\]
so that by H\"older's inequality,
\begin{multline*}
|\cost(\sigma) - \cost_D(\sigma)|^m\\ \le C_m \sum_{0 \le \nu < N} \{ (\osc(\nu))^{2m} + |\qsig(\tnu)|^m (\osc(\nu))^m\} \Delta \tnu.
\end{multline*}
Consequently,
\begin{equation}\label{eq: ci 1}
\begin{split}
    \E_a[|\cost(\sigma) -& \cost_D(\sigma)|^m]\\ \le& C_m\sum_{0 \le \nu < N} \E_a[(\osc(\nu)^{2m}](\Delta \tnu) \\
    &+ C_m \sum_{0 \le \nu < N} (\E_a[|\qsig(\tnu)|^{2m}])^{1/2}(\E_a[(\osc(\nu))^{2m}])^{1/2} \Delta \tnu.
\end{split}
\end{equation}

We now estimate the right-hand side of \eqref{eq: ci 1}.

Fix a large enough constant $C_*$, and let $\cF_\nu$ denote the sigma algebra of events determined by $\qsig(t_\mu)$ for $\mu =1,\dots,\nu$.

Lemma \ref{lem: rare events} shows that
\begin{equation}\label{eq: ci 2}
    \E_a[|\qsig(\tnu)|^{2m}] \le C_m
\end{equation}
and that, given $Q_2 \ge Q_1 \ge C_*$, we have
\begin{equation}\label{eq: ci 3}
    \prob_a [ \osc(\nu) > Q_2 (\Delta \tnu)^{1/2} | \cF_\nu] \le C \exp(-c Q_2^2) \;\text{if}\; |\qsig(\tnu)| \le Q_1.
\end{equation}
From \eqref{eq: ci 3} we see that
\[
\E_a[ (\osc(\nu))^{2m} | \cF_\nu] \le C_m Q_1^{2m} (\Delta \tnu)^m\;\text{if}\; |\qsig(\tnu)| \le Q_1.
\]
In particular,
\begin{equation}\label{eq: ci 4}
    \E_a[(\osc(\nu))^{2m} \cdot \mathbbm{1}_{|\qsig(\tnu)| \le C_*}] \le C_m (\Delta \tnu)^m
\end{equation}
and, for $k \ge 0$,
\begin{multline}\label{eq: ci 5}
    \E_a[(\osc(\nu))^{2m}\cdot \mathbbm{1}_{|\qsig(\tnu)| \in [C_* 2^k, C_* 2^{(k+1)})}]\\ \le C_m \cdot (C_* 2^{(k+1)})^{2m}(\Delta \tnu)^m \cdot \prob_a [ |\qsig(\tnu)| > C_* 2^k].
\end{multline}
Another application of Lemma \ref{lem: rare events} gives
\[
\prob_a [ |\qsig(\tnu)| > C_* 2^k] \le C \exp(-c 2^{2k}),
\]
so that \eqref{eq: ci 5} implies that
\[
\E_a[ (\osc(\nu))^{2m}\cdot \mathbbm{1}_{|\qsig(\tnu)| \in [C_* 2^k, C_* 2^{(k+1)})}] \le C_m \cdot (2^{2mk}) (\Delta \tnu)^m \cdot \exp(-c2^{2k}).
\]
Summing over $k\ge 0$, and combining the result with \eqref{eq: ci 4}, we learn that
\begin{equation}\label{eq: ci 6}
    \E_a [ (\osc(\nu))^{2m}] \le C_m (\Delta \tnu)^m.
\end{equation}
From \eqref{eq: ci 1}, \eqref{eq: ci 2}, and \eqref{eq: ci 6} we conclude that
\[
\E_a[ | \cost(\sigma) - \cost_D (\sigma)|^m] \le C_m \sum_{0 \le \nu < N}(\Delta \tnu)^{m/2 + 1} \le C_m' (\Delta t_\mx)^{m/2},
\]
which is the conclusion of the lemma.
\end{proof}

\section{Continuous vs Discrete}\label{sec: continuous vs discrete}

Let $\sigma$ be a tame strategy, possibly depending on coin flips $\vxi$. In this section we compare the following random functions of time.
\begin{flalign*}
    &\qcsig(t) = \qsig(t)\;\text{for}\; t\in [0,T]&\\
    & \qdsig(t) = \qsig(\tnu)\;\text{for}\; t\in[\tnu, t_{\nu+1}), \;\text{each}\; \nu < N.\\
    &\zocsig(t) = \frac{1}{2}(\qsig(t))^2 - \frac{1}{2}q_0^2 -\frac{1}{2}t - \int_0^t \usig(s)\qsig(s)\ ds\;\text{for}\; t \in [0,T].\\
    &\zodsig(t) = \frac{1}{2}(\qsig(\tnu))^2 - \frac{1}{2} q_0^2 - \frac{1}{2}\sum_{0 \le \mu < \nu} (\Delta \qsig(t_\mu))^2 -  \sum_{0 \le \mu < \nu} \usig(t_\mu) \qsig(t_\mu)\ \Delta t_\mu\\ & \qquad \qquad \text{for}\; t \in [t_\nu, t_{\nu+1}),\;\text{each} \; \nu < N.\\
    & \ztcsig(t) = \int_0^t (\qsig(s))^2\ ds \;\text{for}\; t \in [0,T].\\
    & \ztdsig(t) = \sum_{0 \le \mu < \nu} (\qsig(t_\mu))^2 \Delta t_\mu\;\text{for}\; t \in [\tnu, t_{\nu+1}), \; \text{each} \; \nu < N.
\end{flalign*}
We recall that $\usig(t)$ is constant on $[\tnu, t_{\nu+1})$ for each $\nu$, so there is no need to introduce analogous quantities for $\usig$.

We establish the following result.

\begin{lem}[Lemma on Continuous Variants]\label{lem: continuous variants}
    Let $Q$ be greater than a large enough constant $C$. Then there exists an event $\bad(\sigma,Q)$ with the following properties.
    \begin{itemize}
        \item For each $ a \in [-a_\emx, + a_\emx]$ and $\veta \in \{0,1\}^\N$, we have
        \[
        \eprob_{a,\veta}[\bad(\sigma,Q)] \le C \exp(-cQ^2).
        \]
        \item If $\bad(\sigma,Q)$ does not occur, then
        \begin{flalign*}
            &\max_{t \in [0,T)} | \qcsig(t) - \qdsig(t)| \le C Q(\Delta t_\emx)^{1/4}&\\
            & \max_{t \in [0,T)} | \zocsig(t) - \zodsig(t) | \le C Q^2(\Delta t_\emx)^{1/4},\\
            & \max_{t \in [0,T)} | \ztcsig(t) - \ztdsig(t) | \le C Q^2 (\Delta t_\emx)^{1/4}.
        \end{flalign*}
    \end{itemize}
\end{lem}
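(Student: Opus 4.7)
The plan is to define $\bad(\sigma,Q)$ as the union of the failure events from the first three bullets of Lemma \ref{lem: rare events}, together with a pathwise oscillation bound, and then to deduce each of the three conclusions by direct expansion on the complementary good event. Write $\osc(\nu) := \max_{t\in[\tnu,t_{\nu+1}]}|\qsig(t)-\qsig(\tnu)|$, and let $\bad(\sigma,Q)$ consist of the union of $\{\max_\nu(|\qsig(\tnu)|+|\usig(\tnu)|)>Q\}$, $\{\,|\sum_{\mu<\nu}(\qsig(t_{\mu+1})-\qsig(t_\mu))^2-\tnu|>Q^2(\Delta t_\mx)^{1/2}\text{ for some }\nu\,\}$, and $\{\max_\nu\osc(\nu)>C^*Q(\Delta t_\mx)^{1/4}\}$ for a suitable $C^*$. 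The first two components have probability $\le C\exp(-cQ^2)$ directly from Lemma \ref{lem: rare events}. For the third, I would use \eqref{eq: 1.6}: on the event $\max_\nu(|\qsig(\tnu)|+|\usig(\tnu)|)\le Q$, the deterministic part of $\qsig(t)-\qsig(\tnu)$ is $\le CQ\Delta t_\mx\le CQ(\Delta t_\mx)^{1/4}$, while the Gaussian stochastic part $\int_{\tnu}^t e^{a(t-s)}\,dW(s)$ is controlled uniformly in $\nu$ by the last assertion of Lemma \ref{lem: rare events} together with a union bound, the resulting $\sqrt{\log N}$ factor being absorbed by the gap between the pointwise scale $(\Delta t_\mx)^{1/2}$ and the looser scale $(\Delta t_\mx)^{1/4}$ in the target.

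On the complement of $\bad(\sigma,Q)$ the first conclusion is immediate: for $t\in[\tnu,t_{\nu+1})$, $|\qcsig(t)-\qdsig(t)|=|\qsig(t)-\qsig(\tnu)|\le\osc(\nu)\le C^*Q(\Delta t_\mx)^{1/4}$. For the third conclusion I would write
\[
\ztcsig(t)-\ztdsig(t)=\sum_{\mu<\nu}\int_{t_\mu}^{t_{\mu+1}}[(\qsig(s))^2-(\qsig(t_\mu))^2]\,ds+\int_{\tnu}^t(\qsig(s))^2\,ds,
\]
bound each inner summand via $|(\qsig(s))^2-(\qsig(t_\mu))^2|\le\osc(\mu)(|\qsig(s)|+|\qsig(t_\mu)|)\le CQ^2(\Delta t_\mx)^{1/4}$, and observe that the boundary term is $\le CQ^2\Delta t_\mx$; summing and using $\sum_\mu\Delta t_\mu\le T$ gives the required bound. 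For the second conclusion I would decompose
\[
\zocsig(t)-\zodsig(t)=\tfrac12\big[(\qsig(t))^2-(\qsig(\tnu))^2\big]+\tfrac12\Big[\sum_{\mu<\nu}(\qsig(t_{\mu+1})-\qsig(t_\mu))^2-t\Big]-\Big[\int_0^t\usig\qsig\,ds-\sum_{\mu<\nu}\usig(t_\mu)\qsig(t_\mu)\Delta t_\mu\Big].
\]
The first piece is $\le Q\,\osc(\nu)$, the second is at most $|\sum(\cdots)-\tnu|+|\tnu-t|\le Q^2(\Delta t_\mx)^{1/2}+\Delta t_\mx$ by the third bullet of Lemma \ref{lem: rare events}, and the third reduces on each inner interval (using that $\usig$ is constant equal to $\usig(t_\mu)$ there) to $\usig(t_\mu)\int_{t_\mu}^{t_{\mu+1}}[\qsig(s)-\qsig(t_\mu)]\,ds$, bounded by $Q\cdot\osc(\mu)\cdot\Delta t_\mu$, with the boundary contribution $\le CQ^2\Delta t_\mx$; summing yields the claimed $CQ^2(\Delta t_\mx)^{1/4}$.

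The main obstacle is the uniform oscillation bound used in the construction of $\bad(\sigma,Q)$. The pointwise estimate from the final assertion of Lemma \ref{lem: rare events} gives Gaussian control at scale $(\Delta \tnu)^{1/2}$, but a naive union bound over the $N$ partition intervals introduces an $N$-factor that must be dealt with; the slack between the pointwise scale $(\Delta t_\mx)^{1/2}$ and the weaker scale $(\Delta t_\mx)^{1/4}$ appearing in the conclusion is precisely what permits one to absorb $\sqrt{\log N}$ into the $Q$ inside the exponent, producing a clean $C\exp(-cQ^2)$ tail uniform in the partition. Once that step is in place, the remaining bookkeeping in the expansions above is straightforward.
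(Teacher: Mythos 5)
Your decompositions of $\zocsig-\zodsig$ and $\ztcsig-\ztdsig$ are identical to the paper's (its $\textsc{Term}\,1$ through $\textsc{Term}\,5$ for $\zeta_1$, and the term-by-term estimate for $\zeta_2$), and the choice of $\bad(\sigma,Q)$ as the union of the three rare-event failures plus an oscillation failure is also what the paper uses. The bookkeeping on the complement of $\bad$ is correct.

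The one place where your argument is imprecise is the union bound for the oscillation event. You invoke the pointwise bound $\prob[\osc(\nu)>CQ'(\Delta\tnu)^{1/2}\mid\cF_\nu]\le C\exp(-cQ'^2)$ and then say the $N$-factor from the union bound is absorbed by ``putting $\sqrt{\log N}$ into $Q$.'' For a general partition this doesn't quite close: $N$ need not be comparable to $T/\Delta t_\mx$ (you could have a single interval of size $\Delta t_\mx$ together with a vast number of tiny intervals), so $\sqrt{\log N}$ can be arbitrarily large for fixed $\Delta t_\mx$, and absorbing it into $Q$ would make the required lower bound on $Q$ depend on the partition, whereas the lemma asks for a bound that holds for all $Q\ge C$ with $C$ a constant. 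What the paper does instead is rescale $Q\mapsto Q(\Delta\tnu)^{-1/4}$ in the pointwise estimate (after first removing the conditioning on $|q_\nu|\le Q$ using the tail estimate for $q_\nu$), obtaining
\[
\prob[\osc(\nu)>CQ(\Delta\tnu)^{1/4}]\le C\exp(-cQ^2(\Delta\tnu)^{-1/2})\le C'\exp(-cQ^2)\,\Delta\tnu ,
\]
where the second inequality holds uniformly in $\nu$ once $Q\ge C$. Summing over $\nu$ and using $\sum_\nu\Delta\tnu=T$ gives a clean $CT\exp(-cQ^2)$ with no reference to $N$ at all. The point is that the per-interval tail naturally carries a factor $\Delta\tnu$, so it is the total time $T$, not the number of intervals, that controls the union bound; your $\sqrt{\log N}$ heuristic obscures this and, as stated, is not uniform in the partition. (You also need, as the paper does, to split off the event $|q_\nu|>Q$ once globally before applying the conditional pointwise bound; otherwise the conditioning hypothesis of the pointwise oscillation estimate isn't satisfied.)
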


\begin{proof}
    From Lemma \ref{lem: rare events}, the following have probability at most $C\exp(-cQ^2)$ when conditioned on $a_\tru = a$, $\vxi = \veta$ (any $a \in [-a_\mx, a_\mx]$, $\veta \in \{0,1\}^\N):$
    \begin{flalign}
        & \max_\nu \Big| \sum_{0 \le \mu < \nu} (\Delta \qsig_\mu)^2 - \tnu \Big| > Q^2 (\Delta t_\mx)^{1/2}&\label{eq: cvd 1}\\
        & \max_\mu |\qsig_\mu| > Q\label{eq: cvd 2}\\
        & \max_\mu |\usig_\mu| > Q.\label{eq: cvd 3}
    \end{flalign}
    Moreover, with
    \[
    \osc(\nu) := \max_{t \in [\tnu, t_{\nu+1}]} |\qsig(t) - \qsig(\tnu)|
    \]
    and $\cF_\nu^\sigma$ defined as the sigma algebra of events determined by the $\qsig(t_\mu)$ for $\mu \le \nu$, Lemma \ref{lem: rare events} gives also the following, for each fixed $\nu$.
    \[
    \prob_{a,\veta}[\osc(\nu) > C Q(\Delta \tnu)^{1/2} | \cF_\nu] \le C \exp(-cQ^2)\;\text{if}\; |\qnu| \le Q.
    \]
    This implies that
    \[
    \prob_{a,\veta} [ \osc(\nu) > CQ(\Delta \tnu)^{1/2}\;\text{and}\; |\qnu| \le Q] \le C\exp(-cQ^2).
    \]
    Since also
    \[
    \prob_{a,\veta}[ |\qnu| > Q] \le C \exp(-c Q^2),
    \]
    it follows that
    \[
    \prob_{a,\veta} [ \osc(\nu)>CQ(\Delta \tnu)^{1/2}] \le C \exp(-c Q^2)
    \]
    for each fixed $\nu$, and for all $Q \ge C$.

    Taking $Q(\Delta \tnu)^{-1/4}$ in place of $Q$ here, we find that
    \begin{multline*}
    \prob_{a,\veta}[\osc(\nu) > CQ(\Delta \tnu)^{1/4}]\\ \le C \exp(-cQ^2(\Delta \tnu)^{-1/2}) \le C' \exp(-cQ^2) (\Delta \tnu).
    \end{multline*}
    Summing over $\nu$, we find that the event
    \begin{equation}\label{eq: cvd 4}
        \osc(\nu) > CQ(\Delta \tnu)^{1/4}\;\text{for some}\; \nu,
    \end{equation}
    conditioned on $a_\tru = a$ and $\vxi = \veta$, has probability at most $C \exp(-cQ^2)$.

    We now define $\bad(Q,\sigma)$ to be the event that at least one of the conditions \eqref{eq: cvd 1}--\eqref{eq: cvd 4} holds. Then, as claimed,
    \[
    \prob_{a,\veta} [ \bad(Q,\sigma)] \le C \exp(-cQ^2)
    \]
    for any $a \in [-a_\mx, + a_\mx]$ and any $\veta \in \{0,1\}^\N$.

    We now suppose that $\bad(Q, \sigma)$ does not occur, and compare $\qcsig(t)$ with $\qdsig(t)$, $\zocsig(t)$ with $\zodsig(t)$, and $\ztcsig(t)$ with $\ztdsig(t)$.

    Since $\bad(Q,\sigma)$ does not occur, we have
    \begin{flalign}
        &\max_\nu | \qsig(\tnu) |, \max_\nu |\usig(\tnu)| \le Q,&\label{eq: cvd 5}\\
        & \max_\nu \Big| \sum_{0 \le \mu < \nu} (\Delta \qsig_\nu)^2 - \tnu \Big| \le Q^2 (\Delta t_\mx)^{1/2}, \label{eq: cvd 6}\\
        & \max_\nu \osc(\nu) \le C Q(\Delta t_\mx)^{1/4}.\label{eq: cvd 7}
    \end{flalign}

    For any $\nu$, and any $t \in [\tnu, t_{\nu+1})$, we have
    \[
    |\qcsig(t) - \qdsig(t)| = | \qsig(t) - \qsig(\tnu)| \le \osc(\nu) \le C Q(\Delta t_\mx)^{1/4}.
    \]
    Thus,
    \[
    \max_{t \in [0,T]} | \qcsig(t) - \qdsig(t)| \le C Q(\Delta t_\mx)^{1/4},
    \]
    as claimed. Next, for any $\nu$ and any $t \in [\tnu, t_{\nu+1})$, we have
    \begin{align*}
        \zocsig(t) - \zodsig(t) = &\frac{1}{2}[(\qsig(t))^2 - (\qsig(\tnu))^2] + \frac{1}{2} \Big[ \sum_{0 \le \mu < \nu}(\Delta \qsig_\mu)^2 - t_\nu\Big] \\
        &- \frac{1}{2}(t-\tnu) - \sum_{0 \le \mu < \nu} \int_{t_\mu}^{t_{\mu+1}} \{ \usig(t_\mu)[\qsig(s) - \qsig(t_\mu)]\}\ ds \\
        &- \int_{t_\nu}^t \usig(\tnu) \qsig(s)\ ds\\
        \equiv & \term\;1 + \term\;2-\term\;3-\term\;4-\term\;5.
    \end{align*}
    (Here, we have used the fact that $\usig(s) = \usig(t_\mu)$ for $s \in [t_\mu, t_{\mu+1})$.) We note that
    \begin{equation}\label{eq: cvd 8}
        \max_{t \in [0,T]} |\qsig(t)| \le \max_{0 \le \nu < N} \{ |\qsig(\tnu)| + \osc(\nu)\} \le C Q,
    \end{equation}
    by \eqref{eq: cvd 5} and \eqref{eq: cvd 7}. Hence, for $t \in [t_\nu, t_{\nu+1})$, we have
    \begin{align*}
        |\term\;1| &\le \max_{\tilde{t}\in[0,T]} | \qsig(\tilde{t})| \cdot | \qsig(t) - \qsig(\tnu)|\\
        &\le C Q \osc(\nu) \le C Q^2 (\Delta t_\mx)^{1/4}.
    \end{align*}
    So
    \[
    |\term\;1| \le C Q^2 (\Delta t_\mx)^{1/4}\;\text{for all}\; t \in [0,T).
    \]
    Next, \eqref{eq: cvd 6} tells us that
    \[
    |\term\;2| \le Q^2 (\Delta t_\mx)^{1/2}\;\text{for all}\; t\in[0,T).
    \]
    Clearly
    \[
    |\term\;3| \le (\Delta t_\mx).
    \]
    Furthermore,
    \begin{align*}
    |\term\;4| &\le \sum_{0 \le \mu < N}|\usig(t_\mu)| \int_{t_\mu}^{t_{\mu+1}}|\qsig(s) - \qsig(t_\mu)|\ ds\\
    & \le CQ \sum_{0 \le \mu < N} \osc(\mu) \Delta t_\mu \le C Q \cdot \max_\mu \osc(\mu) \le C Q^2 (\Delta t_\mx)^{1/4}
    \end{align*}
    thanks to \eqref{eq: cvd 5} and \eqref{eq: cvd 7}. Finally,
    \begin{align*}
    |\term\;5| &\le \max_\nu |\usig(\tnu)| \cdot \max_{\tilde{t} \in [0,T]} |\qsig(\tilde{t})| \cdot (\Delta t_\mx)\\ 
    & \le C Q^2 ( \Delta t_\mx),
    \end{align*}
    by \eqref{eq: cvd 5} and \eqref{eq: cvd 8}. Combining our estimates for $\textsc{Terms}$ 1--5, we find that
    \[
    |\zocsig(t) - \zodsig(t)| \le C Q^2 (\Delta t_\mx)^{1/4},
    \]
    as claimed. We pass to $\ztcsig$, $\ztdsig$.  For $t \in [\tnu, t_{\nu+1})$, we have
    \begin{align*}
        | \ztcsig(t) - \ztdsig(t) | = &\Big| \sum_{0 \le \mu < \nu} \int_{t_\mu}^{t_{\mu+1}} \{ (\qsig(s))^2 - (\qsig(t_\mu))^2 \} \ ds + \int_{t_\nu}^t (\qsig(s))^2 \ ds \Big|\\
        \le & \sum_{0 \le \mu < \nu} C \Big(\max_{\tilde{t}\in[0,T]} | \qsig(\tilde{t})|\Big) \int_{t_\mu}^{t_{\mu+1}} | \qsig(s) - \qsig(t_\mu)|\ ds \\
        &+ \max_{\tilde{t} \in [0,T]} |\qsig(\tilde{t})|^2 \cdot \Delta t_\mx\\
        \le & C Q \sum_{0 \le \mu < \nu} \osc(\mu) \Delta t_\mu + C Q^2 \Delta t_\mx\\
        \le & C' Q \max_{0 \le \mu < N}\osc(\mu) + C Q^2 \Delta t_\mx\\
        \le & C'' Q^2 (\Delta t_\mx)^{1/4},\;\text{thanks to \eqref{eq: cvd 7} and \eqref{eq: cvd 8}}.
    \end{align*}
    The proof of the lemma is complete.
\end{proof}

\section{Refining a Partition}
Let $\sigma$ be a tame strategy $\sigma = (\sigma_{\tnu})_{0 \le \nu < N}$ associated to a partition
\begin{equation}\label{eq: rap 1}
    0 = t_0 < t_1 < \dots < t_N = T.
\end{equation}
Suppose
\begin{equation}\label{eq: rap 2}
    0 = \hatt_0 < \hatt_1 < \dots < \hatt_\hatN = T
\end{equation}
is a refinement of the partition \eqref{eq: rap 1}.

We would like to associate a tame strategy $\hatsigma = (\hatsigma_{\hatt_\mu})_{0 \le \mu < \hatN}$ to the partition \eqref{eq: rap 2} in such a way that
\begin{equation}\label{eq: rap 3}
\begin{split}
    &\qhatsig(t) = \qsig(t)\;\text{for all} \; t \in [0,T],\; \text{and}\\
    &\uhatsig(t) = \usig(t)\;\text{for all} \; t \in [0,T).
\end{split}
\end{equation}
That would tell us that refining the partition \eqref{eq: rap 1} allows additional tame strategies, but doesn't rule out any tame strategies $\sigma$.

Unfortunately, no such $\hatsigma$ exists. The problem is that, in order to be a tame strategy, $\hatsigma$ must satisfy
\begin{equation}\label{eq: rap 4}
    |\uhatsig(\hatt_\mu)| \le C_\tame [ |\qhatsig(\hatt_\mu)| + 1]\;\text{with probability}\; 1.
\end{equation}
It may happen that
\[
|\usig(\hatt_\mu)| = |\usig(\tnu)| \gg C_\tame \cdot [|\qsig(\hatt_\mu)| + 1]
\]
for some $\hatt_\mu \in (t_\nu, t_{\nu+1})$, in which case \eqref{eq: rap 4} contradicts \eqref{eq: rap 3}. Accordingly, we modify \eqref{eq: rap 3}, as follows.

For each $\mu$ ($0 \le \mu < \hatN$), define $\bar{\nu}(\mu)$ to be the index $\nu$ for which $\tnu \le \hatt_\mu < t_{\nu+1}$.

Define a stopping time $\tau$ by setting
\[
\tau = \begin{cases}
    \text{Least } \hatt_\mu\; \text{s.t. } |\usig(t_{\bar{\nu}(\mu)})| > 2 C_\tame [ |\qsig(\hatt_\mu)| + 1], &\text{if such a } \hatt_\mu \; \text{exists},\\
    T &\text{otherwise}.
\end{cases}
\]

Then define random processes $\hatq(t)$, $\hatu(t)$ ($t \in [0,T]$) by setting
\[
\begin{cases}
    \hatq(t) = \qsig(t)\;\text{and} \; \hatu(t) = \usig(t) &\text{for}\; 0 \le t < \tau.\\
    \hatu(t)=0 &\text{for}\; \tau \le t \le T,
\end{cases}
\]
and on  $[\tau,T]$ defining $\hatq$ by
\[
\begin{cases}
    d\hatq(t) = \big(a_\tru\cdot \hatq(t)\big) dt + dW(t)\\
    \text{with initial condition}\; \hatq(\tau) = \qsig(\tau).
\end{cases}
\]
Note that
\begin{equation}\label{eq: rap 5}
    |\hatu(\hat{t}_\mu)| \le 2 C_\tame [ |\hatq(\hat{t}_\mu)| + 1] \;\text{for all}\; \hat{t}_\mu.
\end{equation}
It is a tedious exercise (left to the reader) to exhibit a tame strategy $\hatsigma = (\hatsigma_{\hatt_\mu})_{0 \le \mu < \hatN}$ associated to the partition \eqref{eq: rap 2}, such that $\hatq(t) = \qhatsig(t)$ for all $ t\in [0,T]$ and $\hatu(t) = \uhatsig(t)$ for all $ t\in [0,T)$.

Whereas our original tame strategy $\sigma$ satisfies
\begin{equation}
    |\sigma_{t_\nu}(q_1,\dots, q_\nu, \vxi)| \le C_\tame [|\qnu| + 1],
\end{equation}
the strategy $\hatsigma$ satisfies instead
\begin{equation}
    |\hatsigma_{\hat{t}_\mu}(q_1, \dots, q_\mu, \vxi) | \le 2 C_\tame [ |q_\mu| + 1];
\end{equation}
compare with \eqref{eq: rap 5}.

In place of \eqref{eq: rap 3}, we will show that $\qhatsig(t)$ and $\uhatsig(t)$ are likely very close to $\qsig(t)$, $\usig(t)$, respectively. To see this, we fix $a \in [-a_\mx, a_\mx]$ and $\veta \in \{0,1\}^\N$, and condition on $a_\tru = a$, $\vxi = \veta$.

We define random variables
\begin{equation}\label{eq: rap 8}
    \osc(\nu) = \max_{t\in[\tnu, t_{\nu+1}]}|\qsig(t) - \qsig(\tnu)|
\end{equation}
and an event
\begin{equation}\label{eq: rap 9}
    \disaster: \;\osc(\nu) \ge 1\;\text{for some}\; \nu.
\end{equation}
In Section \ref{sec: costing by integrals}, we proved that
\begin{equation}\label{eq: rap 10}
    \E_{a,\veta}[(\osc(\nu))^m] \le C_m (\Delta t_\nu)^{m/2}\;\text{for all}\; m \ge 1.
\end{equation}
Hence,
\[
\prob_{a,\veta}[\osc(\nu)\ge 1] \le C_{\bar{m}} (\Delta t_\nu)^{\bar{m}}\;\text{for all}\; \bar{m}\ge 1, 
\]
and consequently
\begin{equation}\label{eq: rap 11}
    \prob_{a,\veta}[\disaster] \le \sum_{0\le\nu < N} C_{\bar{m}} (\Delta t_\nu)^{\bar{m}}\le C_{\bar{m}-1}' (\Delta t_\mx)^{\bar{m}-1}
\end{equation}
for any $\bar{m} \ge 1$.

Next, we prepare to estimate
\[
\int_0^T\{ | \qhatsig(t) - \qsig(t)|^m + |\uhatsig(t) - \usig(t)|^m\}\ dt.
\]
Let 
\[
\qsig_0(t) = \qsig(\tnu)\;\text{for } t\in[\tnu, t_{\nu+1}),\; 0 \le \nu < N.
\]
Then
\[
\int_{\tnu}^{t_{\nu+1}} | \qsig_0(t) - \qsig(t)|^{2m}\; dt \le (\osc(\nu))^{2m} \Delta \tnu,
\]
hence
\begin{equation}\label{eq: rap 12}
\begin{split}
    \E_{a,\veta} \Big[ \int_0^T | \qsig_0(t) - \qsig(t)|^{2m}\ dt \Big] &\le \sum_\nu \E_{a,\veta} [(\osc(\nu))^{2m}]\Delta \tnu\\
    &\le C_m(\Delta t_\mx)^m,
\end{split}
\end{equation}
thanks to \eqref{eq: rap 10}. Similarly,
\begin{equation}\label{eq: rap 13}
    \E_{a,\veta} \Big[ \int_0^T |\qhatsig_0(t) - \qhatsig(t)|^{2m}\ dt\Big]\le C_m (\Delta t_\mx)^m,
\end{equation}
where $\qhatsig_0(t) = \qhatsig(\hatt_\mu)$ for $t \in [\hatt_\mu, \hatt_{\mu+1})$, $0 \le \mu < \hatN$. We have also
\begin{equation}\label{eq: rap 14}
    \E_{a,\veta} \Big[ \int_0^T | \qsig_0(t)|^{2m}\ dt\Big] = \E_{a,\veta} \Big[ \sum_{0 \le \nu < N} | \qsig(\tnu)|^{2m}\Delta \tnu\Big] \le C_m,
\end{equation}
thanks to Lemma \ref{lem: rare events}. Similarly,
\begin{equation}\label{eq: rap 15}
\E_{a,\veta} \Big[ \int_0^T | \qhatsig_0(t)|^{2m}\ dt\Big] \le C_m.
\end{equation}
From \eqref{eq: rap 12} and \eqref{eq: rap 14} we obtain
\begin{equation}\label{eq: rap 16}
    \E_{a,\veta} \Big[ \int_0^T |\qsig(t)|^{2m}\ dt\Big] \le C_m\;\text{for} \; m\ge 1.
\end{equation}
Similarly, from \eqref{eq: rap 13} and \eqref{eq: rap 15}, we have
\begin{equation}\label{eq: rap 17}
    \E_{a,\veta} \Big[ \int_0^T | \qhatsig(t)|^{2m}\ dt\Big] \le C_m\;\text{for } m \ge 1.
\end{equation}

Turning to $\usig$ and $\uhatsig$, we recall that $\usig(t) = \usig(t_\nu)$ for $t \in [\tnu, t_{\nu+1})$, $0 \le \nu < N$; hence, 
\begin{equation}\label{eq: rap 18}
    \E_{a,\veta}\Big[ \int_0^T |\usig(t)|^{2m}\ dt\Big] = \E_{a,\veta} \Big[ \sum_{0 \le \nu < N} |\usig(t_\nu)|^{2m} \Delta \tnu \Big] \le C_{m} \;(m\ge 1),
\end{equation}
by Lemma \ref{lem: rare events}. Similarly
\begin{equation}\label{eq: rap 19}
\E_{a,\veta}\Big[ \int_0^T |\uhatsig(t)|^{2m}\ dt\Big]  \le C_{m} \;(m\ge 1).
\end{equation}
From \eqref{eq: rap 16}--\eqref{eq: rap 18} we see that
\begin{equation}\label{eq: rap 20}
    \E_{a,\veta}\Big[ \int_0^T \{ |\qhatsig(t) - \qsig(t)| + |\uhatsig(t) - \usig(t)|\}^{2m}\ dt\Big]\le C_m\;(m\ge 1).
\end{equation}
Moreover, unless $\disaster$ occurs, we have $\tau=T$, hence
\begin{equation}\label{eq: rap 21}
\qhatsig(t) = \qsig(t)\;\text{and}\; \uhatsig(t) = \usig(t)\;\text{for all}\; t \in [0,T].
\end{equation}
Indeed, if $\disaster$ doesn't occur, then for $0 \le \nu < N$ and $t \in [\tnu, t_{\nu+1})$ we have
\[
|\qsig(t) - \qsig(\tnu)| \le 1,
\]
hence $[|\qsig(t)| + 1]$ and $[|\qsig(\tnu)| + 1]$ differ by at most a factor of 2. Since $|\usig(\tnu)| \le C_\tame[|\qsig(\tnu)| + 1]$, it follows that $|\usig(\tnu)| \le 2 C_\tame[|\qsig(t)| + 1]$ for $ t \in [\tnu, t_{\nu+1})$, $0 \le \nu < N$. In particular,
\begin{equation}\label{eq: rap 22}
|\usig(t_{\bar{\nu}(\mu)})| \le 2 C_\tame [ |\qsig(t_\mu)| + 1]\;\text{for all} \; \mu \; (0 \le \mu < \hatN).
\end{equation}
Comparing \eqref{eq: rap 22} with the definition of $\tau$, we see that, as claimed, $\tau = T$ unless $\disaster$ occurs.

Thus \eqref{eq: rap 21} holds unless $\disaster$ occurs.

From \eqref{eq: rap 20}, \eqref{eq: rap 21}, \eqref{eq: rap 11}, we now have
\begin{equation}
    \begin{split}
        \E_{a,\veta} \Big[ \int_0^T& \{ |\qhatsig(t) - \qsig(t)| + |\uhatsig(t) -\usig(t)|\}^m\ dt \Big]\\
        = & \E_{a,\veta} \Big[ \int_0^T \{ |\qhatsig(t) - \qsig(t)| + |\uhatsig(t) - \usig(t)|\}^m \mathbbm{1}_\disaster \ dt\Big]\\
        \le & \Big( \E_{a,\veta}\Big[ \int_0^T\{ |\qhatsig(t) - \qsig(t)| + |\uhatsig(t) - \usig(t)|\}^{2m}\ dt\Big]\Big)^{1/2}\\
        & \cdot (\prob_{a,\veta}[\disaster])^{1/2}\\
        \le & C_{m,\bar{m}}\cdot (\Delta t_\mx)^{(\bar{m}-1)/2}\;\text{for any}\; m,\bar{m} \ge 1.
    \end{split}
\end{equation}
We record this result as a lemma.
\begin{lem}[Refinement Lemma]\label{lem: refinement}
    Let $\sigma$ be a tame strategy associated to a partition
    \begin{equation}\tag{A}
        0 = t_0 < t_1 < \dots < t_N = T,
    \end{equation}
    and let
    \begin{equation}\tag{B}
        0 = \hatt_0 < \hatt_1 < \dots < \hatt_\hatN = T
    \end{equation}
    be a refinement of the partition \emph{(A)}.

    Then there exists a tame strategy $\hat{\sigma}$ associated to the partition \emph{(B)}, such that
    \[
    \eE_{a,\veta} \Big[ \int_0^T \{ | \qhatsig(t) - \qsig(t)| + |\uhatsig(t) - \usig(t)| \}^m\ dt\Big] \le C_{m,\bar{m}} (\Delta t_\emx)^{\bar{m}}
    \]
    for all $m,\bar{m} \ge 1$ and all $ a \in [-a_\emx, +a_\emx]$, $\veta \in \{0,1\}^\N$.

    The strategy $\hatsigma$ satisfies the same estimates as we assumed for $\sigma$ (see \ref{eq: du 1}), except that $C_\etame$ is replaced by $2 C_\etame$.
\end{lem}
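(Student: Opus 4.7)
The plan is to construct $\hat\sigma$ by following $\sigma$ as long as the control $\usig$ respects the tameness estimate with constant $2C_\tame$ relative to $\qsig$ sampled at the finer partition, and cutting the control to zero thereafter. Concretely, for each $\mu$ let $\bar\nu(\mu)$ be the unique $\nu$ with $\hatt_\mu \in [\tnu, t_{\nu+1})$, and let $\tau$ be the first $\hatt_\mu$ at which $|\usig(t_{\bar\nu(\mu)})| > 2C_\tame[|\qsig(\hatt_\mu)| + 1]$ (and $\tau = T$ if no such $\hatt_\mu$ exists). On $[0,\tau)$ define $\hatq = \qsig$, $\hatu = \usig$; on $[\tau, T]$ set $\hatu \equiv 0$ and let $\hatq$ evolve from $\hatq(\tau) = \qsig(\tau)$ by the noise-only SDE $d\hatq = a_\tru \hatq\, dt + dW$. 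By construction, $|\hatu(\hatt_\mu)| \le 2C_\tame[|\hatq(\hatt_\mu)| + 1]$ for every $\mu$, so standard bookkeeping shows that $\hatq, \hatu$ arise from a tame strategy $\hat\sigma$ associated to the refinement (B) with tame constant $2C_\tame$.

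Next I would introduce the oscillation $\osc(\nu) = \max_{t \in [\tnu, t_{\nu+1}]} |\qsig(t) - \qsig(\tnu)|$ and the event $\disaster = \{\osc(\nu) \ge 1 \text{ for some } \nu\}$. Lemma \ref{lem: costing by integrals} (applied essentially as in its proof) gives $\E_{a,\veta}[(\osc(\nu))^m] \le C_m (\Delta \tnu)^{m/2}$ for all $m \ge 1$; hence $\prob_{a,\veta}[\osc(\nu) \ge 1] \le C_{\bar m}(\Delta \tnu)^{\bar m}$ for any $\bar m$, and summing over $\nu$ gives
\[
\prob_{a,\veta}[\disaster] \le C'_{\bar m - 1}(\Delta t_\mx)^{\bar m - 1}
\]
for any $\bar m \ge 1$. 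The key observation is that off $\disaster$ we have $|\qsig(t) - \qsig(\tnu)| < 1$ for every $t \in [\tnu, t_{\nu+1})$, so $[|\qsig(t)| + 1]$ and $[|\qsig(\tnu)| + 1]$ differ by at most a factor of $2$; consequently $|\usig(\tnu)| \le C_\tame[|\qsig(\tnu)| + 1] \le 2C_\tame[|\qsig(\hatt_\mu)| + 1]$ for every refined node $\hatt_\mu \in [\tnu, t_{\nu+1})$, so $\tau = T$ off $\disaster$, and therefore $\qhatsig(t) = \qsig(t)$ and $\uhatsig(t) = \usig(t)$ on $[0,T]$.

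To finish, I would combine a raw $L^{2m}$ bound with the smallness of $\disaster$ via Cauchy--Schwarz. Lemma \ref{lem: rare events} gives $\E_{a,\veta}[\int_0^T |\qsig(t)|^{2m}\,dt] \le C_m$ and the analogous bound for $\usig$, and the same estimates for $\hatq, \hatu$ follow from (an iteration of) Lemma \ref{lem: rare events} applied to the tame strategy $\hatsigma$ with tame constant $2C_\tame$. Therefore
\[
\E_{a,\veta}\!\left[\int_0^T \{|\qhatsig - \qsig| + |\uhatsig - \usig|\}^{2m}\,dt\right] \le C_m,
\]
and since the integrand vanishes off $\disaster$,
\[
\E_{a,\veta}\!\left[\int_0^T \{|\qhatsig - \qsig| + |\uhatsig - \usig|\}^m\,dt\right] \le \left(C_m\right)^{1/2} \left(\prob_{a,\veta}[\disaster]\right)^{1/2} \le C_{m,\bar m}(\Delta t_\mx)^{\bar m},
\]
by choosing $\bar m$ in the disaster estimate large enough relative to the desired exponent.

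The main obstacle is the first step: producing a genuine tame rule $\hat\sigma_{\hatt_\mu}: \R^\mu \times \{0,1\}^\N \to \R$ (Borel measurable, with the right tameness bound) whose induced processes coincide with the $\hatq, \hatu$ defined above. The bookkeeping is routine but fiddly, because the decision to ``freeze'' the control at $\tau$ must be expressible as a function of the observed refined positions $\hatq(\hatt_1), \dots, \hatq(\hatt_\mu)$ and the coin flips alone, and because between refined nodes one must reconstruct the relevant value of $\usig(t_{\bar\nu(\mu)})$ from the coarser strategy $\sigma$. Once this construction is in hand, the probabilistic estimates fall out of the lemmas already in place.
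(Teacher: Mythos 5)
Your proposal is correct and follows essentially the same route as the paper: the same stopping time $\tau$ and freeze-the-control construction, the same $\osc(\nu)$ / $\disaster$ decomposition with the same moment bound on $\osc(\nu)$, and the same Cauchy--Schwarz finish using $\qhatsig = \qsig$, $\uhatsig = \usig$ off $\disaster$. The paper likewise treats the task of exhibiting an explicit tame rule for $\hatsigma$ as a tedious exercise left to the reader.
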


\chapter{Bayesian Strategies Associated to Partitions}\label{chap: 3}

\section{Setup}

In this chapter, we take $a_\tru$ to be governed by a known prior probability distribution $d\prior(a)$, concentrated on an interval $[-a_\mx, + a_\mx]$.

We fix a partition
\begin{equation}\label{eq: 3.1}
    0 = t_0 < t_1 < \dots < t_N = T
\end{equation}
of the time interval $[0,T]$.

We fix a deterministic strategy $\sigma$ for the game starting at position $q_0$.

We assume that our strategy $\sigma$ is tame, i.e.,
\begin{equation}\label{eq: 3.2}
    |\usig(\tnu)|\le C_\tame^\sigma[|\qsig(\tnu)| + 1]
\end{equation}
for a constant $C_\tame^\sigma$. We call $C_\tame^\sigma$ the \emph{tame constant} for $\sigma$.

We write
\[
\begin{aligned}
   & q_\nu^\sigma = q^\sigma(\tnu),& &\Delta q_\nu^\sigma = q_{\nu+1}^\sigma - q_\nu^\sigma,&\\
   &\zeta_{1,\nu}^\sigma = \zo^\sigma(\tnu),& &\Delta \zonu^\sigma = \zeta_{1,\nu+1}^\sigma - \zonu^\sigma,&\\
   & \ztnu^\sigma = \zt^\sigma(\tnu),& & \Delta \ztnu^\sigma = \zeta_{2,\nu+1}^\sigma- \ztnu^\sigma,\\
   & \usig_\nu = \usig(\tnu).
\end{aligned}
\]

Until further notice, $c,C,C',$ etc.\ will denote constants determined by $C_\tame^\sigma$ in \eqref{eq: 3.2} together with $a_\mx$ and upper bounds for $T$ and $|q_0|$. The symbols $c,C,C',$ etc.\ may denote different constants in different occurrences. We assume that
\begin{equation}\label{eq: 3.3}
    \Delta t_\mx \equiv \max_{0 \le \nu < N} (t_{\nu+1} - t_\nu)<c\;\text{for a small enough constant } c.
\end{equation}

We write $X= O(Y)$ to indicate that $|X| \le CY$. We write $\cF_\nu^\sigma$ to denote the sigma algebra of events determined by $\qsig(t_\mu)$ for $\mu =0,1,\dots,\nu$. Note that $\cF_\nu^\sigma$ depends on $\sigma$, because $a_\tru$ isn't deterministic.

We write $\prob[\dots]$ to denote probability, and we write $\E[\dots]$ to denote expectation.

If we condition on $a_\tru = a$, then we write $\prob_a[\dots]$ and $\E_a[\dots]$ to denote the corresponding probability and expectation. Thus, for instance, $\E_a[X|\cF_\nu^\sigma]$ dentoes the expected value of $X$ conditioned on $\cF_\nu^\sigma$, given that $a_\tru = a$.

For any event $\cE$ we have
\begin{equation}\label{eq: 3.3.1}
    \prob[\cE] = \int_{-a_\mx}^{a_\mx} \prob_a[\cE] \ d\prior(a)
\end{equation}
and
\begin{equation}\label{eq: 3.4}
    \prob[\cE| \cF_\nu^\sigma] = \int_{-a_\mx}^{a_\mx} \prob_a[\cE | \cF_\nu^\sigma] \ d\post(a | \cF_\nu^\sigma),
\end{equation}
where $d\post(a|\cF_\nu^\sigma)$ is the posterior probability distribution for $a_\tru$ conditioned on $\cF_\nu^\sigma$.

Similarly, for any random variable $X$, we have
\begin{equation}\label{eq: 3.5}
    \E[X] = \int_{-a_\mx}^{a_\mx} \E_a[X]\ d\prior(a)
\end{equation}
and
\begin{equation}\label{eq: 3.6}
    \E[X | \cF_\nu^\sigma] = \int_{-a_\mx}^{a_\mx} \E_a[X | \cF_\nu^\sigma]\ d\post(a|\cF_\nu^\sigma).
\end{equation}
Thanks to \eqref{eq: 3.3.1}--\eqref{eq: 3.6}, the following results are immediate from Lemma \ref{lem: rare events}.

\begin{lem}\label{lem: bayesian rare events}[Bayesian Lemma on Rare Events]
    Suppose $Q>C$ for a large enough $C$. Then the following hold with probability $>1-\exp(-cQ^2)$:
    \begin{itemize}
        \item $|\qsig(\tnu)|, |\usig(\tnu)| \le Q$, for all $\nu$.
        \item $|\zosig(\tnu)|, |\ztsig(\tnu)| \le Q^2$, for all $\nu$.
        \item $| \sum_{0 \le \mu < \nu} (\qsig(t_{\mu+1}) - \qsig(t_\mu))^2 - \tnu| \le Q^2 (\Delta t_\emx)^{1/2}$ for all $\nu$.
    \end{itemize}
    Moreover, suppose we fix $\nu$ and condition on $\cF_\nu^\sigma$, the sigma algebra of events determined by $\qsig(t_\mu)$ for $0 \le \mu \le \nu$. Suppose $|\qsig(\tnu)|\le Q$, where $C\le Q \le (\Delta \tnu)^{-1/1000}$ (for large enough $C$).

    Then for $p=1,2$, we have
    \begin{multline*}
        \eE[(|\Delta \qnusig| + | \Delta \zonusig| + |\Delta \ztnusig|)^p\cdot \mathbbm{1}_{|\Delta \qnusig| + |\Delta \zonusig| + |\Delta \ztnusig|>(\Delta \tnu)^{2/5}} | \cF_\nu^\sigma] \\ \le C \cdot (\Delta \tnu)^{100};
    \end{multline*}
    also
    \[
    \eprob\Big[\max_{t\in[\tnu, t_{\nu+1}]} | \qsig(t) - \qnusig| > (\Delta \tnu)^{2/5} | \cF_\nu^\sigma\Big]\le C\cdot(\Delta \tnu)^{1000}
    \]
    and
    \[
    \eprob[ |\Delta \qnusig| + |\Delta  \zonusig| + |\Delta \ztnusig| > (\Delta \tnu)^{2/5} | \cF_\nu^\sigma] \le C \cdot (\Delta \tnu)^{1000}.
    \]
\end{lem}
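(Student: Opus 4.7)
The plan is to reduce each assertion to the corresponding statement of Lemma~\ref{lem: rare events} by integrating the uniform-in-$a$ estimates against the prior $d\prior(a)$ and the posterior $d\post(a|\cF_\nu^\sigma)$, using the disintegration identities \eqref{eq: 3.3.1}--\eqref{eq: 3.6}. The essential structural fact to exploit is that the constants $c,C$ appearing in Lemma~\ref{lem: rare events} depend only on $q_0$, $a_\mx$, $C_\tame^\sigma$, and an upper bound for $T$; in particular, they are uniform in $a \in [-a_\mx, a_\mx]$ and in $\veta \in \{0,1\}^\N$. This uniformity is what makes averaging against any probability measure on $a$ (or on $\vxi$) harmless.

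For the first three bullet points, I would fix $Q>C$ and let $\cE$ be the union of the three failure events. Lemma~\ref{lem: rare events} gives $\prob_{a,\veta}[\cE] \le C\exp(-cQ^2)$ for every $a$ and every $\veta$. Integrating first in $\veta$ against the independent coin-flip distribution, and then in $a$ via \eqref{eq: 3.3.1}, yields $\prob[\cE] \le C\exp(-cQ^2)$, after absorbing the union bound of size three into $C$.

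For the conditional assertions, I would invoke \eqref{eq: 3.4} and \eqref{eq: 3.6}. The crucial observation, to be verified first, is that once one conditions on $a_\tru = a$, the sigma-algebra $\cF_\nu^\sigma$ generated by $\qsig(t_0),\dots,\qsig(\tnu)$ coincides with the sigma-algebra $\cF_\nu$ used in Chapter~\ref{chap: 2}; compare the Remark following \eqref{eq: 1.11}. Therefore the conditional-on-$\cF_\nu$ estimates of Lemma~\ref{lem: rare events} apply verbatim to $\prob_{a,\veta}[\,\cdot\,|\cF_\nu^\sigma]$ and $\E_{a,\veta}[\,\cdot\,|\cF_\nu^\sigma]$, on the event $\{|\qnusig|\le Q\}$, with constants independent of $a$ and $\veta$. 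Applying \eqref{eq: 3.4} and \eqref{eq: 3.6} to integrate these pointwise-in-$a$ bounds against $d\post(a|\cF_\nu^\sigma)$ (together with the analogous conditional averaging over $\vxi$), and using that $d\post$ is a probability measure of total mass one, I obtain each of the remaining bounds with the same constants.

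The main point needing care is the identification $\cF_\nu^\sigma = \cF_\nu$ on the event $\{a_\tru = a\}$ and the verification that the posterior disintegration in \eqref{eq: 3.4}--\eqref{eq: 3.6} really represents conditional probabilities and expectations as probability-weighted averages of their fixed-$a$ counterparts. Once this Bayesian bookkeeping is pinned down, the remainder is a routine averaging argument: the uniform Gaussian-tail estimates of Lemma~\ref{lem: rare events} are preserved under integration against any probability measure, so the Bayesian version inherits the same orders of decay in $Q$ and in $\Delta \tnu$.
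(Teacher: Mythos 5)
Your proposal is correct and follows essentially the same route the paper takes: the paper's entire proof is the single sentence "Thanks to \eqref{eq: 3.3.1}--\eqref{eq: 3.6}, the following results are immediate from Lemma \ref{lem: rare events}," relying on the $a$- and $\veta$-uniformity of the constants in Lemma \ref{lem: rare events} and the disintegration identities over the prior and posterior. You correctly identify the one point that needs care (that $\cF_\nu^\sigma$ coincides with the $\cF_\nu$ of Chapter \ref{chap: 2} once $a_\tru=a$ is fixed, per the Remark after \eqref{eq: 1.11}), so the argument is complete.
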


\section{Posterior Probabilities and Expectations}
Fix $Q>C$ for large enough $C$.

For each $\nu$, let $\ok(\nu)$ denote the set of all $(\barq_1,\dots, \barq_\nu) \in \R^\nu$ s.t.
\begin{equation}\label{eq: 3.7}
    \max_{\mu\le\nu} |\barq_\mu| + |\baru_\mu| \le Q
\end{equation}
and
\begin{equation}\label{eq: 3.8}
    \Big| \sum_{\mu < \nu} (\Delta \barq_\mu)^2 - \tnu \Big| \le Q^2 (\Delta t_\mx)^{1/4},
\end{equation}
where $\Delta \barq_\mu = \barq_{\mu+1}-\barq_\mu$, $\barq_0 \equiv q_0$, and $\baru_\mu$ is defined to be the value assigned to $\usig(t_\mu)$ provided $\qsig(t_\gamma) = \barq_\gamma$ for $\gamma \le \mu$.

According to the Bayesian Lemma on Rare Events, we have
\begin{equation}\label{eq: 3.9}
    \prob[\text{NOT }\ok(\nu)] \le \exp(-cQ^2).
\end{equation}
Thanks to \eqref{eq: 3.3} and Lemma \ref{lem: prob dist}, the following holds for $a\in [-a_\mx, a_\mx]$ and $(\barq_1,\dots,\barq_\nu) \in \ok(\nu)$:
\begin{multline*}
    \prob[ a_\tru \in [a,a + da]\;\text{and}\; \qsig(t_\mu) \in [\barq_\mu,\barq_\mu + d\barq_\mu]\;\text{for}\; \mu \le \nu]\\
    = ( 1 + O(Q^2(\Delta t_\mx)^{1/4}))\cdot(*)\cdot d\barq_1\cdots d\barq_\nu,
\end{multline*}
where
\[
(*) = \Big[ d\prior(a)\cdot \prod_{0\le\mu < \nu} \Big\{\frac{1}{\sqrt{2\pi\Delta t_\mu}}\exp\Big( - \frac{1}{2\Delta t_\mu}(\Delta \barq_\mu - (a \barq_\mu + \baru_\mu)\Delta t_\mu)^2\Big)\Big\} \Big].
\]
Note that
\[
(*) = d\prior(a) \cdot \exp\Big(- \frac{1}{2}\barztnu a^2 + \barzonu a \Big)\cdot \{\text{Factor independent of }a\},
\]
where
\begin{equation}\label{eq: 3.10}
    \barztnu = \barztnu(\barq_1,\dots,\barq_\nu) = \sum_{0 \le \mu < \nu} \barq_\mu^2 \Delta t_\mu
\end{equation}
and
\begin{equation}\label{eq: 3.11}
    \barzonu = \barzonu(\barq_1,\dots, \barq_\nu) = \sum_{0\le\mu < \nu} \barq_\mu (\Delta \barq_\mu - \baru_\mu \Delta t_\mu).
\end{equation}
Hence, for 
\[
(\barq_1,\dots,\barq_\nu) \in \ok(\nu),
\]
the posterior probability distribution for $a_\tru,$ given that $\qsig(t_\mu) = \barq_\mu$ for $\mu = 1,\dots,\nu,$ is given by
\begin{multline}\label{eq: 3.12}
    d\post(a | \barq_1,\dots,\barq_\nu) \\= (1+O(Q^2(\Delta t_\mx)^{1/4}))\cdot \frac{d\prior(a)\cdot \exp\Big(-\frac{1}{2}\barztnu a^2 + \barzonu a\Big)}{Z}
\end{multline}
for a normalizing constant $Z$. Since
\[
\int_{-a_\mx}^{a_\mx}d\post(a | \barq_1,\dots,\barq_\nu) = 1,
\]
equation \eqref{eq: 3.12} holds with
\begin{equation}\label{eq: 3.13}
    Z = \int_{-a_\mx}^{a_\mx} d\prior(a)\cdot\exp\Big(-\frac{1}{2}\barztnu a^2 + \barzonu a\Big).
\end{equation}
We have thus proven the following result.

\begin{lem}[Lemma on Posterior Probabilities]\label{lem: post prob}
Suppose $Q \ge C $ for a large enough $C$. Fix $\nu$, and suppose $(\qsig(t_1),\dots,\qsig(t_\nu))$ satisfy
\begin{align}
&\max_{\mu\le \nu} |\qsig(t_\mu)| + |\usig(t_\mu)| \le Q,\label{eq: 3.14}\\
&\Big|\sum_{\mu < \nu} (\Delta \qsig(t_\mu))^2 - t_\nu \Big| \le Q^2 (\Delta t_\emx)^{1/4}.\label{eq: 3.16}
\end{align}
Then the posterior probability distribution for $a_\etru$ conditioned on $\cF_\nu^\sigma$ is given by
\begin{equation}\label{eq: 3.17}
\begin{split}
    d\epost(a | \cF_\nu^\sigma) = &(1+O(Q^2(\Delta t_\emx)^{1/4}))\\ &\cdot \frac{\exp\Big(-\frac{1}{2}\ztsig(\tnu)a^2 + \zosig(\tnu) a\Big)\cdot d\eprior(a)}{Z}
\end{split}
\end{equation}
with
\begin{equation}\label{eq: 3.18}
    Z = \int_{-a_\emx}^{a_\emx} \exp\Big( - \frac{1}{2} \ztsig(\tnu)b^2 + \zosig(\tnu)b\Big)\ d\eprior(b).
\end{equation}
\end{lem}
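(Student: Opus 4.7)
The plan is to apply Bayes' rule directly to the approximate joint density given by Lemma \ref{lem: prob dist}, then reorganize the exponent so that the dependence on $a$ becomes manifest and Gaussian-like in $a$.

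First, write $\Phi_a(\barq_1,\dots,\barq_\nu)$ for the conditional joint density of $(\qsig(t_1),\dots,\qsig(t_\nu))$ given $a_\etru = a$. Bayes' rule gives
\[
d\post(a\mid\qsig(t_1)=\barq_1,\dots,\qsig(t_\nu)=\barq_\nu)=\frac{\Phi_a(\barq_1,\dots,\barq_\nu)\,d\prior(a)}{\int_{-a_\mx}^{a_\mx}\Phi_b(\barq_1,\dots,\barq_\nu)\,d\prior(b)}.
\]
Since $(\barq_1,\dots,\barq_\nu)$ satisfies the hypotheses \eqref{eq: 3.14}, \eqref{eq: 3.16}, we are in a position to invoke Lemma \ref{lem: prob dist} (applied with $\bar N=\nu$): uniformly in $a\in[-a_\mx,a_\mx]$,
\[
\Phi_a(\barq_1,\dots,\barq_\nu)=\big(1+O(Q^2(\Delta t_\mx)^{1/4})\big)\prod_{0\le\mu<\nu}\frac{1}{\sqrt{2\pi\Delta t_\mu}}\exp\!\Big(-\frac{[\Delta\barq_\mu-(a\barq_\mu+\baru_\mu)\Delta t_\mu]^2}{2\Delta t_\mu}\Big).
\]

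Next, I expand the square to isolate the $a$-dependence. For each $\mu$,
\[
\frac{[\Delta\barq_\mu-(a\barq_\mu+\baru_\mu)\Delta t_\mu]^2}{2\Delta t_\mu}=\frac{(\Delta\barq_\mu-\baru_\mu\Delta t_\mu)^2}{2\Delta t_\mu}-a\,\barq_\mu(\Delta\barq_\mu-\baru_\mu\Delta t_\mu)+\tfrac{1}{2}a^2\barq_\mu^2\Delta t_\mu.
\]
Summing over $\mu<\nu$ and recalling \eqref{eq: 3.10}, \eqref{eq: 3.11}, the total exponent splits as
\[
-\,\tfrac{1}{2}a^2\,\ztsig(\tnu)+a\,\zosig(\tnu)+K(\barq_1,\dots,\barq_\nu),
\]
where $K$ is independent of $a$. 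Factoring out the $a$-independent pieces (the Gaussian prefactors $\prod(2\pi\Delta t_\mu)^{-1/2}$ together with $e^K$), they are common to numerator and denominator of the Bayes quotient and therefore cancel.

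Finally, using that the multiplicative error $1+O(Q^2(\Delta t_\mx)^{1/4})$ in Lemma \ref{lem: prob dist} is uniform in $a\in[-a_\mx,a_\mx]$, I may pull it outside the integral in the denominator at the cost of another factor of $1+O(Q^2(\Delta t_\mx)^{1/4})$. This leaves
\[
d\post(a\mid\cF_\nu^\sigma)=\big(1+O(Q^2(\Delta t_\mx)^{1/4})\big)\cdot\frac{\exp\!\big(-\tfrac{1}{2}\ztsig(\tnu)a^2+\zosig(\tnu)a\big)\,d\prior(a)}{Z},
\]
with $Z$ determined by the requirement that the right-hand side integrates to $1$; this gives formula \eqref{eq: 3.18}.

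The whole argument is essentially bookkeeping: no probabilistic estimates beyond Lemma \ref{lem: prob dist} are needed, since the hypotheses \eqref{eq: 3.14}, \eqref{eq: 3.16} hold by assumption. The only point requiring a small amount of care is the uniformity of the error term in $a$, which is needed to pull the $(1+O(\cdot))$ factor outside the normalizing integral; this uniformity is built into the statement of Lemma \ref{lem: prob dist}, so no further work is required.
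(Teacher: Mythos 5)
Your proposal is correct and follows essentially the same route as the paper: the paper also applies Lemma \ref{lem: prob dist} to the joint density under the hypotheses \eqref{eq: 3.14}, \eqref{eq: 3.16}, observes that the $a$-dependence of the approximate density factors as $d\prior(a)\cdot\exp(-\tfrac{1}{2}\barztnu a^2+\barzonu a)\cdot\{\text{factor independent of }a\}$, and cancels the $a$-independent parts in the Bayes quotient. The only difference is presentational: you write out the Bayes formula and the expansion of the square explicitly, while the paper states the factorization without showing the intermediate algebra.
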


\begin{cor}
    Under the assumption of the above Lemma, we have
    \begin{equation}\label{eq: 3.19}
        \eE[a_\etru | \cF_\nu^\sigma] = O(Q^2(\Delta t_\emx)^{1/4}) + \bar{a}(\zosig(\tnu),\ztsig(\tnu)),
    \end{equation}
    where
    \begin{equation}\label{eq: 3.20}
        \bar{a}(\bar{\zo},\bar{\zt}) \equiv \frac{ \int_{-a_\emx}^{a_\emx}a \exp\Big( - \frac{a^2}{2}\bar{\zt} + a \bar{\zo}\Big)\ d\eprior(a) }{\int_{-a_\emx}^{a_\emx} \exp\Big( - \frac{a^2}{2}\bar{\zt} + a \bar{\zo}\Big)\ d\eprior(a) }
    \end{equation}
    for $(\bar{\zo},\bar{\zt})\in \R^2$.
\end{cor}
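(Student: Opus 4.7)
The plan is to obtain the corollary by a direct calculation: simply compute $\E[a_\tru \mid \cF_\nu^\sigma]$ by integrating $a$ against the posterior density supplied by Lemma \ref{lem: post prob}, and then recognize the resulting ratio as $\bar{a}(\zosig(\tnu), \ztsig(\tnu))$.

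Concretely, the first step is to use the definition
\[
\E[a_\tru \mid \cF_\nu^\sigma] = \int_{-a_\mx}^{a_\mx} a \; d\post(a \mid \cF_\nu^\sigma).
\]
Substituting the expression \eqref{eq: 3.17} for $d\post$ and using \eqref{eq: 3.18} for the normalizing constant $Z$, we obtain
\[
\E[a_\tru \mid \cF_\nu^\sigma] = \bigl(1 + O(Q^2(\Delta t_\mx)^{1/4})\bigr) \cdot \frac{\int_{-a_\mx}^{a_\mx} a \exp\!\bigl(-\tfrac{1}{2}\ztsig(\tnu) a^2 + \zosig(\tnu) a\bigr) d\prior(a)}{\int_{-a_\mx}^{a_\mx} \exp\!\bigl(-\tfrac{1}{2}\ztsig(\tnu) b^2 + \zosig(\tnu) b\bigr) d\prior(b)}.
\]
The ratio on the right-hand side is precisely $\bar{a}(\zosig(\tnu),\ztsig(\tnu))$ as defined in \eqref{eq: 3.20}.

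Finally, since $\bar{a}(\zosig(\tnu),\ztsig(\tnu))$ is a weighted average of values $a \in [-a_\mx, a_\mx]$, it is bounded by $a_\mx$, hence $|\bar{a}| \le C$. Multiplying out the factor $(1 + O(Q^2(\Delta t_\mx)^{1/4}))$ therefore produces an additive error of size $O(Q^2(\Delta t_\mx)^{1/4})$, yielding
\[
\E[a_\tru \mid \cF_\nu^\sigma] = \bar{a}(\zosig(\tnu),\ztsig(\tnu)) + O(Q^2(\Delta t_\mx)^{1/4}),
\]
which is the claimed formula \eqref{eq: 3.19}. There is no real obstacle here; the only mildly subtle point is converting the multiplicative error in Lemma \ref{lem: post prob} into an additive error, which is legal because $\bar a$ is bounded by $a_\mx$ independent of the conditioning.
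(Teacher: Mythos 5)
Your proposal is correct and is the intended derivation; the paper states the corollary without proof, and integrating $a$ against the posterior from Lemma~\ref{lem: post prob} is exactly what is meant. One technical caveat: the factor $(1 + O(Q^2(\Delta t_\mx)^{1/4}))$ in \eqref{eq: 3.17} generally depends on $a$, so it cannot literally be pulled outside $\int a\, d\post(a\,|\,\cF_\nu^\sigma)$ as your middle display suggests. The cleaner route is to write $d\post(a\,|\,\cF_\nu^\sigma) = (1+e(a))\,\rho(a)$ with $|e(a)|\le CQ^2(\Delta t_\mx)^{1/4}$ and $\rho(a) = \exp(-\tfrac{1}{2}\ztsig(\tnu)a^2 + \zosig(\tnu)a)\,d\prior(a)/Z$, then split
\[
\E[a_\tru\,|\,\cF_\nu^\sigma] = \int_{-a_\mx}^{a_\mx} a\,\rho(a) + \int_{-a_\mx}^{a_\mx} a\,e(a)\,\rho(a) = \bar a(\zosig(\tnu),\ztsig(\tnu)) + O(Q^2(\Delta t_\mx)^{1/4}),
\]
bounding the second integral by $a_\mx\cdot CQ^2(\Delta t_\mx)^{1/4}$ using $|a|\le a_\mx$ and $\int\rho = 1$. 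This produces exactly the additive error you claimed, so your conclusion stands; only the intermediate factoring step needed this small repair.
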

Thanks to the above Corollary, formula \eqref{eq: 3.6}, and Lemma \ref{lem: moments}, we now have the following results.

\begin{lem}[Lemma on Posterior Expectations]\label{lem: post exp}
    Suppose $Q \ge C $ for large enough $C$, and assume that $\Delta t_\emx \le Q^{-1000}$.

    Define the event
    \[
    \etame(\nu) = \{ |\Delta \qnusig| \le 2 (\Delta \tnu)^{2/5}, |\Delta \zonusig| \le 2 (\Delta \tnu)^{2/5}, |\Delta \ztnusig| \le 2 (\Delta \tnu)^{2/5}\}.
    \]
    Fix $\nu$, and suppose we have
    \begin{align*}
        &\max_{\mu \le \nu} |\qsig(t_\mu)| + |\usig(t_\mu)| \le Q,\;\text{and}\\
        &\Big| \sum_{0 \le \mu < \nu} (\Delta \qsig(t_\mu))^2 - \tnu \Big| \le Q^2 (\Delta t_\emx)^{1/4}.
    \end{align*}
    Then the following hold.
    \begin{align*}
        &\eE[(\Delta \qnusig)\cdot \mathbbm{1}_{\etame(\nu)} | \cF_\nu^\sigma] = [\bar{a}(\zosig(\tnu),\ztsig(\tnu))\qnusig + \unusig](\Delta \tnu) + \eerr\;1,\\
        &\eE[(\Delta \zonusig)\cdot\mathbbm{1}_{\etame(\nu)} | \cF_\nu^\sigma] = \bar{a}(\zonusig(\tnu),\ztnusig(\tnu))\cdot(\qnusig)^2(\Delta \tnu) + \eerr\;2,\\
        &\eE[(\Delta \ztnusig)\cdot\mathbbm{1}_{\etame(\nu)}|\cF_\nu^\sigma] = (\qnusig)^2 (\Delta \tnu) + \eerr\;3,\\
        &\eE[(\Delta \qnusig)^2 \cdot \mathbbm{1}_{\etame(\nu)} | \cF_\nu^\sigma] = (\Delta \tnu) + \eerr\; 4,\\
        &\eE[(\Delta \qnusig)\cdot(\Delta \zonusig)\cdot\mathbbm{1}_{\etame(\nu)} | \cF_\nu^\sigma] = \qnusig(\Delta \tnu) + \eerr\; 5,\\
        &\eE[(\Delta \qnusig)\cdot(\Delta \ztnusig)\cdot\mathbbm{1}_{\etame(\nu)}|\cF_\nu^\sigma] = \eerr\; 6,\\
        &\eE[(\Delta \zonusig)^2\cdot \mathbbm{1}_{\etame(\nu)} | \cF_\nu^\sigma ] = (\qnusig)^2 (\Delta \tnu) + \eerr\;7,\\
        &\eE[(\Delta \zonusig)(\Delta \ztnusig)\cdot\mathbbm{1}_{\etame(\nu)} | \cF_\nu] = \eerr \; 8,\\
        &\eE[(\Delta \ztnusig)^2\cdot\mathbbm{1}_{\etame(\nu)} | \cF_\nu^\sigma] = \eerr\; 9,
    \end{align*}
    where
    \[
    |\eerr\;1|,\dots,|\eerr\;9| \le C' Q^4 (\Delta t_\emx)^{1/4} \Delta \tnu.
    \]
    Moreover, under the above assumptions on $(\qsig(t_1),\dots,\qsig(\tnu))$, we have
    \[
    \eprob[\enottame(\nu)|\cF_\nu^\sigma] \le (\Delta \tnu)^{20}.
    \]
    Here,
    \[
    \bar{a}(\zo,\zt):= \frac{\int_{-a_\emx}^{a_\emx} a\exp\Big(-\frac{a^2}{2}\zt + a\zo\Big)\ d\eprior(a)}{\int_{-a_\emx}^{a_\emx}\exp\Big( - \frac{a^2}{2} \zt + a \zo\Big)\ d\eprior(a)}.
    \]
\end{lem}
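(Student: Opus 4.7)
The plan is to reduce each of the nine moment identities to its deterministic-$a$ analogue from the Lemma on Moments of Increments (Lemma \ref{lem: moments}) by conditioning on $a_\tru$ and integrating the result against the posterior $d\post(a\mid\cF_\nu^\sigma)$ described by Lemma \ref{lem: post prob}. Concretely, for each random variable $X$ of the form $(\Delta \qnusig)^{\alpha_0}(\Delta \zonusig)^{\alpha_1}(\Delta \ztnusig)^{\alpha_2}\mathbbm{1}_{\tame(\nu)}$ appearing in the list, I would use \eqref{eq: 3.6} to write
\[
\E[X\mid\cF_\nu^\sigma]=\int_{-a_\mx}^{a_\mx}\E_a[X\mid\cF_\nu^\sigma]\,d\post(a\mid\cF_\nu^\sigma).
\]
Once $a_\tru=a$ is fixed, the sigma algebra $\cF_\nu^\sigma$ agrees with $\cF_\nu$ from Chapter \ref{chap: 2} (via \eqref{eq: 1.7}, $\Delta W_\mu$ and $\Delta q_\mu$ determine each other), and our assumptions $|\qnusig|,|\unusig|\le Q$ with $\Delta t_\mx\le Q^{-1000}$ are exactly the hypotheses of Lemma \ref{lem: moments}, so each $\E_a[X\mid\cF_\nu^\sigma]$ has a known expansion of the form $(\text{leading term})+O(Q^4(\Delta\tnu)^2)$.

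The key observation is that the leading terms split into two types: for the identities numbered 3 through 9, the leading term is $\cF_\nu^\sigma$-measurable and does \emph{not} depend on $a$, while for identities 1 and 2 the $a$-dependence is affine, of the form $(a\,\qnusig+\unusig)\Delta\tnu$ and $a(\qnusig)^2\Delta\tnu$ respectively. For identities 3--9, integrating against $d\post(a\mid\cF_\nu^\sigma)$ is trivial and produces the stated leading term plus the error $O(Q^4(\Delta\tnu)^2)$, which lies within $C'Q^4(\Delta t_\mx)^{1/4}\Delta\tnu$ since $\Delta\tnu\le\Delta t_\mx<1$. For identities 1 and 2, integrating the affine leading term against $d\post(a\mid\cF_\nu^\sigma)$ replaces $a$ by $\E[a_\tru\mid\cF_\nu^\sigma]$, and then the Corollary \eqref{eq: 3.19} substitutes $\bar{a}(\zosig(\tnu),\ztsig(\tnu))$ for $\E[a_\tru\mid\cF_\nu^\sigma]$ at the cost of an additional error bounded by $O(Q^2(\Delta t_\mx)^{1/4})$ times the prefactor $|\qnusig|\Delta\tnu$ or $|\qnusig|^2\Delta\tnu$, each of which is $\le Q^2\Delta\tnu$, giving a contribution of $O(Q^4(\Delta t_\mx)^{1/4}\Delta\tnu)$ as required.

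The tail statement $\prob[\nottame(\nu)\mid\cF_\nu^\sigma]\le(\Delta\tnu)^{20}$ is immediate from the same reduction: Lemma \ref{lem: moments} supplies a bound $\prob_a[\nottame(\nu)\mid\cF_\nu^\sigma]\le C(\Delta\tnu)^{1000}$ that is uniform in $a\in[-a_\mx,a_\mx]$, and integration against $d\post$ preserves this bound with room to spare.

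There is no conceptual obstacle here; the entire argument is mechanical once Lemmas \ref{lem: moments} and \ref{lem: post prob} are in hand. The only point demanding real care is the error bookkeeping in the affine cases: one must verify that both the intrinsic $O(Q^4(\Delta\tnu)^2)$ error from Lemma \ref{lem: moments} and the $O(Q^2(\Delta t_\mx)^{1/4})$ error in the posterior mean, once multiplied by the $O(Q^2)$-sized $\cF_\nu^\sigma$-measurable prefactors, fit under the single uniform bound $C'Q^4(\Delta t_\mx)^{1/4}\Delta\tnu$ stated in the lemma. This comparison uses only $\Delta\tnu\le\Delta t_\mx\le(\Delta t_\mx)^{1/4}$.
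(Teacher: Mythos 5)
Your proposal is correct and matches the paper's argument essentially verbatim: the paper proves the lemma by citing exactly the three ingredients you use—formula \eqref{eq: 3.6} for conditioning, the Corollary following Lemma \ref{lem: post prob} to identify the posterior mean with $\bar{a}$ up to $O(Q^2(\Delta t_\mx)^{1/4})$, and Lemma \ref{lem: moments} for the fixed-$a$ moment expansions—together with the observation that $\cF_\nu^\sigma$ coincides with the Chapter 2 filtration once $a_\tru$ is fixed. Your error bookkeeping (splitting into the $a$-independent cases 3--9 and the affine-in-$a$ cases 1--2, then absorbing both $O(Q^4(\Delta\tnu)^2)$ and the $O(Q^2(\Delta t_\mx)^{1/4})$ posterior-mean error into $C'Q^4(\Delta t_\mx)^{1/4}\Delta\tnu$) is exactly what the paper leaves implicit.
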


\section{The PDE Assumption}\label{sec: pde}
For our given probability distribution $d\prior(a)$, we fix the function $\bar{a}(\zo,\zt)$ given in the preceding section, and we introduce the following PDE for an unknown function $S(q,t,\zo,\zt)$ defined on $\R\times[0,T]\times\R\times[0,\infty)$:
\begin{equation}\label{eq: pde 1}
    \begin{split}
        0 = &\partial_t S + (\bar{a}(\zo,\zt) q + u_\op)\partial_q S + \bar{a}(\zo,\zt) q^2 \partial_{\zo}S + q^2 \partial_{\zt}S+ \frac{1}{2}\partial_q^2 S \\& + q \partial_{q\zo} S  + \frac{1}{2}q^2 \partial_{\zo}^2 S + (q^2 + u_\op^2),
    \end{split}
\end{equation}
where
\begin{equation}\label{eq: pde 2}
    u_\op = - \frac{1}{2} \partial_q S.
\end{equation}
We \emph{assume} the existence of a solution of the above PDE, satisfying the following additional conditions.

The \textsc{terminal condition}:
\begin{equation}\label{eq: pde 3}
    S(q,t,\zo,\zt) = 0\;\text{at}\; t = T.
\end{equation}

\textsc{Positivity}:
\begin{equation}\label{eq: pde 4}
    S(q,t,\zo,\zt) \ge 0.
\end{equation}

\textsc{Estimates}: We assume $S \in C^{1,2}$. For $|\alpha| \le 3$, we have almost everywhere that
\begin{equation}\label{eq: pde 5}
    |\partial^\alpha S(q,t,\zo,\zt)| \le K \cdot[ 1 + |q| + |\zo| + |\zt|]^{m_0}
\end{equation}
for constants $K$, $m_0 \ge 1$. Moreover,
\begin{equation}\label{eq: pde 6}
    |u_\op(q,t,\zo,\zt)| \le C_\tame^\op[|q| + 1].
\end{equation}
Note that \eqref{eq: pde 5} holds everywhere, not just almost everywhere, when $|\alpha| \le 2$.

We call $K$, $m_0$, $C_\tame^\op$, $a_\mx$, and our upper bounds for $T$ and $|q_0|$ the \textsc{boilerplate constants}. We now broaden our definition of constants $c$, $C$, $C'$, etc., to allow them to depend on the \textsc{boilerplate constants}. As usual, these symbols may denote different constants in different occurrences.

We strengthen our large $Q$ assumption. More precisely, we assume from now on that $Q\ge C$ for a large enough constant $C$. Since the meaning of the constant $C$ has changed, the above is stronger than our previous large $Q$ assumption.

We assume that
\[
\Delta t_\mx \le Q^{-1000}.
\]

\section{The Allegedly Optimal Strategy}
Let $u_\op = u_\op(q,t,\zo,\zt)$ be as in Section \ref{sec: pde}.

We define a strategy $\tsig$ based on the function $u_\op$.

Given $\nu$ ($1\le \nu \le N)$ and given real numbers $q_1,\dots,q_\nu$, we define numbers $u_\mu$, $\zomu$, $\ztmu$ by induction on $\mu = 0,1,\dots,\nu$ so that $\zeta_{1,0} = \zeta_{2,0} = 0$, and
\begin{flalign*}
    &u_\mu = u_\op(q_\mu,t_\mu,\zomu,\ztmu)&\\
    &\zomu = \sum_{0 \le \gamma < \mu} q_\gamma ( [q_{\gamma+1} - q_\gamma] - u_\gamma \Delta t_\gamma)\\
    &\ztmu = \sum_{0 \le \gamma < \mu} q_\gamma^2 \Delta t_\gamma
\end{flalign*}
for each $\mu$.

We then set $\tsig_\nu(q_1, \dots, q_\nu)$ equal to the above $u_\mu$ for $\mu = \nu$.

We define our \textsc{allegedly optimal strategy} $\tsig$ to be the collection of tame rules
\[
\tsig = (\tsig_\nu)_{\nu=0,1,\dots,N-1}.
\]
Thanks to our PDE Assumption (see \eqref{eq: pde 6}), each $\tsig_\nu$ is indeed a tame rule with tame constant $C_\tame^\op$, hence $\tsig$ is a strategy.

\section{Performance of Competing Strategies}\label{sec: pcs}

We have just defined the allegedly optimal strategy $\tsig$, based on the functions $u_\op(q,t,\zo,\zt)$, $S(q,t,\zo,\zt)$, and $\bar{a}(\zo,\zt)$.

In this section, we compare the performance of $\tsig$ with that of an arbitrary competing (deterministic, tame) strategy $\sigma$, also defined with respect to the given partition $0 = t_0 < t_1 < \dots < t_N = T$. We assume that
\begin{equation}\label{eq: pcs 1}
    (\Delta t_\mx) \le Q^{-2000m_0}.
\end{equation}
(See Section \ref{sec: pde} for $m_0$.) Thanks to our assumption \ref{eq: pde 6}, the strategy $\tsig$ satisfies
\[
|u^{\tsig}(\tnu)| \le C_\tame^\op[|q^{\tsig}(\tnu)| + 1],
\]
i.e., $\tsig$ is tame with constant $C_\tame^\op$. We assume that the strategy $\sigma$ is tame with constant $C_\tame^\sigma$, i.e., we assume that
\[
|u^\sigma(\tnu)| \le C_\tame^\sigma[|q^\sigma(\tnu)| + 1].
\]
In this section, we write $c$, $C$, $C'$, etc.\ to denote constants determined by $C_\tame^\sigma$ and the \textsc{boilerplate constants} (one of which is $C_\tame^\op$). We strengthen our large $Q$ assumption by supposing that $Q>C$ for a large enough $C$. Since the meaning of constants $C$ has changed, this indeed strengthens our previous large $Q$ assumption.

We define
\begin{equation}\label{eq: pcs 2}
    \cU = \{(q,\zo,\zt)\in\R^3 : |q|, |\zo|, |\zt| \le Q\}
\end{equation}
and
\begin{equation}\label{eq: pcs 3}
    \cU^+ = \{ (q,\zo,\zt) \in \R^3 : |q|, |\zo|, |\zt| \le 2 Q\}.
\end{equation}
For each $\mu$ ($0 \le \mu \le N$) we introduce the event
\begin{equation}\label{eq: pcs 4}
\begin{split}
    \ok_\mu = \Big\{& (\qsig(t_\mu), \zosig(t_\mu), \ztsig(t_\mu)) \in \cU, \\
    & \Big| \sum_{0 \le \gamma < \mu} (\Delta \qsig(t_\gamma))^2 - t_\mu \Big| \le Q^2 (\Delta t_\mx)^{1/4}\Big\}.
\end{split}
\end{equation}
We define the event
\begin{equation}\label{eq: pcs 5}
    \disaster = \{ \ok_\mu\;\text{fails for some }\mu \le N\}
\end{equation}
and the stopping time
\begin{equation}\label{eq: pcs 6}
    \tau = \begin{cases}
        t_\mu \;\text{for the least }\mu\text{ for which }\ok_\mu\;\text{fails,} &\text{if there is such a }\mu,\\
        T = t_N &\text{otherwise}.
    \end{cases}
\end{equation}
Note that $\tau$ is indeed a stopping time with respect to the $\cF_\nu^\sigma$, i.e., the event $\tau > \tnu$ is $\cF_\nu^\sigma$-measurable, for each $\nu$.

We define a \emph{cost-to-go} by setting
\begin{equation}\label{eq: pcs 7}
    \ctg^\sigma(\tnu) = \sum_{\tnu \le t_\mu < \tau}[ (\qsig(t_\mu))^2 + (\usig(t_\mu))^2] \Delta t_\mu.
\end{equation}
To measure the difference between the strategies $\sigma$ and $\tsig$, we introduce the random variable
\begin{equation}\label{eq: pcs 8}
    \discrep^\sigma(t_\mu) = \usig(t_\mu) - u_\op(\qsig(t_\mu),t_\mu,\zosig(t_\mu),\ztsig(t_\mu)).
\end{equation}
Note that 
\[
u_\op(\qsig(t_\mu),t_\mu,\zosig(t_\mu),\ztsig(t_\mu))
\]
is not the same as 
\[
u^{\tsig}(t_\mu) = u_\op(q^{\tsig}(t_\mu),t_\mu,\zo^{\tsig}(t_\mu),\zt^{\tsig}(t_\mu)).
\]

In the next lemma, we compare the cost-to-go of $\sigma$ with that of $\tsig$. We will be conditioning on $\cF_\nu^\sigma$, so the quantities $\qsig(\tnu)$, $\usig(\tnu)$, $\zosig(\tnu)$, $\ztsig(\tnu)$ may be regarded as determinisitc.

\begin{lem}[Main Lemma on Competing Strategies]\label{lem: comp strategies}
Fix $\nu$ $(0 \le \nu \le N)$. Suppose
\begin{equation}\label{eq: pcs star}\tag{$\star$}
(\qsig(\tnu),\zosig(\tnu),\ztsig(\tnu)) \in \cU.
\end{equation}
    Then
    \begin{equation}\tag{A}\label{eq: pcs A}
    \begin{split}
        \eE[\ectg^\sigma(\tnu) | \cF_\nu^\sigma] + &\hat{C}Q^{2m_0}\eprob[\disaster | \cF_\nu^\sigma]\\
        \ge &S(\qsig(\tnu),\tnu,\zosig(\tnu),\ztsig(\tnu)) \\
        &+ \eE\Big[ \sum_{\tnu \le t_\mu < \tau} (\discrep^\sigma(t_\mu))^2 \Delta t_\mu | \cF_\nu^\sigma\Big]\\
        & - Q^{2m_0} (T-\tnu)\cdot (\Delta t_\emx)^{1/20}.
    \end{split}
    \end{equation}
    If $\sigma = \tsig$, then
    \begin{equation}\tag{B}\label{eq: pcs B}
        \begin{split}
            \eE[\ectg^\sigma(\tnu) | \cF_\nu^\sigma] \le &S(\qsig(\tnu), \tnu, \zosig(\tnu), \ztsig(\tnu)) \\
            &+ \hat{C} Q^{2m_0} \eprob[\disaster | \cF_\nu^\sigma] \\
            & + Q^{2m_0}(T-\tnu)\cdot (\Delta t_\emx)^{1/20}.
        \end{split}
    \end{equation}
\end{lem}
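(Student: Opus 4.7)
My plan is to prove both parts simultaneously via a discrete It\^o-type identity for $S$ along the sample path, telescoped up to the stopping time $\tau$ of \eqref{eq: pcs 6}. For each index $\mu$ with $\tnu \le t_\mu < \tau$, I would Taylor expand the increment
\[
\Delta_\mu S := S(\qsig(t_{\mu+1}),t_{\mu+1},\zosig(t_{\mu+1}),\ztsig(t_{\mu+1})) - S(\qsig(t_\mu),t_\mu,\zosig(t_\mu),\ztsig(t_\mu))
\]
to second order around $(\qsig(t_\mu),t_\mu,\zosig(t_\mu),\ztsig(t_\mu))$, restrict to the tame-increment event $\tame(\mu)$ of Lemma~\ref{lem: post exp} (so that the pointwise bound on $|\partial^\alpha S|$ for $|\alpha|\le 3$ in \eqref{eq: pde 5} controls the third-order Taylor remainder by $CQ^{m_0}(\Delta t_\mu)^{6/5}$), take $\E[\,\cdot\,|\cF_\mu^\sigma]$, and substitute the conditional first and second moments of $(\Delta q, \Delta\zo, \Delta\zt)$ supplied by Lemma~\ref{lem: post exp}. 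The complementary event $\nottame(\mu)$ has conditional probability at most $(\Delta t_\mu)^{20}$ and contributes negligibly after Cauchy--Schwarz against the polynomial moment bounds on $|\Delta_\mu S|$ that follow from \eqref{eq: pde 5}.

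The key algebraic point is that substituting $u_\op = -\tfrac{1}{2}\partial_q S$ into the PDE \eqref{eq: pde 1} yields the pointwise identity
\[
\begin{split}
\partial_t S + (\bar{a}q + u)\partial_q S &+ \bar{a} q^2 \partial_{\zo} S + q^2 \partial_{\zt} S + \tfrac{1}{2}\partial_q^2 S \\&+ q\partial_{q\zo}S + \tfrac{1}{2}q^2\partial_{\zo}^2 S + (q^2+u^2) = (u - u_\op)^2
\end{split}
\]
for every $u \in \R$ (since $\partial_q S + u + u_\op = u - u_\op$). Applied at $(\qsig(t_\mu),t_\mu,\zosig(t_\mu),\ztsig(t_\mu))$ with $u = \usig(t_\mu)$, this turns the Taylor computation into the one-step identity
\[
\E[\Delta_\mu S \cdot \mathbbm{1}_{\tame(\mu)}|\cF_\mu^\sigma] = -\bigl((\qsig(t_\mu))^2+(\usig(t_\mu))^2\bigr)\Delta t_\mu + (\discrep^\sigma(t_\mu))^2 \Delta t_\mu + R_\mu
\]
with $|R_\mu| \le CQ^{2m_0}(\Delta t_\mu)(\Delta t_\mx)^{1/20}$. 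Summing over $\nu \le \mu$ with $t_\mu < \tau$ (using that $\mathbbm{1}_{t_\mu<\tau}$ is $\cF_\mu^\sigma$-measurable) telescopes to $\E[S_\tau - S_\nu|\cF_\nu^\sigma]$, where I write $S_\mu$ for $S$ evaluated at the state at time $t_\mu$ and $S_\tau$ at time $\tau$; rearranging yields
\[
\E[\ctg^\sigma(\tnu)|\cF_\nu^\sigma] = S_\nu - \E[S_\tau|\cF_\nu^\sigma] + \E\Big[\sum_{\tnu\le t_\mu<\tau}(\discrep^\sigma(t_\mu))^2\Delta t_\mu\Big|\cF_\nu^\sigma\Big] + \mathcal{R},
\]
with $|\mathcal{R}| \le Q^{2m_0}(T-\tnu)(\Delta t_\mx)^{1/20}$.

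On $\{\tau = T\}$ the terminal condition \eqref{eq: pde 3} forces $S_\tau = 0$; on $\disaster = \{\tau<T\}$ positivity \eqref{eq: pde 4} gives $S_\tau \ge 0$. This alone delivers (B), since $\discrep^{\tsig}(t_\mu) \equiv 0$ by construction of the allegedly optimal strategy and dropping $-\E[S_\tau|\cF_\nu^{\tsig}] \le 0$ produces the upper bound (the $\hat{C}Q^{2m_0}\prob[\disaster|\cF_\nu^{\tsig}]$ term then serves as gratuitous slack). For (A) I need the matching control $\E[S_\tau \cdot \mathbbm{1}_\disaster | \cF_\nu^\sigma] \le \hat{C}Q^{2m_0}\prob[\disaster|\cF_\nu^\sigma]$ modulo a term absorbable into $\mathcal{R}$, obtained by splitting according to the tame-increment event at step $\tau - 1$: on $\tame(\tau-1)$ the state at $t_\tau$ differs from that at $t_{\tau-1}$ by at most $2(\Delta t_\mx)^{2/5}$ and hence lies in $\cU^+$, so \eqref{eq: pde 5} gives $|S_\tau| \le \hat{C}Q^{m_0}$ pointwise, while on $\nottame(\tau-1)$ Cauchy--Schwarz against the conditional one-step moment bound $\E[(1+|q|+|\zo|+|\zt|)^{2m_0}|\cF_{\tau-1}^\sigma] \le CQ^{2m_0}$ (available because $\ok_{\tau-1}$ holds on $\{\tau<T\}$, via Lemma~\ref{lem: bayesian rare events}) gives a contribution of order $Q^{m_0}(\Delta t_\mx)^{10}$ per step, absorbable into $\mathcal{R}$. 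The most delicate technical point I expect is that \eqref{eq: pde 5} controls $|\partial^3 S|$ only almost everywhere: since the laws of the intermediate states along the Taylor line segment admit densities by Lemma~\ref{lem: prob dist}, a Fubini argument converts this a.e.~bound into an almost-sure bound on the integral form of the third-order remainder, but carrying this out cleanly will require the most careful bookkeeping in the final write-up.
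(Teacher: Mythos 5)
Your telescoping argument is a correct alternative to the paper's proof, which instead runs a downward induction on $\nu$. Both proofs share the same computational core: the second-order Taylor expansion of $S$ restricted to the $\tame(\mu)$ event, the substitution of the conditional moments from Lemma~\ref{lem: post exp}, and the algebraic identity $(u - u_\op)^2$ that falls out of the Bellman equation \eqref{eq: pde 1} once $\partial_q S = -2u_\op$ is substituted. Where the two routes differ is in how they handle the contribution from the stopped state. Your forward telescoping produces the boundary term $-\E[S_\tau\,|\,\cF_\nu^\sigma]$ explicitly, and you then must bound $\E[S_\tau\,\mathbbm{1}_\disaster]$ by $\hat{C}Q^{2m_0}\prob[\disaster]$ plus an absorbable remainder -- the argument you sketch (split on $\tame(\tau-1)$ vs.\ $\nottame(\tau-1)$, use that $\ok_{\tau-1}$ holds so the pre-stopping state lies in $\cU$, invoke the polynomial growth \eqref{eq: pde 5} together with one-step moment bounds and Cauchy--Schwarz) does close this. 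The paper's induction instead never produces $S_\tau$ as a standalone object: at each step it splits on whether $(\qsig(t_{\nu+1}),\zosig(t_{\nu+1}),\ztsig(t_{\nu+1})) \in \cU$ (Cases~I and~II), paying the $\hat{C}Q^{2m_0}$ disaster penalty one step at a time via estimates \eqref{eq: pcs AII}, \eqref{eq: pcs BII}, which is cleaner bookkeeping at the price of carrying the inductive hypothesis. Your version makes the martingale/It\^o structure of the argument more visible at the cost of a slightly heavier boundary-term analysis. One small imprecision worth flagging in a final write-up: by \eqref{eq: pcs 5}--\eqref{eq: pcs 6}, disaster can occur with $\tau = T$ (if $\ok_N$ alone fails), so $\disaster \ne \{\tau < T\}$; this is harmless for you because the terminal condition \eqref{eq: pde 3} gives $S_\tau = 0$ whenever $\tau = T$ regardless of whether disaster happened, but you should phrase it that way rather than equating the two events. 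Also, since the $\tame(\mu)$ indicator breaks the clean telescope, your remainder $R_\mu$ must be understood to include the $\E[\Delta_\mu S\cdot\mathbbm{1}_{\nottame(\mu)}\,|\,\cF_\mu^\sigma]$ contribution, estimated via $\prob[\nottame(\mu)\,|\,\cF_\mu^\sigma] \le (\Delta t_\mu)^{20}$ and a conditional moment bound on $|\Delta_\mu S|$; you allude to this, and it does fit inside the stated $CQ^{2m_0}(\Delta t_\mu)(\Delta t_\mx)^{1/20}$ budget under the standing smallness hypothesis \eqref{eq: pcs 1}.
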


We fix the large constant $\hat{C}$ throughout this section.

\begin{proof}
    We proceed by downward induction on $\nu$.

    \underline{In the base case}, $\nu = N$. Since $\tau \le T = t_N$, we have $\ctg^\sigma(\tnu)=0$. Also, $S(\qsig(\tnu),\tnu,\zosig(\tnu),\ztsig(\tnu)) =0$ by the terminal condition for our PDE. Moreover, the sum
    \[
    \sum_{\tnu \le t_\mu < \tau}(\discrep^\sigma(t_\mu))^2 \Delta t_\mu
    \]
    is empty, and $T-\tnu = 0$. Therefore, \eqref{eq: pcs A} asserts that
    \[
    \hat{C} Q^{2m_0}\prob[\disaster | \cF_\nu^\sigma] \ge 0,
    \]
    while \eqref{eq: pcs B} asserts that if $\sigma = \tsig$, then
    \[
    0 \le \hat{C}Q^{2m_0}\prob[\disaster|\cF_\nu^\sigma].
    \]
    These two (equivalent) inequalities are obviously correct, so our lemma holds in the base case $\nu = N$.

    \underline{For the induction step}, we fix $\nu < N$, and assume that \eqref{eq: pcs A} and \eqref{eq: pcs B} hold with $(\nu+1)$ in place of $\nu$. We will deduce \eqref{eq: pcs A} and \eqref{eq: pcs B} for the given $\nu$. To do so, we first dispose of a trivial case.

    \underline{Suppose for a moment that $\ok_\mu$ fails for some $\mu \le \nu$.} Then $\disaster$ occurs, and $\tau \le \tnu$; consequently, $\ctg^\sigma(\tnu) = 0$, and
    \[
    \sum_{\tnu \le t_\mu < \tau} (\discrep^\sigma(t_\mu))^2 \Delta t_\mu = 0.
    \]
    Therefore, \eqref{eq: pcs A} asserts that
    \begin{align*}
        \hat{C}Q^{2m_0} \ge S(\qsig(\tnu),\tnu,\zosig(\tnu),\ztsig(\tnu)) - Q^{2m_0}(T-\tnu)\cdot (\Delta t_\mx)^{1/2},
    \end{align*}
    while \eqref{eq: pcs B} asserts that if $\sigma = \tsig$ then
    \[
    0 \le S(\qsig(\tnu),\tnu,\zosig(\tnu),\ztsig(\tnu)) + \hat{C}Q^{2m_0} + Q^{2m_0}\cdot (T-\tnu)\cdot(\Delta t_\mx)^{1/20}.
    \]
These inequalities are immediate from our assumptions on the PDE solution $S$, together with our hypothesis
\[
(\qsig(\tnu),\zosig(\tnu),\ztsig(\tnu))\in\cU.
\]
Thus, our induction step is complete in the trivial case in which $\ok_\mu$ fails for some $\mu \le \nu$.

\underline{From now on, we assume that}
\begin{equation}\label{eq: pcs 9}
    \underline{\ok_\mu\;\text{holds for all}\; \mu \le \nu.}
\end{equation}
Thus,
\begin{equation}\label{eq: pcs 10}
    \tau \ge t_{\nu+1}
\end{equation}

For the moment, we condition on $\cF_{\nu+1}^\sigma,$ and distinguish two cases.
\[
\begin{aligned}
    &\underline{\text{Case I}}:& &(\qsig(t_{\nu+1}), \zosig(t_{\nu+1}), \ztsig(t_{\nu+1})) \in \cU,&\\
    &\underline{\text{Case II}}:& &(\qsig(t_{\nu+1}),\zosig(t_{\nu+1}),\ztsig(t_{\nu+1}))\notin \cU&.
\end{aligned}
\]
In Case I, our inductive hypothesis tells us the following.
\begin{align*}
    \E[\ctg^\sigma(t_{\nu+1}) | \cF_{\nu+1}^\sigma] + \hat{C} &Q^{2m_0} \prob[\disaster | \cF_{\nu+1}^\sigma]\\
    \ge &S(\qsig(t_{\nu+1}),t_{\nu+1},\zosig(t_{\nu+1}),\ztsig(t_{\nu+1}))\\
    &+ \E\Big[ \sum_{t_{\nu+1} \le t_\mu < \tau} (\discrep^\sigma(t_\mu))^2 \Delta t_\mu | \cF_{\nu+1}^\sigma \Big]\\
    & - Q^{2m_0} (T-t_{\nu+1})\cdot (\Delta t_\mx)^{1/20};
\end{align*}
and if $\sigma = \tsig$ then
\begin{align*}
    \E[\ctg^\sigma(t_{\nu+1}) | \cF_{\nu+1}^\sigma] \le &S(\qsig(t_{\nu+1}), t_{\nu+1}, \zosig(t_{\nu+1}), \ztsig(t_{\nu+1})) \\
    &+ \hat{C} Q^{2m_0} \prob[\disaster | \cF_{\nu+1}^\sigma]\\
    & + Q^{2m_0} (T-t_{\nu+1})\cdot (\Delta t_\mx)^{1/20}.
\end{align*}
    Since
    \[
    \ctg^\sigma(\tnu) = \ctg^\sigma(t_{\nu+1}) + [(\qsig(\tnu))^2 + (\usig(\tnu))^2]\Delta \tnu,
    \]
    thanks to \eqref{eq: pcs 10}, the above inequalities yield at once that
\begin{equation}\label{eq: pcs AI}\tag{AI}
\begin{split}
\E[\ctg^\sigma(\tnu)|\cF_{\nu+1}^\sigma] + \hat{C}&Q^{2m_0}\prob[\disaster | \cF_{\nu+1}^\sigma]\\
 \ge& S(\qsig(t_{\nu+1}),t_{\nu+1},\zosig(t_{\nu+1}),\ztsig(t_{\nu+1})) \\
 &+ [(\qsig(\tnu))^2 + (\usig(\tnu))^2]\Delta \tnu\\
 & + \E\Big[ \sum_{t_{\nu+1} \le t_\mu < \tau} (\discrep^\sigma(t_\mu))^2 \Delta t_\mu | \cF_{\nu+1}^\sigma \Big]\\
 & - Q^{2m_0}\cdot (T-t_{\nu+1}) \cdot (\Delta t_\mx)^{1/20};
\end{split}
\end{equation}
and if $\sigma = \tsig$ then
\begin{equation}\label{eq: pcs BI}\tag{BI}
    \begin{split}
        \E[\ctg^\sigma(\tnu) | \cF_{\nu+1}^\sigma] \le &S(\qsig(t_{\nu+1}),t_{\nu+1},\zosig(t_{\nu+1}),\ztsig(t_{\nu+1}))\\
        &+[(\qsig(\tnu))^2 + (\usig(\tnu))^2]\Delta \tnu\\
        &+\hat{C}Q^{2m_0}\prob[\disaster | \cF_{\nu+1}^\sigma]\\
        &+Q^{2m_0}(T-t_{\nu+1})\cdot (\Delta t_\mx)^{1/20}.
    \end{split}
\end{equation}
Estimates \eqref{eq: pcs AI} and \eqref{eq: pcs BI} hold in Case I.

We turn to \underline{Case II}. In that case, $\ok_{\nu+1}$ fails, $\disaster$ occurs, and $\tau = t_{\nu+1}$, hence
\begin{flalign*}
&\ctg^\sigma(\tnu) = [(\qsig(\tnu))^2 + (\usig(\tnu))^2]\Delta \tnu&\\
&\prob[\disaster | \cF_{\nu+1}^\sigma] = 1\\
& \sum_{t_{\nu+1} \le t_\mu < \tau} (\discrep^\sigma(t_\mu))^2 \Delta t_\mu = 0.
\end{flalign*}
So we have the following estimates:
\begin{equation}\label{eq: pcs AII}\tag{AII}
    \begin{split}
        \E[\ctg^\sigma(\tnu) | &\cF_{\nu+1}^\sigma] + \hat{C}Q^{2m_0}\prob[\disaster | \cF_{\nu+1}^\sigma]\\
         \ge & \hat{C}Q^{2m_0} + \E\Big[ \sum_{t_{\nu+1} \le t_\mu < \tau} (\discrep^\sigma(t_\mu))^2 \Delta t_\mu | \cF_{\nu+1}^\sigma\Big]\\
         & - Q^{2m_0}(T-t_{\nu+1})\cdot (\Delta t_\mx)^{1/20};
    \end{split}
\end{equation}
and if $\sigma = \tsig$ then
\begin{equation}\label{eq: pcs BII}\tag{BII}
\begin{split}
    \E[\ctg^\sigma(\tnu) | \cF_{\nu+1}^\sigma] \le &[(\qsig(\tnu))^2 + (\usig(\tnu))^2]\Delta \tnu\\
    &+\hat{C}Q^{2m_0}\prob[\disaster | \cF_{\nu+1}^\sigma] \\
    &+ Q^{2m_0}\cdot(T-t_{\nu+1})\cdot(\Delta t_\mx)^{1/20}.
\end{split}
\end{equation}
Estimates \eqref{eq: pcs AII} and \eqref{eq: pcs BII} hold in Case II.

Combining estimates \eqref{eq: pcs AI}, \eqref{eq: pcs BI}, \eqref{eq: pcs AII}, \eqref{eq: pcs BII}, we obtain the following inequalities, which hold in both Cases I and II:
\begin{align*}
    \E[\ctg^\sigma(\tnu) | \cF_{\nu+1}^\sigma] + \hat{C}&Q^{2m_0}\prob[\disaster | \cF_{\nu+1}^\sigma]\\
    \ge & S(\qsig(t_{\nu+1}),t_{\nu+1},\zosig(t_{\nu+1}),\ztsig(t_{\nu+1}))\cdot \mathbbm{1}_{\casei}\\
    & + C\hat{Q}^{2m_0}\mathbbm{1}_{\caseii} + [(\qsig(\tnu))^2 + (\usig(\tnu))^2]\Delta \tnu\cdot \mathbbm{1}_{\casei}\\
    & + \E\Big[ \sum_{t_{\nu+1}\le t_\mu < \tau} (\discrep^\sigma(t_\mu))^2 \Delta t_\mu | \cF_{\nu+1}^\sigma\Big]\\
    & - Q^{2m_0}\cdot (T-t_{\nu+1})\cdot(\Delta t_\mx)^{1/20};
\end{align*}
and if $\sigma = \tsig$, then
\begin{align*}
    \E[\ctg^\sigma(\tnu) | \cF_{\nu+1}^\sigma] \le &S(\qsig(t_{\nu+1}),t_{\nu+1},\zosig(t_{\nu+1}),\ztsig(t_{\nu+1}))\cdot\mathbbm{1}_{\casei} \\
    &+ [(\qsig(\tnu))^2 + (\usig(\tnu))^2]\Delta \tnu \\
    &+ \hat{C}Q^{2m_0}\prob[\disaster | \cF_{\nu+1}^\sigma] \\
    &+ Q^{2m_0}\cdot (T- t_{\nu+1}) \cdot (\Delta t_\mx)^{1/20}.
\end{align*}

We now cease conditioning on $\cF_{\nu+1}^\sigma$ and instead condition on $\cF_{\nu}^\sigma$. From the two preceding inequalities we obtain the following estimates, valid whenever \eqref{eq: pcs 9} holds.
\begin{equation}\label{eq: pcs A*}\tag{A*}
    \begin{split}
        \E[\ctg^\sigma(\tnu) | \cF_\nu^\sigma]& + \hat{C} Q^{2m_0} \prob[\disaster | \cF_\nu^\sigma]\\
         \ge &\E[S(\qsig(t_{\nu+1}),t_{\nu+1},\zosig(t_{\nu+1}),\ztsig(t_{\nu+1}))\cdot \mathbbm{1}_{\casei}|\cF_\nu^\sigma]\\
         &+\E[\hat{C}Q^{2m_0}\mathbbm{1}_{\caseii} | \cF_\nu^\sigma] \\
         &+ [(\qsig(\tnu))^2 + (\usig(\tnu))^2]\Delta \tnu \cdot \E[\mathbbm{1}_{\casei} | \cF_\nu^\sigma]\\
         &+\E\Big[ \sum_{t_{\nu+1}\le t_\mu < \tau} (\discrep^\sigma(t_\mu))^2 \Delta t_\mu | \cF_\nu^\sigma\Big]\\
         &-Q^{2m_0}\cdot(T-t_{\nu+1})\cdot (\Delta t_\mx)^{1/20};
    \end{split}
\end{equation}
and if $\sigma = \tsig$ then
\begin{equation}\label{eq: pcs B*}\tag{B*}
    \begin{split}
        \E[\ctg^\sigma(\tnu) | \cF_\nu^\sigma] \le&\E[S(\qsig(t_{\nu+1}), t_{\nu+1}, \zosig(t_{\nu+1}), \ztsig(t_{\nu+1}))]\cdot \mathbbm{1}_{\casei} | \cF_\nu^\sigma]\\
        &+[(\qsig(\tnu))^2 + (\usig(\tnu))^2]\Delta \tnu \\
        &+ \hat{C}Q^{2m_0}\prob[\disaster | \cF_\nu^\sigma]\\
        &+Q^{2m_0}\cdot(T-t_{\nu+1})\cdot (\Delta t_\mx)^{1/20}.
    \end{split}
\end{equation}

Next, from Lemma \ref{lem: post exp}, recall the event
\begin{equation}\label{eq: pcs 11}
    \tame(\nu) = \{ |\Delta \qsig(\tnu)|, |\Delta \zosig(\tnu)|, |\Delta \ztsig(\tnu)| \le 2(\Delta \tnu)^{2/5}\},
\end{equation}
and the estimate
\begin{equation}\label{eq: pcs 12}
    \prob[\nottame(\nu) | \cF_\nu^\sigma] \le (\Delta \tnu)^{20}.
\end{equation}
(Note that Lemma \ref{lem: post exp} applies, thanks to \eqref{eq: pcs 9}.)
If $\tame(\nu)$ occurs, then, since
\[
(\qsig(\tnu),\zosig(\tnu),\ztsig(\tnu))\in\cU
\]
(by the hypothesis of the present lemma), we have 
\[
(\qsig(t_{\nu+1}),\zosig(t_{\nu+1}),\ztsig(t_{\nu+1}))\in\cU^+,
\]
hence
\[
0 \le S(\qsig(t_{\nu+1}),t_{\nu+1},\zosig(t_{\nu+1}),\ztsig(t_{\nu+1})) \le C Q^{m_0}.
\]
Therefore, if we take $\hat{C}$ large enough, then
\begin{multline}\label{eq: pcs 13}
S(\qsig(t_{\nu+1}),t_{\nu+1}, \zosig(t_{\nu+1}), \ztsig(t_{\nu+1})) \cdot \mathbbm{1}_{\casei} + \frac{1}{2}\hat{C} Q^{2m_0}\cdot \mathbbm{1}_{\caseii}\\ \ge S(\qsig(t_{\nu+1}),t_{\nu+1},\zosig(t_{\nu+1}),\ztsig(t_{\nu+1}))\cdot \mathbbm{1}_{\tame(\nu)}
\end{multline}
and
\begin{equation}\label{eq: pcs 14}
    \frac{1}{2}\hat{C}Q^{2m_0}\cdot \mathbbm{1}_{\caseii} \ge [(\qsig(\tnu))^2+(\usig(\tnu))^2]\Delta \tnu \cdot \mathbbm{1}_{\tame(\nu)}\cdot \mathbbm{1}_{\caseii}.
\end{equation}
Putting \eqref{eq: pcs 13} and \eqref{eq: pcs 14} into \eqref{eq: pcs A*}, we learn that
\begin{equation}\label{eq: pcs Asharp}\tag{A\#}
    \begin{split}        \E[\ctg^\sigma(\tnu)|\cF_\nu^\sigma] &+ \hat{C}Q^{2m_0} \prob[\disaster | \cF_\nu^\sigma]\\
        \ge &\E[S(\qsig(t_{\nu+1}),t_{\nu+1},\zosig(t_{\nu+1}),\ztsig(t_{\nu+1}))\cdot \mathbbm{1}_{\tame(\nu)} | \cF_\nu^\sigma]\\
        & + [(\qsig(\tnu))^2 + (\usig(\tnu))^2]\Delta \tnu \prob[\tame(\nu)|\cF_\nu^\sigma]\\
        &+\E\Big[\sum_{t_{\nu+1}\le t_\mu < \tau} (\discrep^\sigma(t_\mu))^2 \Delta t_\mu \Big| \cF_\nu^\sigma\Big]\\
        & - Q^{2m_0} \cdot (T-t_{\nu+1})\cdot (\Delta t_\mx)^{1/20}.
    \end{split}
\end{equation}
To obtain an analogous result from \eqref{eq: pcs B*}, we note that
\begin{align*}
S(\qsig(t_{\nu+1}),&t_{\nu+1},\zosig(t_{\nu+1}),\ztsig(t_{\nu+1}))\cdot\mathbbm{1}_{\casei} \\ \le &S(\qsig(t_{\nu+1}),t_{\nu+1},\zosig(t_{\nu+1}),\ztsig(t_{\nu+1}))\cdot\mathbbm{1}_{(\qsig(t_{\nu+1}),\zosig(t_{\nu+1}),\ztsig(t_{\nu+1}))\in\cU^+}\\
\le & S(\qsig(t_{\nu+1}),t_{\nu+1},\zosig(t_{\nu+1}), \ztsig(t_{\nu+1}))\cdot \mathbbm{1}_{\tame(\nu)} \\&+ C Q^{2m_0}\cdot \mathbbm{1}_{\nottame(\nu)},
\end{align*}
and therefore
\begin{equation}\label{eq: pcs 15}
\begin{split}    \E[S(\qsig(t_{\nu+1}&),t_{\nu+1},\zosig(t_{\nu+1}),\ztsig(t_{\nu+1}))\cdot \mathbbm{1}_{\casei} | \cF_\nu^\sigma] \\
\le& \E[S(\qsig(t_{\nu+1}),t_{\nu+1},\zosig(t_{\nu+1}),\ztsig(t_{\nu+1}))\cdot \mathbbm{1}_{\tame(\nu)} | \cF_\nu^\sigma]\\& + CQ^{2m_0}(\Delta \tnu)^{20},
\end{split}
\end{equation}
thanks to \eqref{eq: pcs 12}.

Putting \eqref{eq: pcs 15} into \eqref{eq: pcs B*}, we obtain for $\sigma = \tsig$ the inequality
\begin{equation}\label{eq: pcs Bsharp}\tag{B\#}
    \begin{split}
        \E[\ctg^\sigma(\tnu) | &\cF_\nu^\sigma]\\
        \le& \E[S(\qsig(t_{\nu+1}),t_{\nu+1},\zosig(t_{\nu+1}),\ztsig(t_{\nu+1}))\cdot\mathbbm{1}_{\tame(\nu)} | \cF_\nu^\sigma]\\
        & + CQ^{2m_0}(\Delta \tnu)^{20} + [(\qsig(\tnu))^2 + (\usig(\tnu))^2] \Delta \tnu \\
        &+ \hat{C}Q^{2m_0}\prob[\disaster | \cF_\nu^\sigma] \\ &+ Q^{2m_0}(T-t_{\nu+1})\cdot (\Delta t_\mx)^{1/20}.
    \end{split}
\end{equation}
Estimates \eqref{eq: pcs Asharp} (for general $\sigma$) and \eqref{eq: pcs Bsharp} (for $\sigma = \tsig$) hold whenever \eqref{eq: pcs 9} is satisfied.

We next study the quantity
\[
\E[S(\qsig(t_{\nu+1}),t_{\nu+1},\zosig(t_{\nu+1}),\ztsig(t_{\nu+1}))\cdot \mathbbm{1}_{\tame(\nu)} | \cF_\nu^\sigma],
\]
which appears in \eqref{eq: pcs Asharp} and \eqref{eq: pcs Bsharp}.

Thanks to \eqref{eq: pcs 9}, \eqref{eq: pcs 4}, and \eqref{eq: pcs 11}, the points $(\qsig(\tnu),\zosig(\tnu),\ztsig(\tnu))$ and $(\qsig(t_{\nu+1}),\zosig(t_{\nu+1}),\ztsig(t_{\nu+1}))$ both lie in $\cU^+$ provided $\tame(\nu)$ holds; see \eqref{eq: pcs 3}. Recall that the third derivatives of $S$ are assumed to be bounded a.e. by $CQ^{m_0}$ on $\cU^+$. Therefore, if $\tame(\nu)$ holds, then
\begin{equation}\label{eq: pcs 16}
    \begin{split}
\Big|S(\qsig&(t_{\nu+1}),t_{\nu+1},\zosig(t_{\nu+1}),\ztsig(t_{\nu+1})) \\
&- \sum_{p_1 + p_2 + p_3 + p_4 \le 2}\Big\{ \frac{(\partial_t^{p_1}\partial_q^{p_2}\partial_{\zo}^{p_3} \partial_{\zt}^{p_4}S)}{p_1!p_2!p_3!p_4!} (\Delta \tnu)^{p_1} (\Delta \qsig(\tnu))^{p_2}\\
&\qquad \qquad\qquad \qquad \cdot (\Delta \zosig(\tnu))^{p_3}(\Delta \ztsig(\tnu))^{p_4}\Big\} \Big|\\
&\le C Q^{m_0}\max\{(\Delta \tnu), |\Delta \qsig(\tnu)|, |\Delta \zosig(\tnu)|, |\Delta \ztsig(\tnu)|\}^3\\
&\le C' Q^{m_0}  [(\Delta \tnu)^{2/5}]^3,
    \end{split}
\end{equation}
where the partials of $S$ are evaluated at $(\qsig(\tnu), \tnu, \zosig(\tnu),\ztsig(\tnu))$, and we have made use of \eqref{eq: pcs 11}.

Moreover, the summands in \eqref{eq: pcs 16} satisfy
\begin{multline*}
| (\partial_t^{p_1} \cdots \partial_{\zt}^{p_4} S) (\Delta \tnu)^{p_1} (\Delta \qsig(\tnu))^{p_2}(\Delta \zosig(\tnu))^{p_3}(\Delta \ztsig(\tnu))^{p_4} |\\ \le C Q^{m_0} (\Delta \tnu)^{p_1 + \frac{2}{5}[p_2 + p_3 + p_4]}
\end{multline*}
whenever \eqref{eq: pcs 9} and $\tame(\nu)$ hold.

Consequently, \eqref{eq: pcs 16} implies that
\begin{equation}\label{eq: pcs 17}
    \begin{split}
        \Big|S(\qsig&(t_{\nu+1}),t_{\nu+1},\zosig(t_{\nu+1}),\ztsig(t_{\nu+1}))\cdot\mathbbm{1}_{\tame(\nu)} \\
&- \sum_{\substack{p_1 + p_2 + p_3 + p_4 \le 2 \\ p_1 + \frac{2}{5}(p_2+p_3+p_4) \le 1}}\Big\{ \frac{(\partial_t^{p_1}\partial_q^{p_2}\partial_{\zo}^{p_3} \partial_{\zt}^{p_4}S)}{p_1!p_2!p_3!p_4!} (\Delta \tnu)^{p_1} (\Delta \qsig(\tnu))^{p_2}\\
&\qquad \qquad\qquad \qquad\quad \cdot (\Delta \zosig(\tnu))^{p_3}(\Delta \ztsig(\tnu))^{p_4}\cdot \mathbbm{1}_{\tame(\nu)}\Big\} \Big|\\
& \le C Q^{m_0}(\Delta \tnu)^{6/5}.
    \end{split}
\end{equation}
The summands entering into \eqref{eq: pcs 17} arise from $S$, $\partial_t S$, $\partial_q S$, $\partial_{\zo} S$, $\partial_{\zt} S$, $\partial_q^2 S$, $\partial_{q\zo} S$, $\partial_{q\zt}S$, $\partial_{\zo}^2 S$, $\partial_{\zo \zt} S$, $\partial_{\zt}^2 S$. We conclude that
\begin{equation}\label{eq: pcs 18}
\begin{split}
     |S(\qsig(t_{\nu+1}),t_{\nu+1}, \zosig(t_{\nu+1}),\ztsig(t_{\nu+1}))\cdot \mathbbm{1}_{\tame(\nu)} - &\sum_{i =0}^{10}\term\;i |\\ &\le C Q^{m_0} (\Delta \tnu)^{6/5}
\end{split}
\end{equation}
whenever \eqref{eq: pcs 9} holds, where
\begin{flalign}
    &\term\;0= S \cdot \mathbbm{1}_{\tame(\nu)},\label{eq: pcs 19}&\\
    &\term\;1=(\partial_t S)\cdot (\Delta \tnu)\cdot\mathbbm{1}_{\tame(\nu)}, \label{eq: pcs 20}\\
    &\term \; 2 = (\partial_qS) \cdot (\Delta \qsig(\tnu))\cdot \mathbbm{1}_{\tame(\nu)}, \label{eq: pcs 21}\\
    &\term \; 3 = (\partial_{\zo} S)\cdot (\Delta \zosig(\tnu))\cdot\mathbbm{1}_{\tame(\nu)}, \label{eq: pcs 22}\\
    &\term \; 4 = (\partial_{\zt}S)\cdot (\Delta\ztsig(\tnu))\cdot\mathbbm{1}_{\tame(\nu)}, \label{eq: pcs 23}\\
    &\term \; 5 = \frac{1}{2}(\partial_q^2 S)\cdot (\Delta \qsig(\tnu))^2\cdot\mathbbm{1}_{\tame(\nu)}, \label{eq: pcs 24}\\
    &\term \; 6 = (\partial_{q\zo} S)\cdot(\Delta \qsig(\tnu))(\Delta \zosig(\tnu))\cdot\mathbbm{1}_{\tame(\nu)}, \label{eq: pcs 25}\\
    &\term \; 7 = (\partial_{q\zt}S) \cdot(\Delta \qsig(\tnu))(\Delta \ztsig(\tnu))\cdot\mathbbm{1}_{\tame(\nu)}, \label{eq: pcs 26}\\
    &\term \; 8 = \frac{1}{2}(\partial_{\zo}^2 S)\cdot(\Delta \zosig(\tnu))^2\cdot \mathbbm{1}_{\tame(\nu)}, \label{eq: pcs 27}\\
    &\term \; 9 = (\partial_{\zo\zt} S)\cdot (\Delta \zosig(\tnu))(\Delta \ztsig(\tnu))\cdot \mathbbm{1}_{\tame(\nu)}, \label{eq: pcs 28}\\
    &\term \; 10 = \frac{1}{2} (\partial_{\zt}^2 S)\cdot (\Delta \ztsig(\tnu))^2 \cdot \mathbbm{1}_{\tame(\nu)}. \label{eq: pcs 29}
\end{flalign}
Here, again, $S$ and its partial derivatives are evaluated at \[
(\qsig(\tnu), \tnu, \zosig(\tnu),\ztsig(\tnu));
\] hence these $(\partial^\alpha S)$ are deterministic once we condition on $\cF_\nu^\sigma$. Consequently,
\begin{flalign*}
    &\E[\term\;0 | \cF_\nu^\sigma]= S \cdot \prob[\tame(\nu) |\cF_\nu^\sigma],&\\
    &\E[\term\;1 | \cF_\nu^\sigma]=(\partial_t S)\cdot (\Delta \tnu)\cdot\prob[\tame(\nu) |\cF_\nu^\sigma], \\
    &\E[\term\;2| \cF_\nu^\sigma] = (\partial_qS) \cdot \E[\Delta \qsig(\tnu)\cdot\mathbbm{1}_{\tame(\nu)} |\cF_\nu^\sigma], \\
    &\E[\term\;3 | \cF_\nu^\sigma] = (\partial_{\zo} S)\cdot\E[\Delta \zosig(\tnu)\cdot \mathbbm{1}_{\tame(\nu)} |\cF_\nu^\sigma], \\
    &\E[\term\;4 | \cF_\nu^\sigma] = (\partial_{\zt}S)\cdot \E[\Delta\ztsig(\tnu)\cdot\mathbbm{1}_{\tame(\nu)} |\cF_\nu^\sigma], \\
    &\E[\term\;5 | \cF_\nu^\sigma] = \frac{1}{2}(\partial_q^2 S)\cdot \E[(\Delta \qsig(\tnu))^2\cdot\mathbbm{1}_{\tame(\nu)} |\cF_\nu^\sigma], \\
    &\E[\term\;6 | \cF_\nu^\sigma]= (\partial_{q\zo} S)\cdot\E[(\Delta \qsig(\tnu))(\Delta \zosig(\tnu))\cdot\mathbbm{1}_{\tame(\nu)} |\cF_\nu^\sigma], \\
    &\E[\term\;7 | \cF_\nu^\sigma] = (\partial_{q\zt}S) \cdot\E[(\Delta \qsig(\tnu))(\Delta \ztsig(\tnu))\cdot\mathbbm{1}_{\tame(\nu)} |\cF_\nu^\sigma], \\
    &\E[\term\;8 | \cF_\nu^\sigma] = \frac{1}{2}(\partial_{\zo}^2 S)\cdot \E[(\Delta \zosig(\tnu))^2\cdot\mathbbm{1}_{\tame(\nu)} |\cF_\nu^\sigma], \\
    &\E[\term\;9 | \cF_\nu^\sigma]= (\partial_{\zo\zt} S)\cdot  \E[(\Delta \zosig(\tnu))(\Delta \ztsig(\tnu))\cdot\mathbbm{1}_{\tame(\nu)} |\cF_\nu^\sigma], \\
    &\E[\term\;10 | \cF_\nu^\sigma] = \frac{1}{2} (\partial_{\zt}^2 S)\cdot \E[(\Delta \ztsig(\tnu))^2 \cdot \mathbbm{1}_{\tame(\nu)} |\cF_\nu^\sigma]. 
\end{flalign*}
The expectations on the right-hand sides have been computed modulo a small error in Lemma \ref{lem: post exp}. Applying that lemma, recalling that $|\partial^\alpha S| \le C Q^{m_0}$ for $|\alpha | \le 2$, and substituting the results into \eqref{eq: pcs 18}, we find that
\begin{equation}\label{eq: pcs 19.5}
    \begin{split}
        \E[S(\qsig&(t_{\nu+1}),t_{\nu+1},\zosig(t_{\nu+1}),\ztsig(t_{\nu+1}))\cdot\mathbbm{1}_{\tame(\nu)} | \cF_\nu^\sigma] \\
        &= S + (\Delta \tnu) \{\partial_t S + [\bar{a} \qsig(\tnu) + \usig(\tnu)]\partial_q S + \bar{a}\cdot(\qsig(\tnu))^2\partial_{\zo}S\\
        &\qquad \qquad \qquad + (\qsig(\tnu))^2 \partial_{\zt} S + \frac{1}{2} \partial_q^2 S + \qsig(\tnu)\partial_{q\zo} S \\
        &\qquad \qquad \qquad+ \frac{1}{2}(\qsig(\tnu))^2\partial_{\zo}^2 S\} + \error,
    \end{split}
\end{equation}
where $S$ and its derivatives are evaluated at $(\qsig(\tnu),\tnu,\zosig(\tnu),\ztsig(\tnu)),$ $\bar{a}$ denotes $\bar{a}(\zosig(\tnu),\ztsig(\tnu)),$ and 
\begin{equation}\label{eq: pcs 30}
\begin{split}
    |\error| &\le C Q^{m_0} (\Delta \tnu)^{6/5} + C Q^{m_0+4} (\Delta t_\mx)^{1/4} \Delta \tnu\\
    & \le CQ^{m_0} (\Delta t_\mx)^{1/5} \Delta \tnu.
\end{split}
\end{equation}
(We have used our assumption \eqref{eq: pcs 1}.) Since $S$ satisfies our PDE \eqref{eq: pde 1}, equation \eqref{eq: pcs 19.5} may be rewritten in the equivalent form
\begin{equation}\label{eq: pcs 20.5}
    \begin{split}
        \E[S(\qsig&(t_{\nu+1}),t_{\nu+1},\zosig(t_{\nu+1}),\ztsig(t_{\nu+1}))\cdot \mathbbm{1}_{\tame(\nu)} | \cF_\nu^\sigma] \\
        =& S(\qsig(\tnu),\tnu,\zosig(\tnu),\ztsig(\tnu)) \\
        &+ (\Delta \tnu) \cdot \{ (\partial_q S) \cdot [\usig(\tnu) - u_\op(\qsig(\tnu),\tnu,\zosig(\tnu),\ztsig(\tnu))] \\
        &\qquad \qquad\quad- [(\qsig(\tnu))^2 + (u_\op(\qsig(\tnu),\tnu,\zosig(\tnu),\ztsig(\tnu)))^2]\} \\
        &+ \error,
    \end{split}
\end{equation}
with
\begin{equation}\label{eq: pcs 31}
|\error| \le C Q^{m_0} (\Delta t_\mx)^{1/5}\Delta \tnu.
\end{equation}
Recalling that $\partial_q S = - 2u_\op$ (see \eqref{eq: pde 2}), we see that the expression in curly brackets in \eqref{eq: pcs 20.5} is equal to
\[
\{-2u_\op \cdot [\usig - u_\op] - (\qsig)^2 - u_\op^2\} = \{(\usig - u_\op)^2 - (\qsig)^2 - (\usig)^2\},
\]
where we have written $\usig$ for $\usig(\tnu)$, and $u_\op$ for $u_\op(\qsig(\tnu),\tnu,\zosig(\tnu),\ztsig(\tnu))$. Moreover,
\begin{align*}
\usig - u_\op &= \usig(\tnu) - u_\op(\qsig(\tnu),\tnu,\zosig(\tnu),\ztsig(\tnu))\\
& \equiv \discrep^\sigma(\tnu)
\end{align*}
(see \eqref{eq: pcs 8}). Therefore, \eqref{eq: pcs 20.5}, \eqref{eq: pcs 31} are equivalent to:
\begin{equation}\label{eq: pcs 32}
    \begin{split}
        \E[S(\qsig&(t_{\nu+1}),t_{\nu+1},\zosig(t_{\nu+1}),\ztsig(t_{\nu+1}))\cdot \mathbbm{1}_{\tame(\nu)}|\cF_\nu^\sigma] \\
        =& S(\qsig(\tnu),\tnu,\zosig(\tnu),\ztsig(\tnu)) \\
        &+ (\Delta \tnu)\{(\discrep^\sigma(\tnu))^2 - (\qsig(\tnu))^2 - (\usig(\tnu))^2\} \\
        &+ \error,
    \end{split}
\end{equation}
with 
\begin{equation}\label{eq: pcs 33}
    |\error| \le C Q^{m_0} (\Delta t_\mx)^{1/5} \Delta \tnu.
\end{equation}
Here, \eqref{eq: pcs 32} and \eqref{eq: pcs 33} are valid wherever \eqref{eq: pcs 9} holds.

We now substitute \eqref{eq: pcs 32}, \eqref{eq: pcs 33} into \eqref{eq: pcs Asharp} and \eqref{eq: pcs Bsharp}, to obtain the following results, valid whenever \eqref{eq: pcs 9} holds.
\begin{equation}\label{eq: pcs Asharpsharp}\tag{A\#\#}
\begin{split}
    \E[\ctg^\sigma&(\tnu) | \cF_\nu^\sigma] +\hat{C}Q^{2m_0}\prob[\disaster | \cF_\nu^\sigma]\\
     \ge & \{ S(\qsig(\tnu),\tnu,\zosig(\tnu),\ztsig(\tnu)) + (\Delta \tnu) (\discrep^\sigma(\tnu))^2\\
    & - [(\qsig(\tnu))^2 + (\usig(\tnu))^2]\Delta \tnu - C Q^{m_0}(\Delta t_\mx)^{1/5} \Delta \tnu\} \\
    & + [(\qsig(\tnu))^2 + (\usig(\tnu))^2]\Delta \tnu \cdot \prob[\tame(\nu) | \cF_\nu^\sigma]\\
    & + \E\Big[ \sum_{t_{\nu+1} \le t_\mu < \tau} (\discrep^\sigma(t_\mu))^2 \Delta t_\mu | \cF_\nu^\sigma\Big]\\
    &-Q^{2m_0}(T-t_{\nu+1})\cdot(\Delta t_\mx)^{1/20};
\end{split}
\end{equation}
and if $\sigma = \tsig$, then (since $\discrep^\sigma(\tnu) = 0)$ we have
\begin{equation}\label{eq: pcs Bsharpsharp}\tag{B\#\#}
    \begin{split}
        \E[\ctg^\sigma(\tnu) | \cF_\nu^\sigma] \le& \{S(\qsig(\tnu),\tnu,\zosig(\tnu),\ztsig(\tnu)) \\
        &+ C Q^{m_0} (\Delta t_\mx)^{1/5} \Delta \tnu\} + CQ^{2m_0}(\Delta \tnu)^{20} \\
        &+ \hat{C}Q^{2m_0}\prob[\disaster | \cF_\nu^\sigma] \\
        &+ Q^{2m_0}(T-t_{\nu+1})\cdot (\Delta t_\mx)^{1/20}.
    \end{split}
\end{equation}
In \eqref{eq: pcs Asharpsharp} we note that
\[
[(\qsig(\tnu))^2 + (\usig(\tnu))^2]\prob[\nottame | \cF_\nu^\sigma] \le C Q^2 (\Delta \tnu)^{20};
\]
see hypothesis \eqref{eq: pcs star} of this lemma. Therefore, in \eqref{eq: pcs Asharpsharp}, the terms
\[
[(\qsig(\tnu))^2 + (\usig(\tnu))^2]\Delta \tnu\cdot \prob[\tame(\nu)]
\]
and
\[
-[(\qsig(\tnu))^2 + (\usig(\tnu))^2]\Delta \tnu
\]
nearly cancel---they produce a term dominated by $CQ^2(\Delta \tnu)^{20}$. Moreover, thanks to \eqref{eq: pcs 10}, we have
\begin{multline*}
    (\Delta \tnu) (\discrep^\sigma(\tnu))^2 + \E\Big[ \sum_{t_{\nu+1} \le t_\mu < \tau} (\discrep^\sigma(t_\mu))^2 \Delta t_\mu \Big| \cF_\nu^\sigma\Big]\\
    = \E\Big[ \sum_{t_\nu \le t_\mu < \tau} (\discrep^\sigma(t_\mu))^2 \Delta t_\mu \Big| \cF_\nu^\sigma\Big].
\end{multline*}
(We have also used the fact that $\discrep^\sigma(\tnu)$ is deterministic when conditioned on $\cF_\nu^\sigma$.)

Finally, our assumptions that $\Delta t_\mx \le Q^{-2000m_0}$ and $Q>C$ ($C$ large enough) imply that
\begin{align*}
    CQ^2 (\Delta \tnu)^{20}& + C Q^{m_0} (\Delta t_\mx)^{1/5} \Delta \tnu + Q^{2m_0} (T-t_{\nu+1})\cdot (\Delta t_\mx)^{1/20} \\ &\le Q^{2m_0} (\Delta t_\mx)^{1/20} \cdot (t_{\nu+1}-\tnu) + Q^{2m_0}(T-t_{\nu+1})\cdot (\Delta t_\mx)^{1/20} \\ &= Q^{2m_0}(T-\tnu)\cdot (\Delta t_\mx)^{1/20}
\end{align*}
and also that the sum of terms
\begin{equation*}
     CQ^{m_0}(\Delta t_\mx)^{1/5}(\Delta \tnu) + C Q^{2m_0} (\Delta \tnu)^{20} + Q^{2m_0}(T-t_{\nu+1})(\Delta t_\mx)^{1/20}
\end{equation*}
is at most
\begin{multline*}
    Q^{2m_0} (\Delta t_\mx)^{1/20}\Delta \tnu + 
    Q^{2m_0}(T-t_{\nu+1})\cdot (\Delta t_\mx)^{1/20}\\
     = Q^{2m_0}(T-\tnu)\cdot (\Delta t_\mx)^{1/20}.
\end{multline*}
In view of the above remarks, \eqref{eq: pcs Asharpsharp} and \eqref{eq: pcs Bsharpsharp} imply the following results, valid whenever \eqref{eq: pcs 9} holds.
\begin{align*}
    \E[\ctg^\sigma&(\tnu) | \cF_\nu^\sigma] + \hat{C} Q^{2m_0}\prob[\disaster | \cF_\nu^\sigma]
    \\
    \ge & S(\qsig(\tnu),\tnu,\zosig(\tnu),\ztsig(\tnu)) + \E\Big[ \sum_{\tnu \le t_\mu < \tau} (\discrep^\sigma(t_\mu))^2 \Delta t_\mu \Big| \cF_\nu^\sigma \Big]\\
    &- Q^{2m_0}\cdot (T-\tnu)\cdot (\Delta t_\mx)^{1/20};
\end{align*}
and if $\sigma = \tsig$, then
\begin{align*}
    \E[\ctg^\sigma(\tnu) | \cF_\nu^\sigma] \le &S(\qsig(\tnu),\tnu,\zosig(\tnu),\ztsig(\tnu)) + \hat{C}Q^{2m_0}\prob[\disaster | \cF_\nu^\sigma]\\& + Q^{2m_0}\cdot (T-\tnu)\cdot (\Delta t_\mx)^{1/20}.
\end{align*}
These are precisely our desired conclusions \eqref{eq: pcs A} and \eqref{eq: pcs B}. Our downward induction on $\nu$ is complete, thus proving Lemma \ref{lem: comp strategies}.
\end{proof}

We now draw conclusions from Lemma \ref{lem: comp strategies}. Setting $\nu = 0$, we obtain the following results, comparing the allegedly optimal strategy $\tsig$ to the competing strategy $\sigma$.
\begin{equation}\label{eq: pcs 34}
    \begin{split}
        \E\Big[ \sum_{0 \le t_\mu < \tau}& \{(\qsig(t_\mu))^2 + (\usig(t_\mu))^2 \}\Delta t_\mu \Big] + \hat{C}Q^{2m_0} \prob[\disaster]\\
        \ge & S(q_0,0,0,0) + \E \Big[ \sum_{0 \le t_\mu < \tau} (\discrep^\sigma(t_\mu))^2 \Delta t_\mu \Big] \\
        &- Q^{2m_0}(\Delta t_\mx)^{1/20}T;
    \end{split}
\end{equation}
and if $\sigma = \tsig$, then
\begin{equation}\label{eq: pcs 35}
    \begin{split}
        \E\Big[ \sum_{0 \le t_\mu < \tau} \{ (\qsig(t_\mu))^2 + (\usig&(t_\mu))^2\} \Delta t_\mu\Big]\\ \le &  S(q_0,0,0,0) + \hat{C} Q^{2m_0} \prob[\disaster] \\
        &+Q^{2m_0}(\Delta t_\mx)^{1/20}T.
    \end{split}
\end{equation}
From \eqref{eq: pcs 2}, \eqref{eq: pcs 4}, \eqref{eq: pcs 5} and Lemma \ref{lem: bayesian rare events}, we have
\begin{equation}\label{eq: pcs 35.5}
    \prob[\disaster] \le C \exp(-cQ).
\end{equation}

Let us investigate what happens if $\disaster$ occurs.

Since 
\[
|\usig(t_\mu)| \le C [|\qsig(t_\mu)| + 1],
\]
we have
\[
\sum_{0 \le \mu < N} \{ (\qsig(t_\mu))^2 + (\usig(t_\mu))^2\} \Delta t_\mu \le C ( \max_\mu |\qsig(t_\mu)| )^2 + C.
\]
Also,
\begin{align*}
    |\discrep^\sigma(t_\mu)| &= |\usig(t_\mu) - u_\op(\qsig(t_\mu),t_\mu,\zosig(t_\mu),\ztsig(t_\mu))|\\
    &\le |\usig(t_\mu)| + C[|\qsig(t_\mu)| + 1]
\end{align*}
by our PDE Assumption (see \eqref{eq: pde 6}). Hence,
\[
\sum_{0 \le \mu < N} (\discrep^\sigma(t_\mu))^2 \Delta t_\mu \le C (\max_\mu |\qsig(t_\mu)|)^2 + C.
\]
The above remarks and Lemma \ref{lem: bayesian rare events} yield the estimates:
\[
\E\Big[ \Big( \sum_{0\le \mu < N}\{ (\qsig(t_\mu))^2 + (\usig(t_\mu))^2\} \Delta t_\mu \Big)^2\Big] \le C
\]
and
\[
\E\Big[ \Big( \sum_{0 \le \mu < N}(\discrep^\sigma(t_\mu))^2 \Delta t_\mu\Big)^2\Big]\le C.
\]
Consequently, Cauchy-Schwarz and \eqref{eq: pcs 35.5} imply that
\begin{equation}\label{eq: pcs 36}
\begin{split}
    \E\Big[ \Big( \sum_{0\le \mu < N} \{ (\qsig(t_\mu))^2 + (\usig(t_\mu))^2\} \Delta t_\mu \Big) \cdot &\mathbbm{1}_{\disaster} \Big]  \\ &\le C \cdot ( \prob[\disaster])^{1/2}\\
    & \le C' \exp(-c' Q)
\end{split}
\end{equation}
and
\begin{equation}\label{eq: pcs 37}
    \begin{split}
        \E\Big[ \Big( \sum_{0 \le \mu < N} (\discrep^\sigma(t_\mu))^2 \Delta t_\mu\Big) \cdot &\mathbbm{1}_{\disaster}\Big]\\
        & \le C \cdot (\prob[\disaster])^{1/2} \\ 
        & \le C' \exp(-cQ).
    \end{split}
\end{equation}
So we have controlled the consequences of $\disaster$. 

On the other hand, if $\disaster$ does not occur, then $\tau= T$; see \eqref{eq: pcs 5} and \eqref{eq: pcs 6}.

Therefore,
\begin{equation}\label{eq: pcs 38}
    \begin{split}
        \E\Big[ \Big( \sum_{0\le \mu < N} \{(\qsig(t_\mu))^2 + &(\usig(t_\mu))^2 \} \Delta t_\mu \Big) \cdot \mathbbm{1}_{\ndisaster}\Big]\\
        &\le \E\Big[ \sum_{0 \le t_\mu < \tau} \{ (\qsig(t_\mu))^2 + (\usig(t_\mu))^2 \} \Delta t_\mu\Big]
    \end{split}
\end{equation}
and
\begin{equation}\label{eq: pcs 39}
    \begin{split}
        \E\Big[ \Big( \sum_{0 \le \mu < N}(\discrep^\sigma(t_\mu))^2 \Delta t_\mu \Big)& \cdot \mathbbm{1}_{\ndisaster}\Big]\\
        &\le \E\Big[ \sum_{0 \le t_\mu < \tau} (\discrep^\sigma(t_\mu))^2 \Delta t_\mu\Big].
    \end{split}
\end{equation}
Also, obviously,
\begin{equation}\label{eq: pcs 40}
    \begin{split}
        \E\Big[ \sum_{0 \le \mu < N} \{ (\qsig(t_\mu))^2 + (\usig&(t_\mu))^2\} \Delta t_\mu \Big]\\ &\ge \E\Big[ \sum_{0 \le t_\mu < \tau} \{ (\qsig(t_\mu))^2 + (\usig(t_\mu))^2 \} \Delta t_\mu \Big].
    \end{split}
\end{equation}
Substituting \eqref{eq: pcs 40}, \eqref{eq: pcs 39}, \eqref{eq: pcs 35.5} into \eqref{eq: pcs 34}, we find that
\begin{align*}
    \E\Big[ \sum_{0 \le \mu < N} \{ (\qsig&(t_\mu))^2 + (\usig(t_\mu))^2\} \Delta t_\mu\Big] + C Q^{2m_0} \exp(-cQ)\\
    \ge & S(q_0,0,0,0) + \E\Big[\Big( \sum_{0 \le \mu < N} (\discrep^\sigma(t_\mu))^2\Big)\cdot \mathbbm{1}_{\ndisaster}\Big]\\
    & - Q^{2m_0} (\Delta t_\mx)^{1/20}T.
\end{align*}
Together with \eqref{eq: pcs 37}, this in turn yields:
\begin{equation}\label{eq: pcs 41}
    \begin{split}
        \E\Big[ \sum_{0 \le \mu < N} \{ (\qsig(t_\mu))^2 + (\usig&(t_\mu))^2\} \Delta t_\mu\Big]\\
        \ge & S(q_0,0,0,0) + \E\Big[ \sum_{0 \le \mu < N} (\discrep^\sigma(t_\mu))^2 \Big) \Big]\\& - CQ^{2m_0}\exp(-cQ) - Q^{2m_0}(\Delta t_\mx)^{1/20} T.
    \end{split}
\end{equation}
Similarly, suppose $\sigma = \tsig$. Then, substituting \eqref{eq: pcs 38} and \eqref{eq: pcs 35.5} into \eqref{eq: pcs 35}, we find that
\begin{multline*}
    \E\Big[\Big( \sum_{0 \le \mu < N} \{ (\qsig(t_\mu))^2 + (\usig(t_\mu))^2\} \Delta t_\mu\Big) \cdot \mathbbm{1}_{\ndisaster}\Big]\\
    \le S(q_0,0,0,0) + C Q^{2m_0} \exp(-cQ) + Q^{2m_0}(\Delta t_\mx)^{1/20}T.
\end{multline*}
Together with \eqref{eq: pcs 36}, this implies that
\begin{equation}\label{eq: pcs 42}
\begin{split}
    \E\Big[ \sum_{0 \le \mu < N} &\{ (\qsig(t_\mu))^2 + (\usig(t_\mu))^2\}\Delta t_\mu\Big] \\
    &\le S(q_0,0,0,0) + CQ^{2m_0}\exp(-cQ) + Q^{2m_0}(\Delta t_\mx)^{1/20}T.
\end{split}
\end{equation}
We have proven \eqref{eq: pcs 41} for $Q\ge C$ and $\Delta t_\mx \le Q^{-2000m_0}$; see \eqref{eq: pcs 1}. Similarly, \eqref{eq: pcs 42} holds for $Q \ge C$, $\Delta t_\mx \le Q^{-2000m_0}$, $\sigma = \tsig$.

Now suppose $\varepsilon > 0$ is given. We pick $Q \ge C $ so large that 
\[
C Q^{2m_0} \exp(-cQ) < \varepsilon / 2
\]
in \eqref{eq: pcs 41}, \eqref{eq: pcs 42}, and also so large that 
\[
Q^{2m_0}(Q^{-2000m_0})^{1/20}T<\varepsilon/2.
\]
Then, if $\Delta t_\mx < Q^{-2000m_0}$, we obtain from \eqref{eq: pcs 41} and \eqref{eq: pcs 42} the estimates
\begin{multline*}
    \E\Big[ \sum_{0 \le \mu < N} \{ (\qsig(t_\mu))^2 + (\usig(t_\mu))^2\}\Delta t_\mu\Big]\\
    \ge S(q_0,0,0,0) + \E\Big[ \sum_{0\le\mu < N} (\discrep^\sigma(t_\mu))^2 \Delta t_\mu \Big] - \varepsilon,
\end{multline*}
and if $\sigma = \tsig$ then
\[
    \E\Big[ \sum_{0 \le \mu < N} \{ (\qsig(t_\mu))^2 + (\usig(t_\mu))^2\} \Delta t_\mu\Big]\le S(q_0,0,0,0) + \varepsilon.
\]
Thus, we have proven the following result, \emph{modulo} our PDE Assumption.

\begin{lem}[First Bayesian Main Lemma]\label{lem: first bayesian main}
    Let $\tsig$ be the \textsc{allegedly optimal strategy} for the partition $0 = t_0 < t_1 < \dots < t_N = T$, and let $\varepsilon > 0$ be given.

    Let $\sigma$ be another tame deterministic strategy for the same partition. Assume that $\sigma$ satisfies
    \[
    |\usig(\tnu)| \le C_\etame^\sigma [|\qsig(\tnu)| + 1].
    \]

    Suppose that
    \[
    \Delta t_\emx = \max_\nu (t_{\nu+1} - \tnu)
    \]
    is less than a small enough $\delta > 0$, determined by $\varepsilon$, $C_\etame^\sigma$, and the \textsc{boilerplate constants}. Then
    \begin{multline*}
        \eE\Big[ \sum_{0 \le \nu < N} \{ (\qsig(\tnu))^2 + (\usig(\tnu))^2\}\Delta \tnu \Big] \\ \ge S(q_0,0,0,0) + \eE\Big[ \sum_{0 \le \nu < N} (\discrep^\sigma(\tnu))^2 \Delta \tnu\Big] - \varepsilon
    \end{multline*}
    and
    \[
    \eE\Big[ \sum_{0 \le \nu < N} \{ (q^{\tsig}(\tnu))^2 + (u^{\tsig}(\tnu))^2\}\Delta \tnu\Big] \le S(q_0,0,0,0) + \varepsilon,
    \]
    where
    \[
    \discrep^\sigma(\tnu) = \usig(\tnu) - u_\eop(\qsig(\tnu),\tnu,\zosig(\tnu),\ztsig(\tnu)).
    \]
\end{lem}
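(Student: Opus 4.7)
The plan is to specialize the Main Lemma on Competing Strategies (Lemma~\ref{lem: comp strategies}) to the initial node $\nu=0$ and then remove the stopping time $\tau$. At $\nu=0$ the hypothesis $(\qsig(0),\zosig(0),\ztsig(0))=(q_0,0,0)\in\cU$ holds as soon as $Q$ exceeds the boilerplate bound on $|q_0|$. Applying Lemma~\ref{lem: comp strategies} then yields two estimates: a lower bound for arbitrary tame $\sigma$ and a matching upper bound when $\sigma=\tsig$, both expressed in terms of the truncated cost $\E[\ctg^\sigma(0)]$, the penalty $\hat{C}Q^{2m_0}\prob[\disaster]$, and a $Q^{2m_0}T(\Delta t_\mx)^{1/20}$ slack.

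Next I would pass from the truncated to the untruncated sum. The Bayesian Lemma on Rare Events (Lemma~\ref{lem: bayesian rare events}) supplies $\prob[\disaster]\le C\exp(-cQ^2)$. On $\ndisaster$ one has $\tau=T$, so the truncated and full sums coincide. On $\disaster$, tameness of $\sigma$ together with the PDE estimate \eqref{eq: pde 6} for $u_\op$ let me dominate both $\sum_\nu\{(\qsig(\tnu))^2+(\usig(\tnu))^2\}\Delta\tnu$ and $\sum_\nu(\discrep^\sigma(\tnu))^2\Delta\tnu$ pointwise by $C(\max_\nu|\qsig(\tnu)|)^2+C$, and Lemma~\ref{lem: bayesian rare events} provides uniform $L^2$ bounds on this dominant random variable. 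Cauchy--Schwarz then converts the $\exp(-cQ^2)$ probability of $\disaster$ into an $\exp(-cQ)$ bound on the expected contribution from $\disaster$. Combining all this gives
\begin{align*}
\E\Big[\textstyle\sum_\nu\{(\qsig)^2+(\usig)^2\}\Delta\tnu\Big] &\ge S(q_0,0,0,0)+\E\Big[\textstyle\sum_\nu(\discrep^\sigma(\tnu))^2\Delta\tnu\Big]\\
&\quad - CQ^{2m_0}\exp(-cQ) - Q^{2m_0}T(\Delta t_\mx)^{1/20},
\end{align*}
together with the analogous upper bound in the case $\sigma=\tsig$.

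Finally, given $\varepsilon>0$, I pick $Q$ large enough that $CQ^{2m_0}\exp(-cQ)<\varepsilon/2$, and then $\Delta t_\mx$ small enough that both $\Delta t_\mx<Q^{-2000m_0}$ (the hypothesis \eqref{eq: pcs 1} needed to apply Lemma~\ref{lem: comp strategies}) and $Q^{2m_0}T(\Delta t_\mx)^{1/20}<\varepsilon/2$ hold. This produces a threshold $\delta>0$ depending only on $\varepsilon$, $C_\tame^\sigma$, and the boilerplate constants, as asserted.

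The serious work is already packaged in Lemma~\ref{lem: comp strategies}, whose proof uses downward induction on $\nu$, Taylor expansion of $S$ through second order at $(\qsig(\tnu),\tnu,\zosig(\tnu),\ztsig(\tnu))$, the posterior-expectation identities of Lemma~\ref{lem: post exp}, and the PDE relation \eqref{eq: pde 1}--\eqref{eq: pde 2} to cancel the leading drift and diffusion terms, leaving only the $(\discrep^\sigma(\tnu))^2\Delta\tnu$ contribution. The remaining step above is largely bookkeeping; the only mild subtlety is the tandem choice of $Q$ (large, to annihilate the $\disaster$ contribution) and $\Delta t_\mx$ (small \emph{relative to} $Q$, to annihilate the Taylor-expansion error). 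Since both error terms are explicit, the order of quantifiers works out cleanly.
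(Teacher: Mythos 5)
Your proposal follows the paper's proof essentially step for step: specialize Lemma \ref{lem: comp strategies} to $\nu=0$, split on $\disaster$, apply Lemma \ref{lem: bayesian rare events} and Cauchy--Schwarz to remove the stopping-time truncation, and then choose $Q$ large followed by $\Delta t_\emx$ small. One minor slip: since $\cU$ caps $|\zo|,|\zt|$ at $Q$ rather than $Q^2$, the correct consequence of Lemma \ref{lem: bayesian rare events} is $\prob[\disaster]\le C\exp(-cQ)$ (as in \eqref{eq: pcs 35.5}), not $C\exp(-cQ^2)$; fortunately this still suffices for your final choice of $Q$.
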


\begin{cor}\label{cor: first bayesian main}
    Let $\sigma$, $\tsig$ be as above. If
    \begin{multline*}
        \eE\Big[ \sum_{0 \le \nu < N} \{ (\qsig(\tnu))^2 + (\usig(\tnu))^2\} \Delta \tnu\Big]\\ \le \eE\Big[ \sum_{0 \le \nu < N} \{ (q^{\tsig}(\tnu))^2 + (u^{\tsig}(\tnu))^2\} \Delta \tnu \Big] + \varepsilon,
    \end{multline*}
    then
    \[
    \eE\Big[ \sum_{0 \le \nu < N} (\discrep^\sigma(\tnu))^2 \Delta \tnu\Big] \le 3\varepsilon.
    \]
\end{cor}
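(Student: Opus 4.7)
The plan is to deduce this Corollary as a purely algebraic consequence of the First Bayesian Main Lemma. Nothing new is needed beyond combining the two estimates that Lemma already provides for $\sigma$ and for the allegedly optimal $\tsig$.

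First I would apply the First Bayesian Main Lemma with parameter $\varepsilon$ itself (the same $\varepsilon$ appearing in the hypothesis of the Corollary), which is permissible by absorbing the corresponding threshold $\delta(\varepsilon, C_\tame^\sigma, \text{boilerplate})$ into the smallness assumption on $\Delta t_\mx$ under which the Corollary operates. For the competing strategy $\sigma$ this yields the lower bound
\[
\E\Big[\sum_{0\le \nu < N}\{(\qsig(\tnu))^2 + (\usig(\tnu))^2\}\Delta\tnu\Big] \ge S(q_0,0,0,0) + \E\Big[\sum_{0\le \nu < N}(\discrep^\sigma(\tnu))^2\Delta\tnu\Big] - \varepsilon,
\]
while for $\tsig$ it yields the matching upper bound
\[
\E\Big[\sum_{0\le \nu < N}\{(q^{\tsig}(\tnu))^2 + (u^{\tsig}(\tnu))^2\}\Delta\tnu\Big] \le S(q_0,0,0,0) + \varepsilon.
\]

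Next I would chain these with the assumption of the Corollary. Combining the hypothesis with the upper bound for $\tsig$ gives
\[
\E\Big[\sum_{0\le \nu < N}\{(\qsig(\tnu))^2 + (\usig(\tnu))^2\}\Delta\tnu\Big] \le S(q_0,0,0,0) + 2\varepsilon,
\]
and substituting this into the lower bound for $\sigma$ and rearranging produces
\[
\E\Big[\sum_{0\le \nu < N}(\discrep^\sigma(\tnu))^2\Delta\tnu\Big] \le 2\varepsilon + \varepsilon = 3\varepsilon,
\]
which is the claimed conclusion.

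There is no serious obstacle here; all the heavy lifting, namely the downward induction against the PDE solution $S$ performed in the proof of Lemma \ref{lem: first bayesian main}, is already in hand. The only tiny point to flag is the bookkeeping of the parameter used when invoking that Lemma: applying it with $\varepsilon$ in each of the two calls, together with the hypothesis, is exactly what produces the constant $3$ on the right-hand side. One could instead apply it with $\varepsilon/3$ and obtain the sharper bound $\varepsilon$, but the Corollary as stated is formulated in the form directly needed for the Quantitative Uniqueness step later on.
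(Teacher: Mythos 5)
Your derivation is correct and is precisely the intended argument: the paper states the Corollary with no separate proof because it is exactly this algebraic chaining of the two inequalities in Lemma~\ref{lem: first bayesian main} with the hypothesis. The arithmetic $\varepsilon + \varepsilon + \varepsilon = 3\varepsilon$ is right, and your remark about the choice of parameter in invoking the Lemma is a fair observation about the bookkeeping.
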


\section{Stability of the Allegedly Optimal Strategy}\label{sec: stability}
We begin by setting up the notation for this section.
\begin{itemize}
    \item $\tsig$ denotes the \textsc{allegedly optimal strategy}.
    \item $\sigma$ denotes some other tame deterministic strategy based on the same partition $0 = t_0 < t_1 < \dots < t_N = T$ as $\tsig$.
    \item $\qnusig$ denotes $\qsig(\tnu)$, and $\Delta \qnusig$ denotes $\qsig(t_{\nu+1}) - \qsig(\tnu)$.
    \item $\qnutsig$ denotes $q^{\tsig}(\tnu)$, and $\Delta \qnutsig$ denotes $q^{\tsig}(t_{\nu+1}) - q^{\tsig}(\tnu)$.
    \item $\zonusig$ denotes $\zosig(\tnu)$, and $\Delta \zonusig$ denotes $\zosig(t_{\nu+1}) - \zosig(\tnu)$.
    \item $\zonutsig$ denotes $\zonutsig(\tnu)$, and $\Delta \zonutsig$ denotes $\zo^{\tsig}(t_{\nu+1}) - \zo^{\tsig}(\tnu)$.
    \item $\ztnusig$ denotes $\ztsig(\tnu)$, and $\Delta \ztnusig$ denotes $\ztsig(t_{\nu+1}) - \ztsig(\tnu)$.
    \item $\ztnutsig$ denotes $\ztnutsig(\tnu)$, and $\Delta \ztnutsig$ denotes $\zt^{\tsig}(t_{\nu+1}) - \zt^{\tsig}(\tnu)$.
    \item $\unusig$ denotes $\usig(\tnu)$, and $\unutsig$ denotes $u^{\tsig}(\tnu)$.
\end{itemize}
We recall that
\[
|\usig(\tnu)| \le C_\tame^\sigma [|\qsig(\tnu)| + 1], \qquad |u^{\tsig}(\tnu)| \le C_\tame^\op [|q^{\tsig}(\tnu)| + 1].
\]
In this section, $c$, $C$, $C'$, etc.\ denote constants determined by $C_\tame^\sigma$ and the \textsc{boilerplate constants} (one of which is $C_\tame^\op$). As usual, these symbols may denote different constants in different occurrences.

We fix $a \in [-a_\mx, + a_\mx]$, and condition on $a_\tru = a$. We write $\prob_a[\cdots]$ and $\E_a[\cdots]$ to denote the corresponding probability and expectation, respectively. 

We write $\cF_\nu$ to denote the sigma algebra of events determined by the Brownian motion $(W(t))_{t \in [0, \tnu]}$ (and by $a_\tru = a$).

Recall that
\begin{flalign}
    & \Delta \qnusig = (a\qnusig + \unusig)(\Delta \tnu^*) + \Delta W_\nu &\label{eq: saos 1}\\
    & \Delta \qnutsig = (a\qnutsig + \unutsig)(\Delta \tnu^*) + \Delta W_\nu,\label{eq: saos 2}\\
    & \Delta \zonusig = \qnusig(\Delta \qnusig - \unusig \Delta \tnu)\label{eq: saos 3}\\
    & \Delta \zonutsig = \qnutsig (\Delta \qnutsig - \unutsig \Delta \tnu)\label{eq: saos 4}\\
    & \Delta \ztnusig = (\qnusig)^2 \Delta \tnu,\label{eq: saos 5}\\
    & \Delta \ztnutsig = (\qnutsig)^2 \Delta \tnu,\label{eq: saos 6}
\end{flalign}
and that
\begin{equation}\label{eq: saos 7}
    \qsig_0 = q^{\tsig}_0 = q_0,\qquad \zeta_{1,0}^\sigma = \zeta_{1,0}^{\tsig} = \zeta_{2,0}^\sigma = \zeta_{2,0}^{\tsig} = 0.
\end{equation}
In \eqref{eq: saos 1} and \eqref{eq: saos 2}, $\Delta \tnu^* = \Delta \tnu + O((\Delta \tnu)^2),$ and $\Delta W_\nu$ is a normal random variable with mean 0 and variance $O(\Delta \tnu)$, independent of $\cF_\nu$.

The quantities $\qnusig$, $\qnutsig$, $\unusig$, $\unutsig$, $\zonusig$, $\zonutsig$, $\ztnusig$, $\ztnutsig$ are deterministic once we condition on $\cF_\nu^\sigma$.

As in Section \ref{sec: pcs}, we define
\begin{equation}\label{eq: saos 8}
    \discrep^\sigma(\tnu) = \usig(\tnu) - u_\op(\qsig(\tnu),\tnu, \zosig(\tnu), \ztsig(\tnu)).
\end{equation}
Our goal is to show that if
\begin{equation}\label{eq: saos 9}
    \E_a \Big[ \sum_{0 \le \nu < N} (\discrep^\sigma(\tnu))^2 \Delta \tnu\Big] \;\text{is small},
\end{equation}
then also
\begin{equation}\label{eq: saos 10}
    \E_a\Big[ \sum_{0 \le \nu < N} \{ |\qsig(\tnu) - q^{\tsig}(\tnu)|^2 + |\usig(\tnu) - u^{\tsig}(\tnu)|^2\} \Delta \tnu\Big]\;\text{is small}.
\end{equation}
Here, \eqref{eq: saos 9} asserts that $\sigma$ does something close to what $\tsig$ would do in the circumstances encountered by $\sigma$. On the other hand, \eqref{eq: saos 10} asserts that $\sigma$ and $\tsig$ produce nearly equal outcomes. To show that \eqref{eq: saos 9} implies \eqref{eq: saos 10}, we introduce the vector
\begin{equation}\label{eq: saos 11}
    \mathcal{X}_\nu = 
\begin{pmatrix} \qnusig - \qnutsig \\ \zonusig - \zonutsig \\ \ztnusig - \ztnutsig \end{pmatrix}
\equiv \begin{pmatrix} \cX_{\nu,1}\\ \cX_{\nu,2}\\ \cX_{\nu,3} \end{pmatrix} \in \R^3.
\end{equation}
Thanks to \eqref{eq: saos 7}, we have
\begin{equation}\label{eq: saos 12}
    \cX_0 = 0.
\end{equation}

For a large enough $C$, we introduce a positive number $Q$ satisfying
\begin{equation}\label{eq: saos 13}
    Q \ge C.
\end{equation}
Under the assumption
\begin{equation}\label{eq: saos 14}
    |\qnusig|, |\qnutsig|, |\zonusig|, |\zonutsig|, |\ztnusig|, |\ztnutsig| \le Q,
\end{equation}
we will estimate
\[
\Delta \cX_\nu \equiv \begin{pmatrix}
    \Delta \cX_{\nu,1}\\ \Delta \cX_{\nu,2} \\ \Delta \cX_{\nu,3}
\end{pmatrix}
\equiv \cX_{\nu+1} - \cX_\nu.
\]
Until further notice we fix $\nu$ and assume \eqref{eq: saos 14}.

We write $G_1, G_2, \dots$ to denote random variables satisfying
\begin{flalign}
    & G_i\;\text{is deterministic once we condition on }\cF_\nu \text{ and}\label{eq: saos 15}&\\
    & |G_i| \le C Q^{m_0}, \;\text{with } m_0\;\text{as in our PDE Assumption (see \eqref{eq: pde 5}.}\label{eq: saos 16}
\end{flalign}
To estimate $\Delta \cX_\nu$, we first apply our PDE Assumption (see \eqref{eq: pde 5}) to show that
\begin{align*}
u_\op(\qsig(\tnu),\tnu,&\zosig(\tnu),\ztsig(\tnu)) - u^{\tsig}(\tnu)\\
    = & u_\op(\qsig(\tnu),\tnu, \zosig(\tnu),\ztsig(\tnu)) - u_\op(q^{\tsig}(\tnu),\tnu,\zeta_1^{\tsig}(\tnu),\zeta_2^{\tsig}(\tnu))\\
    = & G_1 [\qnusig - \qnutsig] + G_2 [\zonusig - \zonutsig] + G_3[\ztnusig - \ztnutsig]
\end{align*}
with $G_1$, $G_2$, $G_3$ as in \eqref{eq: saos 15}, \eqref{eq: saos 16}.

Consequently, \eqref{eq: saos 8} implies that
\begin{equation}\label{eq: saos 17}
\usig(\tnu) - u^{\tsig}(\tnu) = \discrep^\sigma(\tnu) + G_1 \cX_{\nu,1} + G_2 \cX_{\nu,2} + G_3 \cX_{\nu,3}.
\end{equation}
We subtract \eqref{eq: saos 2} from \eqref{eq: saos 1} and apply \eqref{eq: saos 17}. Thus,
\begin{multline*}
    \Delta \cX_{\nu,1} = a \cX_{\nu,1}(\Delta \tnu^*) + \discrep^\sigma (\tnu)(\Delta \tnu^*)\\ + G_1 \cX_{\nu,1} (\Delta \tnu^*) + G_2 \cX_{\nu,2}(\Delta \tnu^*) + G_3 \cX_{\nu,3} (\Delta \tnu^*),
\end{multline*}
which implies that
\begin{equation}\label{eq: saos 18}
    \begin{split}
        \Delta \cX_{\nu,1} = G_4 \cX_{\nu,1}(\Delta \tnu) + G_5 &\cX_{\nu,2}(\Delta \tnu)\\& + G_6 \cX_{\nu,3}(\Delta \tnu) + \discrep^\sigma(\tnu)\cdot (\Delta \tnu^*)
    \end{split}
\end{equation}
with $G_4$, $G_5$, $G_6$ satisfying \eqref{eq: saos 15}, \eqref{eq: saos 16}. Next, we deduce from \eqref{eq: saos 1}, \eqref{eq: saos 3} that
\begin{align*}
    \Delta \zonusig &= \qnusig \cdot \big([(a \qnusig + \unusig) (\Delta \tnu^*) + \Delta W_\nu] - \unusig \Delta \tnu\big)\\
    & = a (\qnusig)^2 (\Delta \tnu^*) + \qnusig \Delta W_\nu + \qnusig \cdot \unusig\cdot (\Delta \tnu^* - \Delta \tnu),
\end{align*}
and similarly
\[
\Delta \zonutsig = a (\qnutsig)^2 (\Delta \tnu^*) + \qnutsig \Delta W_\nu + \qnutsig \cdot \unutsig \cdot (\Delta \tnu^* - \Delta \tnu).
\]
Subtracting, and recalling our assumptions \eqref{eq: saos 14}, we find that
\begin{equation}\label{eq: saos 19}
    \Delta \cX_{\nu,2} = G_7 \cX_{\nu,1} (\Delta \tnu) + \cX_{\nu,1} \Delta W_\nu + G_8 (\Delta \tnu)^2,
\end{equation}
with $G_7$, $G_8$ as in \eqref{eq: saos 15}, \eqref{eq: saos 16}. (Here, we use also the estimate 
\[
|\usig(\tnu)| \le C[|\qsig(\tnu)| + 1]
\]
as well as the corresponding estimate for $u^{\tsig}$, $q^{\tsig}$.)

Again recalling \eqref{eq: saos 14}, we see from \eqref{eq: saos 5}, \eqref{eq: saos 6} that
\begin{equation}\label{eq: saos 20}
    \Delta \cX_{\nu,3} = G_9 \cX_{\nu,1} (\Delta \tnu),
\end{equation}
with $G_9$ as in \eqref{eq: saos 15}, \eqref{eq: saos 16}.

Equations \eqref{eq: saos 18}, \eqref{eq: saos 19}, \eqref{eq: saos 20} tell us that
\begin{equation}\label{eq: saos 21}
    \Delta \cX_\nu = G \cX_\nu (\Delta \tnu) + H \cX_\nu (\Delta W_\nu) + F_\nu,
\end{equation}
where:
\begin{itemize}
    \item The entries of the matrix $G$ satisfy \eqref{eq: saos 15}, \eqref{eq: saos 16}.
    \item $H$ is the constant matrix
    \[
    \begin{pmatrix} 0 & 0 & 0 \\ 1 & 0 & 0 \\ 0 & 0 & 0 \end{pmatrix}.
    \]
    \item The vector $F_\nu$ satisfies
    \[
    |F_\nu| \le C | \discrep^\sigma(\tnu)|\cdot (\Delta \tnu) + C Q^{m_0} (\Delta \tnu)^2.
    \]
\end{itemize}
We now estimate
\begin{equation}\label{eq: saos 22}
    \begin{split}
        \E_a[ |\cX_{\nu+1}|^2 - |\cX_\nu|^2 | \cF_\nu ] =& 2 \E_a[ (\Delta \cX_\nu) \cdot \cX_\nu | \cF_\nu] + \E_a[ |\Delta \cX_\nu|^2 | \cF_\nu]\\
        \equiv & 2 \cdot \term 1 + \term 2.
    \end{split}
\end{equation}
Recall that $G$, $\cX_\nu$ are deterministic once we condition on $\cF_\nu$, while $\Delta W_\nu$ is independent of $\cF_\nu$, with mean 0 and variance $\le C (\Delta \tnu)$. Hence, from \eqref{eq: saos 21} we have the following estimates.
\begin{align*}
    \term 1 = & \E_a[\cX_\nu \cdot (\Delta \cX_\nu) | \cF_\nu]\\
    \le & C Q^{m_0} (\Delta \tnu) |\cX_\nu|^2 + C |\discrep^\sigma(\tnu)| \cdot |\cX_\nu| (\Delta \tnu) \\
    &+ C |\cX_\nu | Q^{m_0} (\Delta \tnu)^2.
\end{align*}
\begin{align*}
    \term 2 = & \E_a[ | \Delta \cX_\nu |^2 | \cF_\nu] \\
    \le & C |G \cX_\nu (\Delta \tnu)|^2 + C | H \cX_\nu |^2 \E_a[ | \Delta W_\nu |^2]\\& + C (\discrep^\sigma(\tnu))^2 (\Delta \tnu)^2 + C Q^{2m_0}(\Delta \tnu)^4.
\end{align*}
Our assumption \eqref{eq: saos 14} implies that $|\cX_\nu| \le CQ$, hence the above estimates imply that
\begin{equation}\label{eq: saos 23}
    \begin{split}
        \term 1 \le& C Q^{m_0} (\Delta \tnu) | \cX_\nu|^2 + C (\discrep^\sigma(\tnu))^2 (\Delta \tnu)\\& + C Q^{m_0+1}(\Delta \tnu)^2
    \end{split}
\end{equation}
and
\begin{equation}\label{eq: saos 24}
    \begin{split}
        \term 2 \le & C Q^{2m_0+2} (\Delta \tnu)^2 + C |\cX_\nu |^2 (\Delta \tnu)\\& + C (\discrep^\sigma(\tnu))^2 (\Delta \tnu)^2 + C Q^{2m_0}(\Delta \tnu)^4.
    \end{split}
\end{equation}

Putting \eqref{eq: saos 23} and \eqref{eq: saos 24} into \eqref{eq: saos 22}, we learn that
\begin{equation}\label{eq: saos 25}
    \begin{split}
        \E_a[ |\cX_{\nu+1}|^2 | \cF_\nu] \le & (1 + C Q^{m_0}(\Delta \tnu)) | \cX_\nu|^2 + C (\discrep^\sigma(\tnu))^2 \Delta \tnu \\&+ C Q^{2m_0+2} (\Delta \tnu)^2.
    \end{split}
\end{equation}

We have proven \eqref{eq: saos 25} under the assumption \eqref{eq: saos 14}. We now drop assumption \eqref{eq: saos 14}, and let $\cE_\nu$ denote the event
\[
\left[
\begin{array}{l}
    |\qsig_\mu|, |q^{\tsig}_\mu|, |\zeta_{1,\mu}^\sigma|, |\zeta_{1,\mu}^{\tsig}|, |\zeta_{2,\mu}^\sigma|, |\zeta_{2,\mu}^{\tsig}| \le Q\\
    \text{for all} \; \mu \le \nu.
\end{array}
\right]
\]
Note that $\mathbbm{1}_{\cE_{\nu+1}} \le \mathbbm{1}_{\cE_\nu}$, and that \eqref{eq: saos 14} holds whenever $\cE_\nu$ occurs. Moreover, $\cE_\nu$ is deterministic once we condition on $\cF_\nu$. Therefore, from \eqref{eq: saos 25} we deduce that
\begin{equation}\label{eq: saos 26}
\begin{split}
    \E_a[|\cX_{\nu+1}|^2 \mathbbm{1}_{\cE_{\nu+1}} | \cF_\nu] \le & \E_a[ | \cX_{\nu+1}|^2 \cdot \mathbbm{1}_{\cE_\nu} | \cF_\nu]\\
    \le & (1 + CQ^{m_0}(\Delta \tnu)) | \cX_\nu|^2 \mathbbm{1}_{\cE_\nu} \\&+ C (\discrep^\sigma(\tnu))^2 \Delta \tnu + C Q^{2m_0+2} (\Delta \tnu)^2.
\end{split}
\end{equation}
We now cease conditioning on $\cF_\nu$, and condition merely on $a_\tru = a$. From \eqref{eq: saos 26} we learn that
\begin{equation}\label{eq: saos 27}
    \begin{split}
\E_a[|\cX_{\nu+1}|^2 \mathbbm{1}_{\cE_{\nu+1}}] \le & (1+C Q^{m_0} (\Delta \tnu)) \E_a [ |\cX_\nu|^2 \mathbbm{1}_{\cE_\nu}] \\&+ C \E_a[(\discrep^\sigma(\tnu))^2] (\Delta \tnu) + C Q^{2m_0+2}(\Delta \tnu)^2.
    \end{split}
\end{equation}
Recall that $\cX_0 = 0$.

We impose the smallness assumption
\begin{equation}\label{eq: saos 28}
    (\Delta t_\mx)^{1/2} \le Q^{-(2m_0+2)}.
\end{equation}
Then \eqref{eq: saos 27} implies that
\begin{align*}
    \E_a[ |\cX_\nu|^2 \cdot \mathbbm{1}_{\cE_\nu}] \le C \exp(C Q^{m_0}\tnu)\cdot \Big\{ \E_a & \Big[ \sum_{0 \le \mu < \nu}(\discrep^\sigma(t_\mu))^2 \Delta t_\mu\Big]\\ &+ C (\Delta t_\mx)^{1/2}\Big\}
\end{align*}
for $0 \le \nu \le N$. Since $\mathbbm{1}_{\cE_N} \le \mathbbm{1}_{\cE_\nu}$, it follows that
\begin{equation}\label{eq: saos 29}
\begin{split}
    \E_a [ |\cX_\nu|^2 \mathbbm{1}_{\cE_N}] \le C\exp(CQ^{m_0}) \Big\{ \E_a & \Big[ \sum_{0 \le \mu < N} (\discrep^\sigma(t_\mu))^2 \Delta t_\mu\Big] \\&+ (\Delta t_\mx)^{1/2}\Big\}.
\end{split}
\end{equation}
In particular, since $\qnusig - \qnutsig$ is the first component of $\cX_\nu$, we have
\begin{equation}\label{eq: saos 30}
    \begin{split}
        \E_a[ | \qnusig - \qnutsig|^2 \cdot &\mathbbm{1}_{\cE_N}]\\ \le & C\exp(CQ^{m_0}) \Big\{ \E_a \Big[ \sum_{0 \le \mu < N} (\discrep^\sigma(t_\mu))^2 \Delta t_\mu\Big]\\ &\qquad \qquad \qquad \qquad+ (\Delta t_\mx)^{1/2}\Big\}.
    \end{split}
\end{equation}
Moreover, \eqref{eq: saos 17} and \eqref{eq: saos 29} together yield:
\begin{equation}\label{eq: saos 31}
    \begin{split}
        \E_a[ |\unusig - &\unutsig|^2 \cdot \mathbbm{1}_{\cE_N}]\\ \le &   C Q^{2m_0} \exp(CQ^{m_0}) \Big\{ \E_a \Big[ \sum_{0 \le \mu < N} (\discrep^\sigma(t_\mu))^2 \Delta t_\mu\Big] \\ &\qquad\qquad\qquad\qquad\qquad + (\Delta t_\mx)^{1/2}\Big\} \\&+ C \E_a[(\discrep^\sigma(\tnu))^2].
    \end{split}
\end{equation}
Summing \eqref{eq: saos 30} and \eqref{eq: saos 31} against $\Delta \tnu$, we find that
\begin{equation}\label{eq: saos 32}
    \begin{split}
        \E_a\Big[ \sum_{0 \le \nu < N} \big\{ | \qnusig & - \qnutsig|^2 + |\unusig - \unutsig|^2\big\} \Delta \tnu \cdot \mathbbm{1}_{\cE_N}\Big]\\
        \le& C Q^{2m_0} \exp(CQ^{m_0}) \Big\{\E_a\Big[ \sum_{0\le \mu < N} (\discrep^\sigma(t_\mu))^2 \Delta t_\mu\Big]\\ &\qquad\qquad\qquad\qquad\qquad+ (\Delta t_\mx)^{1/2}\Big\}.
    \end{split}
\end{equation}
We now turn to the case in which $\cE_N$ doesn't occur. Recall that $\cE_N$ fails precisely when, for some $\nu$, we have
\[
\max\{|\qnusig|, |\qnutsig|, |\zonusig|, |\zonutsig|, |\ztnusig|, |\ztnutsig|\} > Q.
\]
Thanks to Lemma \ref{lem: rare events}, applied to the strategies $\sigma$ and $\tsig$, we have
\begin{equation}\label{eq: saos 33}
    \prob_a[\cE_N\;\text{fails}] \le C \exp(-cQ)
\end{equation}
and also that
\[
\E_a [ (\max_\nu \{ |\qnusig| + |\qnutsig| + |\unusig| + |\unutsig|\} )^4] \le C,
\]
hence
\begin{equation}\label{eq: saos 34}
    \E_a \Big[ \Big( \sum_{0\le \nu < N} \big\{ |\qnusig - \qnutsig|^2 + |\unusig - \unutsig|^2\big\} \Delta \tnu\Big)^2\Big] \le C.
\end{equation}
From \eqref{eq: saos 33}, \eqref{eq: saos 34}, and Cauchy-Schwarz, we obtain the estimate
\begin{equation}\label{eq: saos 35}
    \E_a \Big[ \sum_{0 \le \nu < N} \big\{ |\qnusig - \qnutsig|^2 + |\unusig - \unutsig|^2\big\}\Delta \tnu \cdot \mathbbm{1}_{\cE_N \;\text{fails}}\Big]\le C \exp(-cQ).
\end{equation}

Finally, combining \eqref{eq: saos 32} and \eqref{eq: saos 35}, we find that
\begin{equation}\label{eq: saos 36}
\begin{split}
    \E_a \Big[ \sum_{0 \le \nu < N} \big\{ |\qnusig & - \qnutsig|^2 + |\unusig + \unutsig|^2\big\} \Delta \tnu \Big] \\ \le &C Q^{2m_0}\exp(CQ^{m_0})\Big\{ \E_a \Big[ \sum_{0 \le \mu < N} (\discrep^\sigma(t_\mu))^2 \Delta t_\mu\Big] \\&\qquad\qquad\qquad\qquad\quad+ (\Delta t_\mx)^{1/2}\Big\} + C \exp(-cQ).
\end{split}
\end{equation}
We have proven \eqref{eq: saos 36} under the assumption \eqref{eq: saos 28}.

Now let $\varepsilon>0$ be given.

We take $Q$ in \eqref{eq: saos 36} large enough so that $C\exp(-cQ) < \varepsilon/3$.

Having picked $Q$, we strengthen our smallness assumption \eqref{eq: saos 28} by demanding that
\[
CQ^{2m_0}\exp(CQ^{m_0})\cdot (\Delta t_\mx)^{1/2} \le \frac{\varepsilon}{3}.
\]
If also
\[
\E_a\Big[ \sum_{0 \le \mu < N}( \discrep^\sigma(t_\mu))^2 \Delta t_\mu\Big] \le \frac{\varepsilon}{3}[CQ^{2m_0}\exp(CQ^{m_0})]^{-1},
\]
then \eqref{eq: saos 36} implies that
\[
\E_a \Big[ \sum_{0 \le \nu < N} \big\{ |\qnusig - \qnutsig|^2 + |\unusig - \unutsig|^2\big\} \Delta \tnu\Big] < \varepsilon.
\]
Thus, we have proven the following result.
\begin{lem}\label{lem: stability}[Stability Lemma]
    Let $\varepsilon>0$, let $\tsig$ be the \textsc{allegedly optimal strategy} for the partition $0 = t_0 < t_1 < \dots < t_N = T$, and let $\sigma$ be another tame deterministic strategy for that same partition. Assume that $\sigma$ satisfies
    \[
    |\usig(\tnu)| \le C_\etame^\sigma [|\qsig(\tnu)|+1].
    \]
    Fix $a \in [-a_\emx, +a_\emx]$. Suppose that
    \[
    \Delta t_\emx = \max_\nu (t_{\nu+1}-\tnu) < \delta
    \]
    and
    \[
    \eE_a\Big[ \sum_{0 \le \nu < N}(\usig(\tnu) - u_\eop(\qsig(\tnu),\tnu,\zosig(\tnu),\ztsig(\tnu))^2 \Delta \tnu\Big] < \delta
    \]
    for a small enough $\delta >0$ determined by $\varepsilon$, $C_\etame^\sigma$, and the \textsc{boilerplate constants}. Then
    \[
    \eE_a \Big[\sum_{0 \le \nu < N} \big\{ |\usig(\tnu) - u^{\tsig}(\tnu)|^2 + |\qsig(\tnu) - q^{\tsig}(\tnu)|^2\big\} \Delta \tnu\Big] < \varepsilon.
    \]
\end{lem}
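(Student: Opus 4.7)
The strategy is to track the vectorial discrepancy
\[
\cX_\nu := \big(\qnusig - \qnutsig,\; \zonusig - \zonutsig,\; \ztnusig - \ztnutsig\big)
\]
between the trajectories generated by $\sigma$ and $\tsig$ from the common starting point $\cX_0 = 0$, and show that $\E_a[|\cX_\nu|^2]$ is driven, up to higher order terms, by $\E_a[(\discrep^\sigma(\tnu))^2]$. Both strategies are fed the same Brownian motion $W$, so the only source of divergence is the difference in controls.

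First I would write
\[
\unusig - \unutsig = [\unusig - u_\op(\qnusig,\tnu,\zonusig,\ztnusig)] + [u_\op(\qnusig,\tnu,\zonusig,\ztnusig) - u_\op(\qnutsig,\tnu,\zonutsig,\ztnutsig)],
\]
where the first bracket is exactly $\discrep^\sigma(\tnu)$, and the second bracket is Lipschitz in $\cX_\nu$ by the \textsc{estimates} in the PDE Assumption, with Lipschitz constant polynomially controlled on any ball of radius $Q$. Subtracting the update equations \eqref{eq: saos 1}--\eqref{eq: saos 6} for $\sigma$ and $\tsig$ and using this decomposition, I obtain, on the event that all the relevant quantities are bounded by $Q$, an equation of the schematic form
\[
\Delta\cX_\nu = G\,\cX_\nu\,\Delta\tnu + H\,\cX_\nu\,\Delta W_\nu + F_\nu,
\]
where the entries of the (random, $\cF_\nu$-measurable) matrix $G$ are $O(Q^{m_0})$, where $H$ is the constant matrix with a single $1$ in position $(2,1)$, and where $|F_\nu| \le C|\discrep^\sigma(\tnu)|\,\Delta\tnu + C Q^{m_0}(\Delta\tnu)^2$.

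The main step is then a discrete Gronwall argument applied to $\E_a[|\cX_\nu|^2\mathbbm{1}_{\cE_\nu}]$, where $\cE_\nu$ is the event that $|\qsig_\mu|,|q^{\tsig}_\mu|,|\zeta_{1,\mu}^{\sigma,\tsig}|,|\zeta_{2,\mu}^{\sigma,\tsig}|\le Q$ for all $\mu\le\nu$. Conditioning on $\cF_\nu$, using that $\Delta W_\nu$ is independent of $\cF_\nu$ with mean $0$ and variance $O(\Delta\tnu)$, and that $|\cX_\nu|\le CQ$ on $\cE_\nu$, expanding $|\cX_{\nu+1}|^2$ and dropping the cross-term in $\Delta W_\nu$ yields the recursion
\[
\E_a[|\cX_{\nu+1}|^2\mathbbm{1}_{\cE_{\nu+1}}] \le (1+CQ^{m_0}\Delta\tnu)\E_a[|\cX_\nu|^2\mathbbm{1}_{\cE_\nu}] + C\E_a[(\discrep^\sigma(\tnu))^2]\Delta\tnu + CQ^{2m_0+2}(\Delta\tnu)^2.
\]
Iterating, and using $\sum_\nu (\Delta\tnu)^2 \le (\Delta t_\mx)\cdot T$, gives
\[
\E_a[|\cX_\nu|^2\mathbbm{1}_{\cE_\nu}] \le C\exp(CQ^{m_0})\Big\{\E_a\Big[\sum_{\mu<N}(\discrep^\sigma(t_\mu))^2\Delta t_\mu\Big] + (\Delta t_\mx)^{1/2}\Big\}.
\]
Combining with the Lipschitz identity for $\unusig - \unutsig$ and summing against $\Delta\tnu$ controls $\sum_\nu\E_a[(|\qnusig-\qnutsig|^2+|\unusig-\unutsig|^2)\mathbbm{1}_{\cE_N}]\Delta\tnu$ by the same RHS.

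It remains to remove the truncation to $\cE_N$. By Lemma \ref{lem: rare events} applied to both $\sigma$ and $\tsig$ (valid since both are tame), $\prob_a[\cE_N^c] \le C\exp(-cQ)$, while Lemma \ref{lem: rare events} also gives a uniform $L^4$ bound on $\max_\nu(|\qnusig|+|\qnutsig|+|\unusig|+|\unutsig|)$, so Cauchy--Schwarz bounds the contribution off $\cE_N$ by $C\exp(-cQ)$. Finally, given $\varepsilon>0$, I first choose $Q$ large enough that $C\exp(-cQ)<\varepsilon/3$, then choose $\delta$ so small that both $CQ^{2m_0}\exp(CQ^{m_0})(\Delta t_\mx)^{1/2}<\varepsilon/3$ and the hypothesis $\E_a[\sum(\discrep^\sigma)^2\Delta\tnu]<\delta$ gives the last $\varepsilon/3$. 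The main obstacle is producing the one-step recursion with the correct dependence on $\discrep^\sigma$ and $(\Delta\tnu)^2$ without any spurious term of order $(\Delta\tnu)|\cX_\nu|\cdot|\discrep^\sigma|$ that would ruin Gronwall; this is exactly where one uses AM-GM to absorb such cross terms into $|\cX_\nu|^2\Delta\tnu + |\discrep^\sigma|^2\Delta\tnu$, and where the martingale cancellation of $\Delta W_\nu$ is essential.
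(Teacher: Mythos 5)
Your proposal is correct and follows essentially the same route as the paper's own proof in Section~\ref{sec: stability}: the same vector $\cX_\nu$, the same Lipschitz decomposition of $\unusig - \unutsig$ via the PDE estimates, the same one-step recursion $\Delta\cX_\nu = G\cX_\nu\Delta\tnu + H\cX_\nu\Delta W_\nu + F_\nu$, the discrete Gronwall iteration on $\E_a[|\cX_\nu|^2\mathbbm{1}_{\cE_\nu}]$, and finally removal of the truncation by Lemma~\ref{lem: rare events} and Cauchy--Schwarz with the same order of choice of $Q$ and $\delta$. No substantive differences to report.
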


\section{The Second Bayesian Main Lemma}\label{sec: second bayesian}

\begin{lem}[The Second Bayesian Main Lemma]\label{lem: second bayesian main}
    Let $\varepsilon>0$, let $\tsig$ be the \textsc{allegedly optimal strategy} for a partition of $[0,T]$, and let $\sigma$ be another deterministic tame strategy for that same partition. Assume that $\sigma$ satisfies
    \[
    |\usig(\tnu)| \le C_\etame^\sigma [|\qsig(\tnu)|+1].
    \]
    Suppose that
    \[
    \Delta t_\emx < \delta
    \]
    and that
    \begin{equation}\label{eq: 0sharp}
        \begin{split}
            \eE\Big[ \sum_{0 \le \nu < N} \{ (\qsig(\tnu))^2  + (&\usig(\tnu))^2\} \Delta \tnu \Big]\\& \le \eE\Big[ \sum_{0 \le \nu < N}\{ (q^{\tsig}(\tnu))^2 + (u^{\tsig}(\tnu))^2\}\Delta \tnu\Big] + \delta,
        \end{split}
    \end{equation}
    for a small enough $\delta > 0$ determined by $\varepsilon$, $C_\etame^\sigma$, and the \textsc{boilerplate constants}.

    Then, for every $a \in [-a_\emx, + a_\emx]$, we have
    \[
    \eE_a \Big[ \sum_{0 \le \nu < N} \{ | \qsig(\tnu) - q^{\tsig}(\tnu)|^2 + |\usig(\tnu) - u^{\tsig}(\tnu)|^2\} \Delta \tnu \Big] < \varepsilon.
    \]
\end{lem}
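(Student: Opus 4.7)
The plan is to chain together three results already established: Corollary~\ref{cor: first bayesian main}, which controls the Bayesian-averaged discrepancy; the Change-of-Assumption Lemma~\ref{lem: change of assumption}, which transfers control from one value of $a_\tru$ to another; and the Stability Lemma~\ref{lem: stability}, which converts a small discrepancy into a small trajectory difference.

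First I would apply Corollary~\ref{cor: first bayesian main}. Hypothesis \eqref{eq: 0sharp} says $\sigma$ beats $\tsig$ in Bayesian expected cost by at most $\delta$, so (provided $\Delta t_\emx$ is small enough) the Corollary gives
\[
\E\Big[\sum_{0 \le \nu < N}(\discrep^\sigma(\tnu))^2\,\Delta \tnu\Big] \le 3\delta.
\]
Set $f(q_1,\dots,q_N) := \sum_\nu (\discrep^\sigma(\tnu))^2 \Delta\tnu$. Since $\sigma$ is deterministic, $u^\sigma(\tnu)$, $\zo^\sigma(\tnu)$, $\zt^\sigma(\tnu)$ are Borel functions of $(q_1,\dots,q_\nu)$, and $u_\op$ is smooth by the PDE Assumption, so $f$ is a nonnegative Borel function of the trajectory. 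Writing $\E[f] = \int_{-a_\mx}^{a_\mx} \E_a[f]\,d\prior(a) \le 3\delta$ and using Markov/averaging, there exists some $a^\ast \in [-a_\mx, a_\mx]$ for which
\[
\E_{a^\ast}[f] \le 4\delta.
\]

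Second, I transfer from $a^\ast$ to an arbitrary $a\in[-a_\mx,a_\mx]$ via Lemma~\ref{lem: change of assumption}. For $Q \ge C$ and $Q \le (\Delta t_\mx)^{-1/1000}$, that lemma gives
\[
\E_a[f] \le \exp(CQ^2|a-a^\ast|)\bigl(1+CQ^2(\Delta t_\mx)^{1/4}\bigr)\E_{a^\ast}[f] + \exp(-cQ^2)\bigl(\E_a[f^2]\bigr)^{1/2}.
\]
The second moment $\E_a[f^2]$ is bounded by a constant: tameness of $\sigma$ and of $u_\op$ yields $|\discrep^\sigma(\tnu)| \le C[|\qsig(\tnu)|+1]$, hence $f \le C(\max_\nu|\qsig(\tnu)|^2+1)$, and Lemma~\ref{lem: rare events} (applied with $a_\tru = a$) bounds any moment of $\max_\nu|\qsig(\tnu)|$ by a constant. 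Using $|a-a^\ast| \le 2a_\mx$, I obtain
\[
\E_a[f] \le \hat C_Q \cdot 4\delta + C\exp(-cQ^2)\quad\text{for every } a \in [-a_\mx, a_\mx],
\]
where $\hat C_Q = \exp(2CQ^2 a_\mx)(1+CQ^2(\Delta t_\mx)^{1/4})$ depends only on $Q$ and the boilerplate constants.

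Third, I invoke the Stability Lemma~\ref{lem: stability}. For the given $\varepsilon$ and $C_\tame^\sigma$, that lemma produces a threshold $\delta_\star > 0$ such that $\E_a[f] < \delta_\star$ (together with $\Delta t_\mx < \delta_\star$) implies the desired bound $\E_a\bigl[\sum_\nu \{|\qsig(\tnu)-q^{\tsig}(\tnu)|^2 + |\usig(\tnu)-u^{\tsig}(\tnu)|^2\}\Delta\tnu\bigr] < \varepsilon$. I then order the constants as follows: first choose $Q$ so large that $C\exp(-cQ^2) < \delta_\star/2$; with $Q$ fixed, choose $\delta$ so small that $4\delta \hat C_Q < \delta_\star/2$, that $\delta < \delta_\star$, and that $\Delta t_\mx < \delta$ automatically forces $Q \le (\Delta t_\mx)^{-1/1000}$ together with the smallness needed for Corollary~\ref{cor: first bayesian main} and Lemma~\ref{lem: change of assumption}. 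Applying Stability at each $a$ completes the proof. The main obstacle is purely this bookkeeping of constants: because the Change-of-Assumption factor $\hat C_Q$ blows up like $\exp(CQ^2 a_\mx)$, $Q$ must be chosen \emph{before} $\delta$, which in turn must be chosen before $\Delta t_\mx$; getting these dependencies compatible across all three lemmas is the only real work.
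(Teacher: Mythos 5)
Your proposal is correct and follows essentially the same route as the paper's proof: Corollary~\ref{cor: first bayesian main} to bound the Bayesian-averaged discrepancy, an averaging step to find a good $a^\ast$, Lemma~\ref{lem: change of assumption} to transfer to arbitrary $a$, the rare-events lemma to control $\E_a[f^2]$, and finally Lemma~\ref{lem: stability}, with the key observation that $Q$ must be fixed before $\delta$.

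One bookkeeping point worth tightening: you invoke Corollary~\ref{cor: first bayesian main} with tolerance $\delta$ itself to conclude $\E[f]\le 3\delta$, but that corollary requires $\Delta t_\emx$ to be smaller than a threshold $\delta'$ that \emph{depends on} the tolerance $\delta$, so "$\Delta t_\emx<\delta$" does not automatically supply it --- you would need $\delta\le\delta'(\delta)$, which has no reason to hold. The paper avoids this circularity by introducing an intermediate parameter $\varepsilon_2$ chosen after $Q$ but before $\delta$: it applies the Corollary with tolerance $\varepsilon_2$, deducing $\E[f]\le 3\varepsilon_2$ once $\delta\le\min\{\varepsilon_2,\delta'(\varepsilon_2)\}$. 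Your argument is repaired the same way, e.g.\ by setting $\varepsilon_2:=\delta_\star/(8\hat C_Q)$ (a quantity fixed once $Q$ is fixed) and then choosing $\delta$ small enough relative to $\varepsilon_2$ and $\delta'(\varepsilon_2)$; the rest of your chain then goes through verbatim.
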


\begin{proof}
    We first note that we can replace the \textsc{boilerplate constant} $C_\tame^\op$ by $\max\{C_\tame^\sigma, C_\tame^\op\}$. 
    
    Observe that
    \[
    \discrep^\sigma(\tnu) := \usig(\tnu) - u_\op(\qsig(\tnu), \tnu, \zosig(\tnu), \ztsig(\tnu))
    \]
    satisfies
    \[
    |\discrep^\sigma(\tnu)| \le |\usig(\tnu)| + C [|\qsig(\tnu)| + 1],
    \]
    hence
    \[
    \sum_{0 \le \nu < N} (\discrep^\sigma(\tnu))^2 \Delta \tnu \le C' \max_{0\le\nu < N} \{ |\qsig(\tnu)|^2 + |\usig(\tnu)|^2 + 1\}.
    \]
    Lemma \ref{lem: rare events} therefore yields the estimate
    \begin{equation}\label{eq: 1sharp}
        \E_a\Big[ \Big\{ \sum_{0 \le \nu < N} (\discrep^\sigma(\tnu))^2 \Delta \tnu\Big\}^p\Big] \le C_p
    \end{equation}
    for any $p \ge 1$ and any $a \in [-a_\mx, a_\mx]$.

    Now let $\varepsilon>0$ be given. We pick a small enough $\varepsilon_1 > 0$ depending on $\varepsilon$, then we pick a large enough $Q>1$ depending on $\varepsilon_1$, next we pick a small enough $\varepsilon_2>0$ depending on $Q$, and finally we pick a small enough $\delta >0$ depending on $\varepsilon_2$. We then argue as follows.

    Suppose that $\Delta t_\mx < \delta$, and suppose \eqref{eq: 0sharp} holds. Since $\delta > 0$ has been picked small enough, depending on $\varepsilon_2$, Corollary \ref{cor: first bayesian main} tells us that
    \begin{equation}\label{eq: 2sharp}
        \E\Big[ \sum_{0 \le \nu < N} (\discrep^\sigma(\tnu))^2\Delta \tnu\Big] < 3 \varepsilon_2.
    \end{equation}
    For any random variable $X$, the expected value $\E[X]$ is an average of $\E_a[X]$ over $a \in [-a_\mx,+a_\mx]$ with respect to our given prior probability distribution for $a_\tru$. Therefore, \eqref{eq: 2sharp} implies that for some $a_1 \in [-a_\mx, a_\mx]$ we have
    \[
    \E_{a_1} \Big[ \sum_{0 \le \nu < N} (\discrep^\sigma(\tnu))^2 \Delta \tnu\Big] < 4\varepsilon_2.
    \]
    Consequently, for any $ a\in [-a_\mx, +a_\mx]$, Lemma \ref{lem: change of assumption} and estimate \eqref{eq: 1sharp} with $p=2$ give
    \begin{equation}\label{eq: 3sharp}
        \E_a\Big[ \sum_{0\le\nu < N} (\discrep^\sigma(\tnu))^2 \Delta \tnu\Big] \le \exp(CQ^2)\cdot 4\varepsilon_2 + C \exp(-cQ^2).
    \end{equation}
    Since $\varepsilon_2$ has been picked small enough depending on $Q$, while $Q$ has been picked large enough depending on $\varepsilon_1$, the right-hand side of \eqref{eq: 3sharp} is less than $C'\exp(-cQ^2) < \varepsilon_1$. Thus, for any $a \in [-a_\mx, a_\mx]$, we have
    \begin{equation}\label{eq: 4sharp}
        \E_a \Big[ \sum_{ 0 \le \nu < N} (\discrep^\sigma(\tnu))^2 \Delta \tnu\Big] < \varepsilon_1.
    \end{equation}
    Since $\varepsilon_1$ has been picked small enough depending on $\varepsilon$, estimate \eqref{eq: 4sharp} and Lemma \ref{lem: stability} imply that
    \[
    \E_a \Big[ \sum_{0 \le \nu < N} \{ |\usig(\tnu) - u^{\tsig}(\tnu)|^2 + |\qsig(\tnu) - q^{\tsig}(\tnu)|^2 \} \Delta \tnu\Big] < \varepsilon 
    \]
    for all $a \in [-a_\mx, a_\mx]$, completing the proof of the lemma.
\end{proof}

\begin{cor}\label{cor: second bayesian main}
    Under the assumptions of Lemma \ref{lem: second bayesian main} we have
    \begin{multline*}
        \bigg|\eE_a\Big[ \sum_{0 \le \nu < N} \{ (\qsig(\tnu))^2 + (\usig(\tnu))^2\} \Delta \tnu\Big]\\ - \eE_a \Big[ \sum_{0 \le \nu < N} \{ (q^{\tsig}(\tnu))^2 + (u^{\tsig}(\tnu))^2\}\Delta \tnu\Big]\bigg| \le C \varepsilon^{1/2}
    \end{multline*}
    for each $a \in [-a_\emx, a_\emx]$.
\end{cor}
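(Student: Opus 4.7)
The plan is to derive the corollary by expanding the difference of squares and applying Cauchy--Schwarz, using the $L^2$-type estimate already provided by Lemma \ref{lem: second bayesian main}.

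First, I would write
\[
(\qsig(\tnu))^2 - (q^{\tsig}(\tnu))^2 = \bigl(\qsig(\tnu) - q^{\tsig}(\tnu)\bigr)\bigl(\qsig(\tnu) + q^{\tsig}(\tnu)\bigr),
\]
and analogously for the $u$ terms. Summing against $\Delta \tnu$ and taking $\E_a$, the triangle inequality yields
\[
\Big|\E_a\Big[\sum_\nu \{(\qsig)^2 - (q^{\tsig})^2\}\Delta \tnu\Big]\Big| \le \E_a\Big[\sum_\nu |\qsig - q^{\tsig}|\cdot |\qsig + q^{\tsig}| \Delta \tnu\Big],
\]
with the parallel bound for the $u$ contribution. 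Applying Cauchy--Schwarz to the right-hand side splits it into a product of two square roots: one involving $\E_a[\sum (\qsig - q^{\tsig})^2 \Delta \tnu]$, which is bounded by $\varepsilon$ via Lemma \ref{lem: second bayesian main}, and one involving $\E_a[\sum (\qsig + q^{\tsig})^2 \Delta \tnu]$, which I would bound by a constant $C$ using Lemma \ref{lem: rare events} applied to each of the tame strategies $\sigma$ and $\tsig$ (with tame constants $C_\tame^\sigma$ and $C_\tame^\op$ respectively). Note that Lemma \ref{lem: rare events}, together with its standard $p$-th moment consequences (used repeatedly in Chapter \ref{chap: 2}, e.g.\ in the proof of Lemma \ref{lem: refinement}), gives $\E_a[\max_\nu |\qsig(\tnu)|^2] \le C$ uniformly in $a \in [-a_\mx, a_\mx]$, and similarly for $q^{\tsig}$.

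The analogous argument handles the $u$ terms: since both strategies are tame, $|\usig(\tnu)| \le C[|\qsig(\tnu)|+1]$ and $|u^{\tsig}(\tnu)| \le C[|q^{\tsig}(\tnu)|+1]$, so $\E_a[\sum (\usig + u^{\tsig})^2 \Delta \tnu] \le C$ by the same moment estimates, while $\E_a[\sum (\usig - u^{\tsig})^2 \Delta \tnu] < \varepsilon$ by Lemma \ref{lem: second bayesian main}. Combining the $q$ and $u$ bounds, the difference of expected discrete costs is at most $C\varepsilon^{1/2}$.

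There is no real obstacle here --- the corollary is essentially a Cauchy--Schwarz consequence of the $L^2$-closeness of the trajectories provided by Lemma \ref{lem: second bayesian main}, together with the uniform (in $a$) second-moment bounds from Lemma \ref{lem: rare events}. The only bookkeeping point worth flagging is that the constant $C$ depends on the \textsc{boilerplate constants} and on $C_\tame^\sigma$ (through the moment bound on $q^\sigma$ and on $u^\sigma$), matching the dependencies already allowed in the hypothesis of Lemma \ref{lem: second bayesian main}.
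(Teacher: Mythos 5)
Your proposal is correct and follows essentially the same route as the paper: the corollary is a soft consequence of the $L^2$-closeness of trajectories from Lemma~\ref{lem: second bayesian main} together with uniform second-moment bounds from Lemma~\ref{lem: rare events}. The only cosmetic difference is that you factor $f^2-g^2=(f-g)(f+g)$ and apply Cauchy--Schwarz, whereas the paper invokes Minkowski's inequality on $\|q^\sigma\|_{L^2}$ vs.\ $\|q^{\tsig}\|_{L^2}$ (so it only needs the moment bound for $\tsig$, obtaining the one for $\sigma$ by the triangle inequality); both yield the same $C\varepsilon^{1/2}$ bound with the same constant dependencies.
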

\begin{proof}
    The Corollary follows from Lemma \ref{lem: second bayesian main}, together with Minkowski's inequality and the estimate
    \[
    \E_a \Big[ \sum_{0 \le \nu <N} \{(q^{\tsig}(\tnu))^2 +(u^{\tsig}(\tnu))^2\} \Delta \tnu\Big] \le C,
    \]
    which in turn follows from Lemma \ref{lem: rare events}.
\end{proof}

\section{Allowing for Dependence on Coin Flips}
Let $\tsig$ be the \textsc{allegedly optimal strategy} associated to a partition of $[0,T]$, and let $\sigma$ be a tame strategy associated to the same partition. In this section we allow $\sigma$ to depend on the coin flips $\vxi$. For fixed $\veta \in \{0,1\}^\N$, we write $\sigveta$ for the strategy prescribed by $\sigma$ in case $\vxi = \veta$. We write $\prob_B[\dots]$ and $\E_B[\dots]$ (``B'' for ``Bernoulli'') to denote probability and expectation with respect to the natural (product) probability measure on $\{0,1\}^\N$, in which each $\xi_\nu$ is equal to 0 with probability $1/2.$

Our goal here is to extend Lemma \ref{lem: second bayesian main} to the case of the $\vxi$-dependent strategy $\sigma$.

To so we denote
\[
\cost_D(\sigma) = \sum_{0\le\nu < N} \{(\qsig(\tnu))^2 + (\usig(\tnu))^2\} \Delta \tnu,
\]
and similarly for $\cost_D(\tsig)$ and $\cost_D(\sigveta)$.

Because $\sigma$ is tame, we have
\begin{equation}\label{eq: adc 1}
    |\usig(\tnu)| \le C_\tame^\sigma \cdot [|\qsig(\tnu)| + 1]
\end{equation}
for a constant $C_\tame^\sigma$.

Let $\varepsilon>0$ be given, and let $\delta >0$ be less than a small enough positive number determined by $\varepsilon$, $C_\tame^\sigma$, and the \textsc{boilerplate constants}. 

Suppose that $\Delta t_\mx < \delta$, and that
\begin{equation}\label{eq: adc 2}
    \E[\cost_D(\sigma)] \le \E[\cost_D(\tsig)] + \delta.
\end{equation}

We want to show that
\[
\E_a \Big[ \sum_{0 \le \nu <N} \{ | \qsig(\tnu) - q^{\tsig}(\tnu)|^2 + |\usig(\tnu) - u^{\tsig}(\tnu)|^2 \} \Delta \tnu \Big] < \varepsilon
\]
for all $a \in [-a_\mx, a_\mx]$. To see this, we first pick $\hat{\delta}$ small enough, determined by $\varepsilon$, $C_\tame^\sigma$, and the \textsc{boilerplate constants}; then we pick $\delta$ small enough, determined by $\hat{\delta}$, $C_\tame^\sigma$, and the \textsc{boilerplate constants}.

Lemma \ref{lem: first bayesian main}, with $\hat{\delta}^2$ in place of $\varepsilon$, shows that
\begin{equation}\label{eq: adc 3}
    \E[\cost_D(\sigveta)] \ge \E[\cost_D(\tsig)] - \hat{\delta}^2\;\text{for all}\; \veta \in \{0,1\}^\N.
\end{equation}
On the other hand, \eqref{eq: adc 2} shows that
\begin{equation}\label{eq: adc 4}
    \E_B [ \{ \E[\cost_D(\sigveta)] - \E[\cost_D(\tsig)] + \hat{\delta}^2\}]\le \delta + \hat{\delta}^2 < 2\hat{\delta}^2,
\end{equation}
since $\delta$ is less than a small enough constant depending on $\hat{\delta}$. The quantity in curly brackets in \eqref{eq: adc 4} is nonnegative, thanks to \eqref{eq: adc 3}. Therefore, if we set
\begin{equation}\label{eq: adc 5}
    \goodflips = \{ \veta \in \{0,1\}^\N : \E[\cost_D(\sigveta)] \le \E[\cost_D(\tsig)] + \hat{\delta}\}
\end{equation}
and
\begin{equation}\label{eq: adc 6}
    \badflips = \{ \veta \in \{0,1\}^\N : \E[\cost_D(\sigveta)] > \E[\cost_D(\tsig)] + \hat{\delta}\},
\end{equation}
then
\begin{equation}\label{eq: adc 7}
    \prob_B[\badflips] < 10 \hat{\delta}.
\end{equation}
Moreover, since $\sigveta$ and $\tsig$ are tame for each $\veta$, Lemma \ref{lem: rare events} implies that for each $a \in [-a_\mx, a_\mx]$ we have
\[
\E_a\Big[ \Big\{ \sum_{0 \le \nu < N}[ |q^{\sigveta}(\tnu) - q^{\tsig}(\tnu)|^2 + |u^{\sigveta}(\tnu) - u^{\tsig}(\tnu)|^2]\Delta \tnu \Big\}^2\Big]\le C,
\]
where we allow constants $C$ to depend on $C_\tame^\sigma$ in \eqref{eq: adc 1}. Consequently,
\[
\E_a\Big[ \Big\{ \sum_{0 \le \nu < N} [ |\qsig(\tnu) - q^{\tsig}(\tnu)|^2 + |\usig(\tnu) - u^{\tsig}(\tnu)|^2] \Delta \tnu \Big\}^2\Big] \le C.
\]
Together with \eqref{eq: adc 7} and Cauchy-Schwarz, this implies that
\begin{equation}\label{eq: adc 8}
\begin{split}
    \E_a\Big[ \Big\{ \sum_{0 \le \nu < N} [ |\qsig(\tnu) - q^{\tsig}(\tnu)|^2 + |\usig(\tnu) - u^{\tsig}(\tnu)|^2]\Delta \tnu\Big\}\cdot \mathbbm{1}_{\badflips}(\vxi)\Big]\\
    \le C \hat{\delta}^{1/2}.
    \end{split}
\end{equation}
On the other hand, if $\veta \in \goodflips$, then Lemma \ref{lem: second bayesian main} tells us that
\[
\E_a\Big[ \sum_{0\le \nu < N} \{ | q^{\sigveta}(\tnu) - q^{\tsig}(\tnu)|^2 + |u^{\sigveta}(\tnu) - u^{\tsig}(\tnu)|^2\}\Delta \tnu\Big] < \frac{\varepsilon}{2}.
\]
Consequently,
\begin{equation}\label{eq: adc 9}
\begin{split}
    \E_a\Big[ \Big\{ \sum_{0\le \nu <N} [ |\qsig(\tnu) - q^{\tsig}(\tnu)|^2 + |\usig(\tnu) - u^{\tsig}(\tnu&)|^2] \Delta \tnu\Big\}\\ &\cdot \mathbbm{1}_{\goodflips}(\vxi)\Big] \le \frac{\varepsilon}{2}.
\end{split}
\end{equation}
We learn from \eqref{eq: adc 8} and \eqref{eq: adc 9} that
\begin{equation}\label{eq: adc 10}
    \E_a\Big[ \sum_{0\le \nu < N}\{|\qsig(\tnu) - q^{\tsig}(\tnu)|^2 + |\usig(\tnu) - u^{\tsig}(\tnu)|^2\}\Delta \tnu\Big] < \varepsilon.
\end{equation}
Thus, we have shown that \eqref{eq: adc 1}, \eqref{eq: adc 2} and $\Delta t_\mx < \delta$ imply \eqref{eq: adc 10}.

That is, Lemma \ref{lem: second bayesian main} holds without the assumption that $\sigma$ is deterministic. From now on, when we apply that lemma, we needn't check that $\sigma$ is deterministic.

\section{Reformulating the Main Bayesian Results}
Let $\sigma$ be a tame strategy.

Recall, from Section \ref{sec: continuous vs discrete}, that $\qsig_C(t) = \qsig(t)$ and $\qsig_D(t) = \qsig(\tnu)$ for $ t\in [\tnu, t_{\nu+1})$, $0 \le \nu < N$. Also, $\usig(t) = \usig(\tnu)$ for $ t\in [\tnu, t_{\nu+1})$, $0 \le \nu < N$. Therefore,
\begin{multline*}
    \sum_{0\le \nu < N} \{ |\usig(\tnu) - u^{\tsig}(\tnu)|^2 + |\qsig(\tnu) - q^{\tsig}(\tnu)|^2\} \Delta \tnu \\= \int_0^T\{ |\usig(t) - u^{\tsig}(t)|^2 + |\qsig_D(t) - q^{\tsig}_D(t)|^2\} \ dt
\end{multline*}
where $\tsig$ is the \textsc{allegedly optimal strategy} for some Bayesian prior. Similarly,
\[
    \sum_{0 \le \nu < N} \{  |\usig(\tnu)|^2 + |\qsig(\tnu)|^2\} \Delta \tnu = \int_0^T \{ |\usig(t)|^2 + |\qsig_D(t)|^2\} \ dt
\]
and the analogous formula holds for $\tsig$.

From Section \ref{sec: continuous vs discrete}, we have
\[
\E_a\big[ \max_{t \in [0,T]} | \qsig(t) - \qsig_D(t)|^m\big] \le C_m (\Delta t_\mx)^{m/4}\;\text{for any}\; m \ge 1
\]
and any $a \in [-a_\mx, a_\mx]$; the analogous estimate holds for $\tsig$ in place of $\sigma$.

In view of the above remarks, we can reformulate our main previous results, replacing $\qsig_D(t)$ by $\qsig(t)$.

We define
\[
\cost(\sigma) = \int_0^T \{(\usig(t))^2 +  (\qsig(t))^2\}\ dt.
\]

We combine the above discussion with Lemmas \ref{lem: ac}, \ref{lem: undercontrol}, \ref{lem: first bayesian main}, and \ref{lem: second bayesian main} to deduce the following.

\begin{thm}\label{thm: bayesian strategies}[Main Theorem on Bayesian Strategies]
    Let $\sigma$ be a tame strategy, satisfying
    \[
    |\usig(\tnu)| < C_\etame^\sigma [|\qsig(\tnu)| + 1].
    \]
    Fix a Bayesian prior $d\eprob(a)$ on $[-a_\emx,+a_\emx]$, and suppose our PDE Assumption holds for the PDE arising from that prior. Let $\tsig$ denote the \textsc{allegedly optimal strategy} for the same partition of $[0,T]$ used to define $\sigma$.

    Then given $\varepsilon>0$, there exists $\delta >0$ determined by $\varepsilon$, together with the \textsc{boilerplate constants} and the constant $C_\etame^\sigma$, such that the following holds. Suppose $\Delta t_\emx < \delta $. Then:
    \begin{enumerate}[label={\emph{(\arabic*)}}, ref={(\arabic*)}]
        \item\[
        |\eE[\cost(\tsig)] - S(q_0,0,0,0)| < \varepsilon,
        \]
        where $S$ is our PDE solution.\label{eq: rmb 1}
        \item If $\eE[\cost(\sigma)] < \eE[\cost(\tsig)] + \delta$, then for any $a \in [-a_\emx,+a_\emx]$ we have
        \[
        \eE_a \Big[ \int_0^T \{ |\usig(t) - u^{\tsig}(t)|^2 + |\qsig(t) - q^{\tsig}(t)|^2\}\ dt\Big] < \varepsilon.
        \]\label{eq: rmb 2}
        \item There exists an analytic function $I(a)$ defined on $\cR \equiv (-a_\mx, + a_\mx) \times (-\hat{c},\hat{c})$ for some $\hat{c}$, such that $|I(a)| \le C$ on $\cR$, and
        \[
        |I(a) - \eE_a[\cost(\sigma)]| < \varepsilon\;\text{for}\; a \in (-a_\emx,+a_\emx).
        \]\label{eq: rmb 3}
        \item If $a$ exceeds a large enough constant $C$, then
        \[
        \eE_a[\cost(\sigma)] > cT^2\exp(caT).
        \]\label{eq: rmb 4}
    \end{enumerate}
\end{thm}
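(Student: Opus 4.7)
The Main Theorem on Bayesian Strategies is a compilation of the previously established lemmas, translated from the discrete cost $\cost_D(\sigma) = \sum_\nu \{(\qsig(\tnu))^2 + (\usig(\tnu))^2\}\Delta \tnu$ to the integral cost $\cost(\sigma)$. The bridge is Lemma \ref{lem: costing by integrals}, which gives, for each $a \in [-a_\mx, a_\mx]$ and each $m \ge 1$,
\[
\E_a[|\cost(\sigma) - \cost_D(\sigma)|^m] \le C_m (\Delta t_\mx)^{m/2},
\]
and the same for $\tsig$ (whose tame constant $C_\tame^{\op}$ is a \textsc{boilerplate constant}). Given $\varepsilon > 0$, the plan is to choose $\delta$ small enough that each $\cost \leftrightarrow \cost_D$ translation error, together with every other small error arising below, contributes at most a fixed fraction of $\varepsilon$.

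For \ref{eq: rmb 1}, apply Lemma \ref{lem: first bayesian main} with $\varepsilon/3$ in place of $\varepsilon$: the upper bound gives $\E[\cost_D(\tsig)] \le S(q_0,0,0,0) + \varepsilon/3$, and taking $\sigma = \tsig$ in the lower bound (so the $\discrep^\sigma$ term vanishes) gives $\E[\cost_D(\tsig)] \ge S(q_0,0,0,0) - \varepsilon/3$. Combining with the $\cost \leftrightarrow \cost_D$ bound for $\tsig$ gives \ref{eq: rmb 1}. For \ref{eq: rmb 3}, Lemma \ref{lem: ac} produces, for each $\veta \in \{0,1\}^\N$, an analytic function $I_\veta(a)$ on $\cR$, uniformly bounded by a constant depending only on $C_\tame^\sigma$ and \textsc{boilerplate constants}, with $|I_\veta(a) - \E_{a,\veta}[\cost_D(\sigveta)]| \le C(\Delta t_\mx)^{1/8}$. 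Setting $I(a) = \E_B[I_\veta(a)]$ (Bernoulli average over coin flips) preserves analyticity on $\cR$ and the uniform bound, and yields $|I(a) - \E_a[\cost_D(\sigma)]| \le C(\Delta t_\mx)^{1/8}$; Lemma \ref{lem: costing by integrals} then upgrades $\cost_D$ to $\cost$. For \ref{eq: rmb 4}, apply Lemma \ref{lem: undercontrol} to each deterministic strategy $\sigveta$ (all tame with the same constant $C_\tame^\sigma$), average over $\veta$, and translate $\cost_D \to \cost$; the $\Delta t_\mx$-smallness demanded by Lemma \ref{lem: undercontrol} is folded into $\delta$.

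The most substantive piece is \ref{eq: rmb 2}. From the hypothesis $\E[\cost(\sigma)] < \E[\cost(\tsig)] + \delta$ and Lemma \ref{lem: costing by integrals} applied both to $\sigma$ and $\tsig$, we obtain $\E[\cost_D(\sigma)] < \E[\cost_D(\tsig)] + 2\delta$. Invoke the Second Bayesian Main Lemma (Lemma \ref{lem: second bayesian main}) in the coin-flip-dependent form established in the section ``Allowing for Dependence on Coin Flips,'' which gives
\[
\E_a\Big[\sum_{0 \le \nu < N} \{|\usig(\tnu) - u^{\tsig}(\tnu)|^2 + |\qsig(\tnu) - q^{\tsig}(\tnu)|^2\}\Delta \tnu\Big] < \varepsilon/2
\]
for every $a \in [-a_\mx, a_\mx]$. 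Since $\usig$ and $u^{\tsig}$ are piecewise constant on the partition, the $u$-integral over $[0,T]$ equals the discrete sum; for the $q$-integral, apply Lemma \ref{lem: continuous variants} to both $\sigma$ and $\tsig$ with a sufficiently large $Q$, giving $\max_t |\qsig(t) - \qsig_D(t)| \le CQ(\Delta t_\mx)^{1/4}$ (and the analogue for $\tsig$) outside a bad event of probability $C\exp(-cQ^2)$. Combined with the $L^4$-moments from Lemma \ref{lem: rare events} and Cauchy--Schwarz on the bad event, this upgrades the discrete estimate to the continuous form required by \ref{eq: rmb 2}, with the extra error absorbable into $\varepsilon$ by shrinking $\delta$.

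The main obstacle is purely bookkeeping: one must check that at every step the $\delta$ chosen depends only on $\varepsilon$, $C_\tame^\sigma$, and the \textsc{boilerplate constants}, and that the quantifier order (first $\varepsilon$ and $\sigma$'s tame constant, then $\delta$, then $\Delta t_\mx < \delta$, then $a$) is preserved across Lemmas \ref{lem: rare events}, \ref{lem: ac}, \ref{lem: costing by integrals}, \ref{lem: continuous variants}, \ref{lem: undercontrol}, \ref{lem: first bayesian main}, and \ref{lem: second bayesian main}. No new analytic input is needed; the theorem is essentially a consolidated restatement.
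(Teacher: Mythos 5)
Your proposal is correct and follows the paper's own route: the theorem is indeed proved by translating $\cost_D \leftrightarrow \cost$ via Lemma \ref{lem: costing by integrals} (and Lemma \ref{lem: continuous variants} for the pathwise $q$-comparison), then invoking Lemma \ref{lem: first bayesian main} for (1), the coin-flip-extended Lemma \ref{lem: second bayesian main} for (2), Lemma \ref{lem: ac} averaged over $\veta$ for (3), and Lemma \ref{lem: undercontrol} averaged over $\veta$ for (4), exactly as you lay out. The only point you gloss over — and which the paper also leaves implicit — is that the smallness of $\Delta t_\mx$ required by Lemma \ref{lem: undercontrol} depends on $a$, so the quantifier order ``$\delta$ before $a$'' in part (4) requires a little extra care (in the downstream applications one effectively fixes $a$ before shrinking the partition, so this is harmless); aside from that, no new ingredient is needed beyond what you cite.
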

Note that strictly speaking, we proved \ref{eq: rmb 3} and \ref{eq: rmb 4} for deterministic strategies, so they hold for $\sigma$ once we condition on $\vxi = \veta$ for a fixed $\veta \in \{0,1\}^\N$. Integrating over $\veta$, we obtain \ref{eq: rmb 3} and \ref{eq: rmb 4} as stated.

\section{Comparing the Allegedly Optimal Strategies for Two Partitions}
Let $\pi$ be a partition $0 = t_0 < t_1 < \dots < t_N = T$, and let $\pi'$ be a refinement of $\pi$, given by $0 = t_0' < t_1' < \dots < t_N' = T$. Let $\tsig, \tsig'$ be the corresponding \textsc{allegedly optimal strategies}. We set $\Delta t_\mx = \max_\nu (t_{\nu+1} - \tnu)$ and $\Delta t_\mx' = \max_\nu (t_{\nu+1}' - \tnu') \le \Delta t_\mx.$ We will prove the following result.

\begin{lem}\label{lem: comparing}
    Given $\varepsilon>0$, there exists $\delta > 0$ such that if $\Delta t_\emx < \delta$, then
    \[
    \eE_a\Big[ \int_0^T \{ |q^{\tsig}(t) - q^{\tsig'}(t)|^2 + |u^{\tsig}(t) - u^{\tsig'}(t)|^2\}\ dt \Big] < \varepsilon
    \]
    for all $ a \in [-a_\emx, + a_\emx]$.
\end{lem}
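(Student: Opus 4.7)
The plan is to transplant the coarse \textsc{allegedly optimal} strategy $\tsig$ up to the finer partition $\pi'$ via the Refinement Lemma, and then play it off against $\tsig'$ using the Main Theorem on Bayesian Strategies. Both $\tsig$ and $\tsig'$ are tame with constant $C_\tame^\op$, so all ``boilerplate'' bounds apply uniformly. Given $\varepsilon > 0$, first I fix $\delta_2 = \delta_2(\varepsilon) > 0$ as produced by part \emph{(2)} of Theorem \ref{thm: bayesian strategies} applied to the partition $\pi'$, with competing tame constant $2C_\tame^\op$. All subsequent choices of $\delta$ will be made small enough to force every error term to sit below $\delta_2$.

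First I apply the Refinement Lemma (Lemma \ref{lem: refinement}) to $\sigma = \tsig$ (associated to $\pi$) and the refinement $\pi'$. This produces a tame strategy $\hat\sigma$ associated to $\pi'$, with tame constant $2C_\tame^\op$, such that for every $a \in [-a_\mx,a_\mx]$ and $\veta \in \{0,1\}^\N$,
\[
\E_{a,\veta}\Big[\int_0^T \{|q^{\hat\sigma}(t) - q^{\tsig}(t)|^2 + |u^{\hat\sigma}(t) - u^{\tsig}(t)|^2\}\, dt\Big] \le C (\Delta t_\mx)^{\bar m},
\]
for any $\bar m \ge 1$. Averaging over $a$ under the Bayesian prior yields the same bound for $\E[\,\cdot\,]$. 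Using the identity $x^2 - y^2 = (x-y)(x+y)$, Cauchy--Schwarz, and the uniform bound $\E[\int_0^T \{(q^{\tsig})^2 + (u^{\tsig})^2 + (q^{\hat\sigma})^2 + (u^{\hat\sigma})^2\}\, dt] \le C$ (from Lemma \ref{lem: rare events} applied to both tame strategies), this transfers to a cost estimate
\[
|\E[\cost(\hat\sigma)] - \E[\cost(\tsig)]| \le C(\Delta t_\mx)^{\bar m/2}.
\]

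Next I invoke part \emph{(1)} of Theorem \ref{thm: bayesian strategies} for each of the partitions $\pi$ and $\pi'$ with their respective \textsc{allegedly optimal} strategies: for $\Delta t_\mx$ small, both $\E[\cost(\tsig)]$ and $\E[\cost(\tsig')]$ lie within $\eta$ of $S(q_0,0,0,0)$, hence within $2\eta$ of each other. Choosing $\eta$ and $\bar m$ so that $2\eta + C(\Delta t_\mx)^{\bar m/2} < \delta_2$, the previous paragraph gives
\[
\E[\cost(\hat\sigma)] \le \E[\cost(\tsig')] + \delta_2.
\]
Now part \emph{(2)} of Theorem \ref{thm: bayesian strategies}, applied on the partition $\pi'$ to the competing tame strategy $\hat\sigma$ and the \textsc{allegedly optimal} strategy $\tsig'$, yields
\[
\E_a\Big[\int_0^T \{|u^{\hat\sigma}(t) - u^{\tsig'}(t)|^2 + |q^{\hat\sigma}(t) - q^{\tsig'}(t)|^2\}\, dt\Big] < \tfrac{\varepsilon}{4}
\]
for every $a \in [-a_\mx, a_\mx]$. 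Combining this with the Refinement Lemma bound (taking $\bar m$ large enough that $C(\Delta t_\mx)^{\bar m} < \varepsilon/4$) via the triangle inequality $(|x-y|^2 \le 2|x-z|^2 + 2|z-y|^2)$ produces the desired estimate
\[
\E_a\Big[\int_0^T \{|q^{\tsig}(t) - q^{\tsig'}(t)|^2 + |u^{\tsig}(t) - u^{\tsig'}(t)|^2\}\, dt\Big] < \varepsilon
\]
for every $a \in [-a_\mx, a_\mx]$, as required.

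The main obstacle is bookkeeping the order of quantifiers: the $\delta_2$ from Theorem \ref{thm: bayesian strategies}\emph{(2)} must be fixed before $\delta$, since its dependence on $\varepsilon$, on $C_\tame^\sigma = 2C_\tame^\op$, and on the boilerplate constants has to be folded into the choice of $\delta$ that controls both the Refinement Lemma error and the Part \emph{(1)} agreement between $\E[\cost(\tsig)]$ and $\E[\cost(\tsig')]$. Everything else is a standard application of the tools already in hand.
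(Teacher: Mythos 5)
Your proof is correct and follows essentially the same path as the paper: transplant $\tsig$ to the finer partition via Lemma \ref{lem: refinement}, use part \emph{(1)} of Theorem \ref{thm: bayesian strategies} to see that $\E[\cost(\tsig)]$ and $\E[\cost(\tsig')]$ both hug $S(q_0,0,0,0)$, then invoke part \emph{(2)} of Theorem \ref{thm: bayesian strategies} on $\pi'$ to pin $\hat\sigma$ to $\tsig'$, and finish with the triangle inequality. Your explicit tracking of the inflated tame constant $2C_\tame^\op$ and the careful ordering of the quantifiers are the same subtleties the paper handles, though less explicitly.
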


\begin{proof}
    Given $\varepsilon>0$ we pick $\hat{\delta}>0$ small enough, then pick $\delta >0$ small enough, depending on $\hat{\delta}$.

    Suppose $\Delta t_\mx < \delta$; then also $\Delta t_\mx' < \delta$. Theorem \ref{thm: bayesian strategies} gives
    \begin{equation}\label{eq: ctp 1}
\Big| \E\Big[ \int_0^T\{ |q^{\tsig}(t)|^2 + |u^{\tsig}(t)|^2\}\ dt \Big] - S(q_0,0,0,0)\Big| < \hat{\delta}
    \end{equation}
    and
    \begin{equation}\label{eq: ctp 2}
        \Big| \E\Big[ \int_0^T \{ |q^{\tsig'}(t)|^2 + |u^{\tsig'}(t)|^2\}\ dt \Big] - S(q_0,0,0,0)\Big| < \hat{\delta}.
    \end{equation}
    In particular,
    \begin{equation}\label{eq: ctp 3}
        \E\Big[ \int_0^T \{ |q^{\tsig}(t)|^2 + |u^{\tsig}(t)|^2\}\ dt\Big] < C
    \end{equation}
    and
    \begin{equation}\label{eq: ctp 4}
        \E\Big[ \int_0^T \{ |q^{\tsig'}(t)|^2 + |u^{\tsig'}(t)|^2\} \ dt \Big] < C.
    \end{equation}
    Lemma \ref{lem: refinement} gives a tame strategy $\hatsigma$ associated to the partition $\pi'$ for which
    \begin{equation}\label{eq: ctp 5}
        \E_a\Big[ \int_0^T \{ |q^{\tsig}(t) - q^{\hatsigma}(t)|^2 + |u^{\tsig}(t) - u^{\hatsigma}(t)|^2\}\ dt \Big] \le \hat{\delta}^2
    \end{equation}
    for every $ a\in [-a_\mx, +a_\mx]$, hence
    \[
    \E\Big[ \int_0^T \{ |q^{\tsig}(t) - q^{\hatsigma}(t)|^2 +|u^{\tsig}(t) - u^{\hatsigma}(t)|^2\}\ dt\Big] \le \hat{\delta}^2.
    \]
    Together with \eqref{eq: ctp 3}, this implies that
    \[
    \E\Big[ \int_0^T\{ |q^{\hatsigma}(t)|^2 + |u^{\hatsigma}(t)|^2\} \ dt \Big] \le \E\Big[ \int_0^T \{ |q^{\tsig}(t)|^2 + |u^{\tsig}(t)|^2\}\ dt\Big] + C \hat{\delta}.
    \]
    Thanks to \eqref{eq: ctp 1} and \eqref{eq: ctp 2}, this in turn implies that 
    \begin{equation}\label{eq: ctp 6}
        \E\Big[ \int_0^T \{ |q^{\hatsigma}(t)|^2 + |u^{\hatsigma}(t)|^2\}\ dt\Big] \le \E\Big[ \int_0^T \{ |q^{\tsig'}(t)|^2 + |u^{\tsig'}(t)|^2\} \ dt\Big] + C \hat{\delta}.
    \end{equation}
    Recall that $\tsig'$ is the \textsc{allegedly optimal strategy} associated to the partition $\pi'$, while $\hatsigma$ is another tame strategy associated to $\pi'$.

    Consequently, by virtue of Theorem \ref{thm: bayesian strategies}, \eqref{eq: ctp 6} implies that
    \begin{equation}\label{eq: ctp 7}
        \E_a \Big[ \int_0^T \{ |q^{\hatsigma}(t) - q^{\hatsigma'}(t)|^2 + |u^{\hatsigma}(t) - u^{\hatsigma'}(t)|^2\}\ dt\Big] < \frac{\varepsilon}{2}
    \end{equation}
    for every $a \in [-a_\mx, +a_\mx]$.

    From \eqref{eq: ctp 5} and \eqref{eq: ctp 7} we have
    \[
    \E_a \Big[ \int_0^T \{ |q^{\tsig}(t) - q^{\tsig'}(t)|^2 + |u^{\tsig}(t) - u^{\tsig'}(t)|^2\} \ dt\Big] < \varepsilon,
    \]
    completing the proof of the lemma.
\end{proof}

\begin{cor}\label{cor: comparing}
    Given $\varepsilon>0$ there exists $\delta >0$ for which the following holds.

    Let
    \[
    \pi:\; 0 = t_0 = t_0 < t_1 < \dots < t_N = T
    \]
    and
    \[
    \pi':\; 0 = t_0' < t_1' < \dots < t_{N'}' = T
    \]
    be partitions, let
    \[
    \Delta t_\emx = \max_\nu (t_{\nu+1}-\tnu)\quad \text{and}\quad \Delta t_\emx' = \max_\nu (t_{\nu+1}' - \tnu'),
    \]
    and let $\tsig$, $\tsig'$ be the \textsc{allegedly optimal strategies} associated to $\pi$, $\pi'$, respectively.

    If $\Delta t_\emx,\Delta t_\emx'<\delta$, then
    \[
    \eE_a \Big[ \int_0^T \{ |q^{\tsig}(t) - q^{\tsig'}(t)|^2 + |u^{\tsig}(t) - u^{\tsig'}(t)|^2\} \ dt\Big] < \varepsilon
    \]
    for every $a \in [-a_\emx, +a_\emx]$.
\end{cor}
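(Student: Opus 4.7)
The plan is to reduce the general two-partition case to the refinement case already handled by Lemma \ref{lem: comparing}, by interposing a common refinement and applying the triangle inequality in $L^2$. First I would form the \emph{common refinement} $\pi''$ of $\pi$ and $\pi'$, i.e., the partition whose breakpoints are $\{t_\nu\}_{\nu=0}^N \cup \{t_\mu'\}_{\mu=0}^{N'}$, ordered increasingly. Clearly $\pi''$ is a refinement of both $\pi$ and $\pi'$, and its mesh satisfies $\Delta t_\mx'' \le \min(\Delta t_\mx, \Delta t_\mx') < \delta$. Let $\tsig''$ denote the \textsc{allegedly optimal strategy} associated to $\pi''$.

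Next, given the target accuracy $\varepsilon>0$, apply Lemma \ref{lem: comparing} with the value $\varepsilon/4$ in place of $\varepsilon$; this yields some $\delta_0 > 0$ (depending on $\varepsilon$ and the \textsc{boilerplate constants}), and I would set $\delta = \delta_0$. Since $\pi''$ refines $\pi$ and $\Delta t_\mx < \delta$, Lemma \ref{lem: comparing} gives
\[
\E_a\Big[ \int_0^T \{ |q^{\tsig}(t) - q^{\tsig''}(t)|^2 + |u^{\tsig}(t) - u^{\tsig''}(t)|^2\}\ dt\Big] < \varepsilon/4
\]
for every $a \in [-a_\mx, +a_\mx]$. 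The identical argument applied to the pair $(\pi', \pi'')$ gives
\[
\E_a\Big[ \int_0^T \{ |q^{\tsig'}(t) - q^{\tsig''}(t)|^2 + |u^{\tsig'}(t) - u^{\tsig''}(t)|^2\}\ dt\Big] < \varepsilon/4.
\]

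Finally, I would combine these two bounds via the $L^2$ triangle inequality. Viewing $(q^{\tsig}, u^{\tsig})$, $(q^{\tsig''}, u^{\tsig''})$, and $(q^{\tsig'}, u^{\tsig'})$ as elements of $L^2([0,T] \times \Omega, dt \otimes d\prob_a)$, the inequality $\|f-g\|_2^2 \le 2\|f-h\|_2^2 + 2\|h-g\|_2^2$ applied with $h = (q^{\tsig''}, u^{\tsig''})$ yields
\[
\E_a\Big[ \int_0^T \{ |q^{\tsig}(t) - q^{\tsig'}(t)|^2 + |u^{\tsig}(t) - u^{\tsig'}(t)|^2\}\ dt\Big] < 2\cdot\tfrac{\varepsilon}{4} + 2\cdot\tfrac{\varepsilon}{4} = \varepsilon,
\]
uniformly in $a \in [-a_\mx, +a_\mx]$, which is the conclusion of the corollary. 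There is no substantive obstacle here: the argument is a clean triangle-inequality interpolation through a common refinement, and all quantitative work has already been done in Lemma \ref{lem: comparing}.
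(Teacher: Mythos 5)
Your proposal is correct and matches the paper's own (one-line) proof exactly: pass through the common refinement of $\pi$ and $\pi'$, apply Lemma~\ref{lem: comparing} to each of the two refinement pairs, and combine by the $L^2$ triangle inequality.
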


\begin{proof}
    Compare both $\tsig$ and $\tsig'$ to the \textsc{allegedly optimal strategy} arising from a common refinement of $\pi$ and $\pi'$.
\end{proof}

\chapter{Decisions in Continuous Time}\label{chap: 4}

\section{Tame Strategies with Decisions in Continuous Time}\label{sec: ctns time}

Suppose that for each $n=1,2,3,\dots$ we are given a tame strategy $\sigma^n$ associated to a partition $\pi^n$ of the time interval $[0,T]$. Say $\pi^n$ is given by
\begin{equation}\label{eq: tsdct 0}
    0 = t_0^n < t_1^n < \dots < t_{N(n)}^n = T,
\end{equation}
and $\sigma^n$ is given by the collection of tame rules
\[
\sigma^n = (\sigma^n_{\tnu^n})_{\nu=0,1,\dots,N(n)-1},
\]
where each $\sigma_{\tnu^n}^n$ is a function of $\nu$ real variables $\barq_1, \dots, \barq_\nu$ and the coin flips $\vxi$.

We assume that
\begin{equation}\label{eq: tsdct 1}
    \Delta t_\mx^n := \max_\nu (t_{\nu+1}^n - \tnu^n)\rightarrow 0\;\text{as}\; n\rightarrow \infty,
\end{equation}
and that
\begin{equation}\label{eq: tsdct 2}
    |\sigma_{\tnu^n}^n (\barq_1,\dots,\barq_\nu,\vxi)| \le C_\tame^{\vsigma} [|\barq_\nu| + 1]\;\text{for all}\; \barq_1,\dots,\barq_\nu
\end{equation}
for all $n,\nu$, with $C_\tame^{\vsigma}$ independent of $n,\nu,\barq_1,\dots, \barq_\nu$.

Each $\sigma^n$ gives rise to control trajectories and particle trajectories $u^{\sigma^n,a}(t)$ and $q^{\sigma^n,a}(t)$, respectively, for $t \in [0,T]$, once we specify that $a_\tru = a$. Here, $a$ is an arbitrary real number, not necessarily belonging to $[-a_\mx, + a_\mx]$. We define the \emph{expected cost} of $\sigma^n$ by
\[
\excost(\sigma^n, a) = \E_a \Big[ \int_0^T \{ (u^{\sigma^n,a}(t))^2 + (q^{\sigma^n,a}(t))^2\}\ dt \Big].
\]
We say that $(\sigma^n)_{n\ge 1}$ is a \emph{Cauchy sequence of uniformly tame strategies} if \eqref{eq: tsdct 1} and \eqref{eq: tsdct 2} hold, and
\begin{equation}\label{eq: tsdct 3}
    \lim_{n,m\rightarrow \infty} \E_a \Big[ \int_0^T \{ |u^{\sigma^n,a}(t) - u^{\sigma^m,a}(t)|^2 + |q^{\sigma^n,a}(t) - q^{\sigma^m,a}(t)|^2\} \ dt\Big] = 0,
\end{equation}
uniformly for $a$ in any bounded subset of $\R$.

If $(\sigma^n)_{n\ge 1}$ and $(\hatsigma^n)_{n\ge 1}$ are two Cauchy sequences of uniformly tame strategies, then we call those sequences \emph{equivalent} if we have
\begin{equation}\label{eq: tsdct 4}
\lim_{n\rightarrow \infty} \E_a \Big[ \int_0^T \{ |u^{\sigma^n,a}(t) - u^{\hatsigma^n,a}(t)|^2 + |q^{\sigma^n,a}(t) - q^{\hatsigma^n,a}(t)|^2\}\ dt\Big]=0
\end{equation}
for each $a\in \R$.

If $(\sigma^n)_{n\ge 1}$ is a Cauchy sequence of uniformly tame strategies, then for each $a \in \R$ there exist random functions $u^a(t)$, $q^a(t)$ s.t.
\begin{equation}\label{eq: :)}
    \lim_{n\rightarrow \infty} \E_a \Big[ \int_0^T \{ |u^{\sigma^n,a}(t) - u^a(t)|^2 + |q^{\sigma^n,a}(t) - q^a(t)|^2\}\ dt \Big] = 0,
\end{equation}
uniformly for $a$ in any bounded subset of $\R$.

Moreover, if two Cauchy sequences $(\sigma^n)_{n \ge 1}$, $(\hatsigma^n)_{n \ge 1}$ are equivalent, then for each $a\in\R$, the $u^a$, $q^a$ defined by those sequences are equal, for a.e.\ $t \in [0,T]$, almost surely with respect to $\prob_a$.

We define a \emph{tame strategy} (for decisions in continuous time) to be an equivalence class of Cauchy sequences of uniformly tame strategies, with respect to the equivalence relation \eqref{eq: tsdct 4}. We denote a tame strategy by $\vsigma = [[(\sigma^n)_{n\ge 1}]]$, and we say that $C_\tame^{\vsigma}$ in \eqref{eq: tsdct 2} is a \emph{tame constant} for $\vsigma$. If $\vsigma = [[(\sigma^n)_{n \ge 1}]]$ is a tame strategy, then we write $q^{\vsigma, a}(t)$ and $u^{\vsigma,a}(t)$ to denote the functions $q^a(t)$, $u^a(t)$ in \eqref{eq: :)}.

If $\vsigma=[[(\sigma^n)_{n \ge 1}]]$ is a tame strategy, then we define
\begin{equation*}
    \excost(\vsigma, a) = \lim_{n \rightarrow \infty} \excost(\sigma^n,a).
\end{equation*}
This quantity is well-defined, since the limit exists and two equivalent Cauchy sequences produce the same expected cost. Immediately from our results on tame strategies associated to partitions of $[0,T]$, we have the following results, for any tame strategy $\vsigma = [[(\sigma^n)_{n \ge 1}]]$.

Note that
\[
\excost(\vsigma, a) = \E_a\Big[ \int_0^T \{ (q^{\vsigma,a}(t))^2 +(u^{\vsigma,a}(t))^2\} \ dt\Big]
\]
for any tame strategy $\vsigma$ and any $a \in \R$. For large enough $a>0$, we have
\begin{equation}\label{eq: tsdct 6}
    \excost(\vsigma,a) \ge cT^2 \exp(caT).
\end{equation}
Moreover, we will see that the function 
\begin{equation}\label{eq: tsdct 7}
\begin{split}
    \parbox{20em}{$[-a_\mx, + a_\mx] \ni a \mapsto \excost(\vsigma, a)$\\
    continues to a bounded analytic function on\\
    $(-a_\mx, +a_\mx) + i (-\hat{c}, \hat{c})$
    for some $\hat{c}>0$ determined by the \textsc{boilerplate constants} and the constant $C_\tame^{\vsigma}$.}
\end{split}
\end{equation}

Moreover, we may replace $a_\mx$ by any $\hat{a}_\mx > a_\mx$, and the assumptions of the preceding sections are still valid; the constants determined by the \textsc{boilerplate constants} will now depend on $\hat{a}_\mx$. In particular, \eqref{eq: tsdct 7} immediately implies that the function
\[
[-\hat{a}_\mx, +\hat{a}_\mx] \ni a\mapsto \excost(\vsigma,a)
\]
continues to a bounded analytic function on 
\[
(-\hat{a}_\mx, +\hat{a}_\mx) + i (-\hat{c}(\hat{a}_\mx), + \hat{c}(\hat{a}_\mx));
\]
the bound for that analytic function depends on $\hat{a}_\mx$. This implies that the function
\[
\R \ni a \mapsto \excost(\vsigma,a)
\]
continues to an analytic function on a neighborhood of the real axis in $\C$. In other words,
\begin{equation}\label{eq: tsdct 8}
    \excost(\vsigma,a)\;\text{is a real-analytic function of} \; a \in \R.
\end{equation}

Let us check our assertion \eqref{eq: tsdct 7}. Recall our previous result on analytic continuation:

Given $\varepsilon >0$, there exists $\delta >0$ s.t. whenever $\Delta t_\mx^n < \delta $ we have
\begin{equation}\label{eq: tsdct 9}
    \excost(\sigma^n,a) = I^n(a) + \error^n(a)
\end{equation}
on $(-a_\mx, + a_\mx)$, where
\begin{equation}\label{eq: tsdct 10}
    |\error^n(a)| < \varepsilon\;\text{for}\; a \in (-a_\mx,a_\mx),
\end{equation}
and
\begin{equation}\label{eq: tsdct 11}
    I^n(a)\;\text{is analytic on }\cR\equiv (-a_\mx, +a_\mx)+i(-\hat{c},\hat{c}),
\end{equation}
with $|I^n(a)| \le C$ everywhere on that rectangle. In particular, \eqref{eq: tsdct 9}, \eqref{eq: tsdct 10}, \eqref{eq: tsdct 11} hold for all large enough $n$, since $\Delta t_\mx^n \rightarrow 0$ as $n \rightarrow \infty$.

Since the $I^n(a)$ are uniformly bounded analytic functions on the rectangle $\cR$, we may pick out a subsequence $I^{n_j}(a)$ that converges to a bounded analytic function $I^\infty(a)$ uniformly on compact subsets of $\cR$. Applying \eqref{eq: tsdct 9} and \eqref{eq: tsdct 10} to $\sigma^{n_j}$, and passing to the limit as $j\rightarrow \infty$, we find that $\excost(\vsigma,a) = I^\infty(a)$ for all $a \in (-a_\mx, a_\mx)$, completing the proof of \eqref{eq: tsdct 7}.

Next, we construct an \textsc{allegedly optimal strategy} with \emph{decisions in continuous time}.

Fix a prior probability distribution $d\prob(a)$ on $[-a_\mx, + a_\mx]$. Given $\vsigma = [[(\sigma_n)_{n \ge 1}]]$ we define
\begin{align*}
&\excost(\sigma^n) = \int_{-a_\mx}^{a_\mx} \excost(\sigma^n,a)\ d\prob(a),\\
&\excost(\vsigma) = \int_{-a_\mx}^{a_\mx} \excost(\vsigma, a)\ d\prob(a).
\end{align*}
Let $\pi^n$ be a sequence of partitions of $[0,T]$ (as in \eqref{eq: tsdct 0}), with $\Delta t_\mx^n\rightarrow 0$ as $n \rightarrow \infty$.

For each $n$, let $\tsig^n$ denote the allegedly optimal strategy associated to the partition $\pi^n$. Corollary \ref{cor: comparing} tells us that $(\tsig^n)_{n \ge 1}$ satisfies condition \eqref{eq: tsdct 3}. Moreover, we have assumed condition \eqref{eq: tsdct 1}, and our PDE Assumption (see \eqref{eq: pde 6}) tells us that \eqref{eq: tsdct 2} holds. Thus, the $(\tsig^n)_{n \ge 1}$ form a Cauchy sequence of uniformly tame strategies.

We write $\vsigma_\op = [[(\tsig^n)_{n \ge 1}]]$ to denote the resulting continuous tame strategy. Note that $\vsigma_\op$ is independent of the sequence of partitions used to define it. We will show that it is optimal for Bayesian control.

Let $\vsigma = [[(\sigma^n)_{n\ge 1}]]$ be a tame strategy with tame constant $C_\tame^{\vsigma}$. Then 
\begin{align*}
    \excost(\vsigma) =& \int_{-a_\mx}^{a_\mx} \excost(\vsigma,a)\ d\prob(a)\\
    =& \lim_{n \rightarrow \infty} \int_{-a_\mx}^{a_\mx} \excost(\sigma^n,a)\ d\prob(a)\\
    = & \lim_{n \rightarrow\infty} \excost(\sigma^n).
\end{align*}
Here, the interchange of limit and integral is justified by the uniform convergence in $a$ that we assumed in our definition of Cauchy sequences.

Let $\varepsilon>0$ be given. For $n$ large enough, condition \eqref{eq: tsdct 1} for the $\sigma^n$ allows us to apply Theorem \ref{thm: bayesian strategies}; we conclude that
\[
\excost(\sigma^n) \ge \excost(\tsig^n) - \varepsilon
\]
for $n$ large enough. Here, $\tsig^n$ denotes the \textsc{allegedly optimal strategy} associated to the partition relevant to $\sigma^n$.

Passing to the limit as $n \rightarrow \infty$, we see that
\[
\excost([[(\sigma^n)_{n \ge 1}]]) \ge \excost(\vsigma_\op).
\]
Thus, indeed, $\vsigma_\op$ is the optimal tame Bayesian strategy; any competing tame Bayesian strategy has an expected cost at least that of $\vsigma_\op$.

Next, we compare the cost of $\vsigma_\op$ with that of another tame strategy $\vsigma = [[(\sigma_n)_{n \ge 1}]]$ conditioned on $a_\tru = a$ for a given $a\in [-a_\mx, a_\mx]$.

We will prove the following assertion:
\begin{equation}\label{eq: tsdct 12}
\begin{split}
    &\parbox{20em}{Given $\varepsilon>0$ there exists $\delta$, depending on the \textsc{boilerplate constants} and the constant $C_\tame^{\vsigma}$, such that if}\\
    &\qquad\qquad\excost(\vsigma) \le \excost(\vsigma_\op)+\delta,\\
    &\text{then, for every} \; a\in [-a_\mx, +a_\mx], \text{we have}\\
    & \qquad\qquad |\excost(\vsigma,a) - \excost(\vsigma_\op,a)| < \varepsilon.
\end{split}
\end{equation}
To prove \eqref{eq: tsdct 12}, we recall that $\vsigma_\op = [[(\tsig_n)_{n\ge 1}]]$ with $\tsig_n$ the allegedly optimal strategy associated to the partition associated to $\sigma_n$. Then
\begin{flalign}
    & \excost(\vsigma) = \lim_{n\rightarrow \infty} \excost(\sigma_n)&\label{eq: tsdct 13}\\
    & \excost(\vsigma_\op) = \lim_{n \rightarrow\infty} \excost(\tsig_n)\label{eq: tsdct 14}\\
    &\excost(\vsigma,a) = \lim_{n\rightarrow \infty} \excost(\sigma_n,a)\label{eq: tsdct 15}\\
    &\excost(\vsigma_\op,a)=\lim_{n\rightarrow\infty} \excost(\tsig_n,a).\label{eq: tsdct 16}
\end{flalign}
If $\excost(\vsigma) \le \excost(\vsigma_\op) +  \delta$, then by \eqref{eq: tsdct 13}, \eqref{eq: tsdct 14} we have 
\[
\excost(\sigma_n) \le \excost(\tsig_n) + 2\delta\;\text{for large enough}\; n.
\]
Also, for large enough $n$, the partition of $[0,T]$ associated to $\sigma_n$, $\tsig_n$ has mesh less than $2\delta$. It therefore follows from Theorem \ref{thm: bayesian strategies} that
\[
|\excost(\sigma_n,a) - \excost(\tsig_n,a)| \le \frac{\varepsilon}{2}
\]
for all $a \in [-a_\mx, + a_\mx]$ and all large enough $n$.

From \eqref{eq: tsdct 15}, \eqref{eq: tsdct 16} we now see that
\[
|\excost(\vsigma,a) - \excost(\vsigma_\op,a)| \le \varepsilon
\]
for all $ a \in [-a_\mx, + a_\mx]$, completing the proof of \eqref{eq: tsdct 12}.

\section{Not-Necessarily-Tame Strategies}

In this section we drop the restriction to tame strategies.

Let $\vsigma_1, \vsigma_2, \vsigma_3, \dots$ be tame strategies in the sense of Section \ref{sec: ctns time}. We do \emph{not} assume the $\vsigma_n$ have a tame constant independent of $n$.

We say that the sequence $(\vsigma_n)_{n\ge 1}$ is \emph{Cauchy} if
\[
\lim_{m,n\rightarrow \infty} \E_a \Big[ \int_0^T \{ |q^{\vsigma_n,a}(t) - q^{\vsigma_m,a}(t)|^2 + |u^{\vsigma_n,a}(t) - u^{\vsigma_m,a}(t)|^2\}\ dt\Big] =0,
\]
uniformly for $a$ in any bounded subset of $\R$. Two Cauchy sequences $(\vsigma_n)_{n\ge 1}$ and $(\vsigma_n^\#)_{n \ge 1}$ will be called \emph{equivalent} if
\[
\lim_{n \rightarrow \infty} \E_a \Big[ \int_0^T \{ |q^{\vsigma_n,a}(t) - q^{\vsigma^\#_n,a}(t)|^2 +|u^{\vsigma_n,a}(t) - u^{\vsigma^\#_n,a}(t)|^2\}\ dt\Big]=0
\]
for each $a \in \R$.

To a Cauchy sequence $(\vsigma_n)_{n \ge 1}$ as above, we associate the trajectories $q^a(t)$, $u^a(t)$ for which we have
\[
\lim_{n\rightarrow \infty} \E_a \Big[ \int_0^T \{ | q^{\vsigma_n,a}(t) - q^a(t)|^2 + |u^{\vsigma_n,a}(t) - u^a(t)|^2\} \ dt \Big] = 0.
\]
Two equivalent Cauchy sequences yield the same $q^a$ and $u^a$. We define a \emph{strategy} to be an equivalence class of Cauchy sequences under the above equivalence relation. We denote strategies by $\vvsigma = [[(\vsigma_n)_{n \ge 1}]]$, and we write $q^{\vvsigma, a}(t)$, $u^{\vvsigma, a}(t)$, respectively, to denote the above functions $q^a(t)$, $u^a(t)$. We define
\begin{equation*}
\begin{split}
\excost(\vvsigma,a) &= \lim_{n\rightarrow \infty}\excost(\sigma_n,a)\\
& = \E_a \Big[ \int_0^T\{(q^{\vvsigma,a}(t))^2 +  (u^{\vvsigma,a}(t))^2\}\ dt\Big].
\end{split}
\end{equation*}
These limits converge uniformly for $a$ in any bounded subset of $\R$. Note that every tame strategy $\vsigma$ may also be regarded as a strategy as defined just above, namely the equivalence class associated with the constant sequence $\vsigma, \vsigma, \vsigma, \dots$.

Suppose we are given a prior probability distribution $d\prior$ on the interval $[-a_\mx, a_\mx]$. We define
\[
\excost(\vvsigma, d\prior) = \int_{-a_\mx}^{a_\mx} \excost(\vvsigma,a) \ d\prior(a).
\]
If $\vvsigma = [[(\vsigma_n)_{n \ge 1}]]$, then
\begin{align*}
\excost(\vvsigma, d\prior) &= \int_{-a_\mx}^{a_\mx} \lim_{n \rightarrow \infty} \excost(\vsigma_n,a)\ d\prior(a)\\
& = \lim_{n\rightarrow \infty} \int_{-a_\mx}^{a_\mx} \excost(\vsigma_n,a) \ d\prior(a)\\
& = \lim_{n \rightarrow\infty}\excost(\vsigma_n, d\prior);
\end{align*}
the interchange of limit and integral is justified by the uniform convergence noted above.

Now let $\vsigma_\bayes(d\prior)$ be the optimal Bayesian strategy for $d\prior$, given in Section \ref{sec: ctns time}. For any tame strategy $\vsigma$, we have seen that
\[
\excost(\vsigma, d\prior) \ge \excost(\vsigma_\bayes(d\prior),d\prior).
\]
In particular, if $\vvsigma = [[(\vsigma_n)_{n \ge 1}]]$, then
\[
\excost(\vsigma_n,d\prior) \ge \excost(\vsigma_\bayes(d\prior),d\prior),
\]
hence
\begin{align*}
\excost(\vvsigma, d\prior) &= \lim_{n\rightarrow\infty}\excost(\vsigma_n,d\prior)\\
& \ge \excost(\vsigma_\bayes(d\prior),d\prior).
\end{align*}
So we see that the strategy $\vsigma_\bayes(d\prior)$ has expected cost less than or equal to that of any competing strategy $\vvsigma$.

From now on, we drop the arrows from our notation. When we mention a strategy, we will make clear whether it is a general strategy, a tame strategy, or a tame strategy associated to a partition of $[0,T]$.

Combining the results of this Chapter with Theorem \ref{thm: bayesian strategies}, we deduce Theorems \ref{thm: nintro 1} and \ref{thm: nintro 2} from the introduction.

\chapter{Agnostic Control}\label{chap: 5}

Throughout this chapter the random variable $a_\tru \in [-a_\mx, a_\mx]$ is unknown and we \emph{do not} assume that we have a prior belief about $a_\tru$.

\section{Mixed Strategies}\label{sec: mixed}

Let
\[
\phi: (\xi_1, \xi_2, \xi_3, \dots) \mapsto (\xi_1, \xi_3, \xi_5, \dots)
\]
be the map from $\{0,1\}^\N \rightarrow \{0,1\}^\N$ that erases every other bit.

Fix a partition
\begin{equation}\label{eq: con 1}
0 = t_0 < t_1< \dots < t_N = T,
\end{equation}
and let $\sigma = (\sigma_{\tnu})_{0 \le \nu < N}$ be a tame strategy associated to the partition \eqref{eq: con 1}. Since $\sigma$ is tame, we have
\begin{equation}\label{eq: con 2}
    |\usig(\tnu)| \le C_\tame^\sigma \cdot [|\qsig(\tnu)| + 1] \;\text{for each}\; \nu.
\end{equation}
We can pass from $\sigma$ to the morally equivalent strategy 
\[
\sigma^\# = (\sigma^\#_{\tnu})_{0\le\nu < N}
\]
by setting
\[
\sigma_{\tnu}^\#(q_1, \dots, q_\nu, \vxi) = \sigma_{\tnu}(q_1, \dots, q_\nu, \phi(\vxi)).
\]
Thus, $\sigma^\#$ does precisely what $\sigma$ does, except that whereas $\sigma$ makes use of the bits $\xi_1, \xi_2, \xi_3,\dots$, $\sigma^\#$ makes use only of the bits $\xi_1, \xi_3, \xi_5,\dots$.

Now let $\sigma^0 = (\sigma_{\tnu}^0)_{0 \le \nu < N}$ and $\sigma^1 = (\sigma_{\tnu}^1)_{0 \le \nu < N}$ be two tame strategies, both associated to the partition \eqref{eq: con 1}, and let $\theta \in [0,1]$ be given. We define a \emph{mixed strategy} $\sigma^\theta$, as follows. First, we pass from $\sigma^0$ and $\sigma^1$ to the strategies $\sigma^{0\#}$, $\sigma^{1\#}$ as above. These strategies make use of the bits $\xi_1, \xi_3, \xi_5, \dots$ but ignore the bits $\xi_2, \xi_4, \xi_6,\dots$. We regard $\xi_2, \xi_4, \xi_6,\dots$ as the binary digits of a random variable $Y$ taking values in $[0,1]$. If $Y \le \theta$ then we play the strategy $\sigma^{1\#}$ at all times $\tnu$. If instead $Y>\theta$, then we play the strategy $\sigma^{0\#}$ at all times $\tnu$.

Evidently,
\[
\excost (\sigma^{0\#},a) = \excost(\sigma^0,a)
\]
and
\[
\excost(\sigma^{1\#},a) = \excost(\sigma^1,a)
\]
for all $a \in [-a_\mx, a_\mx]$; and
\begin{equation}\label{eq: con 3}
    \excost(\sigma^\theta,a) = \theta \excost(\sigma^1,a) + (1-\theta)\excost(\sigma^0,a)
\end{equation}
for all $a \in [-a_\mx, a_\mx]$, since $Y \le \theta$ with probability $\theta$. Note that $\sigma^\theta$ is a tame strategy, with
\begin{equation}\label{eq: con 4}
    C_\tame^{\sigma^\theta} \le \max\{ C_\tame^{\sigma^0}, C_\tame^{\sigma^1}\}.
\end{equation}
We have defined the intermediate strategy $\sigma^\theta$ when $\sigma^0$ and $\sigma^1$ are tame strategies associated to the same partition \eqref{eq: con 1} of $[0,T]$.

We next extend our definition to tame strategies with decisions in continuous time. Fix a sequence $\pi_1, \pi_2, \dots$ of partitions of $[0,T]$, with $\text{mesh}(\pi_i) \rightarrow 0$ as $i \rightarrow \infty$. Let $\theta \in [0,1]$ be given. Let $\sigma_1^0, \sigma_2^0,\sigma_3^0, \dots$ and $\sigma_1^1, \sigma_2^1, \sigma_3^1,\dots$ be tame strategies, where, for each $i$, the strategies $\sigma_i^0$ and $\sigma_i^1$ are associated to the partition $\pi_i$ of $[0,T].$

Suppose that $(\sigma_i^0)_{i=1,2,\dots}$ and $(\sigma_i^1)_{i=1,2,\dots}$ are Cauchy sequences, in the sense of Section \ref{sec: ctns time}. Thus, $\vsigma^0 = [[(\sigma_i^0)_{i\ge 1}]]$ and $\vsigma^1 = [[(\sigma_i^1)_{i\ge 1}]]$ are tame strategies in the sense of that section. For each $i$, we pass from $\sigma_i^0$, $\sigma_i^1$ to the mixed strategy $\sigma_i^\theta$ associated to the partition $\pi_i$ of $[0,T]$. Then $\sigma_1^\theta$, $\sigma_2^\theta, \dots$ is again a Cauchy sequence in the sense of Section \ref{sec: ctns time}. We write $\vsigma^\theta$ to denote the tame strategy $[[(\sigma_i^\theta)_{i\ge 1}]]$.
Then we have
\[
\excost(\vsigma^\theta,a) = \theta \excost(\vsigma^1, a) + (1-\theta)\excost(\vsigma^0,a)
\]
for all $ a \in [-a_\mx, a_\mx]$, and
\[
C_\tame^{\sigma_i^\theta} \le \max\{ C_\tame^{\sigma_i^0} , C_\tame^{\sigma_i^1}\},
\]
as follows easily from \eqref{eq: con 3} and \eqref{eq: con 4}.

Note that we have restricted attention to tame strategies $[[(\sigma_i^0)_{i\ge 1}]]$ and $[[(\sigma_i^1)_{i\ge 1}]]$ in which, for each $i$, $\sigma_i^0$ and $\sigma_i^1$ are associated to the same partition of $[0,T]$. It would be natural to dispense with this restriction, but for our purposes that won't be necessary.

\section{Efficient Strategies are Bayesian}
In this section we deal with tame strategies in the sense of Section \ref{sec: ctns time} of Chapter \ref{chap: 4}.

Suppose we are given a class of strategies, which we call the \textsc{legal strategies}.

Assume that given two \textsc{legal strategies} $\sigma^0$ and $\sigma^1$, and given $\theta \in [0,1]$, there exists a \textsc{legal strategy} $\sigma^\theta$ for which we have
\begin{equation}\label{eq: esab 1}
    \excost(\sigma^\theta,a) = (1-\theta)\excost(\sigma^0,a) + \theta \excost(\sigma^1,a)
\end{equation}
for all $a \in [-a_\mx, a_\mx]$.

For example, suppose we fix a constant $\hat{C}$ and a sequence of partitions $(\pi_i)_{i\ge 1}$ of $[0,T]$, with $\text{mesh}(\pi_i)\rightarrow 0$ as $i \rightarrow \infty$.

Then the class of all tame strategies $[[(\sigma_i)_{i\ge 1}]]$ with $\sigma_i$ associated to $\pi_i$ and $C_\tame^{\sigma_i} \le \hat{C}$ satisfies \eqref{eq: esab 1}, thanks to our discussion of mixed strategies in Section \ref{sec: mixed}.

Fix a finite set $A \subset [-a_\mx, a_\mx]$, and let $\varepsilon \ge 0$ be given. (Note that we allow $\varepsilon=0$.)

A \textsc{legal strategy} $\sigma$ will be said to be \emph{efficient with tolerance $\varepsilon$} if there does not exist another \textsc{legal strategy} $\sigma'$ such that
\begin{equation}\label{eq: esab 2}
\excost(\sigma', a) < \excost(\sigma, a) - \varepsilon
\end{equation}
for all $ a \in A$. This notion depends on the set $A$ and the class of \textsc{legal strategies}.

In this section we use a simple convexity argument to prove the following result.

\begin{lem}[Efficient Strategies are Bayesian]\label{lem: esab}
    Fix $A, \varepsilon$ and a class of \textsc{legal strategies} as above, and let $\hat{\sigma}$ be a \textsc{legal strategy}. Suppose $\hat{\sigma}$ is efficient with tolerance $\varepsilon$. Then there exists a prior probability distribution $(p(a))_{a \in A}$ such that for all other \textsc{legal strategies} $\sigma'$ we have
    \begin{equation}\label{eq: esab 3}
        \sum_{a \in A} p(a) \excost(\hat{\sigma},a) \le \sum_{a \in A} p(a) \excost(\sigma',a) + \varepsilon.
    \end{equation}
\end{lem}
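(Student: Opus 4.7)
The plan is to interpret the efficiency of $\hat{\sigma}$ as a geometric disjointness condition in $\R^A$ and then obtain $(p(a))_{a \in A}$ from a hyperplane separation argument; the convexity hypothesis \eqref{eq: esab 1} is precisely what makes this work.

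First, to each \textsc{legal strategy} $\sigma$ associate its cost vector $v(\sigma) = (\excost(\sigma,a))_{a \in A} \in \R^A$, and set
\[
K = \{ v(\sigma) : \sigma \text{ a \textsc{legal strategy}}\} \subset \R^A.
\]
Hypothesis \eqref{eq: esab 1} (with $\sigma^\theta$ legal for every $\theta \in [0,1]$) says exactly that $K$ is convex. Let $v^* = v(\hat{\sigma}) \in K$ and consider the translated convex set
\[
K' = K - v^* + \varepsilon \mathbf{1},
\]
where $\mathbf{1} \in \R^A$ is the all-ones vector. Let $N = \{ x \in \R^A : x_a < 0 \text{ for all } a \in A\}$ be the open negative orthant. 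The efficiency of $\hat{\sigma}$ with tolerance $\varepsilon$ is precisely the assertion that no $\sigma'$ achieves $\excost(\sigma',a) < \excost(\hat{\sigma},a) - \varepsilon$ simultaneously for every $a \in A$, i.e.\ $K' \cap N = \emptyset$.

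Second, I apply the separating hyperplane theorem to the disjoint convex sets $K'$ and $N$, where $N$ is open. This yields a nonzero $p \in \R^A$ and a scalar $c$ with
\[
p \cdot x \ge c \text{ for all } x \in K', \qquad p \cdot y \le c \text{ for all } y \in N.
\]
Since $N$ is a cone containing $-\lambda e_a$ for every $\lambda > 0$ and each coordinate direction $e_a$, the second inequality forces $-\lambda p_a \le c$ for all $\lambda > 0$, so $p_a \ge 0$ for every $a \in A$; letting $y \to 0 \in \overline{N}$ also gives $c \ge 0$, hence in particular $\sup_{y \in N} p \cdot y = 0$ and $\inf_{x \in K'} p \cdot x \ge 0$. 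Unwinding the definition of $K'$, this is
\[
p \cdot v(\sigma') \ge p \cdot v^* - \varepsilon \sum_{a \in A} p(a) \quad \text{for all legal } \sigma'.
\]
Since $p \neq 0$ and $p \ge 0$ coordinatewise, we may normalize so that $\sum_{a \in A} p(a) = 1$, obtaining a probability distribution on $A$ satisfying
\[
\sum_{a \in A} p(a)\, \excost(\hat{\sigma},a) \le \sum_{a \in A} p(a)\, \excost(\sigma',a) + \varepsilon,
\]
which is \eqref{eq: esab 3}.

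The only step with any subtlety is the separation step: because I make no closedness or compactness assumption on $K$, I rely on the version of the separating hyperplane theorem that applies to a convex set and a disjoint \emph{open} convex set, which is exactly the setting here since $N$ is open. Everything else is bookkeeping: identifying the efficiency hypothesis with $K' \cap N = \emptyset$, and reading off nonnegativity and the required inequality from the separating functional.
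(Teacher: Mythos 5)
Your proof is correct and is essentially the same argument as the paper's: both encode the efficiency hypothesis as disjointness of the (convex, by \eqref{eq: esab 1}) set of cost vectors from an open translated negative orthant, invoke hyperplane separation, read off nonnegativity of the separating functional from the unbounded negative coordinate directions, and normalize to get a probability distribution. The only cosmetic difference is that you translate $K$ to put the orthant at the origin and obtain the constant $c \ge 0$ by letting $y \to 0$ in $N$, whereas the paper leaves $\cK$ fixed, translates the orthant to $\cK_-$, and closes the argument by taking $\delta \to 0$ in $v_a = \excost(\hat{\sigma},a) - \varepsilon - \delta$; these are the same step in different coordinates.
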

\begin{proof}
    For any strategy $\sigma$, define the \emph{cost vector} $\vexcost(\sigma)$ to be the vector $(\excost(\sigma,a))_{a \in A} \in \R^A$. Thanks to \eqref{eq: esab 1}, the set $\cK$ of all cost vectors of legal strategies is convex. Define another convex set $\cK_- \subset \R^A$ to consist of all vectors $(v_a)_{a \in A}$ such that $v_a < \excost(\hat{\sigma}, a) - \varepsilon$ for all $a \in A$. Because $\hat{\sigma}$ is efficient with tolerance $\varepsilon$, the convex sets $\cK$ and $\cK_-$ are disjoint. Hence there exists a nonzero linear functional $\lambda: \R^A \rightarrow \R$ such that $\lambda(v) \le \lambda(v^*)$ whenever $v \in \cK_-$ and $v^* \in \cK$. The functional $\lambda$ has the form
    \[
    \lambda((v_a)_{a \in A}) = \sum_{a \in A} p(a) v_a,
    \]
    with at least one nonzero coefficient $p(a_0)$. By definition of $\cK_-$, $\cK$, and $\lambda$, the following holds.
\begin{equation}\label{eq: esab 4}
\begin{split}
    &\parbox{20em}{Let $\sigma'$ be a \textsc{legal strategy}, and let $(v_a)_{a \in A}$ satisfy}\\
    &v_a < \excost(\hat{\sigma},a)-\varepsilon \text{ for all } a \in A. \\
    &\text{Then}\\
    &\sum_{a \in A} p(a) v_a \le \sum_{a \in A} p(a) \excost(\sigma',a).
\end{split}
\end{equation}
We claim that the $p(a)$ are all nonnegative. Indeed, suppose $p(\hat{a}) < 0$ for some $\hat{a}\in A$. We take $\sigma' = \hat{\sigma}$, $v_a = \excost(\hat{\sigma}, a) - \varepsilon - 1$ for $ a\in A \backslash \{\hat{a}\}$, and $v_{\hat{a}} = - \cV$ for some large positive $\cV$. If $\cV$ is large enough, then the above $\sigma'$, $(v_a)_{a \in A}$ violate \eqref{eq: esab 4}. So, as claimed, the $p(a)$ are all nonnegative.

Since also the $p(a)$ are not all zero, we may multiply the $p(a)$ by a positive normalizing constant to preserve \eqref{eq: esab 4} and achieve also 
\begin{equation}\label{eq: esab 5}
    \sum_{a \in A} p(a) = 1.
\end{equation}
Thus, $(p(a))_{a \in A}$ is a probability distribution.

Now let $\delta>0$, and let $v_a = \excost(\hat{\sigma},a) - \varepsilon - \delta$ for $a \in A$. Thanks to \eqref{eq: esab 4}, \eqref{eq: esab 5} we have
\[
\sum_{a \in A} p(a) \excost(\hat{\sigma}, a) - \varepsilon - \delta \le \sum_{a \in A} p(a) \excost(\sigma', a)
\]
for every legal strategy $\sigma'$. 

Since $\delta >0$ may be taken arbitrarily small, inequality \eqref{eq: esab 3} follows, completing the proof of the Lemma.
\end{proof}

\section{Regret}\label{sec: regret}

We fix continuous functions $\rho_0, \rho_1: \R \rightarrow \R$. We suppose that
\[
|\rho_0(a) | \le \tilde{C}\;\text{and}\; 0 < \tilde{c} < \rho_1(a) < \tilde{C}\;\text{for}\; a \in [-a_\mx, a_\mx].
\]
For any strategy $\sigma$ and any $a \in\R$, we define
\[
\regret(\sigma, a) = \rho_0(a) + \rho_1(a) \cdot \excost(\sigma, a).
\]
We add the above $\tilde{c}$ and $\tilde{C}$ to our list of \textsc{boilerplate constants}. As usual, $c$, $C$, $C'$, etc.\ denote constants depending only on the \textsc{boilerplate constants}. These symbols may denote different constants in different occurrences. The above notion of regret includes as special cases our earlier notions of additive, multiplicative, and hybrid regret.

\section{The Main Lemma on Agnostic Control}
Suppose our PDE Assumption holds (with the same constants $K$, $m_0$, $C^\op_\tame$) for every prior probability distribution on a given interval $[-a_\mx, a_\mx]$. Under that assumption (see Section \ref{sec: pde}), we prove the following result.

\begin{lem}\label{lem: agnostic control}
    Let $\varepsilon>0$, and let $A \subset[-a_\emx, a_\emx]$ be finite. Then there exist a subset $A_0 \subset A$, a probability measure $\mu$, and a strategy $\tilde{\sigma}$, with the following properties.
    \begin{enumerate}[label={\emph{(\arabic*)}}]
        \item The measure $\mu$ is concentrated on $A_0$.
        \item $\tilde{\sigma}$ is the optimal Bayesian strategy for the prior $\mu$.
        \item For $a \in A$ and $a_0 \in A_0$, we have
        \[
        \regret(\tilde{\sigma}, a) \le \regret(\tilde{\sigma}, a_0) + \varepsilon.
        \]
        In particular,
        \item
        $|\regret(\tilde{\sigma}, a_0) - \regret(\tilde{\sigma}, a_0')| \le \varepsilon$ for $a_0, a_0' \in A_0$.
    \end{enumerate}
\end{lem}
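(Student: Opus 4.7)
I would follow the hint in the introduction and argue by induction on $|A|$. The base case $|A|=1$ is immediate: for $A = \{a^\dagger\}$, take $A_0 = A$, $\mu = \delta_{a^\dagger}$, and $\tilde\sigma = \sigma_\op(a^\dagger) = \sigma_\bayes(\delta_{a^\dagger})$; all four conclusions hold trivially.

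For the inductive step, the natural object is the Bayesian value function on the simplex $\mathcal{P}(A)$ of probability measures on the finite set $A$:
\[
B(\mu) \; := \; \inf_\sigma \int_A \regret(\sigma,a)\, d\mu(a).
\]
Using $\regret(\sigma,a) = \rho_0(a) + \rho_1(a)\excost(\sigma,a)$ with $\rho_1>0$, minimizing over $\sigma$ reduces to a Bayesian \emph{cost} minimization for the re-weighted prior $\hat\mu := \rho_1\mu/\int\rho_1\,d\mu$; by the theory of Chapters \ref{chap: 3}--\ref{chap: 4} the infimum is attained by $\sigma_\bayes(\hat\mu)$, a tame strategy with tame constant $C^{\op}_\tame$. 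The function $B$ is concave (a pointwise infimum of affine functions of $\mu$) and, using Corollary \ref{cor: comparing} together with continuous dependence of $\bar a(\zo,\zt)$ on the prior, continuous on $\mathcal{P}(A)$; since $\mathcal{P}(A)$ is a compact finite-dimensional simplex, $B$ attains its maximum at some $\mu^*$. Set $\tilde\sigma := \sigma_\bayes(\hat\mu^*)$ and $A_0 := \mathrm{supp}(\mu^*)$. Properties (1) and (2) then hold by construction.

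The crux is a standard envelope / KKT argument at $\mu^*$. Since $\tilde\sigma$ is feasible for every prior, the map $\nu \mapsto \int\regret(\tilde\sigma,a)\,d\nu(a)$ is an affine upper bound for $B$ on $\mathcal{P}(A)$ that agrees with $B$ at $\mu^*$; invoking the essential uniqueness of the Bayesian minimizer supplied by Quantitative Uniqueness (Theorem \ref{thm: nintro 2}), the envelope theorem identifies the one-sided directional derivative of $B$ at $\mu^*$ along $\nu - \mu^*$ with $\int\regret(\tilde\sigma,a)\,d(\nu-\mu^*)(a)$. Maximality of $B$ at $\mu^*$ on the simplex forces this derivative to be non-positive for every feasible $\nu$, so $\int\regret(\tilde\sigma,a)\,d\nu(a) \le B(\mu^*)$ for every $\nu \in \mathcal{P}(A)$; specializing $\nu = \delta_a$ gives $\regret(\tilde\sigma,a) \le B(\mu^*)$ for all $a\in A$. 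The identity $\int\regret(\tilde\sigma,a)\,d\mu^*(a) = B(\mu^*)$ combined with $\regret(\tilde\sigma,\cdot) \le B(\mu^*)$ on $A$ then forces equality on $A_0 = \mathrm{supp}(\mu^*)$. Properties (3)--(4) follow, in fact with the stronger constant $0$ in place of $\varepsilon$.

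The main obstacle, and the reason for the induction, is the degeneracy that arises when the maximizer $\mu^*$ lies on the boundary of $\mathcal{P}(A)$: the envelope step is then only valid along directions remaining in the face $\mathcal{P}(A')$ indexed by $A' := \mathrm{supp}(\mu^*) \subsetneq A$, and the continuity and uniqueness inputs above may degrade across this boundary. In that case I would apply the inductive hypothesis to $A'$ to obtain a triple $(\tilde\sigma, A_0, \mu)$ satisfying (1)--(4) for $A'$, and then use Lemma \ref{lem: esab} together with the mixing construction of Section \ref{sec: mixed} to rule out the existence of any $a \in A\setminus A'$ with $\regret(\tilde\sigma,a) > \regret(\tilde\sigma,a_0) + \varepsilon$ for $a_0 \in A_0$: such an $a$ would allow a small mixture of $\tilde\sigma$ with a Bayesian strategy concentrated near $a$ to strictly decrease $\max_{a\in A}\regret(\cdot,a)$ below $B(\mu^*)$, contradicting the choice of $\mu^*$. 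The $\varepsilon$ slack in the statement is exactly what absorbs the approximation errors from this limiting / mixing argument.
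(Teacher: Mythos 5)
Your induction on $\#A$ and your use of Quantitative Uniqueness and mixing are the same ingredients the paper uses, and your reduction of regret-minimization to cost-minimization under the re-weighted prior $\hat\mu = \rho_1\mu/\int\rho_1\,d\mu$ is a correct observation that the paper exploits implicitly. But the central device of your inductive step---the envelope/KKT argument on the concave value function $B(\mu) = \inf_\sigma\int\regret(\sigma,a)\,d\mu(a)$---is genuinely different from what the paper does, and it has a gap that is not cosmetic.

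The gap is this. At a maximizer $\mu^*$ of the concave function $B$ on the simplex, the affine function $L(\nu) = \int\regret(\tilde\sigma,a)\,d\nu(a)$ is indeed a supergradient of $B$ at $\mu^*$ (it lies above $B$ and touches at $\mu^*$). But for a concave function a supergradient $L$ only satisfies $D_{\nu-\mu^*}B(\mu^*) \le L(\nu) - L(\mu^*)$; maximality gives $D_{\nu-\mu^*}B(\mu^*) \le 0$, and these two inequalities point the same way and do not combine to give $L(\nu) \le L(\mu^*)$. You need the reverse inequality, i.e., equality $D_{\nu-\mu^*}B(\mu^*) = L(\nu) - L(\mu^*)$, which by Danskin's theorem requires the minimizer $\tilde\sigma$ to be (essentially) unique---in other words, $B$ must be differentiable at $\mu^*$ and $L$ must be \emph{the} supergradient, not just \emph{a} supergradient. (Consider $B(x)=\min(x,1-x)$ on $[0,1]$: at the maximizer $x=1/2$, the supergradient $L(x)=x$ lies above $B$ and touches at $1/2$, yet $L(1) > L(1/2)$.) Quantitative Uniqueness does not deliver this: it is a $\delta$-$\varepsilon$ statement about trajectory closeness, not exact uniqueness, so the envelope identity holds only up to errors that must be tracked. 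In particular your claim that conclusions (3)--(4) hold ``with the stronger constant $0$ in place of $\varepsilon$'' is not obtainable by this route, and in fact contradicts your own closing remark that the $\varepsilon$ slack is needed to absorb the approximation errors.

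The paper sidesteps the value function entirely. Instead of differentiating $B$, it takes a nearly-minimax strategy $\sigma_*$, observes that $\sigma_*$ is approximately efficient (no strategy beats it on all of $A$ by more than $O(\varepsilon_7)$), and applies the separating-hyperplane Lemma \ref{lem: esab} directly to the convex set of cost vectors in $\R^A$ to extract a prior $\mu$ for which $\sigma_*$ is nearly Bayesian-optimal. Quantitative Uniqueness is then used only to replace $\sigma_*$ by the exact Bayesian strategy $\tilde\sigma$ at the level of per-$a$ expected costs. The induction is always invoked (not only in a ``degenerate boundary'' case), and the mixing argument appears as a concrete contradiction showing $\mr(\tilde\sigma_0,A_0)\approx\mr(\tilde\sigma,A)$. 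The careful cascade $\varepsilon_7 \ll \varepsilon_6 \ll \cdots \ll \varepsilon_0 = \varepsilon$ is exactly the bookkeeping your envelope argument would have to reproduce to be rigorous. Your fall-back paragraph gestures at the right ideas (induction on the support, mixing to rule out outliers in $A\setminus A'$), but it is sketched at the level where the actual work begins; and you do not restrict the infimum in $B$ to a uniformly tame class of strategies, which is needed for both the continuity of $B$ and the applicability of Quantitative Uniqueness.
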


For the proof of the above lemma, we first fix a class of strategies that we call OK. For $i = 1,2,3,\dots,$ let $\sigma_i$ be a tame strategy arising from the partition $[0,T]\cap 2^{-i}T\Z$ of the time interval $[0,T]$. Suppose that the $\sigma_i$ form a Cauchy sequence in the sense of Section \ref{sec: ctns time}, and that the tame constants $C_\tame^{\sigma_i}$ are all less than or equal to the constant $C_\tame^\op$ in our PDE Assumption (see Section \ref{sec: pde}, inequality \eqref{eq: pde 6}). Then the strategy $\vsigma = [[(\sigma_i)_{i = 1,2,\dots}]]$ will be called \emph{OK}.

We make two crucial observations regarding OK strategies:
    \begin{enumerate}[label={(\Roman*)}]
        \item For any prior $\mu$ on $[-a_\mx, a_\mx]$, the optimal Bayesian strategy $\tilde{\sigma}$ is OK.\label{eq: ac 5}
        \item If $\sigma,\sigma'$ are OK strategies, then so is the mixed strategy that plays strategy $\sigma$ with probability $\theta$ and strategy $\sigma'$ with probability $(1-\theta)$ (for $0 \le \theta \le 1$).\label{eq: ac 6}
    \end{enumerate}
If $A$ is any finite subset of $[-a_\mx, a_\mx]$ and $\sigma$ is any strategy, we write $\mr(\sigma, A)$ to denote the quantity $\max\{ \regret(\sigma, a): a \in A\}$. For any strategy $\sigma$ and any prior $\mu$ on a finite set $A$ we write
\[
\excost(\sigma,\mu) = \sum_{a \in A} \excost(\sigma, a) \mu(a).
\]
We now begin the proof of Lemma \ref{lem: agnostic control}.

\begin{proof}
    We proceed by induction on $\#A$, the number of elements of $A$.

    \underline{In the base case}, $\#A = 1$, i.e., $A = \{a_0\}$ for some $a_0 \in [-a_\mx, a_\mx]$. We take $A_0 = A$, $\mu = \text{point mass at }a_0$, $\tilde{\sigma}= \text{optimal known-}a\text{ strategy for }a=a_0$. The conclusions of the lemma are obvious.
    
    \underline{For the induction step}, we fix $k \ge 2$ and assume the 

    \textsc{induction hypothesis}: Our lemma holds whenever $\#A < k$.

    We fix $A$ with $\#A = k$, and prove the Lemma for $A$.

    Let $\varepsilon>0$ be given. We pick $\varepsilon_0, \varepsilon_1, \dots, \varepsilon_7>0$, with $\varepsilon_0 = \varepsilon$; and with $\varepsilon_{i+1}$ small enough, depending on $\varepsilon_0,\dots, \varepsilon_i$ and the \textsc{boilerplate constants}.

    Let $\mr_* = \inf\{ \mr(\sigma,A): \sigma \; \text{any OK strategy}\}$ and let $\sigma_*$ be an OK strategy such that
    \[
    \mr(\sigma_*,A) \le \mr_* + \varepsilon_7.
    \]
    For any other OK strategy $\sigma'$, we have
    \begin{equation}\label{eq: ac 7}
        \mr(\sigma_*,A) \le \mr(\sigma' , A) + \varepsilon_7.
    \end{equation}
    If some OK strategy $\sigma'$ satisfied
    \[
    \excost(\sigma', a) \le \excost(\sigma_*,a) - C\varepsilon_7
    \]
    for all $ a \in A$ and a large enough constant $C$, then $\sigma'$ would violate \eqref{eq: ac 7}. Therefore, $\sigma_*$ is $C\varepsilon_7$-efficient on $A$ for the class of OK strategies. Thanks to observation \ref{eq: ac 6} and Lemma \ref{lem: esab}, there exists a probability measure $\mu$ on $A$, such that
    \begin{equation}\label{eq: ac 8}
    \excost(\sigma_*,\mu) \le \excost(\sigma', \mu) + \varepsilon_6
    \end{equation}
    for any OK strategy $\sigma'$. In particular, \eqref{eq: ac 8} holds for the optimal Bayesian strategy for $\mu$, denoted $\tilde{\sigma}$. (Here we use observation \ref{eq: ac 5}.) It therefore follows from Theorem \ref{thm: bayesian strategies} that
    \[
    |\excost(\sigma_*,a) - \excost(\tilde{\sigma},a)| \le \varepsilon_5\;\text{for all}\; a \in A.
    \]
    Together with \eqref{eq: ac 7}, this shows that
    \begin{equation}\label{eq: ac 9}
        \mr(\tilde{\sigma},A) \le \mr(\sigma',A) + C \varepsilon_5
    \end{equation}
    for all OK strategies $\sigma'$. It may happen that
    \begin{equation}\label{eq: ac 10}
        \regret(\tilde{\sigma},a) \ge \mr (\tilde{\sigma},A) - \varepsilon_3\;\text{for all}\; a \in A.
    \end{equation}
    In that case, the conclusions of our lemma hold for $\tilde{\sigma}$, $\mu$ and $A_0 = A$. Hence, we may assume that \eqref{eq: ac 10} is false. Let
    \begin{equation}\label{eq: ac 11}
        A_0 = \{a \in A : \mr(\tilde{\sigma}, A) - \varepsilon_3 \le \regret(\tilde{\sigma},a) \le \mr(\tilde{\sigma}, a)\}.
    \end{equation}
    Thus,
    \begin{equation}\label{eq: ac 12}
    \mr(\tilde{\sigma},A) - \varepsilon_3 \le \regret(\tilde{\sigma},a) \le \mr(\tilde{\sigma}, A)\;\text{for}\; a\in A_0
    \end{equation}
    and
    \begin{equation}\label{eq: ac 13}
\regret(\tilde{\sigma}, a) < \mr(\tilde{\sigma}, A) - \varepsilon_3\;\text{for}\; a \in A\backslash A_0.
    \end{equation}
Since \eqref{eq: ac 10} is false, we have $\#A_0 < \#A$, so our \textsc{inductive hypothesis} applies, i.e., our lemma holds for $A_0$.

Thus, there exist a subset $A_{00}\subset A_0$, a probability measure $\mu_0$, and a strategy $\tilde{\sigma}_0$, with the following properties.
\begin{flalign}
&\mu_0\;\text{is concentrated on } A_{00}.\label{eq: ac 14}&\\
& \tilde{\sigma}_0 \;\text{is the optimal Bayesian strategy for the prior } \mu_0.\label{eq: ac 15}\\
& \regret(\tilde{\sigma}_0,a) \le \regret(\tilde{\sigma}_0,a_0)+ \varepsilon_7\;\text{for } a \in A_0, a_0 \in A_{00}.\label{eq: ac 16}
\end{flalign}
In particular,
\begin{equation}\label{eq: ac 17}
|\regret(\tilde{\sigma}_0,a_0) - \regret(\tilde{\sigma}_0,a_0') | \le \varepsilon_7\;\text{for}\; a_0, a_0' \in A_{00}.
\end{equation}
From \eqref{eq: ac 16}, \eqref{eq: ac 17} we see that
\begin{equation}\label{eq: ac 18}
    \mr(\tilde{\sigma}_0,A_0) - \varepsilon_6 \le \regret(\tilde{\sigma}_0, a_0 ) \le \mr(\tilde{\sigma}_0,A_0)\;\text{for } a_0 \in A_{00}.
\end{equation}
Our plan is to prove that the conclusions of Lemma \ref{lem: agnostic control} for $A$ hold for the set $A_{00}$, the measure $\mu_0$, and the strategy $\tilde{\sigma}_0$; that will complete our induction on $\#A$ and prove Lemma \ref{lem: agnostic control}. To carry out our plan, we first prove that 
\begin{equation}\label{eq: ac 19}
    |\mr(\tilde{\sigma},A) - \mr(\tilde{\sigma}_0,A_0)| \le \varepsilon_2.
\end{equation}
To see \eqref{eq: ac 19}, we recall that
\[
\regret(\sigma, a) = \rho_0(a) + \rho_1(a) \cdot \excost(\sigma, a)
\]
with $c < \rho_1(a) < C$.

For $a_0 \in A_{00} \subset A_0$, estimates \eqref{eq: ac 12} and \eqref{eq: ac 18} therefore imply the inequalities
\begin{equation}\label{eq: ac 20}
\Big[ \frac{\mr(\tilde{\sigma},A) - \rho_0(a)}{\rho_1(a)} \Big] - C \varepsilon_3 \le \excost(\tilde{\sigma},a) \le \Big[ \frac{\mr(\tilde{\sigma},A) - \rho_0(a)}{\rho_1(a)}\Big]
\end{equation}
and
\begin{equation}\label{eq: ac 21}
\begin{split}
    \Big[ \frac{\mr(\tilde{\sigma}_0,A_0) - \rho_0(a)}{\rho_1(a)}\Big] - C \varepsilon_6 \le \excost(&\tilde{\sigma}_0,a)\\ &\le \Big[ \frac{\mr(\tilde{\sigma}_0,A_0)-\rho_0(a)}{\rho_1(a)}\Big].
\end{split}
\end{equation}
Let
\begin{equation}\label{eq: ac 22}
    H_1 = \sum_{a \in A_{00}} \frac{\mu_0(a)}{\rho_1(a)},\qquad H_0 = \sum_{a \in A_{00}} \frac{\mu_0(a) \rho_0(a)}{\rho_1(a)}.
\end{equation}
Since $\mu_0$ is a probability measure concentrated on $A_{00}$, and since $c < \rho_1(a) < C$, we have
\begin{equation}\label{eq: ac 23}
    c' < H_1 < C'.
\end{equation}

Multiplying \eqref{eq: ac 20}, \eqref{eq: ac 21} by $\mu_0(a)$, and summing over $a \in A_{00}$, we obtain the inequalities
\begin{equation}\label{eq: ac 24}
    H_1 \mr(\tilde{\sigma}, A) - H_0 - C \varepsilon_3 \le \excost(\tilde{\sigma},\mu_0) \le H_1 \mr(\tilde{\sigma},A) - H_0
\end{equation}
and
\begin{equation}\label{eq: ac 25}
    H_1 \mr(\tilde{\sigma}_0,A_0) - H_0 - C \varepsilon_6 \le \excost(\tilde{\sigma}_0,\mu_0) \le H_1 \mr(\tilde{\sigma}_0, A_0) - H_0.
\end{equation}
Moreover, since $\tilde{\sigma}_0$ is the optimal Bayesian strategy for the prior $\mu_0$, we have
\begin{equation}\label{eq: ac 26}
    \excost(\tilde{\sigma}_0,\mu_0) \le \excost(\tilde{\sigma},\mu_0).
\end{equation}
From \eqref{eq: ac 24}, \eqref{eq: ac 25}, \eqref{eq: ac 26} we see that
\begin{multline*}
    H_1 \mr(\tilde{\sigma}_0,A_0) - H_0 - C \varepsilon_6 \le \excost(\tilde{\sigma}_0,\mu_0) \le \excost(\tilde{\sigma}, \mu_0)\\ 
     \le H_1 \mr(\tilde{\sigma}, A) - H_0.
\end{multline*}
Thus,
\[
H_1 \mr(\tilde{\sigma}_0, A_0) \le H_1 \mr(\tilde{\sigma}, A) + C \varepsilon_6.
\]
Thanks to \eqref{eq: ac 23}, this tells us that 
\[
\mr(\tilde{\sigma}_0, A_0) \le \mr(\tilde{\sigma}, A) + C \varepsilon_6.
\]
So we've proven half of \eqref{eq: ac 19}. In particular, \eqref{eq: ac 19} holds unless we have
\begin{equation}\label{eq: ac 27}
    \mr(\tilde{\sigma}_0, A_0) \le \mr(\tilde{\sigma}, A) - \varepsilon_2.
\end{equation}
To complete the proof of \eqref{eq: ac 19}, we assume \eqref{eq: ac 27} and derive a contradiction as follows.

Observation \ref{eq: ac 5} tells us that both strategies $\tilde{\sigma}$ and $\tilde{\sigma}_0$ are OK. We form a mixed strategy $\sigma_\mix$ by playing the strategy $\tilde{\sigma}$ with probability $( 1 - \varepsilon_4)$ and the strategy $\tilde{\sigma}_0$ with probability $\varepsilon_4$. Observation \ref{eq: ac 6} tells us that $\sigma_\mix$ is an OK strategy. We will see that \eqref{eq: ac 27} implies that $\sigma_\mix$ outperforms $\tilde{\sigma}$, contradicting \eqref{eq: ac 9}. To see this, we first recall that since $\tilde{\sigma}, \tilde{\sigma}_0$ are tame strategies, we have $\cost(\tilde{\sigma}, a)$, $\cost(\tilde{\sigma}_0,a) \le C$ for all $a \in A$, hence
\begin{equation}\label{eq: ac 28}
    \regret(\tilde{\sigma}_0,a) \le \regret(\tilde{\sigma},a) + C \;\text{for any}\; a \in A.
\end{equation}
Now suppose $a \in A_0$. Then \eqref{eq: ac 27} yields the inequalities
\begin{align*}
    \regret(\sigma_\mix, a) =& (1-\varepsilon_4) \regret(\tilde{\sigma},a) + \varepsilon_4 \regret(\tilde{\sigma}_0,a)\\
    \le & (1-\varepsilon_4) \mr(\tilde{\sigma},A) + \varepsilon_4 \mr(\tilde{\sigma}_0,A_0)\\
    \le & ( 1- \varepsilon_4) \mr(\tilde{\sigma},A) + \varepsilon_4[\mr(\tilde{\sigma},A) - \varepsilon_2]\\
    = & \mr(\tilde{\sigma}, A) - \varepsilon_4 \varepsilon_2.
\end{align*}
On the other hand, for $ a \in A \backslash A_0$, inequalities \eqref{eq: ac 13} and \eqref{eq: ac 28} imply that 
\begin{align*}
    \regret(\sigma_\mix,a) =& (1-\varepsilon_4) \regret(\tilde{\sigma},a) + \varepsilon_4 \regret(\tilde{\sigma}_0,a)\\
    \le& (1-\varepsilon_4) [\mr(\tilde{\sigma}, A) -\varepsilon_3]+ \varepsilon_4 [\mr(\tilde{\sigma},A)+C]\\
    = & \mr(\tilde{\sigma}, A) + C\varepsilon_4 - (1-\varepsilon_4)\varepsilon_3 \le \mr(\tilde{\sigma},A) - \frac{1}{2}\varepsilon_3
\end{align*}
(since $\varepsilon_4 \ll \varepsilon_3 \ll 1)$. Thus, for all $a \in A$, we have
\begin{align*}
\regret(\sigma_\mix, a) &\le \mr(\tilde{\sigma}, A) - \min \{\frac{1}{2}\varepsilon_3, \varepsilon_4\varepsilon_2\} \\
& = \mr(\tilde{\sigma},A) - \varepsilon_2 \varepsilon_4.
\end{align*}
In other words,
\[
\mr(\sigma_\mix,A) \le \mr(\tilde{\sigma}, A) - \varepsilon_4\varepsilon_2.
\]
As promised, this contradicts \eqref{eq: ac 9}, completing the proof of \eqref{eq: ac 19}.

Returning to \eqref{eq: ac 24} and \eqref{eq: ac 25}, we now see that
\begin{equation}\label{eq: ac 29}
    |\excost(\tilde{\sigma},\mu_0) - \excost(\tilde{\sigma}_0, \mu_0)| \le C \varepsilon_3,
\end{equation}
thanks to \eqref{eq: ac 19} and \eqref{eq: ac 23}.

Since $\tilde{\sigma}_0$ is the optimal Bayesian strategy for $\mu_0$, and since $\tilde{\sigma}$ is tame with tame constant at most $C$, \eqref{eq: ac 29} and Theorem \ref{thm: bayesian strategies} together imply that
\begin{equation}\label{eq: ac 30}
    |\excost(\tilde{\sigma}, a) - \excost(\tilde{\sigma}_0,a)|\le \varepsilon_1 \;\text{for all}\; a \in A.
\end{equation}
We are ready to show that $A_{00}$, $\mu_0$, $\tilde{\sigma}_0$ satisfy the conclusions of Lemma \ref{lem: agnostic control} for $A$. Indeed, we know that $\mu_0$ is a probability measure concentrated on $A_{00}$, and that $\tilde{\sigma}_0$ is the optimal Bayesian strategy for the prior $\mu_0$. It remains only to show that
\begin{equation}\label{eq: ac 31}
    \regret(\tilde{\sigma}_0,a) \le \regret(\tilde{\sigma}_0,a_0) + \varepsilon\;\text{for any} \; a_0 \in A_{00}, a \in A.
\end{equation}
However, \eqref{eq: ac 30} yields
\begin{equation}\label{eq: ac 32}
\regret(\tilde{\sigma}_0,a) \le \regret(\tilde{\sigma},a) + C \varepsilon_1 \le \mr(\tilde{\sigma}, A) + C \varepsilon_1\;\text{for } a \in A,
\end{equation}
while \eqref{eq: ac 18} and \eqref{eq: ac 19} yield
\begin{equation}\label{eq: ac 33}
    \regret(\tilde{\sigma}_0,a_0) \ge \mr(\tilde{\sigma}_0, A_0) - \varepsilon_6 \ge \mr(\tilde{\sigma}, A) - C \varepsilon_2
\end{equation}
for $a_0 \in A_{00}$.

The desired estimate \eqref{eq: ac 31} is immediate from \eqref{eq: ac 32} and \eqref{eq: ac 33}. Thus, as promised, the conclusions of our lemma hold, with $A_{00}$, $\mu_0$, $\tilde{\sigma}_0$ in place of $A_0$, $\mu$, $\tilde{\sigma}$. Our induction on $\# A$ is complete, and Lemma \ref{lem: agnostic control} is proven.
\end{proof}

\section{An Interval of Allowed Parameters}

The previous section produced nearly optimal agnostic strategies when the parameter $a$ is known to belong to a finite set. In this section we pass to the case in which $a$ is known merely to belong to a given interval $[-a_\mx,+a_\mx]$. We continue to suppose our PDE Assumption (from Section \ref{sec: pde}) holds for every prior probability distribution on the interval $[-a_\mx, a_\mx]$. Under this assumption, we will prove the following result.

\begin{thm}\label{thm: 5}
    There exists a Bayesian prior probability measure $\mu_\infty$ supported on a subset $A_\infty \subset [-a_\emx, +a_\emx]$, for which the optimal Bayesian strategy $\sigma_\infty$ satisfies
    \begin{itemize}
        \item[\emph{(A)}] The function $a \mapsto \regret(\sigma_\infty,a)$ is constant on $A_\infty$, and
        \item[\emph{(B)}] The function $[-a_\emx, a_\emx] \ni a \mapsto \regret(\sigma_\infty,a)$ is maximized on $A_\infty$.
    \end{itemize}
\end{thm}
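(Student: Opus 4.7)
The plan is to apply Lemma \ref{lem: agnostic control} along a sequence of progressively finer nets, pass to a weak-$*$ limit of the resulting Bayesian priors, and use Quantitative Uniqueness (Theorem \ref{thm: nintro 2}, through Lemma \ref{lem: second bayesian main}) to identify the limit. Specifically, set $A_n := [-a_\mx, a_\mx] \cap 2^{-n}\Z$, a nested sequence of finite sets whose union is dense in $[-a_\mx, a_\mx]$. Applying Lemma \ref{lem: agnostic control} with $A = A_n$ and $\varepsilon = 1/n$ yields subsets $A_0^n \subset A_n$, probability measures $\mu_n$ supported on $A_0^n$, and optimal Bayesian strategies $\tilde\sigma_n$ for the priors $\mu_n$. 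By the PDE Assumption all $\tilde\sigma_n$ are tame with the single constant $C_\tame^\op$, and they satisfy
\[
\regret(\tilde\sigma_n, a) \le \regret(\tilde\sigma_n, a_0) + \frac{1}{n} \quad\text{for } a \in A_n,\ a_0 \in A_0^n.
\]

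Next I would extract two successive subsequences. Prokhorov compactness produces a subsequence $\mu_{n_k}$ converging weakly to a probability measure $\mu_\infty$ on $[-a_\mx, a_\mx]$. By Theorem \ref{thm: bayesian strategies}(3), applied on a slightly enlarged parameter interval, the maps $a \mapsto \excost(\tilde\sigma_{n_k}, a)$ form a uniformly bounded family of analytic functions on an open strip containing $[-a_\mx, a_\mx]$; Montel's theorem then supplies a further subsequence along which $\excost(\tilde\sigma_{n_k}, \cdot) \to E_\infty$ uniformly on $[-a_\mx, a_\mx]$, whence $\regret(\tilde\sigma_{n_k}, \cdot) \to R_\infty := \rho_0 + \rho_1 E_\infty$ uniformly.

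The crucial step is to identify $E_\infty = \excost(\sigma_\infty, \cdot)$, where $\sigma_\infty$ denotes the optimal Bayesian strategy for $\mu_\infty$ (well-defined and tame with constant $C_\tame^\op$ by the PDE Assumption). Combining the optimality inequalities
\[
\excost(\tilde\sigma_{n_k}, \mu_{n_k}) \le \excost(\sigma_\infty, \mu_{n_k}), \qquad \excost(\sigma_\infty, \mu_\infty) \le \excost(\tilde\sigma_{n_k}, \mu_\infty)
\]
with uniform convergence of $\excost(\tilde\sigma_{n_k}, \cdot)$ and weak convergence of $\mu_{n_k}$, both pairs of quantities must tend to the common value $\int E_\infty\, d\mu_\infty = \excost(\sigma_\infty, \mu_\infty)$; in particular $\excost(\tilde\sigma_{n_k}, \mu_\infty) \to \excost(\sigma_\infty, \mu_\infty)$. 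Quantitative Uniqueness, in the pointwise-in-$a$ form of Lemma \ref{lem: second bayesian main} (extended to continuous time in Chapter \ref{chap: 4}), then forces $\excost(\tilde\sigma_{n_k}, a) \to \excost(\sigma_\infty, a)$ for every single $a \in [-a_\mx, a_\mx]$, so $E_\infty = \excost(\sigma_\infty, \cdot)$ and $R_\infty = \regret(\sigma_\infty, \cdot)$.

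Finally, set $A_\infty := \mathrm{supp}(\mu_\infty)$. For any $a_\infty, a_\infty' \in A_\infty$ the Portmanteau theorem yields sequences $a_k, a_k' \in \mathrm{supp}(\mu_{n_k}) \subset A_0^{n_k}$ with $a_k \to a_\infty$ and $a_k' \to a_\infty'$; applying the displayed inequality above between $a_k$ and $a_k'$ and invoking uniform convergence of regret gives $\regret(\sigma_\infty, a_\infty) = \regret(\sigma_\infty, a_\infty')$, proving (A). For (B), the same displayed inequality compared against arbitrary $a \in A_n$, combined with density of $\bigcup_n A_n$ in $[-a_\mx, a_\mx]$ and the Portmanteau construction above, shows that the common regret value on $A_\infty$ equals $\max_{[-a_\mx, a_\mx]} \regret(\sigma_\infty, \cdot)$. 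The main obstacle is the identification step: weak convergence of priors does not \emph{a priori} force convergence of the corresponding optimal Bayesian strategies, and Quantitative Uniqueness is exactly the tool that converts integrated asymptotic optimality into pointwise convergence of expected costs, which is what drives both (A) and (B).
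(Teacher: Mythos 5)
Your proof is correct and follows a genuinely different route from the paper's, so a comparison is in order.

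The paper's proof of the key uniform-convergence fact $\regret(\sigma_{N},\cdot)\to\regret(\sigma_\infty,\cdot)$ goes through the PDE: using the a priori bounds of the PDE Assumption, it extracts a subsequence along which $\partial^\alpha S_N\to\partial^\alpha S_\infty$ in $C^0_{\mathrm{loc}}$ for $|\alpha|\le 2$, hence $u_N\to u_\infty$ locally uniformly, and then it feeds the resulting small discrepancy $u_N-u_\infty$ (evaluated along the $\sigma(N,k)$ trajectories) into the Stability Lemma \ref{lem: stability}. Your argument replaces this PDE-compactness step entirely: you extract uniform convergence of the cost functions $\excost(\tilde\sigma_{n_k},\cdot)$ by Montel (made legitimate by enlarging to $[-\hat a_\mx,\hat a_\mx]$ so that $[-a_\mx,a_\mx]$ is compactly contained in the rectangle of analyticity from \eqref{eq: tsdct 7}), then identify the limit by a two-sided optimality sandwich
\[
\excost(\tilde\sigma_{n_k},\mu_{n_k})\le\excost(\sigma_\infty,\mu_{n_k}),\qquad \excost(\sigma_\infty,\mu_\infty)\le\excost(\tilde\sigma_{n_k},\mu_\infty),
\]
which in the limit forces $\excost(\tilde\sigma_{n_k},\mu_\infty)\to\excost(\sigma_\infty,\mu_\infty)$, and then convert this \emph{integral} closeness into \emph{pointwise} closeness via Quantitative Uniqueness (Lemma \ref{lem: second bayesian main} in its continuous-time form \eqref{eq: tsdct 12}). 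This is a clean reuse of a tool the paper developed for other purposes, and it neatly sidesteps both the PDE normal-families argument and the (implicit) need to know that the limit $S_\infty$ of the $S_N$ is the same PDE solution used to define $\sigma_\infty$. What the paper's route buys in exchange is an explicit description of the limiting control law $u_\infty=-\tfrac12\partial_q S_\infty$, which may be useful elsewhere but is not needed to prove the theorem. Two small presentational points: the analyticity you invoke is really the continuous-time statement \eqref{eq: tsdct 7} (derived from Lemma \ref{lem: ac} and Theorem \ref{thm: bayesian strategies}(3)) rather than Theorem \ref{thm: bayesian strategies}(3) itself, which concerns a single partition and carries an $\varepsilon$-error; and your choice $A_\infty=\mathrm{supp}(\mu_\infty)$ is (possibly strictly) smaller than the paper's set of accumulation points of the $A_N^0$, but that is harmless since the theorem only requires \emph{some} $A_\infty$ carrying $\mu_\infty$ with properties (A) and (B).
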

\begin{proof}
    Let $A_1, A_2, A_3, \dots$ be a sequence of sets of the form
    \begin{flalign}
        &A_N = [-a_\mx, a_\mx] \cap 2^{-m_N} \Z, \text{ with } m_N \rightarrow \infty\text{ as } N\rightarrow \infty; \text{ and let}\label{eq: iap 1}&\\
&\varepsilon_1,\varepsilon_2,\varepsilon_3,\dots \text{ be a sequence of positive numbers tending to zero.}\label{eq: iap 2}
    \end{flalign}
Applying Lemma \ref{lem: agnostic control} to each $A_N$, we obtain a probability measure $\mu_N$, concentrated on a subset $A_N^0 \subset A_N$, such that the optimal Bayesian strategy $\sigma_N$ for the prior $\mu_N$ satisfies
\begin{flalign}
    & \mr_N - \varepsilon_N \le \regret(\sigma_N, a^0)\le \mr_N\;\text{for all } a^0 \in A_N^0, \text{ where} \label{eq: iap 2.5}&\\
    & \mr_N = \max\{\regret(\sigma_N,a): a \in A_N\}.\label{eq: iap 3}
\end{flalign}
    Passing to a subsequence, we may assume that the $\mu_N$ converge weakly to a probability measure $\mu_\infty$ on $[-a_\mx, +a_\mx]$. Again passing to a subsequence, we may assume that the $\mr_N$ converge to a limit $\mr_\infty$ as $N\rightarrow \infty$. (Here we use the fact that the $\mr_N$ are bounded, thanks to Lemma \ref{lem: rare events}.)

    Let $\sigma_\infty$ be the optimal Bayesian strategy for the prior $\mu_\infty$. After again passing to a subsequence, we will show that
    \begin{equation}\label{eq: iap 4}
        \regret(\sigma_N,a) \rightarrow \regret(\sigma_\infty,a)\;\text{as}\; N \rightarrow \infty, 
    \end{equation}
    uniformly for $a \in [-a_\mx, + a_\mx]$. The proof of \eqref{eq: iap 4} is the main step in our argument. 

    Let us recall how $\regret(\sigma_N,a)$ and $\regret(\sigma_\infty,a)$ are defined. Starting from the prior $\mu_N$, we form the functions
    \[
    \bar{a}_N(\zo, \zt) = \frac{\int_{-a_\mx}^{a_\mx} a \exp\Big( - \frac{a^2}{2} \zt + a \zo\Big)\ d \mu_N(a)}{\int_{-a_\mx}^{a_\mx} \exp \Big( - \frac{a^2}{2} \zt + a\zo\Big)\ d \mu_N(a)},
    \]
    and similarly define $\bar{a}_\infty(\zo,\zt)$.

    Using $\bar{a}_N$ in place of $\bar{a}$ in equation \eqref{eq: pde 1}, we then obtain a PDE solution $S_N(q,t,\zo,\zt) \in C_{loc}^{2,1}(\R\times[0,T]\times \R\times [0,\infty)),$ satisfying the conditions given in Section \ref{sec: pde}. Thanks to the estimates on $\partial^\alpha S_N$ ($|\alpha| \le 3)$ in that section, we may again pass to a subsequence, and assume that
    \begin{equation}\label{eq: iap 5}
        \partial^\alpha S_N \rightarrow \partial^\alpha S_\infty\;\text{as}\; N \rightarrow \infty \;\text{for}\; |\alpha|\le 2,
    \end{equation}
    uniformly on compact subsets of $\R \times [0,T] \times \R \times [0,\infty)$. Here, $S_\infty \in C_{loc}^{2,1}$ satisfies all the estimates given in Section \ref{sec: pde}. Since the $\mu_N$ converge weakly to $\mu_\infty$, it follows that $\bar{a}_N (\zo,\zt)\rightarrow \bar{a}_\infty(\zo,\zt)$ as $N \rightarrow \infty$ for each $(\zo,\zt) \in \R \times [0,\infty)$. Together with \eqref{eq: iap 5} and the PDE satisfied by the $S_N$, this proves that $S_\infty$ satisfies the PDE \eqref{eq: pde 1} for the Bayesian prior $\mu_\infty$.

    Now define
\[
u_N(q,t,\zo,\zt) = - \frac{1}{2}\partial_q S_N(q,t,\zo, \zt)
\]
and
\[
u_\infty(q,t,\zo,\zt) = - \frac{1}{2}\partial_q S_\infty(q,t,\zo,\zt)
\]
for $(q,t,\zo,\zt) \in \R \times [0,T] \times \R \times [0, \infty)$. From \eqref{eq: iap 5} we have
\begin{equation}\label{eq: iap 6}
    u_N \rightarrow u_\infty\;\text{as}\; N \rightarrow \infty,
\end{equation}
uniformly on compact subsets of $\R \times [0,T] \times \R \times [0,\infty)$.

For each $k \ge 1$, we introduce the partition $\pi_k$ of $[0,T]$ given by
\begin{equation}\label{eq: iap 7}
0 = t_0^k < t_1^k < \dots < t_k^k = T, \;\text{with}\; t_\nu^k = \frac{\nu}{k}T.
\end{equation}
Let $\sigma(N,k)$ be the \textsc{allegedly optimal strategy} for the Bayesian prior $\mu_N$ and the partition $\pi_k$. Thus,
\begin{equation}\label{eq: iap 8}
    u^{\sigma(N,k)}(\tnu^k) = u_N(q^{\sigma(N,k)}(\tnu^k), \tnu^k, \zo^{\sigma(N,k)}(\tnu^k), \zt^{\sigma(N,k)}(\tnu^k))\;\text{for}\; 0 \le \nu < k.
\end{equation}
Similarly, let $\sigma(\infty,k)$ be the \textsc{allegedly optimal strategy} for the Bayesian prior $\mu_\infty$ and the partition $\pi_k$. Thus,
\begin{equation}\label{eq: iap 9}
\begin{split}
    u^{\sigma(\infty, k)}(&\tnu^k) \\ &= u_\infty(q^{\sigma(\infty,k)}(\tnu^k),\tnu^k, \zo^{\sigma(\infty,k)}(\tnu^k),\zt^{\sigma(\infty,k)}(\tnu^k))\;\text{for}\; 0 \le \nu < k.
\end{split}
\end{equation}
By definition,
\begin{equation}\label{eq: iap 10}
    \excost(\sigma_N,a) = \lim_{k\rightarrow \infty} \excost(\sigma(N,k),a)
\end{equation}
and
\begin{equation}\label{eq: iap 11}
    \excost(\sigma_\infty,a) = \lim_{k\rightarrow \infty} \excost(\sigma(\infty,k),a),
\end{equation}
for $a \in [-a_\mx, a_\mx]$. Finally,
\begin{equation}\label{eq: iap 12}
    \regret(\sigma_N,a) = \rho_0(a) + \rho_1(a) \excost(\sigma_N,a)
\end{equation}
and
\begin{equation}\label{eq: iap 13}
    \regret(\sigma_\infty, a) = \rho_0(a) + \rho_1(a) \excost(\sigma_\infty,a),
\end{equation}
\begin{sloppypar}
    \noindent for $a \in [-a_\mx, a_\mx]$. This concludes our review of the definition of $\regret(\sigma_N,a)$ and $\regret(\sigma_\infty,a)$.
\end{sloppypar}

For large enough $N$ and $k$, we will apply Lemma \ref{lem: stability} of Section \ref{sec: stability} to the Bayesian prior $\mu_\infty$, the \textsc{allegedly optimal strategy} $\sigma(\infty,k)$, and the alternative strategy $\sigma(N,k)$. Thus, $u_\infty$ will play the r\^{o}le of $u_\op$ in Section \ref{sec: stability}, while $u^{\sigma(N,k)}$, given by \eqref{eq: iap 8}, will play the r\^{o}le of $\usig$ in Section \ref{sec: stability}. The r\^{o}le of the quantity $\discrep^\sigma$ in Section \ref{sec: stability} will therefore be played by
\begin{equation}\label{eq: iap 14}
\begin{split}
    \discrep(N,k,\tnu^k) = u_N&(q^{\sigma(N,k)}(\tnu^k),\tnu^k, \zo^{\sigma(N,k)}(\tnu^k),\zt^{\sigma(N,k)}(\tnu^k)) \\&- u_\infty( q^{\sigma(N,k)}(\tnu^k),\tnu^k, \zo^{\sigma(N,k)}(\tnu^k), \zt^{\sigma(N,k)}(\tnu^k)).
\end{split}
\end{equation}
To apply Lemma \ref{lem: stability}, we must estimate
\begin{equation}\label{eq: iap 15}
    \E_a\Big[ \sum_{0\le\nu < k} |\discrep(N,k,\tnu^k)|^2 \Delta \tnu^k \Big]\;\text{for}\; a \in [-a_\mx, a_\mx],
\end{equation}
with $\Delta \tnu^k = t_{\nu+1}^k - \tnu^k = T/k$ (see \eqref{eq: iap 7}). To estimate the quantity in \eqref{eq: iap 15}, we recall that
\[
|u_N(q,t,\zo,\zt)|, |u_\infty(q,t,\zo,\zt)| \le C [|q| + 1],
\]
and, consequently,
\begin{equation}\label{eq: iap 16}
    |\discrep(N,k,\tnu^k)| \le C [|q^{\sigma(N,k)}(\tnu^*)| + 1].
\end{equation}
For $Q \ge C$, define the events
\[
\bad(N,k,Q) := \big\{ \max_{0 \le \nu < k} \{ |q^{\sigma(N,k)}(\tnu^k)| + |\zo^{\sigma(N,k)}(\tnu^k)| + |\zt^{\sigma(N,k)}(\tnu^k)|\} > Q\big\}
\]
and
\[
\good(N,k,Q) := \big\{ \max_{0 \le \nu < k} \{ |q^{\sigma(N,k)}(\tnu^k)| + |\zo^{\sigma(N,k)}(\tnu^k)| + |\zt^{\sigma(N,k)}(\tnu^k)|\} \le Q\big\}.
\]
We take $k \ge C$ for a large $C$, so that our partition of $[0,T]$ is fine enough to allow us to apply Lemma \ref{lem: rare events}. Lemma \ref{lem: rare events} then tells us that
\[
\E_a \big[ \max_{0 \le \nu < k} \{ |q^{\sigma(N,k)}(\tnu^k)| + 1\}^2 \cdot \mathbbm{1}_{\bad(N,k,Q)} \big] \le C Q^{-1}
\]
for $ a \in [-a_\mx, a_\mx]$. Hence, by \eqref{eq: iap 16},
\begin{equation}\label{eq: iap 17}
        \E_a \Big[ \sum_{0 \le \nu < k} (\discrep(N,k,\tnu^k))^2 \Delta \tnu^k \cdot \mathbbm{1}_{\bad(N,k,Q)}\Big] \le C Q^{-1}
\end{equation}
for $ a \in [-a_\mx, a_\mx]$. On the other hand, let $\delta >0$ be given, and suppose
\[
N \ge N_{\min}(\delta, Q)\;\text{for a large enough}\; N_{\min}(\delta, Q).
\]
Then, by comparing \eqref{eq: iap 6} with \eqref{eq: iap 14}, we see that
\[
| \discrep(N,k,\tnu^k)| \le \delta \;\text{for all}\; \nu, 
\]
provided $\good(N,k,Q)$ occurs. Therefore, 
\[
\E_a \Big[ \sum_{0 \le \nu < k} (\discrep(N,k,\tnu^k))^2 \Delta \tnu^k \cdot \mathbbm{1}_{\good(N,k,Q)}\Big] \le C \delta
\]
for all $ a \in [-a_\mx, a_\mx]$. Together with \eqref{eq: iap 17}, this implies that
\[
\E_a \Big[ \sum_{0 \le \nu < k} (\discrep(N,k,\tnu^k))^2 \Delta \tnu^k\Big] \le C \delta + C Q^{-1}
\]
for all $ a\in [-a_\mx, a_\mx]$ provided $N \ge N_{\min}(\delta, Q)$ and $k\ge C$. Taking $Q = \delta^{-1}$, we have
\begin{equation}\label{eq: iap 18}
    \E_a \Big[ \sum_{0 \le \nu < k} (\discrep(N,k,\tnu^k))^2 \Delta \tnu^k\Big] \le C \delta
\end{equation}
for $ a\in [-a_\mx, a_\mx]$, $N \ge N_{\min}'(\delta)$, and $k \ge C$. Also, recalling \eqref{eq: iap 7}, we see that
\begin{equation}\label{eq: iap 19}
    \Delta t_\mx^k \equiv \max_{0 \le \nu < k} (t_{\nu+1}^k - \tnu^k) \le \frac{C}{k} < \delta\;\text{provided}\; k > \frac{C}{\delta}.
\end{equation}
Now let $\varepsilon>0$ be given, and let $\delta$ be small enough, depending on $\varepsilon$. Our results \eqref{eq: iap 18} and \eqref{eq: iap 19} are the hypotheses of Lemma \ref{lem: stability}, with $\mu_\infty$ in place of $d \prior$, and with $\sigma(N,k)$ in place of $\sigma$. Applying that lemma, we learn that
\[
|\excost(\sigma(N,k),a) - \excost(\sigma(\infty,k),a)| \le \varepsilon,
\]
all $a \in [-a_\mx, a_\mx]$, for $k \ge k_{\min}(\varepsilon)$ and $N \ge N''_{\min}(\varepsilon)$. Passing to the limit as $k\rightarrow \infty$ for fixed $N$, and recalling \eqref{eq: iap 10} and \eqref{eq: iap 11}, we see that
\[
|\excost(\sigma_N,a) - \excost(\sigma_\infty,a)| \le \varepsilon
\]
for $N \ge N''(\varepsilon)$ and for all $ a\in[-a_\mx, a_\mx]$. Since $\varepsilon >0$ is arbitrary, we conclude that 
\[
\excost(\sigma_N,a) \rightarrow \excost(\sigma_\infty,a)\;\text{as}\; N \rightarrow \infty,
\]
uniformly for $ a \in [-a_\mx, a_\mx]$. Thanks to \eqref{eq: iap 12} and \eqref{eq: iap 13}, this in turn implies that
\[
\regret(\sigma_N,a) \rightarrow \regret(\sigma_\infty,a)\;\text{as}\; N \rightarrow \infty,
\]
uniformly for $a \in [-a_\mx, a_\mx]$. So, at last, we have proven \eqref{eq: iap 4}.

Note that the functions $\regret(\sigma_N,a)$, $\regret(\sigma_\infty,a)$ are continuous on $[-a_\mx, a_\mx]$. Thanks to \eqref{eq: iap 4}, they have a common modulus of continuity, i.e.,
\begin{equation}\label{eq: iap 20}
    | \regret(\sigma_N,a_1) - \regret(\sigma_N, a_2)| \le \omega(|a_1 - a_2|)
\end{equation}
for $a_1, a_2 \in [-a_\mx, a_\mx]$ and for all $N\ge 1$, for a function $\omega(t)$ satisfying
\begin{equation}\label{eq: iap 21}
    \omega(t) \rightarrow 0\;\text{as}\; t \rightarrow 0^+.
\end{equation}

We have defined the probability measure $\mu_\infty$ and its optimal Bayesian strategy $\sigma_\infty$. To complete the proof of our theorem, we must define a set $A_\infty
\subset [-a_\mx, a_\mx]$ and prove that
\begin{itemize}
    \item $\mu_\infty$ is supported on $A_\infty$,
    \item $\regret(\sigma_\infty,a)$ is constant on $A_\infty$, and
    \item $\regret(\sigma_\infty,a)$ is maximized on $A_\infty$ over all $a \in [-a_\mx, a_\mx]$.
\end{itemize}
We define $A_\infty$ to consist of all $a \in[-a_\mx, a_\mx]$ such that for all $\eta > 0$ and all $N_* \ge 1$ there exists $a^0 \in A_N^0 \cap (a-\eta, a+\eta)$ for some $N > N_*$. (Recall $A_N^0$ from the defining conditions for the $\mu_N$.)

Let us check that $\mu_\infty$ is supported in $A_\infty$. Thus, let $a \in [-a_\mx, a_\mx]\backslash A_\infty$. Then for some open interval $I \ni a$ and some $N_* \ge 1$, we have $A_N^0 \cap I = \emptyset$ for $N >N_*$. Since $\mu_N$ is supported in $A_N^0$, we have $\mu_N(I) = 0$. Since the probability measures $\mu_N$ converge weakly to $\mu_\infty$ as $N \rightarrow \infty$, it follows that $\mu_\infty(I) = 0$. So $ a \notin \text{support}(\mu_\infty)$, completing the proof that $\mu_\infty$ is supported in $A_\infty$. 

Next, suppose $a^0 \in A_\infty$. Then there exist sequences $N_\nu \rightarrow \infty$ and $a_\nu \rightarrow a^0$ as $\nu \rightarrow \infty$, with $a_\nu \in A_{N_\nu}^0$. From \eqref{eq: iap 2.5} we have
\begin{equation*}
    \mr_{N_\nu} - \varepsilon_{N_\nu} \le \regret(\sigma_N,a_\nu) \le \mr_{N_\nu},
\end{equation*}
hence, thanks to \eqref{eq: iap 20},
\begin{equation}\label{eq: iap 22}
\mr_{N_\nu} - \varepsilon_{N_\nu} - \omega(|a_\nu - a^0|) \le \regret(\sigma_N, a^0) \le \mr_{N_\nu} + \omega(|a_\nu - a^0|).
\end{equation}
As $\nu \rightarrow \infty$, we have $\mr_{N_\nu} \rightarrow \mr_\infty$, $\varepsilon_{N_\nu} \rightarrow 0$, and $\omega(|a_\nu - a^0|) \rightarrow 0$ thanks to \eqref{eq: iap 21}. Therefore, \eqref{eq: iap 22} implies that
\[
\lim_{N \rightarrow \infty} \regret(\sigma_N, a^0) = \mr_\infty.
\]
Recalling \eqref{eq: iap 4}, we see that
\begin{equation}\label{eq: iap 23}
\regret(\sigma_\infty, a^0) = \mr_\infty\;\text{for all}\; a^0 \in A_\infty.
\end{equation}
On the other hand, let $a \in [-a_\mx, a_\mx]$. From \eqref{eq: iap 1} we obtain a sequence $a_N \in A_N$ ($N \ge 1$) such that $a_N \rightarrow a$ as $N \rightarrow \infty$. Thanks to \eqref{eq: iap 3}, we have
\[
\regret(\sigma_N, a_N) \le \mr_N\;\text{for each}\; N,
\]
hence
\begin{equation}\label{eq: iap 24}
    \regret(\sigma_N,a) \le \mr_N + \omega(|a_N - a|),
\end{equation}
by \eqref{eq: iap 20}. As $N \rightarrow \infty$, we have $\mr_N \rightarrow \mr_\infty$ and $\omega(|a_N - a|) \rightarrow 0$ by \eqref{eq: iap 21}. Therefore, \eqref{eq: iap 24} and \eqref{eq: iap 4} yield the inequality
\begin{equation}\label{eq: iap 25}
    \regret(\sigma_\infty,a) \le \mr_\infty\;\text{for all}\; a \in [-a_\mx, a_\mx].
\end{equation}
From \eqref{eq: iap 23} and \eqref{eq: iap 25} we see that $\regret(\sigma_\infty,a)$ is constant on $A_\infty$, and that the maximum of $\regret(\sigma_\infty,a)$ over all $a \in [-a_\mx, a_\mx]$ is achieved on $A_\infty$. The proof of our theorem is complete.
\end{proof}

Under additional assumptions on the functions $\rho_0(a)$ and $\rho_1(a)$ in Section \ref{sec: regret}, we can easily deduce that the set $A_\infty$ in the above theorem is finite. Indeed, suppose $\rho_0$ and $\rho_1$ are real-analytic on $\R$, and suppose that for all $\varepsilon>0$ we have
\begin{equation}\label{eq: iap 26}
    \rho_0(t) \ge - \exp(\varepsilon t)\;\text{and}\; \rho_1(t) \ge \exp(-\varepsilon t)\;\text{for large positive}\; t.
\end{equation}
Recall that the function $a \mapsto \excost(\sigma_\infty, a)$ is real-analytic on $\R$ and grows exponentially as $a \rightarrow \infty$. (See Theorem \ref{thm: bayesian strategies}.)

Under our assumptions on $\rho_0, \rho_1$, it follows that the function 
\[
a \mapsto \regret(\sigma_\infty,a) = \rho_0(a) + \rho_1(a)\excost(\sigma_\infty,a)
\]
is again real-analytic on $\R$ and exponentially large as $a \rightarrow \infty$.

In particular, $[-a_\mx, a_\mx] \ni a \mapsto \regret(\sigma_\infty,a)$ is a nonconstant real-analytic function. Since $\regret(\sigma_\infty,a)$ is constant on $A_\infty$, it follows that $A_\infty$ is finite, as claimed.

Combining Theorem \ref{thm: 5} with the fact that $A_\infty$ is finite establishes Parts (I), (II), (III) of Theorem \ref{thm: nintro 3} in the introduction.

\bibliographystyle{plain}
\bibliography{ref}
\end{document}